\def\namedlabel#1#2{\begingroup
	#2%
	\def\@currentlabel{#2}%
	\phantomsection\label{#1}\endgroup
}
\numberwithin{equation}{section}
\DeclareMathOperator{\Id}{Id}
\DeclareMathOperator{\Real}{Re}		
\DeclareMathOperator{\Imaginary}{Im}
\DeclareMathOperator{\Conf}{Conf}
\def\ASEP{\boldsymbol{\mathcal{A}}}
\def\ASEPB{\boldsymbol{\mathcal{B}}}
\def\ASEPF{\mathcal{F}}
\def\PP{\mathbb{P}}
\def\EE{\mathbb{E}}
\def\Z{\mathbb{Z}}
\def\R{\mathbb{R}}
\def\h{\mathfrak{h}}
\def\bfrho{\boldsymbol{\rho}}
\def\bfeta{\boldsymbol{\eta}}
\def\bfzeta{\boldsymbol{\zeta}}
\def\bfalpha{\boldsymbol{\alpha}}
\def\bfxi{\boldsymbol{\xi}}
\def\bfX{\boldsymbol{X}}
\def\bfY{\boldsymbol{Y}}
\def\bfZ{\boldsymbol{Z}}
\def\bfU{\boldsymbol{U}}
\def\bfK{\boldsymbol{K}}
\def\bfM{\boldsymbol{M}}
\def\llbrace{\{\!\!\{}
\def\rrbrace{\}\!\!\}}
\newtheorem{thm}{Theorem}[section]
\newtheorem{prop}[thm]{Proposition}
\newtheorem{lem}[thm]{Lemma}
\newtheorem{cor}[thm]{Corollary}
\theoremstyle{remark}
\newtheorem{rem}[thm]{Remark}
\theoremstyle{definition}
\newtheorem{definition}[thm]{Definition}
\def\note#1{\textup{\textsf{\color{blue}(#1)}}}
\begin{document}

\begin{frontmatter}

\title{The ASEP speed process}
\runtitle{The ASEP speed process}
\runauthor{Aggarwal, Corwin, Ghosal}

\begin{aug}
\author{\fnms{Amol}  \snm{Aggarwal}\textsuperscript{1}}
 \author{\fnms{Ivan}  \snm{Corwin}\textsuperscript{2}}
 \author{\fnms{Promit} \snm{Ghosal}\textsuperscript{3}}
\address{\textsuperscript{1}Columbia, Department of Mathematics, 2990 Broadway, NY, NY 10027, \\ Clay Mathematics Institute 70 Main St \#300, Peterborough, NH 03458 \\ Institute of Advanced Study, School of Mathematics, 1 Einstein Drive, Princeton, NJ 08540,  amolaggarwal@math.columbia.edu}
\address{\textsuperscript{2}Columbia, Department of Mathematics, 2990 Broadway, NY, NY 10027,  corwin@math.columbia.edu}
\address{\textsuperscript{3}MIT, Department of Mathematics, 182 Memorial Drive, Cambridge, MA 02139,  promit@mit.edu}




\end{aug}

\begin{abstract}
For ASEP with step initial data and a second class particle started at the origin we prove that as time goes to infinity the second  class particle almost surely achieves a velocity that is uniformly distributed on $[-1,1]$. This positively resolves Conjecture 1.9 and 1.10 of \cite{SP} and allows us to construct the ASEP speed process.
\end{abstract}



\begin{keyword}
\kwd{ASEP, second class particle}
\end{keyword}

\end{frontmatter}

\setcounter{tocdepth}{1}
\tableofcontents
\section{Introduction}\label{sec:intro}

Consider ASEP started in step initial data with one second class particle at the origin (see Figure \ref{fig:Traj}). Specifically, at time $t = 0$, each site $j \leq -1$ is occupied with a first class particle, the site $j = 0$ is occupied by a second class particle, and all sites $j > 0$ are initially unoccupied and (for the definition of the dynamics which follows) will be considered infinite class. First and second class particles have left jump rate $L$ and right jump rate $R$ where we assume that $R>L\geq 0$ and $R-L=1$. Jumps are subject to the rule that when a class $k$ particle tries to jump into a site with a class $k'$ particle, the particles switch places if and only if $k<k'$ (otherwise, they stay put). We denote this process by $\ASEP_t=(\bfeta_t,\bfX_t)$ where $\bfeta_t\in \{0,1\}^{\Z}$ are the occupation variables for the first class particles and $\bfX_t$ is the location of the second class particle (we require that $\bfeta_t(\bfX_t)=0$ so there is no first class particle at the site of the second class particle). Initially, $\bfeta_0(j)=\mathbf{1}_{j<0}$ and $\bfX_0=0$. 
%

Our main result, which is the  positive resolution of \cite[Conjecture 1.9]{SP}, shows that in large $t$, the trajectory of $\bfX(t)$ is almost surely linear with slope uniform on $[-1, 1]$. In other words, the second class particle chooses a random direction in the rarefaction fan uniformly and then proceeds asymptotically in that direction (see Figure \ref{fig:Traj}).

\begin{figure}
	\begin{center}
	\includegraphics[width=4in]{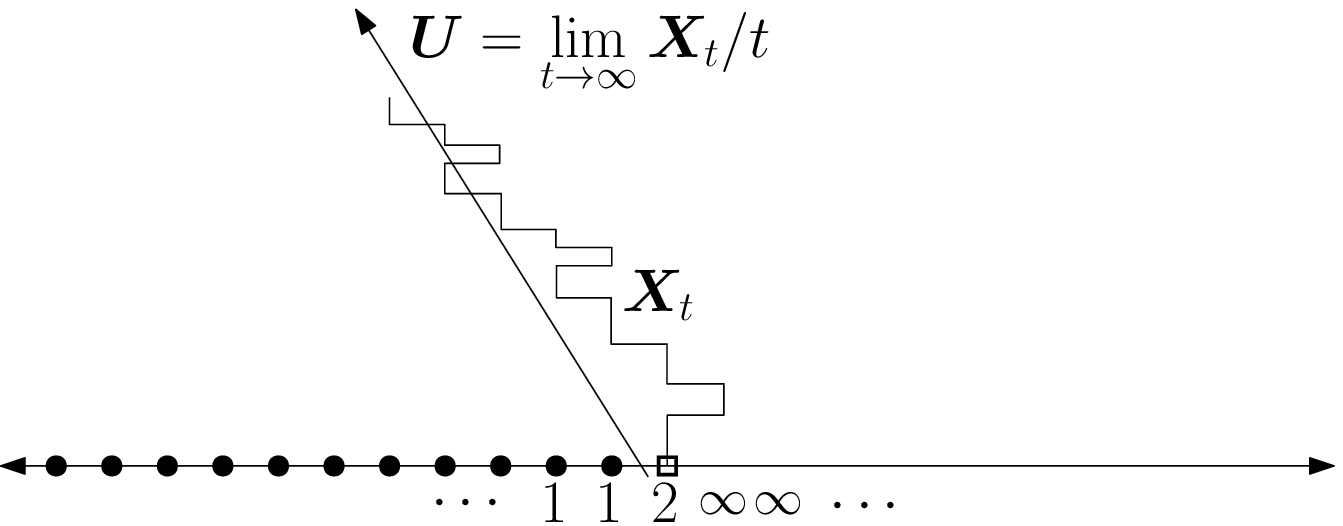}
	\end{center}
	\caption{Illustration of \Cref{xtlimitU}.}
	\label{fig:Traj}
\end{figure}

\begin{thm}[Conjecture 1.9 of \cite{SP}]
	\label{xtlimitU}
	The limit velocity $\bfU:=\lim\limits_{t \rightarrow \infty}\bfX_t/t$ of the second class particle $\bfX_t$ in $\ASEP_t$ exists almost surely and its law is uniform on $[-1, 1]$.
\end{thm}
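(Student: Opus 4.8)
\emph{Proof strategy.} The plan is to separate the problem into (i) identifying the distribution of any almost sure subsequential limit of $\bfX_t/t$, and (ii) showing that the limit exists almost surely. For (i) I would run, on the same Poisson clocks used to build $\ASEP_t$, two honest ASEPs from step-type data: a process $\bfeta^\circ_t$ with $\bfeta^\circ_0(j)=\mathbf 1_{j\le-1}$ (site $0$ empty) and a process $\bfeta^\bullet_t$ with $\bfeta^\bullet_0(j)=\mathbf 1_{j\le 0}$ (a first class particle added at the origin). Basic coupling preserves $\bfeta^\circ_t\le\bfeta^\bullet_t$, and the unique discrepancy between them evolves exactly as the second class particle $\bfX_t$. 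Summing occupation variables to the left of a site $m$, and using that $\bfeta^\bullet_0$ is the unit right-translate of $\bfeta^\circ_0$, one obtains for every $m\in\Z$ the identity
\[
\PP(\bfX_t\le m)=\tfrac12\,\EE\!\big[H^\bullet_t(m)-H^\circ_t(m)\big],
\]
with $H^\circ_t,H^\bullet_t$ the height functions; translation invariance of the dynamics turns the right-hand side into a bounded linear combination of one-point expectations of the single step-initial-data height function near $m$.

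\textbf{Distribution of the velocity.} Next I would invoke the hydrodynamic limit for step-initial-data ASEP: $t^{-1}H^\circ_t(vt)$ tends to a strictly concave parabolic profile on the rarefaction fan $v\in[-1,1]$, with macroscopic density $\rho(v)=\tfrac{1-v}{2}$. Feeding this through the identity above (and controlling the lower-order corrections, for which the one-point Tracy--Widom asymptotics of the ASEP current are available) yields $\PP(\bfX_t\le vt)\to\tfrac{1+v}{2}$ for $v\in(-1,1)$, with the evident limits for $v\notin[-1,1]$. Hence $\bfX_t/t$ converges in distribution to the uniform law on $[-1,1]$, and any almost sure subsequential limit has this law.

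\textbf{Almost sure convergence.} For (ii) I would introduce, for each $y\in\Z$, a second class particle $\bfX^{(y)}_t$ started from $y$, all driven by the common clocks, so that $y\mapsto\bfX^{(y)}_t$ is monotone with non-crossing trajectories and sandwiches $\bfX_t=\bfX^{(0)}_t$ at every time. Combining this monotone structure with the distributional statement above and a subadditive/renewal argument in the spirit of Mountford--Guiol and Cator--Pimentel, one aims to conclude $\liminf_t\bfX_t/t=\limsup_t\bfX_t/t$ almost surely; the quantitative heart of this is summability, along a geometric sequence of times, of the Tracy--Widom tail bounds for the current at the characteristic speed $v$, which via Borel--Cantelli forbids the second class particle from crossing the ray of slope $v$ infinitely often. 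This gives the almost sure existence of $\bfU=\lim_t\bfX_t/t$, and by the previous step its law is uniform on $[-1,1]$.

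\textbf{Main difficulty.} The hard part is the almost sure convergence. For TASEP the needed concentration of the current around its hydrodynamic value comes essentially for free from the last-passage representation, which ASEP lacks; here it has to be pulled from the integrable one-point formulas and combined with monotone couplings that are only order one rather than order $t$, so the real work is in showing the error terms are genuinely summable. One also has to use the strict concavity of the flux $\rho(1-\rho)$ quantitatively, so that distinct characteristics issuing from the origin separate at a controlled rate, and the boundary velocities $v=\pm1$ need a separate treatment since the fluctuation estimates degenerate there.
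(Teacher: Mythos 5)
Your part (i) --- identifying the limiting law --- is fine: the coupling of the two step configurations differing by one particle at the origin, together with the Ferrari--Kipnis identity, reduces the distribution of $\bfX_t/t$ to one-point current asymptotics for step initial data, and this is exactly how the distributional limit was established in the literature; the paper does not reprove it but quotes it as \Cref{xt1}. The theorem therefore reduces, as you say, to the almost sure existence of the limit, and that is where your proposal has a genuine gap.

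The mechanism you propose for almost sure convergence --- monotone non-crossing trajectories plus Borel--Cantelli along a geometric time sequence fed by ``Tracy--Widom tail bounds for the current at the characteristic speed'' --- is precisely the TASEP argument of Mountford--Guiol and Ferrari--Pimentel, and it does not transfer to ASEP. For TASEP the second class particle is a competition interface in last passage percolation, so one-point current bounds convert into trajectory bounds via the microscopic variational formula; for $L>0$ no such representation exists, and a one-time identity such as $\PP(\bfX_t\le m)=\tfrac12\EE[H^\bullet_t(m)-H^\circ_t(m)]$ controls marginals, not excursions of the trajectory across a ray. Likewise, the naive coupling ``second class particles added to the left of $\bfX_t$ stay to its left'' is false for ASEP already for two particles. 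The paper's substitutes, which your sketch does not supply, are: (a) Rezakhanlou's coupling (\Cref{prop:Rez}), which bounds $\bfX_{S+T}$ stochastically from below by a \emph{uniformly randomly chosen} one of $N$ second class particles inserted to its left at time $S$, converting the problem into controlling the bulk of an order-$S^{1-2\gamma}$ cloud rather than a single trajectory; and (b) effective, exponentially concentrated hydrodynamic estimates for step-Bernoulli and perturbed profiles (\Cref{hetaxi}, \Cref{hetalinear}), whose lower tail comes not from Tracy--Widom asymptotics directly but from the Borodin--Olshanski identity matching the ASEP $q$-Laplace transform to a multiplicative functional of the discrete Laguerre ensemble, reduced to known TASEP bounds. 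Finally, a geometric sequence $S_n=2^nS$ is too sparse: between $S$ and $2S$ the particle can wander by order $S$, changing the velocity by order one, so the paper must take $S_{n+1}=S_n+S_n/\log S_n$, dense enough that the crude Poisson bound controls intermediate times while the per-step errors $S_n^{-1/200}$ remain summable. Without these ingredients the Borel--Cantelli scheme you describe cannot be closed.
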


The distributional limit of $\bfX_t/t$ (which we recall below) was known to be uniform for $L=0$ from \cite{SCP}, see equation (1.5). That was generalized to all $L$ in \cite[Theorem 2.1]{FGM09}. A  different proof of the distributional limit was given in \cite[Theorem 1.1]{GSZ}, based on color-position symmetries for multispecies ASEP discovered in \cite{BorWhe} and \cite{BorBuf}.

\begin{prop}[\cite{SCP,FGM09,GSZ}]
\label{xt1}
For any $\rho \in [0, 1]$,
\begin{equation*}
\lim_{t \rightarrow \infty} \PP \big[ \bfX_t/t \le 1 - 2\rho \big] = \rho.
\end{equation*}
\end{prop}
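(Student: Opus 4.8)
The plan is to reduce \Cref{xt1} to the classical one-point (hydrodynamic) description of $\ASEP$ run from step data, via the basic coupling. Let $\bfeta^{+}$ and $\bfeta^{-}$ be two single-species ASEPs driven by the same Poisson clocks, started from the step configurations $\bfeta^{+}_0(j)=\mathbf 1_{j\le 0}$ and $\bfeta^{-}_0(j)=\mathbf 1_{j\le -1}$ respectively. Collapsing the second class particle of $\ASEP_t$ into the pool of first class particles produces a single-species ASEP with initial data $\mathbf 1_{j\le 0}$, i.e.\ a copy of $\bfeta^{+}$, while the first class particles of $\ASEP_t$ on their own evolve as a single-species ASEP with initial data $\mathbf 1_{j\le -1}$, i.e.\ a copy of $\bfeta^{-}$. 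Since ASEP is attractive and $\bfeta^{+}_0-\bfeta^{-}_0=\mathbf 1_{\{0\}}\ge 0$ has total mass one, at every time $t$ one has $\bfeta^{-}_t\le\bfeta^{+}_t$ with a single discrepancy site, which is precisely $\bfX_t$. Hence, for every $k\in\Z$,
\[
\mathbf 1_{\{\bfX_t\ge k\}}=\sum_{j\ge k}\bigl(\bfeta^{+}_t(j)-\bfeta^{-}_t(j)\bigr),
\qquad\text{so}\qquad
\PP[\bfX_t\ge k]=\EE\!\left[N^{+}_t(k)\right]-\EE\!\left[N^{-}_t(k)\right],
\]
where $N^{\pm}_t(k):=\sum_{j\ge k}\bfeta^{\pm}_t(j)$ counts the particles weakly to the right of $k$ (all finite for the ranges of $k$ used below).

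The second step removes one of the two processes using translation invariance. Because $\bfeta^{+}_0(j)=\bfeta^{-}_0(j-1)$, the process $\bfeta^{+}$ has the same law as $\bfeta^{-}$ shifted one step to the right, so $\EE[N^{+}_t(k)]=\EE[N^{-}_t(k-1)]$ and therefore
\[
\PP[\bfX_t\ge k]=\EE\!\left[N^{-}_t(k-1)-N^{-}_t(k)\right]=\PP\bigl[\bfeta^{-}_t(k-1)=1\bigr].
\]
This identifies $\PP[\bfX_t\ge k]$ with the one-point occupation probability of $\ASEP$ started from step data. The last ingredient is then the classical law of large numbers / local-equilibrium statement for step initial data: for $v\in(-1,1)$ one has $\PP[\bfeta^{-}_t(\lfloor vt\rfloor)=1]\to\tfrac{1}{2}(1-v)$ as $t\to\infty$ (with limit $1$ for $v<-1$ and $0$ for $v>1$), the right-hand side being the density of the rarefaction fan at macroscopic velocity $v$; this is exactly the input recorded in \cite{SCP,FGM09}, and it can equally be read off from the Tracy--Widom one-point asymptotics for the ASEP current. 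Taking $k=\lfloor(1-2\rho)t\rfloor$ gives $\PP[\bfX_t/t\ge 1-2\rho]\to\rho$; since the same analysis shows the limiting law of $\bfX_t/t$ has no atoms, $\bfX_t/t$ converges in distribution to the uniform law on $[-1,1]$, which is the assertion of the proposition (the one-sided formula being the distribution function of that law).

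I do not expect a genuine obstacle: the statement is already available in the literature, and the only content beyond bookkeeping is the coupling identification of the first paragraph together with the cited one-point limit (which, if one insists on a self-contained argument, is the part that is not a one-liner, being a hydrodynamic/local-equilibrium fact). If one prefers to avoid quoting such a statement, the route of \cite{GSZ} is available instead: using the color--position symmetry of multi-species ASEP from \cite{BorWhe,BorBuf}, the quantity $\PP[\bfX_t\ge k]$ can be rewritten directly as a single one-point marginal of standard single-species ASEP, after which the Tracy--Widom step-data asymptotics apply. The only mild point to be careful about in either approach is the precise identification of the discrepancy location with $\bfX_t$, the one-site shift in the translation step, and the orientation of the rarefaction fan (which fixes the sign in the limiting density and hence the direction of the resulting uniform law).
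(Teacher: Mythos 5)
The paper does not actually prove \Cref{xt1}: it is quoted from \cite{SCP,FGM09,GSZ}, so there is no internal argument to compare against. What you write is precisely the Ferrari--Kipnis coupling identity that underlies those references, and the reduction is correct: the single discrepancy between the two step ASEPs is $\bfX_t$, the split $\PP[\bfX_t\ge k]=\EE[N^+_t(k)]-\EE[N^-_t(k)]$ is legitimate because both counts have finite expectation, and translation invariance collapses the difference to the one-point occupation probability $\EE[\bfeta^-_t(k-1)]$. Two caveats. First, the remaining input $\PP[\bfeta^-_t(\lfloor vt\rfloor)=1]\to(1-v)/2$ is a genuine local-equilibrium statement and is exactly the content of the cited papers; it does not follow from the law of large numbers for the height function, nor can it simply be ``read off'' from Tracy--Widom one-point asymptotics, because $\EE[\bfeta_t(k)]$ is a discrete derivative of $\EE[\h_t(\cdot)]$ and convergence of $\h_t(vt)/t$ does not control its unit increments. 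If you want to avoid quoting local equilibrium, average your identity over a window of $o(t)$ consecutive values of $k$ (turning it into a height-function increment, controllable by \Cref{hetaxi}) and use the monotonicity of $k\mapsto\PP[\bfX_t\ge k]$ to recover the pointwise limit. Second, what you actually derive is $\PP[\bfX_t/t\ge 1-2\rho]\to\rho$, which is the complement of the displayed statement: for a uniform limit on $[-1,1]$ one has $\PP[\bfU\le 1-2\rho]=1-\rho$, so the proposition as printed (with ``$\le$'' and limit $\rho$) has the inequality reversed. Your version is the correct one (and is all the paper ever uses, namely weak convergence of $\bfrho_t$ to $U[0,1]$), but you should flag the mismatch rather than assert that your formula ``is the assertion of the proposition.''
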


Thus, the proof of Theorem \ref{xtlimit} reduces to  the following almost sure limit for $X_t/t$.

\begin{thm}
	\label{xtlimit}
	The limit $\bfU:=\lim\limits_{t \rightarrow \infty}\bfX_t/t$ exists almost surely.
\end{thm}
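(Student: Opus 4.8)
The plan is to upgrade the distributional convergence in \Cref{xt1} to an almost sure statement by exploiting monotonicity and a coupling argument. The key structural fact is that ASEP with a single second class particle started from step data can be coupled with multispecies ASEP: if we replace the single second class particle at the origin by an entire ``rainbow'' of particles, assigning each site $j \ge 0$ its own distinct color (with larger $j$ meaning lower priority), then the trajectory of the original second class particle is sandwiched between the trajectories of the color-$j$ particles for various $j$. More usefully, one couples all the single-second-class-particle processes where the second class particle starts at different sites, or equivalently uses the standard basic coupling so that the indicator $\mathbf{1}\{\bfX_t \le m\}$ is monotone in the driving randomness in an appropriate sense. First I would fix a rational $v \in (-1,1)$ and consider the events $A_v = \{\limsup_t \bfX_t/t > v\}$ and $B_v = \{\liminf_t \bfX_t/t < v\}$; the goal is to show $\PP[A_v \cap B_v] = 0$ for all such $v$, which forces the limit to exist (the limsup and liminf can differ only on a set that is covered by countably many $A_v \cap B_v$).

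To show $\PP[A_v \cap B_v] = 0$ I would use a comparison with a \emph{two}-second-class-particle system, or rather with the competition interface / coupled ASEP picture. The crucial tool is a ``no crossing / coalescence'' phenomenon: consider ASEP run from step data, and let $\bfX_t^{(0)}$ be the second class particle started at $0$ and $\bfX_t^{(1)}$ one started at site $1$ (in the same basic coupling, so both feel the same Poisson clocks). Then $\bfX_t^{(0)} \le \bfX_t^{(1)}$ for all $t$, and once they meet they can be taken to move together. By \Cref{xt1} applied at two nearby densities, $\bfX_t^{(0)}/t$ and $\bfX_t^{(1)}/t$ have the same limiting law, so with high probability they are close; combined with monotonicity in the starting location one gets that the whole family $\{\bfX_t^{(m)}/t\}_{m \in \Z}$ is, with high probability, squeezed near a single value. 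The almost sure convergence then follows from a Borel--Cantelli argument along the geometric subsequence $t_n = 2^n$: one shows $\PP[\,|\bfX_{t_n}/t_n - \bfX_{t_{n+1}}/t_{n+1}| > \epsilon \text{ i.o.}\,] = 0$ using the distributional limit plus the fact that fluctuations of $\bfX_t$ around its ballistic value are $o(t)$, which again is extracted from comparing with the densities on either side.

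The main obstacle, and where the real work lies, is establishing the quantitative ``localization'' or ``coalescence'' estimate: that if two second class particles start at nearby (on scale $o(t)$) locations, then with high probability $|\bfX_t^{(0)} - \bfX_t^{(1)}| = o(t)$, uniformly enough to run Borel--Cantelli. Distributional convergence (\Cref{xt1}) alone only says each $\bfX_t^{(m)}/t$ converges in law; it does not a priori prevent $\bfX_t/t$ from oscillating. To rule out oscillation one needs a genuine \emph{rate} or a \emph{rigidity} statement --- for instance, that the second class particle stays within $o(t)$ of a fixed characteristic of the Burgers equation once that characteristic is chosen. I expect this to require either (i) exact formulas for multispecies ASEP (via the color-position symmetry of \cite{BorWhe, BorBuf} and the stationary-measure / Mallows-measure structure used in \cite{GSZ}) to control the joint law of $(\bfX_s, \bfX_t)$ for $s \ll t$, showing $\bfX_s/s$ and $\bfX_t/t$ are asymptotically perfectly correlated; or (ii) a hydrodynamic/last-passage-percolation argument showing the second class particle tracks a characteristic, in the spirit of Mountford--Guiol and Ferrari--Kipnis for TASEP, adapted to ASEP. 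Option (i) is likely the cleaner route given the algebraic tools cited, and the heart of the proof will be converting a suitable two-point distributional identity into the statement that $\{\bfX_{2^n}/2^n\}_n$ is almost surely Cauchy.
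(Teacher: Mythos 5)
Your proposal correctly diagnoses the difficulty --- that \Cref{xt1} gives no information about the joint law of $(\bfX_s,\bfX_t)$ for $s\ll t$, and that some rigidity statement forcing the particle to track a characteristic is required --- but it does not supply that statement, and the two routes you sketch for obtaining it do not close the gap. Route (i), extracting the two-time joint distribution of $(\bfX_s,\bfX_t)$ from color-position symmetry, is speculative: no usable multi-time formula of this kind is known, and the paper does not go this way. Route (ii) is in the right spirit but is precisely the unproven content. What the paper actually proves is a quantitative one-step tracking estimate (\Cref{xti}): over a time increment $T=S/\log S$, conditionally on $\ASEPF_S$ and on a hydrodynamic event, $\bfX_{S+T}-\bfX_S=(1-2\bfrho_S)T+O(S^{1-1/200})$ with probability $1-O(S^{-1/5})$. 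Proving this requires two ingredients absent from your sketch: Rezakhanlou's coupling (\Cref{prop:Rez}), which stochastically lower-bounds $\bfX_{S+T}$ by a uniformly chosen member of a crowd of second class particles inserted to the left of $\bfX_S$ (needed because for $L>0$ added second class particles can influence $\bfX$, so the naive TASEP ordering fails), and effective hydrodynamic concentration estimates with exponential tails (\Cref{hetaxi}, obtained from the ASEP--discrete-Laguerre identity and Fredholm determinant asymptotics) to control where that crowd ends up.

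There is also a quantitative problem with your Borel--Cantelli scheme along $t_n=2^n$. Any one-step comparison of the type you envision loses a fixed $\varepsilon$ of velocity per doubling step, and these losses compound over infinitely many steps; the argument only closes if the per-step error is summable. The paper handles this by perturbing on a sublinear spatial scale $S^{1-\gamma}$ (giving per-step velocity error $S^{-1/200}$) and by iterating along the much denser time sequence $S_{m+1}=S_m+S_m/\log S_m$ (so $S_m\sim e^{\sqrt m}$), for which $\sum_m S_m^{-1/200}$ and the probabilistic error $\sum_m S_m^{-1/5}$ both converge; intermediate times are then controlled by a crude Poisson bound. Finally, the assertion that ordering plus equality of limiting laws squeezes the whole family $\{\bfX_t^{(m)}/t\}$ near a single value is false as stated: the speeds $U(m)$ of distinct colors are genuinely distinct random variables (each uniform on $[-1,1]$), so spatial coalescence of nearby second class particles is not the mechanism; the relevant correlation is temporal, between $\bfX_S/S$ and $\bfX_{S+T}/(S+T)$ for the \emph{same} particle.
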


\Cref{xtlimitU} implies well-definedness of the ASEP speed process, confirming \cite[Conjecture 1.10]{SP}.
Consider multispecies ASEP where initially at $n\in \Z$, we start with a class $n$ particle. Let the particles evolve as indicated above: each particle independently attempts to jump left and right with rates $L$ and $R$; those attempted jumps are achieved only if the destination is occupied with a higher class (hence lower priority) particle. For each $n\in \Z$, the class $n$ particle sees an initial condition which is equivalent to a translation of the initial condition considered in \Cref{xtlimitU}. Thus \Cref{xtlimitU} applies for each particle, namely if we let $\bfX_t(n)$ denote the location of the particle that started in position $n\in \Z$ at time $t\geq 0$, we have $\big(\bfX_t(n)-n\big)/t$ converges almost surely to random variable $U(n)$ with distribution uniform on $[-1,1]$. Taking a union over all particles implies that this holds simultaneously for all particles. Let $\mu^{{\rm ASEP}}$ denote the joint law of all $\big(U(n)\big)_{n\in \Z}$.

\begin{cor}[Conjecture 1.10 of \cite{SP}]
The ASEP speed process measure $\mu^{{\rm ASEP}}$ is well defined and translation invariant with each $U(n)$ uniform on $[-1,1]$.
\end{cor}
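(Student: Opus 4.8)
The plan is to derive the corollary from \Cref{xtlimitU} using the standard color-merging (projection) property of multispecies ASEP, a countable union bound to upgrade the almost sure convergence to hold simultaneously for all particles, and the symmetry of the speed-process initial data under simultaneously translating positions and relabeling classes.

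First I would realize the multispecies ASEP on a single probability space through its graphical construction: attach independent Poisson clocks of rates $L$ and $R$ to the left- and right-oriented edges of $\Z$ and resolve each ring by the swap rule stated in the Introduction; standard well-posedness for nearest-neighbor bounded-rate particle systems makes each trajectory $\big(\bfX_t(n)\big)_{t\ge 0}$ a measurable functional of the clocks. The key observation is that for a fixed $n\in\Z$, merging all classes $<n$ into a single ``first class'' species, keeping class $n$ as a ``second class'' particle, and treating all classes $>n$ as empty (infinite-class) sites yields a process that again evolves by the ASEP swap rule --- this closure holds because the rule compares only the relative order of the two interacting classes. Since the speed-process initial data ``class $m$ at site $m$'' projects, under this merging, to ``first class particles on $\{m\le n-1\}$, a second class particle at $n$, holes on $\{m\ge n+1\}$'', which is the translate by $n$ of the initial data of $\ASEP_t$, the merged-and-shifted process has the law of $\ASEP_t$. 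Hence \Cref{xtlimitU} applies and gives that $\big(\bfX_t(n)-n\big)/t$ converges almost surely to a random variable $U(n)$ that is uniform on $[-1,1]$.

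Next, since $\Z$ is countable and each of the events $\{(\bfX_t(n)-n)/t\text{ does not converge}\}$ has probability zero, their union also has probability zero; thus almost surely the limits $U(n)$ exist for all $n\in\Z$ simultaneously, so $\big(U(n)\big)_{n\in\Z}$ is a well-defined random element of $[-1,1]^{\Z}$, $\mu^{\rm ASEP}$ is its law, and each marginal is uniform on $[-1,1]$ by the previous paragraph. For translation invariance I would use that the map $S$ that shifts every position by $1$ and relabels each class $m$ as $m+1$ fixes the speed-process initial configuration, commutes with the swap rule (which sees only relative class order), and transports the clock family to an equidistributed one. Applying $S$ therefore shows that $\big(\bfX_t(n+1)-(n+1)\big)_{n\in\Z}$ has the same joint law as $\big(\bfX_t(n)-n\big)_{n\in\Z}$ for every $t$; letting $t\to\infty$ gives $\big(U(n+1)\big)_{n\in\Z}\stackrel{d}{=}\big(U(n)\big)_{n\in\Z}$, i.e.\ $\mu^{\rm ASEP}$ is translation invariant. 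Once \Cref{xtlimitU} is available there is no substantive obstacle here; the only points requiring care are stating the color-merging property cleanly for this bi-infinite, fully occupied configuration (and invoking well-posedness of the graphical construction) and checking the relabeling covariance used in the last step.
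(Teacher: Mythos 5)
Your proposal is correct and follows essentially the same route as the paper: project the multispecies process onto a two-species one by merging classes, apply \Cref{xtlimitU} to each (translated) particle, take a countable union bound, and read off translation invariance from the shift-and-relabel symmetry of the initial data and dynamics. You merely make explicit the color-merging and relabeling-covariance steps that the paper leaves implicit in the paragraph preceding the corollary.
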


Having constructed this measure it is natural to investigate properties of it such as the joint distributions of various $U(n)$. We will not pursue this here, but we mention that \cite{SDMA} establishes various results in this direction (for instance, related to the properties of ``convoys'' of second class particles that move at the same limiting velocity) and \cite{GSZ} probes the distribution of $\min\big(U(1),\ldots, U(n)\big)$ as a function of $n$.

\smallskip

In the remainder of this introduction we will discuss how our results fit with respect to previous work, and then describe the heuristics and proof ideas. The proof that we provide combines probabilistic ideas (i.e., couplings) with integrable tools (i.e., effective hydrodynamic bounds). The interplay of these two techniques allows us to prove a result that we do not know how to attain with either separately.

Second class particles have been extensively studied with varying perspectives and purposes. When such a particle is started at a shock, it tracks out a microscopic version of the evolution of the shock \cite{Fer92,MSL}; when it is started in stationary initial data, it follows the characteristic velocity \cite{FF94, Rez91} and displays super-diffusive scaling around that related to the KPZ two-point distribution \cite{PS02,FS06, CTPS, Agg18,QV07,BS10}.

For step (sometimes called anti-shock) initial data, there is an entire rarefaction fan in the hydrodynamic equation and thus a continuum of characteristics velocities \cite{THMC}. The behavior of a second class particle started in such initial data (as we consider here) was first taken up in \cite{SCP} in the case $L=0$. As noted above \Cref{xt1}, they showed the asymptotic uniformity of the location of the second class particle in the rarefaction fan. They also proved that for any $0<s<t$ fixed, $\lim_{\varepsilon\to 0} \big( \frac{\bfX_{s/\varepsilon}}{s/\varepsilon}-\frac{\bfX_{t/\varepsilon}}{t/\varepsilon}\big) =0$ in probability.

This convergence was strengthened a decade later in \cite{MG05}, which proved the almost sure limit for the velocity of a second class particle (i.e., the $L=0$ case of \Cref{xtlimitU}); alternative proofs for the same result appeared in \cite{FP05,PTCI}. The starting point for \cite{MG05} is the coupling between $L=0$ TASEP and exponential last passage percolation (LPP). The almost sure limit relied on Sepp\"al\"ainen's microscopic variational formula for TASEP \cite{Sep99} along with some LPP concentration results. This relation to LPP is valuable and relates the second class particle to the competition interface \cite{FP05}. TASEP gaps relate to a totally asymmetric zero range process, leading to an understanding of second class particles for that model \cite{ABGM19,Gon14}.

When $L>0$, the LPP variational formula no longer holds. Thus, a new set of ideas is needed to establish  \Cref{xtlimitU}.  We will outline these below. The proof of \Cref{xtlimitU} is given in Section \ref{Linear}, relying on all of the results developed in this paper.

\smallskip
\noindent \emph{Understanding the results in terms of hydrodynamics.}
The uniformity of $\bfX_t/t$ on $[-1,1]$ is a microscopic manifestation of an observation about the hydrodynamic limit of ASEP.
Recall that the evolution of the density $\rho$ of particles on macroscopic time and space scales in ASEP is governed by the weak entropy solution to the inviscid Burgers equation
$$\partial_t \rho(t,x) = \partial_x\big(\rho(t,x)(1-\rho(t,x))\big).$$
In particular, as $\varepsilon\to 0$, the density field for the occupation process at time $t/\varepsilon$ in location $x/\varepsilon$ should converge in a weak  sense to the solution of this PDE (provided the initial data converges likewise).
If we start with step initial data $\rho(0,x)=\mathbf{1}_{x\leq 0}$ versus shifted step-initial data $\rho(0,x)=\mathbf{1}_{x\leq -\delta}$, the difference of the solutions at time $t$ is a function that is essentially uniform with value $\delta/(2t)$ between $-t$ and $t$. By the basic coupling of ASEP (see Section \ref{sec:couplings}), the shift in initial data can be interpreted as the addition of many second class particles to the left of the origin and the behavior of the hydrodynamic limit suggests the uniform distribution of the velocity of those particles.
The proof of the uniform distribution in \cite{SCP} uses the fact that ASEP reaches some form of local equilibrium. This means that if the local density is $\rho$, then the local distribution of particles should be given by Bernoulli product measure with parameter $\rho$. These measures are stationary for ASEP.

Assuming this local equilibrium behavior, we can start to understand why the second class particle maintains its velocity. Based on the hydrodynamic theory for step initial data, if $\bfX_t/t= 1-2\rho$ for some $\rho\in (0,1)$ then the density around $\bfX_t$ will be roughly $\rho$ and assuming local equilibrium, the occupation variables for first class particles around $\bfX_t$ will be close to i.i.d. Bernoulli with parameter $\rho$. In this equilibrium situation, $\bfX_t$ jumps left at rate $R\rho$ if position $\bfX_t-1$ is occupied by a first class particle and rate $L(1-\rho)$ if $\bfX_t-1$ has a hole; similarly $\bfX_t$ jumps rate at rate $L\rho$ if position $\bfX_t+1$ is occupied by a first class particle and rate $R(1-\rho)$ if $\bfX_t+1$ has a hole. Thus the expected instantaneous velocity of $\bfX_t$ is $(R-L)(1-2\rho) =1-2\rho$ and so in expectation $\bfX_t$ continues to move along the characteristic velocity $1-2\rho$. This is not the same as showing an almost sure limiting velocity. For infinite i.i.d. Bernoulli $\rho$ initial data, \cite{Fer92} showed exactly the latter.

\smallskip\noindent \emph{Proof sketch \Cref{xtlimit} when $L=0$ (TASEP).}
Though we are interested in the $L>0$ case, it is useful to first focus on $L=0$.  The proof  we describe here is different than  \cite{MG05} and does not rely on LPP. It also extends (using two additional ingredients) to $L>0$. We start by explaining an overly optimistic approach to the proof and then explain how it can be modified to produce an actual proof.

\begin{figure}
	\begin{center}
	\includegraphics[width=5.5in]{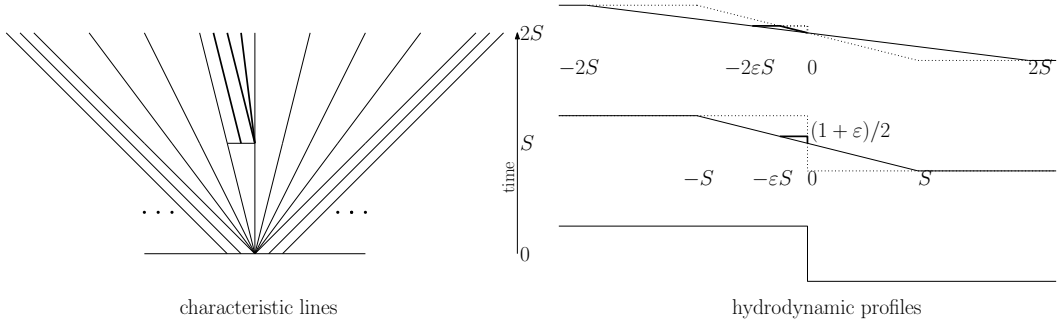}
	\end{center}
	\caption{Left: The linear characteristic lines used to solve the inviscid Burgers equation from step initial data. At time $S$ the density is perturbed in the interval $(-\varepsilon S,0)$ to match that of the left endpoint of the interval. The subsequent characteristics show how this perturbation evolves in time via the inviscid Burger equation. Right: The densities corresponding to the characteristics on the left. At time $S$  the profile (thin line) is augmented with the bold line to have density $(1+\varepsilon)/2$ on the interval $(-\varepsilon S,0)$. The time $2S$ profile is then shown (with dotted lines transcribing the time $S$ profile).}
	\label{fig:charhydro}
\end{figure}

For step initial data TASEP, at a large time $S$, we expect the density of particles will be approximated by the solution to the Burgers equation which linearly interpolates  between density one to the left of $-S$ and density zero to the right of $S$ (see the  rarefaction fan at the intermediate time in \Cref{fig:charhydro}). Assume for the moment that the occupation variables at time $S$ are independent Bernoulli with parameters given by this hydrodynamic profile, and also assume that $\bfX_S=0$ so it lies along a zero velocity characteristic. (If $\bfX_S$ were along another characteristic, we would need to work in a moving reference frame.)

Under these assumptions, we can couple our time $S$ system to another TASEP where the Bernoulli parameter profile is augmented to the left of the origin (i.e., the location of $\bfX_S$) as in \Cref{fig:charhydro}. Under the basic coupling, this corresponds to adding extra second class particles to the left of $\bfX_S$ to create the augmented profile. Importantly, these additional second class particles remain to the left of $\bfX_t$ at all times $t>S$. This fails when $L>0$.

Using the above observation, we see that in order to lower-bound the motion of $\bfX_t$ for $t>S$, it suffices to control the locations of the extra second class particles. While it is hard to control individual particles, we know how to control lots of them by use of hydrodynamic limit theory. Consider adding in enough second class particles so as to make a macroscopic change in the density profile. For example, on the interval $(-\varepsilon S,0)$ we can change the density to equal $(1+\varepsilon)/2$, as depicted on the right of \Cref{fig:charhydro}. At time $2S$ (top of \Cref{fig:charhydro}) this perturbation will evolve as to only perturb the density on the interval $(-2\varepsilon S,0)$. This suggests that with high probability, of the $O(S)$ added second class particles, all but $o(S)$ of them will be  to the right of $-2\varepsilon S$ and hence $\bfX_{2S}$ will be to the right of $-2\varepsilon S$ as well. Since $\varepsilon$ was arbitrary this suggests that $\bfX_t$ should maintain a velocity at least 0 (and by particle-hole symmetry, the opposite should follow too).

There are a number of issues above. The perturbation should really be on a spatial interval of size $o(S)$. This is because the above argument permits the velocity to drop by $\varepsilon$ on the time increment $S$ to $2S$, and if we repeat on doubling time intervals ($2S$ to $4S$, etc) the net drop may compound to become unbounded. This can be remedied by perturbing instead on an interval like $(-S^{1-\gamma},0)$ for some small $\gamma>0$. Assuming our hydrodynamic results extend to this scale, we should be able to bound the total drop in $\bfX_t$ at times of the form $S_n=2^{n}S$ for $n=0,1,\ldots$. However, at intermediate times $\bfX_t$ could wander in a manner that would prevent the  velocity from having a limit. To remedy this, we instead consider a sequence of times that grows like $S_n= S e^{\sqrt {n}}$ (in fact, by choosing $S_{n+1} = S_n+ S_n/\log S_n$). By a Poisson bound (from the basic coupling) the intermediate wandering of $\bfX_t$ does not change the velocity much compared to the $S_n$ times.

Besides these modifications, there is still the issue of justifying the simplistic assumptions we made based on hydrodynamic theory considerations. This is done by making use of \emph{effective} versions of hydrodynamic limit results that quantify with exponential decay how close the actual number of particles is to the hydrodynamic limit profile on spatial and fluctuation scales that are $o(S)$. For example, for step initial data if we look at the number of particles at time $S$ in an interval $[X,Y]$ with $-S<X<Y<S$, we expect that it will be approximately $S$ times the integral from $X/S$ to $Y/S$ of the hydrodynamic profile function $(1-z)/2$. An effective hydrodynamic concentration inequality  would say that for some $\alpha\in (0,1)$ the probability that the deviation of this number of particles around what we expect it to be will exceed $s S^{\alpha}$ is bounded above by $c^{-1} e^{-c s}$ for some $c>0$. (The optimal $\alpha$ should be $1/3$ and the decay should actually be faster than $e^{-cs}$ for any $c>0$, though we do not need or pursue this.) We also make use of similar bounds for other types of initial data such as the perturbed one, though these can be deduced from bounds for the class of step-Bernoulli initial data via coupling arguments. We use the exponential decay in these bounds when taking union bounds to control the hydrodynamic comparison at each $S_n$.

The step initial data effective hydrodynamic result is present in the literature. We quote \cite[Theorem 13.2]{LPDL} and \cite[Proposition 4.1 and Proposition 4.2]{ASFTLPM} (see \Cref{l0estimate} below) for this result. In fact, \cite{LPDL} essentially relies on \cite{CTPS} which uses Fredholm determinantant asymptotics as well as Widom's trick to establish the lower and upper tail bounds respectively. In general for determinantal models like TASEP, one tail often follows directly from showing decay of the kernel of the Fredholm determinant while the other is typically more complicated to demonstrate and requires tools like Widom's trick or Riemann-Hilbert problems \cite{BDMMZ}.

\smallskip\noindent \emph{Proving  \Cref{xtlimit} when $L>0$ (ASEP).}
It is easy to see (e.g. considering a two-particle system) that the presence of additional second class particles to the left of $\bfX_t$ may effect its motion and hence the simple coupling used above for TASEP fails. In its place, we make use of a more sophisticated coupling that was introduced in \cite[Section 4]{MSL} (see \Cref{prop:Rez} below). It says that for $t>S$, $\bfX_t$ can be stochastically lower bounded by the motion of a random second class particle unifomly chosen among those added to the left of $\bfX_t$ at time $t=S$. This enables us to implement for ASEP a similar sort of hydrodynamic argument as  given above for TASEP.

In addition to the above coupling, we also need to develop effective hydrodynamic concentration inequalities for ASEP. Due to reduction and coupling arguments, it suffices for us to demonstrate these in the case of step Bernoulli initial data. Distributional limit theorems for step initial data ASEP go back to \cite{AASIC} and for step Bernoulli initial data to \cite{ASC} where the one-point distribution of the height function (which captures the integrated occupation variables) was analyzed directly.

In \cite{BCS} it was realized that the ASEP height function $q$-Laplace transform admits a simpler form as a Fredholm determinant. The $q$-Laplace transform asymptotically captures the tails of the probability distribution. Our effective hydrodynamic results require both upper and lower tail control. As is typical in such formulas, one tail (typically called the upper tail) is readily accessible from the Fredholm determinant formula via decay of the kernel therein (see also \cite{DZ21} which derives the corresponding large deviation principle for this tail). The other (lower) tail requires a different type of argument. As mentioned earlier, in determinantal models, this is sometimes achieved via Widom's trick or Riemann-Hilbert problems, and in related random matrix theory contexts, other tools like electrostatic variational problems or tridiagonal matrices can be used for such bounds.

The first instance of a positive temperature model for which the lower tail was bounded in a manner adapted to KPZ scaling was the KPZ equation. This was achieved in \cite{CG} using a remarkable rewriting in \cite{BG} of the KPZ Laplace transform Fredholm determinant formula proved in \cite{ACQ}. Through this formula the Laplace transform for the KPZ equation was matched to a certain multiplicative functional of the determinantal Airy point process. From this, \cite{CG} derived tail bounds by controlling the behavior of the Airy points (something achievable through existing techniques).

There is a similar identity from \cite{AEPDPP} which relates the $q$-Laplace transform for ASEP to the expectation of a multiplicative functional of a certain discrete Laguerre determinantal point process (see also \cite{Bor18} which proves a more general result higher in the hierarchy of stochastic vertex models). From this identity is should be possible to extract fairly tight lower tail bounds. However, we do not need to use the full strength of this identity. In fact, the behavior of this multiplicative functional can be upper bounded by the behavior of its lowest particle, which ends up being equal to the TASEP height function. Thus, through this identity we can deduce the ASEP tail from existing knowledge of that of TASEP.

\smallskip
\noindent \emph{Outline.} Section \ref{sec:couplings} contains the definition of the basic coupling as well as key consequences such as attractivity (\Cref{xizeta1}), finite speed of propagation (\Cref{xizetaequal}) and monotonicity (\Cref{xizeta2}). We also recall as \Cref{prop:Rez} the coupling from \cite{MSL}, the proof of which is provided in Section \ref{sec:Rez} for completeness. Section \ref{sec:mde} contains our effective hydrodynamic concentration estimates that mainly stem from \Cref{hetaxi} -- these include \Cref{hetaxi2} and \Cref{hetalinear}. \Cref{hetaxi} is proved in Appendix \ref{sec:modDevproof} and \ref{RightKernel}.

Section \ref{Linear} contains the proof of our main result,  \Cref{xtlimit} (which combined with \Cref{xt1} implies  \Cref{xtlimitU} immediately). \Cref{xti} gives the main technical result that controls the motion of the second class particle between two times. This result translates into \Cref{cor:almostthere} and then into \Cref{xtlimit}. Section \ref{couple} proves \Cref{xti} by setting up a coupling as outlined in the proof sketch above and then showing (as \Cref{zti}) that most of the additional second class particles move at a speed close to that of the characteristic.
Section \ref{LimitProcess} proves \Cref{zti} by utilizing the effective hydrodynamic concentration estimates from  Section \ref{sec:mde}.

\smallskip
\noindent \emph{Notation.}
We fix $R > L \ge 0$ with $R - L = 1$. Unless specified otherwise we assume all constants and parameters are real valued, with the exception of indices which are obviously integer valued. When we introduce constants (the value of which may change despite using the same symbol), we will generally specify upon which parameters they depend by writing $c=c(\cdots)$ with the dependence inside the parentheses. We do not attempt to track constants through the paper or optimize our estimates (e.g. in concentration inequalities) beyond what is needed to reach our main result. We will typically use the sanserif font $\mathsf{E}$ for events and write $\mathsf{E}^c$ for complement of $\mathsf{E}$ and $\mathbf{1}_{\mathsf{E}}$ for indicator function which is $1$ on the event $\mathsf{E}$ and $0$ otherwise. We typically use $\eta,\zeta,\xi$ to denote elements of $\{0,1\}^{\Z}$, i.e., occupation variables. We will use bold-faced letters such as $\bfeta,\bfX$ to denote random variables. For real $x\leq y$ define $\llbracket x,y\rrbracket := \big[\lfloor x\rfloor, \lceil y\rceil\big]\cap \mathbb{Z}$; if $x>y$ define $\llbracket x,y\rrbracket =\varnothing$, the empty set.

\smallskip
\noindent \emph{Acknowledgements.}
We thank Gidi Amir, Omer Angel, James B. Martin and Peter Nejjar for helpful comments.
A.A. was partially supported by a Clay Research Fellowship and gratefully acknowledges support from the Institute for Advanced Study. I.C was partially supported by the NSF through grants DMS:1937254, DMS:1811143, DMS:1664650, as well as through a Packard Fellowship in Science and Engineering, a Simons Fellowship, a Miller Visiting Professorship from the Miller Institute for Basic Research in Science, and a W.M. Keck Foundation Science and Engineering Grant. A.A, I.C. and P.G. also wish to acknowledge the NSF grant DMS:1928930 which supported their participation in a fall 2021 semester program at MSRI in Berkeley, California, as well as the CRM in Montreal, Canada where this work was initiated in the 2019 conference on ``Faces of Integrability''. 

\section{Couplings}\label{sec:couplings}

The (single class) ASEP can be described as a Markov process on occupation variables or ordered particle location variables. The {\it occupation process}  $\bfeta_t = \big(\bfeta_t (j)\big)_{j \in \mathbb{Z}}\!\in\! \{0,1\}^\mathbb{Z}$ has infinitesimal generator $\mathcal{L}$ which acts on local functions $f(\eta)$ as
$$
\mathcal{L} f(\eta) = \sum_{j\in \mathbb{Z}} \big(R\cdot \eta(j)(1-\eta(j+1)) + L\cdot \eta(j+1)(1-\eta(j))\big) \big(f(\eta^{j,j+1})-f(\eta)\big)
$$
where $\eta^{j,j+1}$ switches the value of $\eta(j)$ and $\eta(j+1)$ (so $\eta^{j,j+1}(i)=\eta(i)$ for $i\neq j,j+1$, $\eta^{j,j+1}(j)=\eta(j+1)$ and  $\eta^{j,j+1}(j+1)=\eta(j)$). In words, particles jump left and rate according to independent exponential clocks of rates $L$ and $R$, provided that the destination site is unoccupied. The sites $j$ where $\bfeta_t(j)=1$ are said to be occupied by particles, and otherwise (when $\bfeta_t(j)=0$) by holes. As mentioned previously, we will always assume that $R>L\geq 0$ so that there is a net drift to the right.

\begin{rem}
\label{etaeta}
Observe that the ASEP is preserved under interchanging particles and holes, and by reversing all jump directions. Stated alternatively, suppose that $\bfeta_t$ is an ASEP with left jump rate $L$ and right jump rate $R$; then, the process $\check{\bfeta}_t$ defined by setting $\check{\bfeta}_t (j) = 1 - \bfeta_t (-j)$ for all $j\in \Z$ is also an ASEP with left jump rate $L$ and right jump rate $R$. This is sometimes referred to as \emph{particle-hole symmetry}.
\end{rem}

The {\it basic coupling} provides a single probability space upon which the evolution for all initial data for ASEP can simultaneously be defined (see \cite[VIII.2]{Liggett85}). Moreover, that coupling enjoys the properties of being {\it attractive} and {\it monotone} (these are recorded below), and hence allows us to define second (and more general) class particles. This construction is easily seen to match with the dynamics explained in the introduction.

The basic coupling comes from the {\it graphical} construction of ASEP which we now recall (see also Figure \ref{fig:graphical}). To every site $j\in\mathbb{Z}$ we associate two Poisson point processes on $[0,\infty)$, one which has rate $L$ and one which has rate $R$. Call the rate $L$ process the {\it left arrows} and the rate $R$ process the {\it right arrows}. All of these (between sites and at the same site) will be independent. Above every site $j\in \mathbb{Z}$ we draw a vertical line representing time and draw left and right arrows out of $j$ at heights corresponding to the points in the left and right arrow point processes just defined. For any initial data $\bfeta_0$, we define the time evolution $\bfeta_t$ in the following manner. Particles initially occupy sites $j$ where $\eta_0(j)=1$ and remain in place until they encounter an arrow out of their site. At that time, they follow the arrow, provided that the destination site is unoccupied; otherwise, they remain in their site until the next arrow. The basic coupling can also be defined directly in terms of the generator of dynamics on multiple choices of initial data -- see Section \ref{sec:Rez} for such generators.

\begin{figure}
	\begin{center}
	\includegraphics[width=3in]{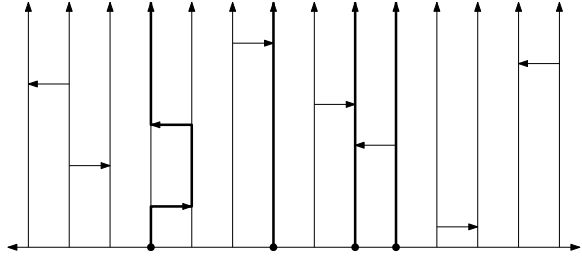}
	\end{center}
	\caption{The graphical construction of ASEP. Arrows are given by Poisson point processes and particles follow them provided the destination is unoccupied.}
	\label{fig:graphical}
\end{figure}

\begin{lem}[Attractivity]
	\label{xizeta1}
	Let $\bfeta_t$ and $\bfzeta_t$ denote two versions of ASEP with the same jump rates and with initial data such that $\bfeta_0 (j) \leq \bfzeta_0 (j)$ for each $j \in \mathbb{Z}$. Then, under the basic coupling, almost surely $\bfeta_t (j) \le \bfzeta_t (j)$ for all $j \in \mathbb{Z}$ and $t \ge 0$.	
\end{lem}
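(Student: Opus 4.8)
The plan is to argue entirely on the probability space of the graphical construction, with all left arrows (rate $L$) and right arrows (rate $R$) at every site fixed, and to show that the coordinatewise order $\bfeta_t(j)\le\bfzeta_t(j)$ is destroyed at no arrow firing. Since the paper already takes for granted, via \cite[VIII.2]{Liggett85}, that the basic coupling makes $\bfeta_t$ and $\bfzeta_t$ jointly well defined, the only reduction needed is from the all-times assertion to a statement about individual arrow times: almost surely no two arrows fire simultaneously, and in any bounded time window only finitely many arrows are relevant to a fixed site (by the same Poisson-thinning / finite-speed estimate that underlies the construction of the process). Hence it suffices to prove that if $\bfeta_{t^{-}}(k)\le\bfzeta_{t^{-}}(k)$ for all $k\in\Z$ and a single arrow fires at time $t$, then $\bfeta_{t}(k)\le\bfzeta_{t}(k)$ for all $k$.

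For this one-step check, fix an arrow out of a site $j$ into a neighbor $j'\in\{j-1,j+1\}$, write $\eta=\bfeta_{t^{-}}$ and $\zeta=\bfzeta_{t^{-}}$ (so $\eta\le\zeta$ coordinatewise), and recall that in each of the two processes this arrow moves a particle from $j$ to $j'$ precisely when $\eta(j)=1,\eta(j')=0$ (respectively $\zeta(j)=1,\zeta(j')=0$), leaving all other coordinates unchanged. I would verify the post-jump inequality at the two affected sites separately. At $j'$: the order can fail only if $\eta$ jumps while $\zeta$ does not; but $\eta$ jumping forces $\eta(j)=1$ and $\eta(j')=0$, hence $\zeta(j)=1$, and then either $\zeta(j')=0$ (so $\zeta$ jumps as well and both processes put a particle at $j'$) or $\zeta(j')=1$ (so $\zeta_t(j')=1\ge\eta_t(j')$); no failure in either case. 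At $j$: the order can fail only if $\zeta$ jumps while $\eta$ does not; but $\zeta$ jumping forces $\zeta(j)=1$ and $\zeta(j')=0$, hence $\eta(j')=0$, and then either $\eta(j)=1$ (so $\eta$ jumps too and both processes vacate $j$) or $\eta(j)=0$ (so $\eta_t(j)=0\le\zeta_t(j)$); again no failure. Thus $\eta_t\le\zeta_t$ coordinatewise.

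Combining the one-step check with the reduction, the coordinatewise inequality — valid at $t=0$ by hypothesis — survives every arrow event and therefore holds for all $t\ge0$, almost surely, which is the claim. I do not anticipate a genuine obstacle here: the substance is exactly the finite case analysis above, and the only point requiring standard care is the reduction of an ``all $t$'' statement to individual jump times for a particle system on all of $\Z$, which is handled by the same finite-range / finite-speed-of-propagation considerations that go into constructing the process (and could alternatively be routed through a monotone approximation by finite-volume dynamics).
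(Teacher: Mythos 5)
Your argument is correct and is exactly the standard one the paper invokes by reference: the paper states this lemma without proof, citing \cite[VIII.2]{Liggett85}, and the intended justification is precisely your case analysis showing that each arrow firing in the graphical construction preserves the coordinatewise order. Both the one-step check at the two affected sites and the reduction to individual (a.s.\ non-simultaneous, locally finite) arrow times are handled correctly.
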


Attractivity allows us to define the first and second class particle process  $(\bfeta_t,\bfalpha_t)$ by the relation $\bfzeta_t = \bfeta_t + \bfalpha_t$ (see Figure \ref{fig:coupling}). By attractivity, $\bfalpha_t\in \{0,1\}^{\mathbb{Z}}$, and hence can be thought of as occupation variables for second class particles. We write $\PP^{\bfeta_0,\bfalpha_0}$ for the probability measure associated to the $(\bfeta_t,\bfalpha_t)$ process with initial data $(\bfeta_0,\bfalpha_0)$.
When there is a single second class particle (our particular interest), i.e., $\sum_{i \in \mathbb{Z}} \bfalpha_0(i)=1$, we denote its location at time $t$ by $\bfX_t$ (so that $\bfalpha_t(\bfX_t)=1$ and $\bfalpha_t(j)=0$ for all other $j$) and write  $\PP^{\bfeta_0,\bfX_0}$ for the probability measure associated to the $(\bfeta_t,\bfX_t)$ process with initial data $(\bfeta_0,\bfX_0)$.

\begin{rem}
\label{rem:secondclassduality}
The particle-hole symmetry noted in \Cref{etaeta} extends to two-species ASEP. In particle if we reverse all jump directions and swap first class particles and holes, and keep second class particles as is, then the two-species ASEP is preserved. Stated alternatively, suppose that
$(\bfeta_t,\bfalpha_t)$ records the first and second class particle occupation variables, then $\check{\bfeta}_t(j) =  1- \bfeta_t(-j)$ and $\check{\bfalpha}_t(j) = \bfalpha_t(-j)$ for all $j\in \Z$ is also a two-species ASEP with left jump rate $L$ and right jump rate $R$.
\end{rem}

For $x\in \mathbb{Z}$, $\bfeta_0\in \{0,1\}^{\mathbb{Z}}$ and $N\in \mathbb{Z}_{\geq 1}$, let $A^{\leq }(x,\bfeta_0,N)$ denote the set of $\bfalpha_0\in \{0,1\}^\mathbb{Z}$ such that $\sum_{j\in \mathbb{Z}} \bfalpha_0(j)=N$, $\bfeta_0+\bfalpha_0\in \{0,1\}^{\mathbb{Z}}$, $\bfalpha_0(x)=1$, and $\bfalpha_0(w)= 1$ only if $w\leq x$ (note that the ``only if'' is not ``if and only if'').
In words, this means that we start with $N$ second class particles relative to the first class particles at $\bfeta_0$, with the rightmost
one at site $x$.
Associate to $\bfalpha_0\in A^{\leq}(x,\bfeta_0,N)$ its ordered particle vector $\bfZ_0=(\bfZ_0(1)>\cdots>\bfZ_0(N))$ so that $\bfalpha(w)=1$ if and only if $w\in \{\bfZ(1),\ldots, \bfZ(N)\}$. Let $\bfZ_t=(\bfZ_t(1)>\cdots>\bfZ_t(N))$ be the ordered locations at time $t$ of $\bfalpha_t$.

The following result can be extracted from \cite[Section 4]{MSL} (we provide a proof of it in Appendix \ref{sec:Rez} for completeness). It says that to control the location of a single second class particle, we can introduce several second class particles to the left  and control the location of a typical (uniformly chosen) one of those (see the caption of Figure \ref{fig:coupling}).

\begin{figure}
	\begin{center}
	\includegraphics[width=3in]{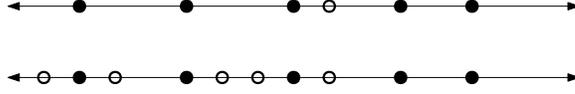}
	\end{center}
	\caption{Top: ASEP with first class particles (black bullets) and one second class particle (open disk). Bottom: ASEP with four additional second class particles added to the left of the top figure's second class particle. \Cref{prop:Rez} shows that we can couple the two versions of ASEP so the top second class particle stays to the right of a uniformly randomly chosen particle among the second class particles in the bottom figure.}
	\label{fig:coupling}
\end{figure}

\begin{prop}\label{prop:Rez}
	For any $y\in \mathbb{Z}$,  $\bfX_0\in \mathbb{Z}$ and $\bfeta_0\in \{0,1\}^{\mathbb{Z}}$ with $\bfeta_0(\bfX_0)=0$, and for any $N\in \mathbb{Z}_{\geq 1}$ and $\bfalpha_0\in A^{\leq}(\bfX_0,\bfeta_0,N)$,
	\begin{equation}\label{eq:Rezleq}
		\PP^{\bfeta_0,\bfX_0}[\bfX_t\leq y] \leq \frac{1}{N} \sum_{j=1}^{N} \PP^{\bfeta_0,\bfalpha_0}[\bfZ_t(j)\leq y].
	\end{equation}
\end{prop}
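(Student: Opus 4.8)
The plan is to construct, on a single probability space, a coupling between the single-second-class-particle process $(\bfeta_t,\bfX_t)$ and the $N$-second-class-particle process $(\bfeta_t,\bfalpha_t)$, together with an auxiliary uniformly random index, in such a way that the event $\{\bfX_t \le y\}$ is contained in the event $\{\bfZ_t(J) \le y\}$ for the random index $J$. Taking probabilities and using that $J$ is uniform on $\{1,\dots,N\}$ and independent of the relevant randomness then yields \eqref{eq:Rezleq}, since $\PP[\bfZ_t(J)\le y] = \frac1N\sum_{j=1}^N \PP[\bfZ_t(j)\le y]$.

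First I would set things up via the basic coupling: run both the $(\bfeta_t,\bfalpha_t)$ process (with the $N$ second class particles $\bfZ_t(1)>\cdots>\bfZ_t(N)$) and the $(\bfeta_t,\bfX_t)$ process using the \emph{same} Poisson arrow families. Since $\bfalpha_0\in A^{\le}(\bfX_0,\bfeta_0,N)$ has its rightmost second class particle at $\bfX_0$, attractivity/monotonicity (\Cref{xizeta1}, \Cref{xizeta2}) gives that at $t=0$ the single second class particle in the top system sits exactly at $\bfZ_0(1)$, the rightmost in the bottom system. The key structural fact from the MSL-style argument is that the \emph{set} of sites occupied by the $N$ second class particles in the bottom system, viewed as a whole, behaves consistently with a single-second-class picture: merging the first class particles with all but one chosen second class particle produces a legitimate coupled pair to which monotonicity applies. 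Concretely, for each fixed $j$, the process obtained by promoting $\bfZ_t(1),\dots,\bfZ_t(j-1)$ to first class (or rather, comparing $\bfeta_t + \sum_{i\ne j}\mathbf 1_{\bfZ_t(i)}$ against $\bfeta_t$) shows $\bfX_t \ge$ the location of the $j$-th particle is \emph{not} what we want directly; rather the correct statement is a combinatorial exchangeability: under the graphical construction there is a measure-preserving bijection on the randomness sending the trajectory of $\bfX_t$ to the trajectory of a uniformly chosen $\bfZ_t(j)$, with $\bfX_t \ge \bfZ_t(J)$ always. This is exactly the content of \cite[Section 4]{MSL}, and the cleanest route is to follow that argument: introduce the particles one at a time and track how the ``distinguished'' second class particle's label can only move down in the ordered list, so that after all $N$ are present the distinguished particle is equally likely (by symmetry of the independent clocks governing swaps among equal-class particles) to be any of the $N$.

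The key steps in order: (1) realize both systems in the basic coupling with shared arrows; (2) identify the single second class particle in the top system with the rightmost of the $N$ second class particles in the bottom system at $t=0$; (3) establish the invariance that the distinguished particle, as a label in the ordered vector $\bfZ_t$, performs a (nondecreasing-in-label) path, using that second–class particles are indistinguishable to first–class particles and the swaps among them are governed by independent symmetric clocks; (4) conclude that marginally the distinguished label at time $t$ is uniform on $\{1,\dots,N\}$ while always $\bfX_t = \bfZ_t(\text{label}) \ge$ — wait, more precisely $\bfX_t$ tracks exactly the distinguished particle so $\{\bfX_t \le y\} = \{\bfZ_t(J)\le y\}$ where $J$ is the (random, uniform) label — hence $\PP[\bfX_t\le y] = \frac1N \sum_{j=1}^N \PP[\bfZ_t(j)\le y]$, which is even an equality and a fortiori the desired inequality. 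The main obstacle is step (3): carefully justifying that the distinguished second class particle's position in the ranked order is exchangeable, i.e., that one can couple so the distinguished label is uniform. This requires a delicate but standard argument about the graphical construction — adding second class particles incrementally and checking that each new particle, starting to the left, only ever causes the distinguished one's rank to shift by swaps that are symmetric in law — and handling the fact that $\bfalpha_0$ may place particles at \emph{some} but not all sites $w\le \bfX_0$. Since the full details are exactly those of \cite[Section 4]{MSL}, I would either cite that directly or reproduce the incremental-coupling argument in Appendix \ref{sec:Rez} as promised.
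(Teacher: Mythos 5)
Your overall skeleton --- a distinguished second class particle among the $N$, whose label is made uniform on $\{1,\dots,N\}$ by the symmetric swaps among equal-class particles, compared against the lone second class particle --- is the same as the Rezakhanlou coupling the paper reproduces in Appendix \ref{sec:Rez}. But step (4) contains a genuine error: you assert that $\bfX_t$ ``tracks exactly the distinguished particle,'' so that $\{\bfX_t\le y\}=\{\bfZ_t(J)\le y\}$ and \eqref{eq:Rezleq} holds with equality. This is false. A tagged second class particle among $N$ is obstructed by the other $N-1$ second class particles (equal-class particles block each other's asymmetric jumps), whereas the lone second class particle sees holes at those sites, so the two trajectories are not equal in law. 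Concretely, take $L=0$, $\bfeta_0\equiv 0$, $N=2$ with second class particles at $0$ and $-1$: then $\bfX_t$ and $\bfZ_t(1)$ are rate-$R$ Poisson walks while $\bfZ_t(2)$ is stochastically strictly behind, so the right side of \eqref{eq:Rezleq} strictly exceeds the left. The correct statement is a one-sided domination $\bfX_t\ge\bfZ_t(J)$, and establishing it is the substance of the proof: one writes down the generator of the labeled $N$-particle process (whose rate-$L$ symmetric part exchanges the contents, hence the labels, of adjacent sites) and checks that whenever the lone particle and the tagged particle occupy the same site, the lone particle's rate to advance in the drift direction involves $1-\bfeta_t$ at the destination while the tagged particle's involves $1-\bfzeta_t$ with $\bfzeta_t=\bfeta_t+\bfalpha_t\ge\bfeta_t$, the rates in the opposite direction being equal. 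Your proposal never performs this rate comparison, and without it there is no inequality to propagate.

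Relatedly, steps (1)--(2) as written (run both systems off the same Poisson arrows and identify $\bfX_0$ with the rightmost second class particle) do not yield a coupling with the desired ordering: as the introduction of the paper emphasizes, for $L>0$ the naive shared-arrow coupling fails precisely because the added second class particles interfere with the original one (the original particle's rate-$L$ attempt lands on a hole in one system but on a second class particle in the other, and the two evolutions decouple with no pathwise order). The coupling must therefore be built at the level of generators, as in \Cref{prop:Rezappendix}, with the symmetric part supplying the label exchangeability (so the distinguished label stays uniform for all $t$) and the asymmetric part supplying the domination. In short: right architecture, but the two load-bearing steps --- constructing a coupling that actually orders the two distinguished particles, and recognizing that the conclusion is an inequality rather than an identity --- are respectively missing and wrong.
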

Another consequence of the graphical construction is ASEP's finite speed of propagation.
\begin{lem}
	\label{xizetaequal}
	Let $U \le V$, $T \ge 0$, and $\bfxi$ and $\bfzeta$ be two versions of ASEP (each with left and right jump rates $L$ and $R$, respectively). If $\bfxi_0 (j) = \bfzeta_0 (j)$ for each $j \in \llbracket U, V\rrbracket$, then under the basic coupling we have that $\bfxi_t (j) = \bfzeta_t (j)$ for each $j \in \llbracket U + 4RT, V - 4RT\rrbracket$ and $t \in [0, T]$, off of an event of probability at most $4 e^{-T/3}$.
\end{lem}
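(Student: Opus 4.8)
The plan is to exploit the graphical construction: two ASEP configurations that agree on an interval $\llbracket U,V\rrbracket$ at time $0$ can only disagree inside $\llbracket U+4RT, V-4RT\rrbracket$ at time $t\le T$ if ``information'' has propagated inward from outside $\llbracket U,V\rrbracket$, and under the basic coupling information travels only along the Poisson arrows. Concretely, I would track the left endpoint $U$: a discrepancy can move left to right (or right to left) from near the boundary only by a chain of arrows, and such a chain of length $\ell$ within time $T$ requires at least $\ell$ Poisson events (left or right arrows) among a set of $O(T)$ ``relevant'' sites during $[0,T]$. The combined arrow process out of any single site has rate $L+R < 2R$, so the number of arrows emanating from each boundary-adjacent site in $[0,T]$ is dominated by a Poisson$(2RT)$ random variable. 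First I would set up the ``discrepancy can only spread at bounded speed'' statement precisely and reduce the lemma to a large-deviation bound on such a Poisson count.

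The key steps, in order: (1) Couple $\bfxi$ and $\bfzeta$ via the basic coupling and note that $\bfxi_0=\bfzeta_0$ on $\llbracket U,V\rrbracket$; define the leftmost discrepancy position at time $t$ that lies in $\llbracket U+1,V\rrbracket$ and similarly the rightmost in $\llbracket U,V-1\rrbracket$, and argue by symmetry it suffices to control the left side. (2) Observe that for the value at some site $j$ with $U+4RT \le j$ to change relative to the other copy, there must be, for some $k\ge 1$, a ``time-ordered chain'' of $k$ arrows connecting site $U-m$ (for some $m\ge 0$, outside or at the boundary) up through sites near $j$ — more carefully, any discrepancy entering $\llbracket U,V\rrbracket$ from the left must be carried by a sequence of jumps each using an arrow, so the discrepancy front advances by at most the number of arrows available. (3) Quantify: the front can reach position $U+4RT$ by time $T$ only if the total number of arrows out of the sites $U, U-1, U-2,\ldots$ (those from which a leftward-originating chain could start) within $[0,T]$ is at least $4RT$; but this count is a sum of independent Poisson$(\le 2RT\cdot(\text{window length}))$ — the cleanest version is to bound the number of arrows out of the at-most-$4RT+1$ sites in a window $[U-4RT, U]$ during $[0,T]$, which is Poisson with mean $\le (L+R)T(4RT+1) \le$ something, then apply a Chernoff bound. (4) Calibrate the constants so the exponent works out to $T/3$ and the number of bad events (left boundary, right boundary, and the two analogous particle-hole symmetric statements, or simply a factor from the union over the window) contributes the constant $4$.

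\emph{Main obstacle.} The real work is step (2)–(3): formulating the ``no fast information propagation'' statement so that it both is true for the ASEP basic coupling and produces a clean Poisson tail with the right constant. One must be careful that a single arrow can move a discrepancy, but a discrepancy can also be \emph{created} in the interior (when a particle in one copy jumps but the corresponding site in the other copy is blocked); one needs the combinatorial fact that the leftmost interior discrepancy can move to the right by at most one site per arrow event among the relevant sites, and can only appear at all if fed from the boundary. I would phrase this via a deterministic lemma: if no site in $\llbracket U-4RT, U \rrbracket$ has more than $4RT$ arrows during $[0,T]$ — or better, via an inductive ``influence set'' argument tracking which sites' time-$t$ values can possibly depend on sites outside $\llbracket U,V\rrbracket$ — then agreement persists on $\llbracket U+4RT, V-4RT\rrbracket$. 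Once that deterministic skeleton is in place, the probabilistic bound is a routine Poisson Chernoff estimate: $\PP[\mathrm{Poisson}(\lambda)\ge a]\le e^{-a}(e\lambda/a)^{a}$, chosen with $a\asymp RT$ and $\lambda\asymp RT$ so that $a$ is a fixed multiple of $\lambda$ larger than $e\lambda$, yielding decay $e^{-cT}$, and then one checks $c$ can be taken to give the stated $4e^{-T/3}$.
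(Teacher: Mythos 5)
Your overall strategy matches the paper's (the paper gives only a one\nobreakdash-sentence proof: ``large deviation bounds on the sum of exponential random variables''), and your ``main obstacle'' paragraph correctly identifies the deterministic skeleton one needs: under the basic coupling the set of sites whose time\nobreakdash-$t$ value can depend on the configuration outside $\llbracket U,V\rrbracket$ grows by at most one site per arrow event \emph{at the current boundary of that influence set}. However, the concrete estimate you propose as ``the cleanest version'' in step (3) does not work. The total number of arrows emanating from the $4RT+1$ sites of a spatial window during $[0,T]$ is Poisson with mean $(L+R)T(4RT+1)=\Theta(R^2T^2)$, which for large $T$ vastly exceeds the threshold $4RT$. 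So the event ``at least $4RT$ arrows in the window'' occurs with probability tending to $1$, and a Chernoff bound applied to it yields nothing; the necessary condition you extract is simply too weak. The same objection applies to counting arrows out of the sites $U,U-1,U-2,\dots$.

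The estimate that does work uses the time\nobreakdash-ordering you allude to but do not exploit: the discrepancy front can advance from its current position $x$ to $x+1$ only when one of the (at most two) Poisson clocks joining $x$ and $x+1$ rings, so by the strong Markov property the waiting times between successive advances stochastically dominate i.i.d.\ $\mathrm{Exp}(L+R)$ variables. Hence the number of advances in $[0,T]$ is dominated by a $\mathrm{Poisson}\big((L+R)T\big)\preceq\mathrm{Poisson}(2RT)$ variable --- note the mean is $2RT$, with no factor of the window length --- and
\begin{equation*}
\PP\big[\mathrm{Poisson}(2RT)\ge 4RT\big]\le \Big(\frac{e\cdot 2RT}{4RT}\Big)^{4RT}e^{-2RT}=e^{-(4\log 2-2)RT}\le e^{-T/3},
\end{equation*}
using $R\ge R-L=1$; a union bound over the (at most four) boundary fronts gives $4e^{-T/3}$. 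So the gap is not in the idea but in the quantification: you must bound the rate at which the front \emph{waits} at each successive site, not the aggregate arrow count in a space--time box. Replacing step (3) with the sum\nobreakdash-of\nobreakdash-exponentials bound above repairs the argument and is exactly what the paper intends.
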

\begin{proof}
	This follows from large deviation bounds on the sum of exponential random variables which control how particles from outside an interval can effect the behavior far inside it.
\end{proof}

The final general result we give from coupling is {\it montonicity}. It deals with the integrated occupation variables, i.e., sometimes called the  height function or current.
Let $\bfxi_t$ denote ASEP and identify the {\it ordered particle locations} by $\cdots < \bfY_t(1)< \bfY_t(0) < \bfY_t(-1) < \cdots$ where the indexing is such that initially $\bfY_0 (0) \leq 0 < \bfY_{0}(-1)$ (subsequently, the $\bfY_t(j)$ track these indexed particles as they jump). For any $x \in \mathbb{Z}$, we define
\begin{flalign}
	\label{jtx}
	\h_t (x; \bfxi) = \displaystyle\sum_{i\in \mathbb{Z}} \big( \textbf{1}_{\bfY_0(i) \le 0} \textbf{1}_{\bfY_t(i) > x} - \textbf{1}_{\bfY_0(i) > 0} \textbf{1}_{\bfY_t(i) \le x} \big)
\end{flalign}
and extend $\h_t (x; \bfxi)$ to a continuous function in $x$ by linear interpolation. For $x,y\in \Z$,
\begin{flalign}\label{eq:heightdiff}
\h_t ([x,y]; \bfxi):= \h_t (x; \bfxi)-\h_t (y; \bfxi) = \sum_{i=x+1}^{y} \bfxi_t(i)
\end{flalign}
from which it is clear that for $j\in \mathbb{Z}$,
\begin{equation}\label{eqhdiff}
\bfxi_t(j) = \h_t (j-1;\bfxi)-\h_t (j;\bfxi).
\end{equation}
In particular, if $t=0$ we will use the short-hand $\h(x; \bfxi)=\h_0 (x; \bfxi)$ and have that
\begin{equation}\label{eqhsum}
\h(x; \bfxi) =\h_0(x; \bfxi) = \begin{cases} -\displaystyle\sum_{i=1}^{x} \bfxi_0(i)&\textrm{if } x\geq 1, \\ 0&\textrm{if } x=0,\\ \displaystyle\sum_{i=x+1}^{0} \bfxi_0(i)&\textrm{if } x\leq -1.\end{cases}
\end{equation}

At most one of the two summands on the right side of \eqref{jtx} is nonzero. Observe that $\h_t (x;\bfxi)$ has the following combinatorial interpretation: Color all particles initially to the right of $0$ red, and all particles initially at or to the left of $0$ blue. Then, $\h_t (x;\bfxi)$ denotes the number of red particles at or to the left of $x$ at time $t$ subtracted from the number of blue particles to the right of $x$ at time $t$.

The following  shows that if we start with two height functions that are coupled so that they are either ordered pointwise (up to a vertical shift by some $H$) or close to each other (within $K$), then this property persists under the basic coupling. In the first statement, the shift by $H$ may be necessary since our height functions are zeroed out to satisfy $\h_0 (0;\bfxi)=0$; observe that the second statement of the below lemma follows from the first.

\begin{lem}[Monotonicity]
	\label{xizeta2}
	Let $\bfxi_t$ and $\bfzeta_t$ be two ASEPs with the same jump rates.

	\begin{enumerate}[leftmargin=*]
		\item If for some $H\in \mathbb{Z}$ we have $\h_0 (x; \bfxi)+H \ge \h_0 (x; \bfzeta)$ for each $x \in \mathbb{Z}$, then under the basic coupling we almost surely have $\h_0 (x; \bfxi)+H \ge \h_0 (x; \bfzeta)$ for all $x \in \mathbb{Z}$ and $t \ge 0$.
		\item If for some $K \in \mathbb{Z}$ we have $\big| \h_0 (x; \bfxi) - \h_0 (x; \bfzeta) \big| \le K$ for each $x \in \mathbb{Z}$, then under the basic coupling we almost surely have $\big| \h_t (x; \bfxi) - \h_t (x; \bfzeta) \big| \le K$ for all $x \in \mathbb{Z}$ and $t \ge 0$.
	\end{enumerate}

\end{lem}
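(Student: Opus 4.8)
The plan is to derive part (2) from part (1) and to prove part (1) by tracking the effect of a single arrow of the graphical construction on the height function. Part (2) is immediate: $|\h_0(x;\bfxi)-\h_0(x;\bfzeta)|\le K$ for all $x$ is the conjunction of $\h_0(x;\bfxi)+K\ge \h_0(x;\bfzeta)$ and $\h_0(x;\bfzeta)+K\ge \h_0(x;\bfxi)$ for all $x$, and applying part (1) to each (the second with the names $\bfxi,\bfzeta$ exchanged, and $H=K$ in both) gives $|\h_t(x;\bfxi)-\h_t(x;\bfzeta)|\le K$ for all $x$ and $t\ge 0$.

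For part (1), the first step is to record the single-arrow dynamics of the height function. By \eqref{eqhdiff}, $x\mapsto \h_t(x;\bfxi)$ is non-increasing on $\Z$ with consecutive decrements $\h_t(j-1;\bfxi)-\h_t(j;\bfxi)=\bfxi_t(j)\in\{0,1\}$ (so particles are unit down-steps and holes are flat steps). From the graphical construction, a right arrow out of $j$ acts on $\bfxi$ exactly when $\bfxi_t(j)=1$ and $\bfxi_t(j+1)=0$, equivalently when $\h_t(j;\bfxi)=\h_t(j-1;\bfxi)-1=\h_t(j+1;\bfxi)$, in which case its sole effect is $\h_t(j;\bfxi)\mapsto\h_t(j;\bfxi)+1$; a left arrow out of $j$ acts exactly when $\bfxi_t(j-1)=0$ and $\bfxi_t(j)=1$, equivalently when $\h_t(j-2;\bfxi)=\h_t(j-1;\bfxi)$ and $\h_t(j;\bfxi)=\h_t(j-1;\bfxi)-1$, with sole effect $\h_t(j-1;\bfxi)\mapsto\h_t(j-1;\bfxi)-1$. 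In particular any single arrow changes a given ASEP's height function at exactly one site, and by $\pm1$ there.

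The second step is the monotonicity check. By the standard properties of the graphical construction on $\Z$ (in particular, a.s.\ only finitely many arrows are relevant to the height functions in any bounded space-time region, cf.\ \Cref{xizetaequal}), it suffices to show that firing one arrow cannot turn the relation ``$\h(x;\bfxi)+H\ge\h(x;\bfzeta)$ for all $x$'' from true to false. Consider a right arrow out of $j$ (left arrows are entirely analogous). Only the values at $x=j$ can change, and each only upward by $1$, so the only dangerous case is that before the arrow one has equality $\h(j;\bfxi)+H=\h(j;\bfzeta)$ and the arrow is effective for $\bfzeta$ but not for $\bfxi$. Assuming this, effectiveness for $\bfzeta$ gives $\h(j-1;\bfzeta)=\h(j;\bfzeta)+1$ and $\h(j+1;\bfzeta)=\h(j;\bfzeta)$, so the pre-arrow inequalities at $j-1$ and $j+1$, together with the assumed equality, read
\[
\h(j-1;\bfxi)+H\ \ge\ \h(j;\bfxi)+H+1,\qquad \h(j+1;\bfxi)+H\ \ge\ \h(j;\bfxi)+H.
\]
Since $\h(j-1;\bfxi)-\h(j;\bfxi)=\bfxi(j)\le 1$, the first inequality forces $\bfxi(j)=1$; since $\h(j;\bfxi)-\h(j+1;\bfxi)=\bfxi(j+1)\ge 0$, the second forces $\bfxi(j+1)=0$. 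But then the right arrow out of $j$ is effective for $\bfxi$ as well, a contradiction. Hence the relation survives every arrow, so it holds for all $t\ge 0$; this is part (1).

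The only point requiring care is the passage from ``preserved at each transition'' to ``preserved for all $t\ge 0$'', which rests on the a.s.\ local finiteness of relevant arrows in the graphical construction (the mechanism behind \Cref{xizetaequal}); the short but essential observation is that in the borderline equality case one uses that height increments lie in $\{0,1\}$ to force the arrow to be effective for both processes simultaneously. One could instead hope to deduce part (1) from attractivity (\Cref{xizeta1}), but the partial order on height functions is not the pointwise order on occupation variables, so the direct arrow-by-arrow argument above seems cleanest.
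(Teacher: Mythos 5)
Your proof is correct. The paper does not actually supply an argument for this lemma: it only remarks (as you do) that part (2) follows from part (1), and leaves part (1) as a standard consequence of the basic coupling. Your arrow-by-arrow verification of part (1) — reducing to the borderline equality case and using that the neighbouring height increments lie in $\{0,1\}$ to force the arrow to be effective for both processes simultaneously — is exactly the standard argument, and your checks of the single-arrow action on $\h_t$ and of the left-arrow case are accurate.
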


\begin{figure}[t]
	\begin{center}
	\includegraphics[width=2.5in]{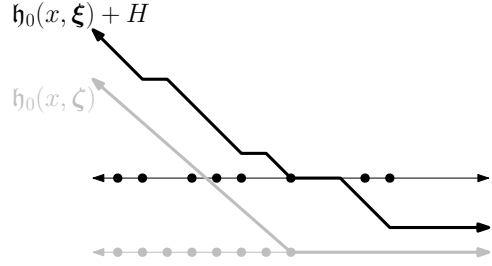}
	\end{center}
	\caption{Two height functions are depicted. The grey one is determined by the values of $\bfzeta_0$ while the black one is determined by the values of $\bfxi_0$. If the later is shifted by $H$ it point-wise exceeds the former. Provided this occurs at time 0, Lemma \ref{xizeta2} shows that this property persists for all time.}
	\label{fig:monotone}
\end{figure}

\section{Some effective hydrodynamics concentration estimates}
\label{sec:mde}


This section establishes uniform estimates that upper bound the maximal deviations that ASEP height functions can have from their hydrodynamic limits. The key to establishing these concentration bounds is an understanding of the fluctuations under the stationary measure (which just boils down to bounds on sums of i.i.d. Bernoulli random variables) and under step-Bernoulli initial data. This later result is contained in \Cref{hetaxi} and proved later in Section \ref{sec:modDevproof}. These are  put together using  attractivity of the basic coupling.

We begin with the following definition describing random particle configurations distributed according to a product measure. Such configurations will often serve as initial data for the versions of ASEP we consider. Throughout, all versions of ASEP will have the same left jump rate $L$ and right jump rate $R$, for $R > L \ge 0$ with $R - L = 1$.

\begin{definition}
	
\label{distributedinitial}

Fix a finite interval $I = \llbracket A, B\rrbracket$ with integer endpoints $A< B$, as well as a function $\varphi : \mathbb{R} \rightarrow [0, 1]$. We say that a particle configuration $\bfeta = \big( \bfeta (x) \big)$ is \emph{$\varphi$-distributed} on $I$ if its coordinates $\big \{ \bfeta (x) \}$ are all mutually independent and
\begin{flalign*}
	\PP \big[ \bfeta (A + x) = 1 \big] = \varphi \bigg( \displaystyle\frac{x}{B - A} \bigg), \qquad \text{for each $x \in \mathbb{Z}$}.
\end{flalign*}
We say that $\bfeta$ is $\varphi$-distributed on $\mathbb{Z}$ if its coordinates $\bfeta (x)$ are  mutually independent and
\begin{flalign*}
	\PP \big[ \bfeta (x) = 1 \big] = \varphi (x), \qquad \text{for each $x \in \mathbb{Z}$}.
\end{flalign*}
These two notations are somewhat at odds since the former (involving finite $I$) involves rescaling while the latter does not. We hope the reader will excuse us for this.
\end{definition}

When using \Cref{distributedinitial}, we will often (although not always, for instance, see the formulation of the lemma below) take $I = \llbracket-K, K\rrbracket$ for some integer $K \ge 1$ and $\varphi$ to be some piecewise linear function which takes value zero outside the interval $[0, 1]$. This will guarantee that $\bfeta$ only has particles on $\llbracket -K, K\rrbracket$.

The following is a concentration inequality for $\varphi$-distributed particle configurations.

\begin{lem}
\label{distributionconcentration}

Adopt the notation of \Cref{distributedinitial} and assume that $I=\mathbb{Z}$. For any $s \in \mathbb{R}_{\ge 1}$ and $X, Y \in \mathbb{Z}$, we have
\begin{flalign}
	\label{hx1}
	\PP \bigg[ \Big| \h (X; \bfeta) - \h (Y; \bfeta) - \displaystyle\sum_{j = X}^{Y} \varphi(j)   \Big| \ge s |Y - X|^{1/2} \bigg] \le 2 e^{-s^2}.
\end{flalign}
Now consider the case where  $I = \llbracket A, B\rrbracket$ is finite and $\varphi(x)\equiv 0$ for all $x\notin[0, 1]$. Then,
\begin{flalign}
	\label{hxy2}
	\PP \bigg[ \displaystyle\max_{\substack{X, Y \in \mathbb{Z} \\ X \le Y}} \Big| \h (X; \bfeta) - \h (Y; \bfeta) - \displaystyle\sum_{j = X}^{Y} \varphi \Big( \displaystyle\frac{j - A}{B - A} \Big)   \Big| \ge s (B - A)^{1/2} \bigg] \le 2 (B-A+1)^2 e^{-s^2 }.
\end{flalign}
\end{lem}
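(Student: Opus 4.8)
The plan is to derive both bounds from a single black box, namely Hoeffding's (equivalently Azuma's) inequality for sums of independent bounded random variables; the only genuinely new ingredient is a reduction, in the second bound, of the supremum over the infinite index set $\{(X,Y):X\le Y\}$ to a maximum over finitely many random variables.

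For \eqref{hx1}, I would first note that by the telescoping identity \eqref{eqhdiff}, the quantity $\h(X;\bfeta)-\h(Y;\bfeta)$ is, for $X\le Y$, exactly the sum $\sum_{i=X+1}^{Y}\bfeta(i)$ of $|Y-X|$ mutually independent Bernoulli variables (independence being part of \Cref{distributedinitial}), whose mean $\sum_{i=X+1}^{Y}\varphi(i)$ agrees with $\sum_{j=X}^{Y}\varphi(j)$ up to a single boundary term of absolute value at most $1$. Hoeffding's inequality for a centered sum of $|Y-X|$ summands each of range $1$ bounds the probability of a deviation of size $s|Y-X|^{1/2}$ by $2e^{-2s^2}$, and since $2e^{-2s^2}\le 2e^{-s^2}$ this yields \eqref{hx1} with room to spare, the slack comfortably absorbing the boundary term (here $s\ge 1$ is convenient). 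The case $X>Y$ is identical after relabeling.

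For \eqref{hxy2}, the key observation is that, since $\varphi\equiv 0$ off $[0,1]$, the rescaled parameters $\varphi\big((j-A)/(B-A)\big)$ vanish for every $j\notin\llbracket A,B\rrbracket=\{A,A+1,\dots,B\}$, and $\bfeta(j)=0$ almost surely for such $j$. Consequently the centered increment inside the maximum in \eqref{hxy2} is almost surely unchanged when $(X,Y]$ is replaced by $(X,Y]\cap\{A,\dots,B\}$; it therefore depends on $(X,Y)$ only through an integer subinterval $\{P,\dots,Q\}\subseteq\{A,\dots,B\}$ (or through the empty set, where it vanishes), and hence assumes at most $\binom{B-A+1}{2}+(B-A+1)+1\le(B-A+1)^2$ distinct values over all pairs $X\le Y$. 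For each such subinterval, applying \eqref{hx1} (i.e.\ Hoeffding) with $m:=Q-P+1\le B-A+1$ summands bounds the probability that this increment exceeds $s(B-A)^{1/2}\ge s\,m^{1/2}$ by $2\exp\big(-2s^2(B-A)/(B-A+1)\big)\le 2e^{-s^2}$, the last step using $B-A\ge 1$. A union bound over the at most $(B-A+1)^2$ distinct values then gives the stated $2(B-A+1)^2e^{-s^2}$.

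The main---indeed essentially the only---obstacle is this finiteness reduction: a priori the supremum in \eqref{hxy2} is over an uncountable family, so a direct union bound is vacuous, and it is the compact support of $\varphi$ that lets one collapse the family to $O\big((B-A)^2\big)$ random variables. Everything else is a routine application of a standard concentration inequality plus a union bound; in particular no feature of the ASEP dynamics enters, only independence of the initial occupation variables.
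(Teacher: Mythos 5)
Your argument is correct and matches the paper's proof: both bounds come from Hoeffding's inequality, and \eqref{hxy2} is obtained by reducing the supremum to subintervals of $\llbracket A,B\rrbracket$ (using that $\varphi$ vanishes off $[0,1]$, so $\bfeta(j)=0$ and the increments are unchanged outside that window) followed by a union bound over the at most $(B-A+1)^2$ resulting pairs. The only divergence is that you explicitly track the off-by-one mismatch between $\sum_{j=X}^{Y}\varphi(j)$ and the true mean $\sum_{j=X+1}^{Y}\varphi(j)$, which the paper silently ignores; be aware that your claim that the Hoeffding slack ($2e^{-2s^2}$ versus $2e^{-s^2}$) absorbs this unit discrepancy does not hold for very short intervals (e.g.\ $|Y-X|=1$ with $s=1$), where the statement should simply be read with the centering equal to the mean.
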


\begin{proof}
Observe that \eqref{hx1} followed immediately from Hoeffding's inequality and the fact that $\bfeta$ is $\varphi$-distributed.
Next, assume that $I = \llbracket A, B\rrbracket$ is a finite interval and that $\varphi$ is supported on $[0, 1]$. Using the fact that for $X, Y \in I$, we have $|Y - X| \le B - A$, Hoeffding's inequality and a union bound yields
\begin{equation*}
		\PP \bigg[ \displaystyle\max_{\substack{X, Y \in \llbracket A,B\rrbracket \\ X \le Y}} \Big| \h (X; \bfeta) - \h  (Y; \bfeta) - \displaystyle\sum_{j = X}^{Y} \varphi \Big( \displaystyle\frac{j - A}{B - A} \Big)   \Big| \ge s (B - A)^{1/2} \bigg] \le 2 (B - A + 1)^2 e^{-s^2}.
	\end{equation*}
The bound \eqref{hxy2} follows from combining the above with the fact that since $\varphi$ is supported on $[0, 1]$ we have for $X < A$ and $Y > B$ that
$
\h (A; \bfeta) - \h (X; \bfeta) = 0 = \h (B; \bfeta) - \h (Y; \bfeta).
$
\end{proof}

We now specify two choices we will commonly take for $\varphi$ from \Cref{distributedinitial}.

\begin{definition}
	
\label{lambdarhofunctions}

Fix real numbers $0 \le \lambda \le \rho \le 1$. Define the piecewise constant function $\Xi^{(\rho; \lambda)} : \mathbb{R} \rightarrow [\rho,\lambda]$ and the piecewise linear function $\Upsilon^{(\rho; \lambda)} : \mathbb{R} \rightarrow \mathbb{R}$ by setting
\begin{equation*}
\Xi^{(\rho; \lambda)} (z)= \begin{cases} \rho&\textrm{if } z\leq 0,\\\lambda&\textrm{if } z>0,\end{cases}\qquad\qquad
\Upsilon^{(\rho; \lambda)} (z)= \begin{cases} \rho&\textrm{if } z\leq 1-2\rho,\\  (1-z)/2&\textrm{if } 1-2\rho \leq z\leq 1-2\lambda,\\ \lambda&\textrm{if } z\geq 0.\end{cases}
\end{equation*}
\end{definition}

We say that an ASEP $\bfeta_t$ has \emph{$(\rho; \lambda)$-Bernoulli initial data} if $\bfeta_0$ is $\Xi^{(\rho; \lambda)}$-distributed on $\mathbb{Z}$. Observe in particular that $(1; 0)$-Bernoulli initial data is equivalent to step initial data, and that $(\rho; \rho)$-Bernoulli initial data is stationary for the ASEP; we call the latter \emph{$\rho$-stationary initial data}. The $\Upsilon^{(\rho; \lambda)}$-distributed initial data is meant to model the profile that one gets after running $\Xi^{(\rho; \lambda)}$-distributed initial data for a long time (with a linear interpolating rarefaction fan from density $\rho$ to density $\lambda$). The assumption $\lambda \leq \rho$ ensures that the hydrodynamic limit does not have shocks.

The following is a key concentration estimate $(\rho; 0)$-step Bernoulli initial data ASEP. This estimate is not optimal, either in the error bound $T^{2/3}$ or in the probability decay $e^{-cs}$. (In the case of step initial data, we believe that the $T^{1/3}$ scale is optimal, but the decay is not.) Note that for our purposes, it is sufficient that we have a bound of the form $T^{\alpha}$ for some $\alpha<1$. A proof of this result is given in Section \ref{sec:modDevproof}.
\begin{prop}
\label{hetaxi}
For any $\varepsilon > 0$, there exists  $c = c(\varepsilon) > 0$ such that the following holds. Let $\rho \in [\varepsilon, 1]$ and $\boldsymbol{\eta}$ be$(\rho; 0)$-Bernoulli initial data ASEP. For any $T > 1$ and $s \in [0, T]$,
\begin{flalign}
\label{hetaxilambda0}
 \displaystyle\max_{\substack{|X/T| \le 1 - \varepsilon \\ |Y/T| \le 1 - \varepsilon}}\PP \Bigg[ \bigg| \h_T(\llbracket X,Y\rrbracket; \bfeta) - T \displaystyle\int\limits_{X/T}^{Y/T} \Upsilon^{(\rho; 0)} (z) dz \bigg| \ge s T^{2/3} \Bigg] \le c^{-1} T e^{-c s}.
\end{flalign}
For step initial data (when $\rho=1$) \eqref{hetaxilambda0} holds with the term $sT^{2/3}$ replaced by $sT^{1/3}$.
The constants $c=c(\varepsilon)$ can be chosen so as to weakly decrease as $\varepsilon$ decreases to 0.
\end{prop}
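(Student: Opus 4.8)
The plan is to reduce \eqref{hetaxilambda0} to a single one-point concentration estimate for the ASEP current and then obtain that estimate from the Fredholm determinant formula for the $q$-deformed Laplace transform of the height function. \emph{Reduction to one point.} By \eqref{eq:heightdiff}, $\h_T(\llbracket X,Y\rrbracket;\bfeta)=\h_T(X;\bfeta)-\h_T(Y;\bfeta)$. Setting $a_x:=T\int_{x/T}^{1}\Upsilon^{(\rho;0)}(z)\,dz$ — the macroscopic height at $x$, since $\Upsilon^{(\rho;0)}$ vanishes for $z\ge 1$ and $\EE[\h_T(x;\bfeta)]$ is asymptotically the integral of the density profile over $[x,\infty)$ — the deterministic centering telescopes, $T\int_{X/T}^{Y/T}\Upsilon^{(\rho;0)}=a_X-a_Y$. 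The triangle inequality then bounds the probability in \eqref{hetaxilambda0} by $\PP\big[|\h_T(X;\bfeta)-a_X|\ge \tfrac12 sT^{2/3}\big]+\PP\big[|\h_T(Y;\bfeta)-a_Y|\ge\tfrac12 sT^{2/3}\big]$, so it suffices to prove, uniformly over $|x/T|\le 1-\varepsilon$, a one-point bound $\PP\big[|\h_T(x;\bfeta)-a_x|\ge s T^{2/3}\big]\le c^{-1}e^{-cs}$ (with $T^{1/3}$ in place of $T^{2/3}$ when $\rho=1$, after adjusting $c$). The extra factor $T$ in \eqref{hetaxilambda0} is harmless slack that absorbs the polynomial loss incurred below when passing from a transform bound to a probability bound, and also absorbs the $o(T^{2/3})$ discrepancy between $a_x$ and $\EE[\h_T(x;\bfeta)]$, which the same analysis controls.

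\emph{The integrable estimate.} For the one-point bound I would use the Fredholm determinant representation from \cite{BCS} of the $q$-deformed Laplace transform $\EE\big[\big(\zeta\, q^{\h_T(x;\bfeta)-a_x};q\big)_\infty^{-1}\big]$ of the centered height function for step-Bernoulli initial data, together with a $q$-deformed Markov inequality that converts control of this transform — as $\zeta$ ranges over an appropriate one-parameter family of magnitudes — into two-sided tail bounds on $\h_T(x;\bfeta)-a_x$, at the price of a polynomial-in-$T$ factor. One of the two tails (the \emph{upper} tail, in the KPZ convention) follows from elementary steepest-descent bounds showing that the relevant kernel has operator norm $o(1)$ in that regime, so the Fredholm determinant is $1+o(1)$; likewise, in the flat region $x/T<1-2\rho$ the fluctuations are Gaussian at scale $T^{1/2}$ and both tails are elementary this way. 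All of this is carried out uniformly over $\rho\in[\varepsilon,1]$ and $|x/T|\le 1-\varepsilon$; since I only aim for the weak scale $T^{2/3}$ — which dominates both the $T^{1/3}$ Tracy--Widom scale in the rarefaction fan $x/T>1-2\rho$ and the $T^{1/2}$ Gaussian scale in the flat region — only crude, uniform kernel estimates are needed, and $c=c(\varepsilon)$ is allowed to degrade as $\varepsilon\downarrow0$, reflecting the critical point of the phase function approaching the edge of the admissible contour (this is the content of the appendices \ref{sec:modDevproof} and \ref{RightKernel}). When $\rho=1$ the flat region is empty — $\Upsilon^{(1;0)}$ is constant only for $z\le-1$ — so the whole window $|x/T|\le1-\varepsilon$ lies in the fan and one recovers the sharp $T^{1/3}$.

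\emph{The lower tail, and the main obstacle.} The remaining (\emph{lower}) tail — the event that $\h_T(x;\bfeta)$ is anomalously small at scale $T^{2/3}$, nontrivial only inside the rarefaction fan — is the genuine difficulty, as is standard for determinantal KPZ models: it is not visible from decay of the Fredholm kernel. Here I would invoke the identity of \cite{AEPDPP} (and \cite{Bor18}) rewriting the $q$-Laplace transform of the ASEP height function as the expectation of a multiplicative functional $\prod_i f(\mathfrak{l}_i)$ over a discrete Laguerre-type determinantal point process $\{\mathfrak{l}_i\}$ with $0\le f\le 1$. Since $\prod_i f(\mathfrak{l}_i)$ is dominated by $f$ evaluated at the extreme particle of the ensemble, and that extreme particle is distributed as the corresponding TASEP height function, the ASEP lower tail is controlled by the \emph{TASEP} lower tail, which is classical; I would quote it from \Cref{l0estimate} (i.e.\ \cite{LPDL,ASFTLPM}, which ultimately rest on the Fredholm-determinant asymptotics of \cite{CTPS} together with Widom's trick). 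Assembling the upper- and lower-tail bounds, inverting the $q$-Laplace transform, and folding the polynomial loss into the factor $T$ gives \eqref{hetaxilambda0}, and the uniformity of all estimates over $\rho$ and $x/T$ yields the claimed monotone dependence of $c$ on $\varepsilon$. The main obstacle is precisely this lower-tail bound; a secondary technical burden is making the steepest-descent analysis uniform across the crossover between the Gaussian flat region and the Tracy--Widom fan, and tracking the dependence of all constants on $\varepsilon$.
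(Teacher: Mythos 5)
Your treatment of the rarefaction fan is essentially the paper's: there the proof splits the one-point parabola bound into two tails, getting the ``$\h$ large'' tail from the \cite{AEPDPP} Laguerre identity (whose multiplicative functional is dominated by its value at the minimal particle, i.e.\ the TASEP height, so \Cref{l0estimate} applies) and the ``$\h$ small'' tail from steepest descent on the \cite{BCS} Fredholm determinant, exactly as you propose (your label ``the event that $\h_T$ is anomalously small'' for the Laguerre tail is flipped --- the method you describe controls the event that $\h_T$ is anomalously \emph{large} --- but the two methods as described are complementary, so this is only a labeling slip). One ingredient you omit and do need: the Laguerre identity holds only for \emph{step} initial data, so to use it for $(\rho;0)$-Bernoulli data one must first invoke the monotone coupling $\h_T(\cdot;\bfeta)\le\h_T(\cdot;\bfxi^{\mathrm{step}})$ of \Cref{xizeta2}, which transfers only the upper bound on $\h_T$; this is why the paper must prove the complementary bound \eqref{h1} by a separate determinant analysis rather than getting both tails from the identity.

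The genuine gap is the flat region $X/T<1-2\rho$. Your claim that there ``the fluctuations are Gaussian at scale $T^{1/2}$ and both tails are elementary'' is only half right. The height difference $\h_T(\llbracket X,Y_0\rrbracket;\bfeta)=\sum_{j=X+1}^{Y_0}\bfeta_T(j)$ is bounded \emph{above} elementarily, by dominating $\bfeta_0$ with $\rho$-stationary data and applying Hoeffding to the resulting i.i.d.\ Bernoulli occupation variables at time $T$ (this is \eqref{hxyeta1}). But there is no stationary measure dominating $(\rho;0)$-Bernoulli data from \emph{below}, so the lower tail of the particle count in the flat region does not follow from any such comparison, and the one-point current $\h_T(x;\bfeta)$ there is not a sum of independent variables to which a concentration inequality applies. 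The paper handles this with a translation trick \eqref{x0y01}: killing all initial particles to the right of $X-Y_0$ produces a configuration that is both dominated by $\bfeta_0$ and equal in law to a translate of it, so the lower tail of the particle count on any flat-region block is bounded by the lower tail on the corresponding block at the fan edge $Y_0$, where the parabola estimates \eqref{0hetaxi2} and \eqref{h1} apply; one then iterates over $O(T^{1/3})$ blocks of length $T^{2/3}$, and the accumulated linearization error is precisely the source of the $T^{2/3}$ scale in \eqref{hetaxilambda0}. (Your only alternative --- pushing the steepest-descent analysis of the \cite{BCS} kernel into the region $X<Y_0$ --- is possible in principle but requires substantially more delicate contours, which the paper deliberately avoids.) Without this step your argument does not control the event that too few particles lie in a flat-region interval at time $T$, and the proposition fails to be proved there.
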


From \Cref{hetaxi} and monotonicity, we deduce the following corollary showing that \eqref{hetaxilambda0} also holds under $(\rho; \lambda)$-Bernoulli initial data for any $0 \le \lambda \le \rho \le 1$.

\begin{cor}	\label{hetaxi2}
For any $\varepsilon\in(0,1)$, there exists $c = c(\varepsilon) > 0$ such that the following holds. For any $\lambda \in [0, 1 - \varepsilon]$ and $\rho \in [\varepsilon, 1]$ with $\lambda \le \rho$, let $\bfeta_t$ denote $(\rho; \lambda)$-Bernoulli initial data ASEP. Then, for any $T>1$ and $s \in [0, T]$,
	\begin{flalign}
		\label{hetaxilambdarho}
		\PP \Bigg[ \displaystyle\max_{\substack{|X/T| \le 1 - \varepsilon \\ |Y/T| \le 1 - \varepsilon}} \bigg|  \h_T(\llbracket X,Y\rrbracket; \bfeta)  - T \displaystyle\int\limits_{X/T}^{Y/T} \Upsilon^{(\rho; \lambda)} (z) dz \bigg| \ge s T^{2/3} \Bigg] \le c^{-1} T^3 e^{-c s}.
	\end{flalign}
The constants $c=c(\varepsilon)$ can be chosen so as to weakly decrease as $\varepsilon$ decreases to 0.
\end{cor}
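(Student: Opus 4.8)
The plan is to determine $\h_T(\llbracket X,Y\rrbracket;\bfeta)$ for $\bfeta:=\bfeta^{(\rho;\lambda)}$ by trapping it, under the basic coupling, between reference ASEPs whose height-function concentration is already available, and exploiting that $\Upsilon^{(\rho;\lambda)}$ coincides with the reference profiles on suitable intervals. First I would set $a:=1-2\rho$ and $b:=1-2\lambda$, so that $-1\le a\le b\le 1$ and $\Upsilon^{(\rho;\lambda)}$ equals $\rho$ on $(-\infty,a]$, equals $(1-z)/2$ on $[a,b]$, and equals $\lambda$ on $[b,\infty)$. On a common probability space I would realize, via a coupling of their initial product measures together with the basic coupling, the processes $\bfeta^{(\rho;0)}$ ($(\rho;0)$-Bernoulli), $\bfeta^{(1;\lambda)}$ ($(1;\lambda)$-Bernoulli), $\bfeta^{(\rho;\rho)}$ ($\rho$-stationary), $\bfeta^{(\lambda;\lambda)}$ ($\lambda$-stationary) and $\bfeta$ itself, arranged so that $\bfeta^{(\rho;0)}_0\le\bfeta_0\le\bfeta^{(1;\lambda)}_0$ and $\bfeta^{(\lambda;\lambda)}_0\le\bfeta_0\le\bfeta^{(\rho;\rho)}_0$ pointwise (possible since $\lambda\le\rho$). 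By attractivity (\Cref{xizeta1}) these pointwise inequalities, hence the induced orderings of the increments $\h_T(\llbracket X,Y\rrbracket;\cdot)$, persist for all $T\ge 0$.

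Next I would record concentration for the four reference processes. For $\bfeta^{(\rho;0)}$ this is \Cref{hetaxi} (applied with $\varepsilon/2$ in place of $\varepsilon$, to leave room for the rounding below). For $\bfeta^{(1;\lambda)}$, I would use particle-hole symmetry: by \Cref{etaeta} the process $\check\bfeta^{(1;\lambda)}_t(j)=1-\bfeta^{(1;\lambda)}_t(-j)$ is an ASEP with the same rates, and its initial configuration agrees, except at the single site $0$, with $(1-\lambda;0)$-Bernoulli initial data; since $1-\lambda\in[\varepsilon,1]$, \Cref{hetaxi} applies to that data, the second part of \Cref{xizeta2} shows the one-site discrepancy never moves $\h_T$ by more than $1$, and the identities $\h_T(\llbracket X,Y\rrbracket;\check\bfeta^{(1;\lambda)})=(Y-X)-\h_T(\llbracket -Y-1,-X-1\rrbracket;\bfeta^{(1;\lambda)})$ and $\Upsilon^{(1;\lambda)}(z)=1-\Upsilon^{(1-\lambda;0)}(-z)$ turn this into concentration of $\h_T(\llbracket X,Y\rrbracket;\bfeta^{(1;\lambda)})$ around $T\int_{X/T}^{Y/T}\Upsilon^{(1;\lambda)}(z)\,dz$. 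For $\bfeta^{(\rho;\rho)}$ and $\bfeta^{(\lambda;\lambda)}$ the time-$T$ configurations are still product Bernoulli (stationarity), so $\h_T(\llbracket X,Y\rrbracket;\cdot)$ is a sum of independent Bernoullis and Hoeffding's inequality, as in \Cref{distributionconcentration}, gives concentration around $\rho(Y-X)$ and $\lambda(Y-X)$ respectively, with a union bound over the $O(T^2)$ windows.

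The step I expect to be the main difficulty is that one cannot simply compare $\bfeta$ with $\bfeta^{(\rho;0)}$ and $\bfeta^{(1;\lambda)}$ over all windows: $\int\Upsilon^{(\rho;0)}$ and $\int\Upsilon^{(1;\lambda)}$ differ from $\int\Upsilon^{(\rho;\lambda)}$ by $\Theta(T)$, so the two sides of such a sandwich would not match. The remedy is to first reduce to windows lying in a single phase of $\Upsilon^{(\rho;\lambda)}$: using $\h_T(\llbracket X,Y\rrbracket;\bfeta)=\h_T(\llbracket X,M\rrbracket;\bfeta)+\h_T(\llbracket M,Y\rrbracket;\bfeta)$, I would split a window at $M=\lfloor aT\rfloor$ and/or $M=\lfloor bT\rfloor$ whenever these fall strictly inside it, producing at most three sub-windows, each of one of three types. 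Type (i): $Y/T\le a$, where $\Upsilon^{(\rho;\lambda)}=\rho=\Upsilon^{(\rho;0)}$ on the window, so $\h_T(\cdot\,;\bfeta)$ is trapped between $\h_T(\cdot\,;\bfeta^{(\rho;0)})$ and $\h_T(\cdot\,;\bfeta^{(\rho;\rho)})$, both concentrating around $\rho(Y-X)=T\int_{X/T}^{Y/T}\Upsilon^{(\rho;\lambda)}$. Type (ii): $X/T\ge b$, where $\Upsilon^{(\rho;\lambda)}=\lambda=\Upsilon^{(1;\lambda)}$ on the window, so $\h_T(\cdot\,;\bfeta)$ is trapped between $\h_T(\cdot\,;\bfeta^{(\lambda;\lambda)})$ and $\h_T(\cdot\,;\bfeta^{(1;\lambda)})$. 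Type (iii): $[X/T,Y/T]\subseteq[a,b]$, where $\Upsilon^{(\rho;\lambda)}=\Upsilon^{(\rho;0)}=\Upsilon^{(1;\lambda)}=(1-z)/2$ on the window, so $\h_T(\cdot\,;\bfeta)$ is trapped between $\h_T(\cdot\,;\bfeta^{(\rho;0)})$ and $\h_T(\cdot\,;\bfeta^{(1;\lambda)})$. In each case the lower and upper trap quantities concentrate around the common value $T\int_{X/T}^{Y/T}\Upsilon^{(\rho;\lambda)}(z)\,dz$, so on the intersection of the reference events one gets $\bigl|\h_T(\llbracket X,Y\rrbracket;\bfeta)-T\int_{X/T}^{Y/T}\Upsilon^{(\rho;\lambda)}(z)\,dz\bigr|\le 3sT^{2/3}$. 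Finally I would note that whenever a split at $aT$ (resp.\ $bT$) is actually performed, a valid window straddling it forces $a$ (resp.\ $b$) into $[-(1-\varepsilon),1-\varepsilon]$, so the split point and all resulting sub-window endpoints stay within $|\cdot/T|\le 1-\varepsilon/2$ for $T$ large (the claim being trivial for bounded $T$); the $O(1)$ rounding errors and the factor $3$ are absorbed by rescaling $s$, and a union bound of the reference estimates over the $O(T^2)$ windows, after rescaling $c$, yields $c^{-1}T^3e^{-cs}$, with $c=c(\varepsilon)$ weakly decreasing in $\varepsilon$ inherited from \Cref{hetaxi}.
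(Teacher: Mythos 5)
Your proposal is correct and follows the same route as the paper's proof: sandwich $\bfeta$ under the basic coupling between step--Bernoulli and stationary reference processes, get concentration for the former from \Cref{hetaxi} together with particle--hole symmetry and for the latter from Hoeffding as in \Cref{distributionconcentration}, and union bound over the $O(T^2)$ windows. The one substantive difference is the step you single out as the main difficulty. The paper does not split windows; for a fixed pair $X\le Y$ it invokes the pointwise identity $\Upsilon^{(\rho;\lambda)}=\min\{\Upsilon^{(1;\lambda)},\Upsilon^{(\rho;\rho)}\}$ together with $\h_T(\llbracket X,Y\rrbracket;\bfeta)\le\min\{\h_T(\llbracket X,Y\rrbracket;\bfxi),\h_T(\llbracket X,Y\rrbracket;\bfzeta)\}$ and declares that the two reference bounds suffice. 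Taken literally this does not close for a window straddling $z=1-2\rho$: there $T\int_{X/T}^{Y/T}\Upsilon^{(\rho;\lambda)}$ can be smaller than \emph{both} $T\int\Upsilon^{(1;\lambda)}$ and $T\int\Upsilon^{(\rho;\rho)}$ by order $T$ (the integral of a pointwise minimum is generally strictly below the minimum of the integrals), so the target event is not contained in the union of the two reference events. Your decomposition of each window at $\lfloor(1-2\rho)T\rfloor$ and $\lfloor(1-2\lambda)T\rfloor$, so that on each sub-window the trapping profiles coincide with $\Upsilon^{(\rho;\lambda)}$, is exactly the repair this step needs, and your bookkeeping (the factor $3$ absorbed into $s$, the split points staying within $|\cdot/T|\le 1-\varepsilon/2$, the single-site discrepancy in the particle--hole reflection handled by \Cref{xizeta2}) is sound. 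Your two-sided sandwich with four reference processes also matches the paper, which carries out the upper bound with $(1;\lambda)$-Bernoulli and $\rho$-stationary data and asserts the lower bound is analogous via $(\rho;0)$-Bernoulli and $\lambda$-stationary data.
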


\begin{proof}
By the particle-hole symmetry in \Cref{etaeta} along with \Cref{hetaxi} applied to $(1 - \lambda,0)$-Bernoulli initial data ASEP,  \eqref{hetaxilambdarho} holds if $(\rho; \lambda) = (1; \lambda)$.
	
Now consider the case where $\lambda = \rho$. Then $\bfeta$ is stationary in time, and so $\bfeta_T$ is also $\Xi^{(\rho; \rho)}$-distributed on $\mathbb{Z}$. Hence, the $\varphi = \Xi^{(\rho; \rho)}$ case of \eqref{hx1} together with a union bound over all integer $X, Y \in \llbracket -T, T\rrbracket$ yields
	\begin{flalign*}
		\PP \Bigg[ \displaystyle\max_{|X|, |Y| < T} \bigg|  \h_T(\llbracket X,Y\rrbracket; \bfeta) - \rho (Y - X) \bigg| \ge s T^{1/2} \Bigg] \le 50 T^2 e^{-s^2/2},
	\end{flalign*}
($50$ is not tight, but sufficiently large), which verifies \eqref{hetaxilambdarho}.
	
Now suppose that $(\rho; \lambda)$ is arbitrary satisfying $\lambda \in [0, 1  - \varepsilon]$ and $\rho \in [\varepsilon, 1]$, with $\lambda \le \rho$. By a union bound, to show \eqref{hetaxilambdarho}  it suffices that we show that there exists $c = c(\varepsilon) > 0$ such that for any integers $X\leq Y$ with  $|X/T|, |Y/T| \le (1 - \varepsilon)$,
	\begin{flalign}
		\label{probability1rholambda}
		\begin{aligned}
		& \PP \bigg[ \h_T(\llbracket X,Y\rrbracket; \bfeta) \ge T \displaystyle\int\limits_{X/T}^{Y/T} \Upsilon^{(\rho; \lambda)} (z) dz + s T^{2/3} \bigg] \le c^{-1} T e^{-c s}, \\
		& \PP \bigg[\h_T(\llbracket X,Y\rrbracket; \bfeta) \le  T \displaystyle\int\limits_{X/T}^{Y/T} \Upsilon^{(\rho; \lambda)} (z) dz - s T^{2/3} \bigg] \le c^{-1} T e^{-c s}.
		\end{aligned}
	\end{flalign}
We only establish the first bound in \eqref{probability1rholambda}, as the proof of the latter is entirely analogous.
	
To that end, let $\bfxi_t$ and $\bfzeta_t$ denote two ASEPs started with $(1; \lambda)$-Bernoulli initial data and $\rho$-stationary initial data, respectively. Since $\lambda \le \rho \le 1$, we may couple the Bernoulli initial data $\bfeta_0, \bfxi_0$ and $\bfzeta_0$ on the same probability space so that $\bfeta_0 (x) \le \min \big\{ \bfxi_0 (x), \bfzeta_0 (x) \big\}$, for each $x \in \mathbb{Z}$, almost surely. (This is a microscopic form of the ordering illustrated in \Cref{fig:Corollary35}.) The basic coupling used in \Cref{xizeta1} implies the existence of a coupling between $\bfeta_t, \bfxi_t$ and $\bfzeta_t$ such that $\bfeta_t (x) \le \min \big\{ \bfxi_t (x), \bfzeta_t (x) \big\}$ holds for each $x \in \mathbb{Z}$ and $t \in \mathbb{R}_{\ge 0}$, almost surely. In particular, using \eqref{eq:heightdiff} we almost surely have that
	\begin{flalign*}
		\h_T(\llbracket X,Y\rrbracket; \bfeta) =\displaystyle\sum_{j = X+1}^{Y} \bfeta_T (j) & \le \min \Bigg\{ \displaystyle\sum_{j = X+1}^Y \bfxi_T (j), \displaystyle\sum_{j = X+1}^Y \bfzeta_T (j) \Bigg\} \\
		& = \min \big\{ \h_T (X; \bfxi) - \h_T (Y; \bfxi), \h_T (X; \bfzeta)  - \h_T (Y; \bfzeta) \big\}.
	\end{flalign*}

\begin{figure}[t]
	\begin{center}
	\includegraphics[width=5in]{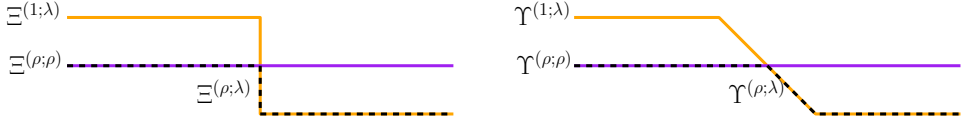}
	\end{center}
	\caption{The initial data $\Xi^{(\rho,\lambda)}$ (on the left) can be bounded above by the minimum of $\Xi^{(1,\lambda)}$ and $\Xi^{(\rho,\rho)}$, and likewise $\Upsilon^{(\rho,\lambda)}$ (on the right) can be bounded above by the minimum of $\Upsilon^{(1,\lambda)}$ and $\Upsilon^{(\rho,\rho)}$.}
	\label{fig:Corollary35}
\end{figure}

By \Cref{lambdarhofunctions} we have (see \Cref{fig:Corollary35}) that $\Upsilon^{(\rho; \lambda)} (z) = \min \big\{ \Upsilon^{(1; \lambda)} (z), \Upsilon^{(\rho; \rho)} (z) \big\}$. Therefore, to establish the first bound in \eqref{probability1rholambda}, it suffices to show that
	\begin{flalign}
	\label{xizetaprobability1rholambda}
	\begin{aligned}
		& \PP \bigg[ \h_T(\llbracket X,Y\rrbracket; \bfxi) \ge T \displaystyle\int\limits_{X/T}^{Y/T} \Upsilon^{(1; \lambda)} (z) dz + s T^{2/3} \bigg] \le c^{-1} T e^{-c s}, \\
		& \PP \bigg[\h_T(\llbracket X,Y\rrbracket; \bfzeta) \ge  T \displaystyle\int\limits_{X/T}^{Y/T} \Upsilon^{(\rho; \rho)} (z) dz + s T^{2/3} \bigg] \le c^{-1} T e^{-c s}.
	\end{aligned}
\end{flalign}	
Since the first and second estimates in \eqref{xizetaprobability1rholambda} follow from the already established $(\rho; \lambda) = (1; \lambda)$ and $(\rho; \lambda) = (\rho; \rho)$ cases of the corollary, we deduce the first inequality in \eqref{probability1rholambda}. The second inequality in \eqref{probability1rholambda} follows similarly as above by lower bounding by $(\rho,0)$-Bernoulli and $\lambda$-stationary initial data. This completes the proof of  \eqref{hetaxilambdarho} and hence the corollary.
\end{proof}

The rest of this section establishes effective hydrodynamic concentration inequalities for ASEP with initial data given by specific piecewise linear functions (though the methods apply more generally) defined below and illustrated in \Cref{fig:Prop41withoutpsi}.

\begin{figure}[t]
	\begin{center}
	\includegraphics[width=4in]{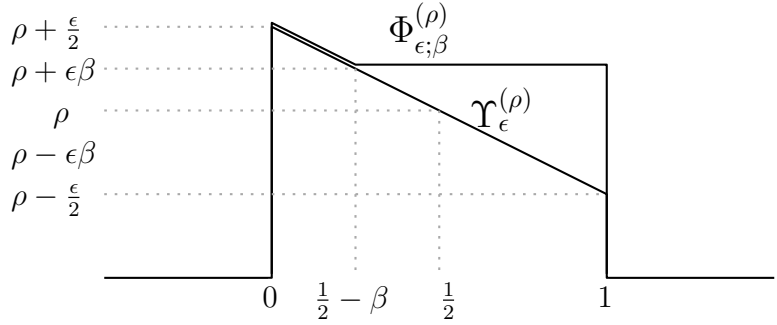}
	\end{center}
	\caption{$\Phi_{\varepsilon; \beta}^{(\rho)}$ and $\Upsilon_{\varepsilon}^{(\rho)}$ (with slight vertical shifts to make it easier to distinguish).}
	\label{fig:Prop41withoutpsi}
\end{figure}

\begin{definition}\label{linearconstant}
Fix any $\varepsilon \in \big( 0, \frac{1}{2} \big)$ and $\rho \in [\varepsilon, 1 - \varepsilon]$. Define $\Upsilon_{\varepsilon}^{(\rho)} : \mathbb{R} \rightarrow [0, 1]$ by
\begin{equation}
\label{functionlinear}
	\Upsilon_{\varepsilon}^{(\rho)} (z) = \begin{cases}\rho + \varepsilon (\frac{1}{2} - z) & \textrm{if }z \in [0, 1],\\ 0 &\textrm{if } z\notin[0,1].\end{cases}
\end{equation}
The function $\Upsilon_{\varepsilon}^{(\rho)}$ is a suitable translation and scaling of the function $\Upsilon^{(\rho; \lambda)}$ from \Cref{lambdarhofunctions}, where we additionally set it to $0$ outside of the interval $[0, 1]$. The function $\Upsilon_{\varepsilon}^{(\rho)}$ is linear on its non-zero support. It will also be useful to consider versions of this function that (continuously) transition from being linear to constant. To that end, for any $\varepsilon, \beta \in \big( 0, \frac{1}{2} \big)$ and $\rho \in [\varepsilon, 1 - \varepsilon]$, define  $\Phi_{\varepsilon; \beta}^{(\rho)}: \mathbb{R} \rightarrow [0, 1]$ by
\begin{equation*}
\Phi_{\varepsilon; \beta}^{(\rho)} (z) =\begin{cases}  \rho + \varepsilon (\frac{1}{2} - z )& \textrm{if }z \in [ 0, \frac{1}{2} - \beta ],\\
\rho + \varepsilon \beta & \textrm{if }z \in [\frac{1}{2} - \beta, 1 ],\\0&\textrm{if }z\notin[0,1].\end{cases}
\end{equation*}
\end{definition}

The following proposition provides effective hydrodynamic concentration estimates for the ASEP under either $\Upsilon_{\varepsilon}^{(\rho)}$-distributed or  $\Phi_{\varepsilon; \beta}^{(\rho)}$-distributed initial data.

\begin{prop}
	\label{hetalinear}
For any fixed $\delta \in \big( 0, \frac{1}{16R} \big)$, there exists $c = c(\delta) > 0$ such that the following holds. For any $S, T \in \mathbb{R}_{\ge 1}$ with $S \ge \delta^{-2} T$, $\beta \in \big( 0, \frac{1}{4} \big)$, $\varepsilon \in \big( 4 \delta, \frac{1}{2} \big)$, $\rho \in [\varepsilon, 1 - \varepsilon]$, and $\kappa \in [15, T]$:
\begin{enumerate}[leftmargin=*]
\item\label{hetaxilambdarholinear} ASEP $\bfeta_t$ with $\Upsilon_{\varepsilon}^{(\rho)}$-distributed initial data on the interval $\llbracket -\varepsilon S,\varepsilon S\rrbracket$ satisfies
\begin{flalign*}
		\begin{aligned}
	\PP \bigg[ \displaystyle\max_{\substack{|X/S| \le \varepsilon / 4 \\ |Y/S| \le \varepsilon / 4}} \Big|\h_T(\llbracket X,Y\rrbracket; \bfeta) - T\!\! \displaystyle\int\limits_{X/T}^{Y/T}  \Big( \rho + \displaystyle\frac{(1 - 2 \rho - z) T}{2 (S + T)} \Big) dz \Big| \ge \kappa S^{2/3} \bigg]  \le c^{-1} S^3 e^{-c \kappa};
	\end{aligned}
\end{flalign*}

\item\label{hetaxilambdarholinear2}  ASEP $\bfeta_t$ with $\Phi_{\varepsilon; \beta}^{(\rho)}$-distributed initial data on the interval  $\llbracket -\varepsilon S,\varepsilon S\rrbracket$ satisfies
\begin{flalign*}
	\begin{aligned}
	\!\!\!\!\!\!\!\!\PP \bigg[ \displaystyle\max_{\substack{|X/S| \le \varepsilon / 4 \\ |Y/S| \le \varepsilon / 4}} \Big| \h_T(\llbracket X,Y\rrbracket; \bfeta)- T\!\!\!  \displaystyle\int\limits_{X/T}^{Y/T}\!\!\! \max \Big\{ \rho + \displaystyle\frac{(1 - 2 \rho - z) T}{2 (S + T)}, \rho + \varepsilon \beta \Big\} dz \Big|\! \ge \kappa S^{2/3} \bigg]\!  \le c^{-1} S^3 e^{-c \kappa}.
	\end{aligned}
\end{flalign*}
%
\end{enumerate}
\end{prop}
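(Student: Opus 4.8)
The plan is to exploit the self‑similarity of the rarefaction fan. Rescaled onto $\llbracket -\varepsilon S,\varepsilon S\rrbracket$, the profile $\Upsilon_{\varepsilon}^{(\rho)}$ assigns to site $j$ the density $\rho-\tfrac{j}{2S}$, which is precisely the macroscopic density at site $j$ of a $(\rho+\tfrac{\varepsilon}{4};\rho-\tfrac{\varepsilon}{4})$‑Bernoulli ASEP run for time $S$, once the step is translated to $p:=-\lfloor S(1-2\rho)\rfloor$ so that the fan is centered at the origin. Hence running the $\Upsilon_{\varepsilon}^{(\rho)}$‑distributed ASEP for time $T$ ought to be interchangeable, on the window of interest, with running the translated $(\rho+\tfrac{\varepsilon}{4};\rho-\tfrac{\varepsilon}{4})$‑Bernoulli ASEP for time $S+T$, and the latter is directly governed by \Cref{hetaxi2}. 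A short computation confirms that the resulting value $(S+T)\int_{(X-p)/(S+T)}^{(Y-p)/(S+T)}\Upsilon^{(\rho+\varepsilon/4;\rho-\varepsilon/4)}(z)\,dz$ equals exactly $T\int_{X/T}^{Y/T}\big(\rho+\tfrac{(1-2\rho-z)T}{2(S+T)}\big)\,dz$, after one checks — using $T\le\delta^{-2}S\cdot\delta^{2}$, i.e. $T\le\delta^{2}S$, together with $\varepsilon>4\delta$ — that $\tfrac{X-p}{S+T}$ and $\tfrac{Y-p}{S+T}$ land strictly inside the fan region $[\,1-2\rho-\tfrac{\varepsilon}{2},\,1-2\rho+\tfrac{\varepsilon}{2}\,]$ where $\Upsilon^{(\rho+\varepsilon/4;\rho-\varepsilon/4)}(z)=\tfrac{1-z}{2}$.

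To make this interchange rigorous I would proceed as follows. The hypotheses $\delta<\tfrac{1}{16R}$ and $\varepsilon>4\delta$ give $4RT<\varepsilon S/16$, so by finite speed of propagation (\Cref{xizetaequal}) one may replace the (boundedly supported) $\Upsilon_{\varepsilon}^{(\rho)}$‑distributed initial data by the full‑line "clipped ramp" product measure $\hat{\bfeta}_{0}$ — density $\rho+\tfrac{\varepsilon}{4}$ left of $-\tfrac{\varepsilon S}{2}$, density $\rho-\tfrac{j}{2S}$ on $\llbracket-\tfrac{\varepsilon S}{2},\tfrac{\varepsilon S}{2}\rrbracket$, density $\rho-\tfrac{\varepsilon}{4}$ to the right — coupled to agree with $\bfeta_{0}$ on $\llbracket-\tfrac{\varepsilon S}{2},\tfrac{\varepsilon S}{2}\rrbracket$; this alters $\h_{T}(\llbracket X,Y\rrbracket;\bfeta)$ for $|X|,|Y|\le\varepsilon S/4$ only off an event of probability $4e^{-T/3}$. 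Next take an independent $(\rho+\tfrac{\varepsilon}{4};\rho-\tfrac{\varepsilon}{4})$‑Bernoulli ASEP $\bfzeta$ with step at $p$. On the comparison window $W:=\llbracket-\tfrac{\varepsilon S}{3},\tfrac{\varepsilon S}{3}\rrbracket$, the i.i.d. Bernoulli estimate of \Cref{distributionconcentration} applied to $\hat\bfeta_0$ and \Cref{hetaxi2} applied to $\bfzeta$ at time $S$ show that $\h_{0}(\cdot;\hat\bfeta)$ and $\h_{S}(\cdot;\bfzeta)$ each lie within $O(sS^{2/3})$ of the same deterministic ramp curve on $W$, hence within $K=O(sS^{2/3})$ of one another after matching the arbitrary additive constant $H$. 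Truncating both $\hat\bfeta_{0}$ and $\bfzeta_{S}$ to the empty configuration outside $W$ promotes this to the global bound $|\h_{0}(x;\hat\bfeta')-\h_{0}(x;\bfzeta')-H|\le K$ for all $x\in\Z$ on the good event; running both forward under a common arrow field, with the $\hat\bfeta'$‑time $t$ identified with the $\bfzeta'$‑time $S+t$ (legitimate by the Markov property, since $\bfzeta$'s clocks on $[S,\infty)$ are independent of $\bfzeta_{S}$), monotonicity (\Cref{xizeta2}) preserves it, giving $|\h_{T}(x;\hat\bfeta')-\h_{S+T}(x;\bfzeta')-H|\le K$; a final application of \Cref{xizetaequal} identifies $\hat\bfeta'$ with $\bfeta$ and $\bfzeta'$ with $\bfzeta$ on $\llbracket-\varepsilon S/4,\varepsilon S/4\rrbracket$. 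Since the constant $H$ cancels in the increment $\h_{T}(\llbracket X,Y\rrbracket;\cdot)$, combining with \Cref{hetaxi2} at time $S+T$ gives $\h_{T}(\llbracket X,Y\rrbracket;\bfeta)$ within $O(sS^{2/3}+s(S+T)^{2/3})=O(sS^{2/3})$ of the claimed value; choosing $s$ a fixed multiple of $\kappa$, union bounding over the $O(S^{2})$ pairs $(X,Y)$, and absorbing the $e^{-T/3}\le e^{-\kappa/3}$ terms (using $\kappa\le T$) produces the stated bound $c^{-1}S^{3}e^{-c\kappa}$; the dependence $c=c(\delta)$ comes from feeding parameter $\varepsilon>4\delta$ into \Cref{hetaxi2}.

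For part (2) I would sandwich $\Phi_{\varepsilon;\beta}^{(\rho)}$. Since $\Phi_{\varepsilon;\beta}^{(\rho)}\ge\Upsilon_{\varepsilon}^{(\rho)}$ pointwise and $\Phi_{\varepsilon;\beta}^{(\rho)}\ge\rho+\varepsilon\beta$ on a neighborhood of the window, attractivity together with part (1) (for the linear piece) and the i.i.d.\ Bernoulli estimate (for the flat piece, the $(\rho+\varepsilon\beta)$‑stationary ASEP staying Bernoulli) yield $\h_{T}(\llbracket X,Y\rrbracket;\bfeta)\ge T\int_{X/T}^{Y/T}\max\{\rho+\tfrac{(1-2\rho-z)T}{2(S+T)},\,\rho+\varepsilon\beta\}\,dz-O(sS^{2/3})$, splitting $\llbracket X,Y\rrbracket$ at the site $x^{*}$ where the two terms cross. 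In the other direction, $\Phi_{\varepsilon;\beta}^{(\rho)}$ is pointwise dominated by its "max‑with‑constant" full‑line extension (constant $\rho+\tfrac{\varepsilon}{2}$, then slope $-\tfrac1{2S}$, then constant $\rho+\varepsilon\beta$), whose macroscopic profile is exactly that of $(\rho+\tfrac{\varepsilon}{2};\rho+\varepsilon\beta)$‑Bernoulli data run for time $S$ with step at $p$; rerunning the coupling argument of part (1) with this auxiliary step‑Bernoulli ASEP, and noting that on the window $\tfrac{X-p}{S+T}$ falls to the right of the narrower fan $[\,1-2\rho-\varepsilon,\,1-2\rho-2\varepsilon\beta\,]$ precisely when $a\le b$ (so that $\Upsilon^{(\rho+\varepsilon/2;\rho+\varepsilon\beta)}(\tfrac{X-p}{S+T})=\max\{a,b\}$), gives the matching upper bound, pinching $\h_{T}(\llbracket X,Y\rrbracket;\bfeta)$ around $T\int_{X/T}^{Y/T}\max\{\ldots\}\,dz$.

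I expect the main obstacle to be exactly this bookkeeping in the middle step. The monotonicity Lemma \Cref{xizeta2} needs the two height functions close on all of $\Z$, whereas $\Upsilon_{\varepsilon}^{(\rho)}$ and $\Phi_{\varepsilon;\beta}^{(\rho)}$ have bounded support while the auxiliary step‑Bernoulli profiles do not, forcing the truncate‑then‑recover device via finite speed of propagation. Threading the constants — choosing $W$ large enough to leave $4RT$‑margins for the two uses of \Cref{xizetaequal} and to keep $\tfrac{X-p}{S+T}$, $\tfrac{Y-p}{S+T}$ inside the relevant rarefaction fans, yet small enough that $W\subset(-\varepsilon S,\varepsilon S)$ and that the translated data stays in the valid range of \Cref{hetaxi2} — is where all three hypotheses $\varepsilon>4\delta$, $\delta<\tfrac1{16R}$, $S\ge\delta^{-2}T$ are used, and it is the part that requires the most care.
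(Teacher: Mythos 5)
Your proposal is correct and follows essentially the same route as the paper's proof: match the compactly supported product initial data to the time-$S$ state of an auxiliary step-Bernoulli ASEP via height-function closeness (\Cref{distributionconcentration} together with \Cref{hetaxi2} applied at time $S$), propagate that closeness with monotonicity (\Cref{xizeta2}), localize with finite speed of propagation (\Cref{xizetaequal}), and read off the answer from \Cref{hetaxi2} at time $S+T$. The only differences are cosmetic --- the paper takes pure step (resp.\ $(1;\rho+\varepsilon\beta)$-Bernoulli) data as the auxiliary process rather than your two-sided $(\rho+\tfrac{\varepsilon}{4};\rho-\tfrac{\varepsilon}{4})$ (resp.\ $(\rho+\tfrac{\varepsilon}{2};\rho+\varepsilon\beta)$) choices, and it handles part (2) by rerunning the same two-sided argument rather than via your attractivity sandwich.
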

\begin{proof}
	
	The proofs of \Cref{hetalinear}  \eqref{hetaxilambdarholinear} and \eqref{hetaxilambdarholinear2} are very similar, so we only detail that of \eqref{hetaxilambdarholinear}. The idea will be to compare $\bfeta_t$ on the time interval $t\in [0,T]$ to another version of ASEP  $\bfzeta_t$ that corresponds to step initial data ASEP, with all particles outside a specific window destroyed at time $S$ and then run for time $t\in [S,S+T]$. The window is chosen so the step initial data hydrodynamic limit replicates the profile for the initial data of $\bfeta_0$.

To this end, let $\bfxi_t$ denote ASEP under step initial data (to establish \eqref{hetaxilambdarholinear2} we would instead let $\bfxi_t$ denote an ASEP under two-sided $(1; \rho + \varepsilon \beta)$-Bernoulli initial data).
By \Cref{hetaxi2}, there exists $c = c (\delta)>0$ such that for any $U>1$ and $\kappa \in [1, U]$,
\begin{flalign}
\label{hxizetau}
\displaystyle\max_{\substack{|X/U| \le 1 - \delta \\ |Y/U| \le 1 - \delta}} \PP \bigg[ \Big| \h_U (\llbracket X,Y\rrbracket; \bfxi) - U \displaystyle\int\limits_{X/U}^{Y/U} \Big( \displaystyle\frac{1 - z}{2} \Big) dz \Big| > \kappa U^{2/3} \bigg] < c^{-1} U e^{- c \kappa}.
\end{flalign}
Now define $\bfzeta_t$ to be an ASEP started from random initial data $\bfzeta_0$ given by
	\begin{flalign}
		\label{zeta0xis}
	\bfzeta_0 (x) = \begin{cases} \bfxi_S \big( j + \lfloor(1 - 2 \rho) S\rfloor \big) & \textrm{if }j \in \llbracket -\varepsilon S, \varepsilon S\rrbracket,
\\ 0 &\textrm{if } j\notin \llbracket -\varepsilon S, \varepsilon S\rrbracket.  \end{cases}	
	\end{flalign}
By \Cref{xizetaequal}, we may couple the ASEPs $\bfzeta_t (j)$ and $\bfxi_{S + t} \big( j + \lfloor(1 - 2 \rho) S \rfloor \big)$ so that for all $t \in [0, T]$ they coincide with high probability on $j \in \llbracket -(\varepsilon S - 4RT), \varepsilon S - 4RT\rrbracket$, namely
	\begin{flalign}
		\label{aevent}
		\PP [\mathsf{A}] \ge 1 - 4 e^{-T / 3},
	\end{flalign}
where the  event $\mathsf{A}$ is defined by
\begin{equation*}
\mathsf{A} = \Big\{ \bfzeta_t (j) = \bfxi_{S + t} \big( j + \lfloor(1 - 2 \rho)\rfloor S \big)\textrm{ for all }t\in [0,T] \textrm{ and } j \in  \llbracket -(\varepsilon S - 4RT), \varepsilon S - 4RT\rrbracket \Big\}.
\end{equation*}
By \eqref{eq:heightdiff} it then follows that for $X \in \llbracket -(\varepsilon S - 4RT), \varepsilon S - 4RT\rrbracket$,
	\begin{flalign}
		\label{hha}
		\textbf{1}_{\mathsf{A}} \h_t(X; \bfzeta) = \textbf{1}_{\mathsf{A}} \Big( \h_{S+t} \big(\big\llbracket X+\lfloor (1 - 2 \rho) S\rfloor,\lfloor (1 - 2 \rho) S\rfloor\big\rrbracket ; \bfxi\big) \Big).
	\end{flalign}

	Next, by applying \eqref{hxizetau} with $(U; X, Y)$ equal to $\big( S; X+\lfloor (1 - 2 \rho) S\rfloor, \lfloor (1 - 2 \rho) S\rfloor \big)$, and using the matching from \eqref{zeta0xis}, we see that there exists $c=c(\delta)>0$ such that for $\kappa \in [6, 6U]$,
	\begin{flalign}
		\label{bprobability}
		\PP \big[ \mathsf{B} (\kappa) \big] \ge 1 - c^{-1} S^2 e^{- c \kappa},
	\end{flalign}
where the event $\mathsf{B} (\kappa)$ is defined by
	\begin{flalign*}
		\mathsf{B} (\kappa) = \bigg\{   \displaystyle\max_{|X/S| \le \varepsilon}  \Big| \h_0(X; \bfzeta \big) - S \displaystyle\int\limits_{X/S}^0 \Big( \rho - \displaystyle\frac{z}{2} \Big) dz \Big| \le \displaystyle\frac{\kappa S^{2/3}}{6} \bigg\}.
	\end{flalign*}
In order to apply \eqref{hxizetau} we used the fact that $\big|\lfloor (1-2\rho)S\rfloor \pm \varepsilon S\big|]\leq S(1-\delta)$ as follows from the restrictions we assumed on $\varepsilon$ and $\rho$.

Now, turning to $\bfeta_0$, recall that it is $\Upsilon_{\varepsilon}^{(\rho)}$ distributed on $\llbracket -\varepsilon S, \varepsilon S\rrbracket$ and hence by \Cref{distributionconcentration} there exists $c>0$ such that for any $\kappa \in [12, 12 U]$,
	\begin{flalign}
		\label{cprobability}
		\PP \big[ \mathsf{C} (\kappa) \big] \ge 1 - 2(2S+1)^2 e^{ -\kappa^{2}/{144} }\geq 1-  c^{-1} S^2 e^{- c\kappa},
	\end{flalign}
where the event $\mathsf{C} (\kappa)$ is defined by
	\begin{flalign*}
		 \mathsf{C} (\kappa) = \bigg\{   \displaystyle\max_{|X/S| \le \varepsilon}  \Big| \h_0 (X; \bfeta) - \displaystyle\int\limits_X^0 \Big( \rho - \displaystyle\frac{z}{2S} \Big) dz \Big| \le \displaystyle\frac{\kappa S^{2/3}}{12} + 1 \bigg\}.
	\end{flalign*}
In applying \Cref{distributionconcentration} we use the fact that $S^{1/2}<S^{2/3}$ for $S>1$ and instead bound $\mathsf{C}(\kappa)$ with the term $S^{2/3}$ replaced by $S^{1/2}$.
The $1$ on the right-hand side of the inequality in $\mathsf{C}$ takes into account the potential effect of replacing the summation in \eqref{hxy2} by the above integral.
In our next deduction, however, we will use the fact that $\frac{\kappa S^{1/2}}{12}+1\leq \frac{\kappa S^{1/2}}{6}$ since we have assumed $\kappa>15$ and $S>1$.

By definition, $\bfeta_0 (x) = 0 = \bfzeta_0 (x)$ for $x \notin \llbracket-\varepsilon S, \varepsilon S\rrbracket$, thus combining \eqref{bprobability} and \eqref{cprobability} yields that there exists $c=c(\delta)>0$ such that for all $\kappa \in [12, 6S]$
\begin{flalign}\label{eqDbound}
\PP \big[ \mathsf{D} (\kappa) \big] \ge 1 - c^{-1} S^2 e^{- c\kappa},
\end{flalign}
where the event $\mathsf{D} (\kappa)$ is defined by
	\begin{flalign}
		\label{probabilityd}
		 \mathsf{D} (\kappa) =  \Big\{ \displaystyle\max_{X\in \mathbb{Z}} \Big| \h_0 (X; \bfeta) - \h_0 (X; \bfzeta) \Big| \le \displaystyle\frac{\kappa S^{2/3}}{3} \bigg\}.
	\end{flalign}
By the second part of \Cref{xizeta2}, we may couple $\bfeta_t$ and $\bfzeta_t$ such that
	\begin{flalign*}
 \textbf{1}_{\mathsf{D} (\kappa)} \displaystyle\sup_{t \ge 0} \displaystyle\max_{X \in \mathbb{Z}} \Big| \h_t (X; \bfeta) - \h_t (X; \bfzeta) \Big| \le \displaystyle\frac{\kappa S^{2/3}}{3},
	\end{flalign*}
holds almost surely for all $t>0$. Combined this with \eqref{hha}, along with the fact that $\big[- \frac{\varepsilon S}{4}, \frac{\varepsilon S}{4} \big] \subseteq [-(\varepsilon S - 4RT), \varepsilon S - 4RT]$ (as $T^{-1} S \ge \delta^{-2} \ge 4 \varepsilon^{-1} \delta^{-1} \ge 64 \varepsilon^{-1} R$) yields
\begin{flalign}\label{eqADmax}
\textbf{1}_{\mathsf{A}} \textbf{1}_{\mathsf{D} (\kappa)} \displaystyle\max_{|X/S| \le \varepsilon / 4} \Big| \h_T(X; \bfeta) - \h_{S+T} \big(\big\llbracket  X+\lfloor (1 - 2 \rho) S \rfloor,\lfloor (1 - 2 \rho) S \rfloor\big\rrbracket ; \bfxi\big) \Big| < \displaystyle\frac{\kappa S^{2/3}}{3}.
	\end{flalign}

Finally, let us define the event
\begin{align}\label{eqEevent}
		\mathsf{E} (\kappa) = \bigg\{ \displaystyle\max_{|X/T| \le \varepsilon / 4} \Big| & \h_{S + T} \big( \llbracket (1 - 2 \rho) S + X, (1 - 2 \rho) S\rrbracket ; \bfxi \big) \\
\nonumber		& \quad - T \displaystyle\int\limits_{X/S}^0 \Big( \rho + \displaystyle\frac{T}{2 (S + T)} (1 - 2 \rho - z) \Big) dz \Big| \le \displaystyle\frac{\kappa (S + T)^{2/3}}{12} \bigg\}.
\end{align}
From the $(U; X, Y) = \big( S + T, X+\lfloor (1 - 2 \rho) S \rfloor, \lfloor (1 - 2 \rho) S \rfloor\big)$ case of \eqref{hxizetau}, we have that there exists $c=c(\delta)>0$ such that for all $\kappa \in [12, 12(S+T)]$,
\begin{flalign}
\label{probabilitye}
\PP \big[ \mathsf{E} (\kappa) \big] \ge 1 - c^{-1} S^2 e^{ -c\kappa}.
\end{flalign}
In fact, when applying \eqref{hxizetau} we initially have $(S+T)$ on the right-hand side, but since $S\geq \delta^{-2}T$ by assumption, we can replace this by $S$ up to a $\delta$-dependent constant. Furthermore, in applying \eqref{hxizetau} we arrive at a slightly different form for the integral in $\mathsf{E} (\kappa)$, namely
	\begin{flalign*}
		(S + T) \displaystyle\int\limits_{(\lfloor (1 - 2 \rho) S \rfloor + X)/(S+T)}^{\lfloor (1 - 2 \rho) S \rfloor/(S+T)} \Big( \displaystyle\frac{1-w}{2}\Big) dw = T \displaystyle\int\limits_{X/T}^0 \Big( \rho + \displaystyle\frac{T}{2(S + T)} (1 - 2 \rho - z) \Big) dz + \textrm{Error}
	\end{flalign*}
where the equality is facilitated through the change of variables $z = T^{-1} (S + T)w - T^{-1} \lfloor (1 - 2 \rho) S \rfloor$) and the error (which comes from replacing $\lfloor (1 - 2 \rho) S \rfloor$ by $(1 - 2 \rho) S$ after the change of variables) is bounded in magnitude by $\frac{T}{2(S+T)}$. That error term can be absorbed, as in the case of $\mathsf{C}(\kappa)$ in \eqref{cprobability}, via the triangle inequality. This yields \eqref{probabilitye}.

Combining \eqref{eqADmax} with \eqref{eqEevent} and using Bonferroni's inequality (and the fact that under our assumptions $\kappa (S+T)^{2/3}/12 <\kappa S^{2/3}/6$) we see the first inequality below
\begin{align*}
\PP \bigg[ \displaystyle\max_{|X/S| \le \varepsilon / 4} \Big| \h_T (X; \bfeta) -& T \displaystyle\int\limits_{X/T}^0 \Big(\rho + \displaystyle\frac{T}{2 (S + T)} (1 - 2 \rho - z) \Big) dz \Big| \ge \displaystyle\frac{\kappa S^{2/3}}{2} \bigg] \\
		& \ge \PP [\mathsf{A}] + \PP \big[ \mathsf{D} (\kappa) \big] + \PP \big[ \mathsf{E} (\kappa) \big] - 2\geq 1 - c^{-1} S^2 e^{-c\kappa},
	\end{align*}
while the second (which holds for some $c=c(\delta)>0$) uses  \eqref{aevent}, \eqref{eqDbound}, and \eqref{probabilitye}.

\Cref{hetalinear} \eqref{hetaxilambdarholinear} involves a maximum over both $|X/S| \le \varepsilon / 4$ and $|Y/S| \le \varepsilon / 4$. This result follows from the above inequality by the triangle inequality and union bound.
\end{proof}

\section{Linear Trajectories of Second Class Particles and proof of \Cref{xtlimit}}
\label{Linear}

Recall from the beginning of \Cref{sec:intro} that $\ASEP_t=(\bfeta_t,\bfX_t)$ denotes ASEP started with first class particles at every site of $\mathbb{Z}_{\leq -1}$, a single second class particle started at the origin, and all other site empty.
Let $\ASEPF_s$ denote the $\sigma$-algebra generated by $\ASEP_t$ up to and including time $s$, for $s \in \mathbb{R}_{\ge 0}$. For any event $\mathsf{E}$, we will write $\PP[\mathsf{E}|\ASEP_s] := \EE[\mathbf{1}_{\mathsf{E}}|\ASEPF_s]$ for the conditional probability of $\mathsf{E}$ given $\ASEPF_s$.
In Section \ref{couple} we will prove the following.

\begin{prop}
\label{xti}
For any $S>2$ let $T=S (\log S)^{-1}$ and define the $\ASEPF_{S}$-measurable random variable $\bfrho_{S} \in \mathbb{R}$ by the relation $1 - 2 \bfrho_{S} = S^{-1} \bfX_{S}$,  the $\varepsilon$-dependent event
\begin{equation}\label{eq:hspsbdrho}
\mathsf{P}_S := \{\bfrho_{S} \in (\varepsilon, 1 - \varepsilon)\}
\end{equation}
 and the $\ASEPF_{S+T}$-measurable events
\begin{flalign*}
\mathsf{E}^{\geq}_{S} &:= \Big\{ \bfX_{S+T} - \bfX_{S} \ge (1 - 2 \bfrho_{S}) T - S^{1 -1/200} \Big\},\\
\mathsf{E}^{\leq}_{S} &:= \Big\{ \bfX_{S+T} - \bfX_{S} \le (1 - 2 \bfrho_{S}) T + S^{1 - 1/200} \Big\},
\end{flalign*}
and $\mathsf{E}_{S}:= \mathsf{E}^{\geq}_{S}\cap \mathsf{E}^{\leq}_{S}$.
Then, for any $\varepsilon \in ( 0, 1/4 )$, there exists $c = c (\varepsilon) > 0$ and a $\ASEPF_{S}$-measurable event $\mathsf{H}_{S}$ such that and all $S>2$ we have
\begin{equation}\label{eq:hspsbd}
\PP[\mathsf{P}_S\cap (\mathsf{H}_{S})^c] \leq c^{-1} e^{-c S^{1/12}}\qquad \textrm{and} \quad \PP[\mathsf{E}_{S} | \ASEPF_{S}] \ge (1 - c^{-1} S^{-1/5})\mathbf{1}_{\mathsf{H}_{S}\cap \mathsf{P}_S}.
\end{equation}
%
%
The constants $c=c(\varepsilon)$ can be chosen so as to weakly decrease as $\varepsilon$ decreases to 0.
%
%
\end{prop}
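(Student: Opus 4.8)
The plan is to realize the event $\mathsf{E}_S$ inside the coupling framework of \Cref{prop:Rez} combined with the effective hydrodynamics of \Cref{hetalinear}. Condition on $\ASEPF_S$ and work on the event $\mathsf{P}_S$, so that $\bfX_S = (1-2\bfrho_S)S$ with $\bfrho_S \in (\varepsilon,1-\varepsilon)$. I would first define the $\ASEPF_S$-measurable ``good'' event $\mathsf{H}_S$ to be the event that the occupation configuration $\bfeta_S$ is well-approximated by the hydrodynamic rarefaction profile $\Upsilon^{(1;0)}$ on the relevant spatial window around $\bfX_S$ --- quantitatively, that for all $X,Y$ with $|X/S|,|Y/S|\le 1-\varepsilon$ the height differences $\h_S(\llbracket X,Y\rrbracket;\bfeta)$ deviate from $S\int\Upsilon^{(1;0)}$ by at most, say, $S^{2/3+o(1)}$. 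By \Cref{hetaxi} applied to step initial data ($\rho=1$, where the error is even $T^{1/3}$) together with a union bound over the $O(S^2)$ choices of $(X,Y)$, the complement of $\mathsf{H}_S$ intersected with $\mathsf{P}_S$ has probability at most $c^{-1}e^{-cS^{1/12}}$, which gives the first inequality in \eqref{eq:hspsbd}. The precise exponent $S^{1/12}$ should come from choosing the deviation threshold in $\mathsf{H}_S$ to be $s_0 S^{2/3}$ with $s_0 \sim S^{1/12}$ and invoking the $c^{-1}Te^{-cs}$ tail.

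The heart is the second inequality. Working conditionally on $\ASEPF_S$ and on $\mathsf{H}_S \cap \mathsf{P}_S$, I would insert $N \asymp S^{1-\gamma}$ extra second class particles to the immediate left of $\bfX_S$ (on an interval of length $\asymp S^{1-\gamma}$ for small $\gamma>0$), chosen so that adding them changes the density profile from $\Upsilon^{(1;0)}$ to something matching a $\Phi^{(\rho)}_{\varepsilon;\beta}$- or $\Upsilon^{(\rho)}_\varepsilon$-type profile centered at $\bfX_S$; the good event $\mathsf{H}_S$ is exactly what guarantees enough holes are present to the left of $\bfX_S$ to legitimately place these particles. By \Cref{prop:Rez}, $\PP^{\bfeta_S,\bfX_S}[\bfX_{S+T}\le y] \le \frac1N\sum_j \PP[\bfZ_{S+T}(j)\le y]$, so controlling $\bfX_{S+T}-\bfX_S$ from below reduces to showing that all but $o(N)$ of the $\bfZ(j)$ have moved at least $(1-2\bfrho_S)T - S^{1-1/200}$. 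The number of added particles to the left of a moving reference point $\bfX_S + (1-2\bfrho_S)T - S^{1-1/200}$ at time $S+T$ is a height-function difference for the ASEP started from the perturbed profile, so \Cref{hetalinear}\eqref{hetaxilambdarholinear} (with $S$ there playing the role of our $S$, the time increment $T = S(\log S)^{-1}$, and noting $S\ge \delta^{-2}T$ holds for large $S$) bounds, with probability $1 - c^{-1}S^3 e^{-c\kappa}$, the deviation of that count from its hydrodynamic value by $\kappa S^{2/3}$. Since the hydrodynamic profiles for the perturbed and unperturbed data differ, on the window of width $O(S^{1-\gamma})$, by an amount of order $N\cdot(\text{slope difference})$, choosing the parameters so that this is $\ll N$ forces at most $o(N)$ particles to lag, giving $\PP[\mathsf{E}^{\ge}_S \mid \ASEPF_S] \ge (1-c^{-1}S^{-1/5})\mathbf{1}_{\mathsf{H}_S\cap\mathsf{P}_S}$. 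The upper bound $\mathsf{E}^{\le}_S$ follows symmetrically: either by the particle-hole symmetry of \Cref{rem:secondclassduality} (which swaps $\bfX_t \leftrightarrow -\bfX_{-t}$-type roles and exchanges the roles of $0$ and $1$, turning a lower bound into an upper bound), or by the same argument applied to second class particles inserted to the \emph{right} and using the mirror form of \Cref{prop:Rez}.

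The main obstacle I anticipate is the bookkeeping of \emph{scales}: reconciling the deviation bound $S^{1-1/200}$ demanded in $\mathsf{E}^{\ge}_S$, the error $\kappa S^{2/3}$ from \Cref{hetalinear}, the number $N\asymp S^{1-\gamma}$ of inserted particles, the width of the perturbation interval, and the slack $c^{-1}S^{-1/5}$ in the conditional probability, so that all the union bounds close and the exponents $1/12$, $1/200$, $1/5$ come out as stated. Concretely, one needs the $O(S^{1-\gamma})$-scale version of \Cref{hetalinear} to have error much smaller than $S^{1-1/200}$ while the number of lagging particles (bounded by error over the local density gap) is $o(N) = o(S^{1-\gamma})$, and one must also verify that the finite-speed-of-propagation step of \Cref{xizetaequal} does not spoil the matching on the relevant window, since $T = S(\log S)^{-1}$ and $4RT \ll S^{1-\gamma}$ fails --- so in fact one should localize to a window of width between $4RT$ and $\varepsilon S$, which is why the perturbation must be arranged carefully rather than on a truly $o(S)$ interval. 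A secondary technical point is ensuring that the Radon--Nikodym/coupling inside \Cref{prop:Rez} is compatible with conditioning on $\ASEPF_S$, i.e.\ that we may apply the proposition to the (random, $\ASEPF_S$-measurable) configuration $(\bfeta_S,\bfX_S)$; this is fine because \Cref{prop:Rez} holds for arbitrary deterministic initial data and we integrate over the conditional law.
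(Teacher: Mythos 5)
Your proposal follows essentially the same route as the paper: perturb the configuration at time $S$ by adding $\asymp S^{1-\gamma}$ second class particles on an $O(S^{1-\gamma})$ interval left of $\bfX_S$ so that the local density becomes flat, invoke \Cref{prop:Rez} to dominate $\bfX_{S+T}$ by a uniformly chosen added particle, control the count of lagging particles via the effective hydrodynamics of \Cref{hetalinear} together with monotonicity and finite speed of propagation, and dispatch $\mathsf{E}^{\le}_S$ by particle-hole symmetry. The only cosmetic difference is that you define $\mathsf{H}_S$ directly as hydrodynamic closeness of $\bfeta_S$, whereas the paper phrases it as closeness of the perturbed configurations to product-measure comparison processes, but these encode the same information.
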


The following is a corollary of \Cref{xti}.
\begin{cor}\label{cor:almostthere}
Define
$$
U^{\inf} = \liminf_{t\to \infty} \frac{\bfX_t}{t},\quad U^{\sup} = \limsup_{t\to \infty} \frac{\bfX_t}{t}, \quad
\mathsf{L}_{\varepsilon} = \big\{ |U^{\inf}-U^{\sup}|<\varepsilon\big\}.
$$
Then, there exists $c>0$ such that for all $\varepsilon\in (0,1/4)$,
$
\PP[\mathsf{L}_{\varepsilon} ]>1-c\varepsilon.
$
\end{cor}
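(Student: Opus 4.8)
The plan is to deduce \Cref{cor:almostthere} from \Cref{xti} by a Borel--Cantelli argument along a carefully chosen sequence of times $S_n$, combined with a Poisson-type bound controlling how much $\bfX_t$ can wander at intermediate times. As flagged in the introduction's proof sketch, the key is that doubling time scales would let the velocity drop by $\varepsilon$ on each increment and compound, so one should instead take a slowly growing sequence such as $S_{n+1} = S_n + S_n/\log S_n$ (so that $T_n = S_n/\log S_n$ is exactly the time increment appearing in \Cref{xti}), which grows like $e^{c\sqrt n}$ and keeps the per-step fluctuation $S_n^{1-1/200}$ summable relative to $S_n$ after telescoping.

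First I would fix $\varepsilon \in (0,1/4)$ and set $S_0$ large, defining $S_{n+1} = S_n + T_n$ with $T_n = S_n(\log S_n)^{-1}$. For each $n$, apply \Cref{xti} with $S = S_n$ to get the events $\mathsf{P}_{S_n}$, $\mathsf{H}_{S_n}$, $\mathsf{E}_{S_n}$. On $\mathsf{P}_{S_n}$, the quantity $1-2\bfrho_{S_n} = \bfX_{S_n}/S_n$ lies in $(\varepsilon,1-\varepsilon)$, and on $\mathsf{E}_{S_n}$ we have $|\bfX_{S_{n+1}} - \bfX_{S_n} - (\bfX_{S_n}/S_n)T_n| \le S_n^{1-1/200}$, which rearranges to say that the velocity at time $S_{n+1}$ differs from that at time $S_n$ by $O(S_n^{-1/200}\log S_n)$ — summable in $n$. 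Using the second bound in \eqref{eq:hspsbd}, $\PP[\mathsf{E}_{S_n}^c \cap \mathsf{H}_{S_n}\cap\mathsf{P}_{S_n} \mid \ASEPF_{S_n}] \le c^{-1}S_n^{-1/5}$; summing over $n$ (the $S_n$ grow fast enough that $\sum S_n^{-1/5}<\infty$, being essentially $\sum e^{-c\sqrt n/5}$), Borel--Cantelli gives that almost surely, eventually either $\mathsf{P}_{S_n}$ fails or both $\mathsf{H}_{S_n}$ and $\mathsf{E}_{S_n}$ hold; likewise $\sum \PP[\mathsf{P}_{S_n}\cap\mathsf{H}_{S_n}^c] < \infty$ so $\mathsf{H}_{S_n}$ holds eventually on $\mathsf{P}_{S_n}$. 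The upshot: almost surely, on the event that $\bfX_{S_n}/S_n \in (\varepsilon,1-\varepsilon)$ for infinitely many $n$, the sequence $\bfX_{S_n}/S_n$ is eventually Cauchy — hence convergent, with both $U^{\inf}$ and $U^{\sup}$ (along this subsequence) within a tiny window.

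Next I would control the intermediate times $t \in [S_n, S_{n+1}]$. The second class particle moves at rate at most $R+L$, so $|\bfX_t - \bfX_{S_n}|$ is stochastically dominated by a Poisson random variable of mean $(R+L)T_n$; a standard tail bound gives $\PP[\sup_{t\in[S_n,S_{n+1}]}|\bfX_t - \bfX_{S_n}| > s] \le e^{-cs}$ for $s \gg T_n$, and since $t \ge S_n$ this translates to a fluctuation of at most $O(T_n/S_n) = O((\log S_n)^{-1}) \to 0$ in velocity units, again with summable failure probability. Combining: almost surely, either $\bfX_t/t$ fails to enter $(\varepsilon, 1-\varepsilon)$ along the subsequence infinitely often — but then by the intermediate-time bound $\limsup_t \bfX_t/t$ and $\liminf_t \bfX_t/t$ cannot straddle, say, the middle third of $(\varepsilon,1-\varepsilon)$, one has to be a bit careful here and argue that $U^{\inf}, U^{\sup} \notin (\varepsilon,1-\varepsilon)$ or they are close — or $\bfX_{S_n}/S_n \in (\varepsilon,1-\varepsilon)$ i.o. and then the Cauchy conclusion above plus the intermediate bound forces $|U^{\sup} - U^{\inf}| < \varepsilon$. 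To tidy the dichotomy, I would also run the symmetric argument using particle--hole symmetry (\Cref{rem:secondclassduality}) to handle $\bfrho_{S_n}$ near the endpoints: the complementary events where $\bfX_{S_n}/S_n$ is never in $(\varepsilon,1-\varepsilon)$ eventually force $U^{\sup} \le 1-\varepsilon$ is impossible... actually the clean statement is that $\PP[\mathsf{L}_\varepsilon^c]$ is bounded by the probability that $U^{\sup} - U^{\inf} \ge \varepsilon$, which by the above can only happen if the trajectory's asymptotic velocity window is never sampled in $(\varepsilon, 1-\varepsilon)$, and a short argument bounds the latter probability by $O(\varepsilon)$ using \Cref{xt1} (the distributional limit) applied at the endpoints — giving $\PP[\mathsf{L}_\varepsilon] > 1 - c\varepsilon$.

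\textbf{The main obstacle} I expect is the dichotomy handling of the case where $\bfrho_{S_n}$ (equivalently $\bfX_{S_n}/S_n$) drifts toward $0$ or $1$, i.e. where $\mathsf{P}_{S_n}$ fails: \Cref{xti} gives no control there, so one must argue separately that this forces the asymptotic velocity to be pinned near $\pm 1$ (hence $U^{\inf}$ and $U^{\sup}$ are automatically close, being squeezed against the boundary), and that the "bad" set where the window $(U^{\inf}, U^{\sup})$ somehow avoids $(\varepsilon,1-\varepsilon)$ entirely yet has width $\ge \varepsilon$ has probability $O(\varepsilon)$. This is where invoking \Cref{xt1} is essential: it pins down $\PP[\bfX_t/t \le 1-2\varepsilon] \to \varepsilon$ and $\PP[\bfX_t/t \ge 1 - 2(1-\varepsilon)] = \PP[\bfX_t/t \ge -1+2\varepsilon] \to 1-\varepsilon$, so the probability that $\bfX_t/t$ spends its asymptotic life outside $(-1+2\varepsilon', 1-2\varepsilon')$ for suitable $\varepsilon'$ comparable to $\varepsilon$ is $O(\varepsilon)$, and on the complement the subsequence argument applies. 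The rest — telescoping the $S_n^{1-1/200}$ errors, the Poisson intermediate bound, the Borel--Cantelli summability — is routine given the exponential/polynomial decay in \eqref{eq:hspsbd}.
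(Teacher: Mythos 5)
Your overall architecture matches the paper's: the same time sequence $S_{n+1}=S_n+S_n/\log S_n$, the same use of \Cref{xti} at each $S_n$, the same Poisson bound for intermediate times, and the same invocation of \Cref{xt1} to pay an $O(\varepsilon)$ cost for the velocity being near the edge of the fan. But the step you yourself flag as ``the main obstacle'' is a genuine gap, and your proposed resolution does not close it. The problem is the case where $\mathsf{P}_{S_n}$ fails at some times but holds at infinitely many others. Your Borel--Cantelli conclusion (``eventually, either $\mathsf{P}_{S_n}$ fails or $\mathsf{H}_{S_n}\cap\mathsf{E}_{S_n}$ holds'') gives you control of the velocity increment only at those $n$ where $\mathsf{P}_{S_n}$ holds. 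During an excursion out of the good region you have only the brutal Poisson bound, which allows a velocity change of order $1/\log S_n$ per step; since $\sum_n 1/\log S_n = \sum_n n^{-1/2}=\infty$, the velocity can drift by an arbitrary amount over a long excursion before re-entering the good region. So ``$\bfrho_{S_n}\in(\varepsilon,1-\varepsilon)$ i.o.'' does not force the sequence $\bfX_{S_n}/S_n$ to be Cauchy, and the complementary event is not simply ``pinned near the boundary'' either (the velocity could oscillate between $1-2\varepsilon$ and $2R$, say). Note also a small but relevant confusion: $\mathsf{P}_S$ puts $\bfrho_S$, not $\bfX_S/S$, in $(\varepsilon,1-\varepsilon)$; the good velocity window is $(-1+2\varepsilon,\,1-2\varepsilon)$ and its complement is two-sided.

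The missing idea, which is how the paper resolves this, is to make the $\mathsf{P}$ events at times $S_m$, $m\ge 1$, \emph{deterministic consequences} of the earlier good events rather than additional probabilistic requirements. One applies \Cref{xti} at time $S_m$ with a slightly smaller parameter $\varepsilon_m=\varepsilon_{m-1}-S_{m-1}^{-1/200}$; then $\mathsf{P}^{\varepsilon_{m-1}}_{S_{m-1}}\cap\mathsf{E}_{S_{m-1}}$ implies $\mathsf{P}^{\varepsilon_m}_{S_m}$ outright, because the event $\mathsf{E}_{S_{m-1}}$ bounds $|\bfrho_{S_m}-\bfrho_{S_{m-1}}|$ by exactly $\varepsilon_{m-1}-\varepsilon_m$, and $\sum_m S_m^{-1/200}<\varepsilon/4$ keeps all the $\varepsilon_m$ bounded below. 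Consequently the only probabilistic cost for the $\mathsf{P}$ events is $\PP[(\mathsf{P}^{\varepsilon_0}_{S_0})^c]\le 3\varepsilon$ at the initial time (via \Cref{xt1}), and the remaining failure probabilities of $\mathsf{H}_{S_m}$ and $\mathsf{E}_{S_m}$ are summed directly (a union bound on nested events, not Borel--Cantelli, which is what yields the quantitative $1-c\varepsilon$ rather than just an almost-sure eventual statement). On the resulting good event the velocity never leaves a slightly enlarged window, every increment is controlled, and the telescoped drift plus the intermediate-time wiggle is at most $\varepsilon$. Without this nesting, your dichotomy between ``good i.o.'' and ``eventually bad'' cannot be completed as written.
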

Before proving this, let us see how this readily implies \Cref{xtlimit}.
\begin{proof}[Proof of \Cref{xtlimit}]
Observe that for $\varepsilon'<\varepsilon$, $\mathsf{L}_{\varepsilon}\subseteq \mathsf{L}_{\varepsilon'}$. In other words, as $\varepsilon\to 0$ the events increase. Their intersection $\mathsf{L}=\cap_{\varepsilon\in (0,1/4)} \mathsf{L}_{\varepsilon}$ is equal to the event that $U^{\inf}=U^{\sup}$ which is exactly the event that $\lim_{t\to \infty} \frac{\bfX_t}{t}$ exists. By the aforementioned containment and the bound $\PP[\mathsf{L}_{\varepsilon} ]>1-c\varepsilon$ from \Cref{cor:almostthere} we see that $\PP[\mathsf{L}] = \lim_{\varepsilon\to 0} (1-c\varepsilon) = 1$, thus proving the almost limit, as desired.
\end{proof}

It remains to show how \Cref{cor:almostthere} follows from \Cref{xti}. The idea is to work with a set of times $S_m$ that grows so that $S_{m+1}= S_m + S_m/\log S_m$. Taking the first time $S_0$ large enough with probability like $1-2\varepsilon$ we have that $\bfrho_{S_0}= (1-S_0^{-1} \bfX_{S_0})/2$ lies within $(\varepsilon,1-\varepsilon)$ -- this is the event $\mathsf{P}_{S_0}$. From \Cref{xti} there exists a hydrodynamic event $\mathsf{H}_{S_0}$ which is exponentially likely on the event $\mathsf{P}_{S_0}$ such that on $\mathsf{P}_{S_0}$ and $\mathsf{H}_{S_0}$, the event $\mathsf{E}_{S}$ holds with probability like $1-c^{-1} S^{-1/5}$. On the event  $\mathsf{E}_{S}$, we can bound how much $\bfrho_{S_1}$ and $\bfrho_{S_0}$ can differ to be like $S_0^{-1/200}$. Then, we can iterate on each subsequent time $S_1$, $S_2$ and so on. Since the $S_m$ grow like $e^{\sqrt{m}}$, the total change in the $\bfrho_S$ as well as the total probabilistic error built up over each iteration can be made arbitrarily small. This shows that on the sequence of times $S_m$ we can show the claim of \Cref{cor:almostthere}. For intermediate times, we use a brutal Poissonian bound on the motion of ASEP particles to show that wandering cannot change the velocity much there either.

Before proving \Cref{cor:almostthere} we introduce the set of times involved in our multi-scale argument and some properties of functions of those times.

\begin{definition}\label{ti}
For any $S_0 \in \mathbb{R}_{\ge 2}$ define $T_m, S_m \in \mathbb{R}_{> 0}$ inductively as follows. For each $m\in\mathbb{Z}_{\ge 1}$, set $T_{m - 1} = T(S_{m-1})$ where $T(S):= S (\log S)^{-1}$ and set $S_m = S_{m - 1} + T_{m - 1}$.
We will make use of the following two properties of $T(S)$:
\begin{itemize}
\item[P$_1$] The function $S\mapsto S+T(S)$ is increasing for $S\geq 2$.
\item[P$_2$] $T(S)$  has a unique minimum for $S\geq 2$ at $S=e$ in which case $T(e) = e$  and $T(S)$ is increasing for $S>e$.
\end{itemize}
\end{definition}

The following lemma provides a lower bound on each $S_m$. It may be helpful to note that the recursion for $S_m$ is a discrete version of solving the differential equation $dS(m)/dm = S(m)/\log S(m)$ with $S(0)=S_0$, whose solution is $S(m)=\exp(\sqrt{2m+\log(S_0)^2})$.

\begin{lem}
	\label{rti}
	For each  $m\in\mathbb{Z}_{\geq 1}$, we have that $S_m \ge e^{\sqrt{m}}$. Moreover, for any real $\delta > 0$ and $\vartheta > 0$, there exists $D = D(\delta, \vartheta) > 1$ such that if $S_0>D$, then
\begin{flalign*}
  \textrm{{\bf(a)}}\, \sum_{m = 0}^{\infty} S_m^{-\vartheta} < \delta,\qquad \textrm{{\bf(b)}}\, \sum_{m = 0}^{\infty} e^{-\vartheta S_m} < \delta,\qquad  \textrm{{\bf(c)}}\, \sum_{m = 0}^{\infty} e^{-\vartheta T_m} < \delta.
\end{flalign*}
%
\end{lem}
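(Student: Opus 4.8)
\textbf{Proof plan for Lemma~\ref{rti}.}
The first claim, $S_m \ge e^{\sqrt m}$, I would prove by induction on $m$, comparing the recursion $S_m = S_{m-1} + S_{m-1}/\log S_{m-1}$ to the solution of the ODE $dS/dm = S/\log S$. Concretely, set $f(S) = (\log S)^2$; then along the ODE, $df/dm = 2$, so $f(S(m)) = f(S_0) + 2m$, i.e. $S(m) = \exp\!\big(\sqrt{2m + (\log S_0)^2}\big)$, which is at least $e^{\sqrt{2m}} \ge e^{\sqrt m}$. For the discrete recursion, I would check that $(\log S_m)^2 \ge (\log S_{m-1})^2 + 1$ for $S_{m-1} \ge 2$: writing $S_m = S_{m-1}(1 + 1/\log S_{m-1})$ and using $\log(1+x) \ge x - x^2/2$ with $x = 1/\log S_{m-1} \le 1/\log 2$, one gets $\log S_m \ge \log S_{m-1} + 1/\log S_{m-1} - c_0/(\log S_{m-1})^2$ for an explicit $c_0$, whence $(\log S_m)^2 \ge (\log S_{m-1})^2 + 2 - (\text{lower order, still} \ge 1)$ once $S_{m-1}$ is bounded below — and the base case $S_0 \ge 2$ starts the induction (one may need to verify the first few steps by hand, or simply note $(\log S_0)^2 \ge 0$ and that the per-step increment of $(\log S)^2$ stays $\ge 1$). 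Telescoping gives $(\log S_m)^2 \ge m$, i.e. $S_m \ge e^{\sqrt m}$.

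For parts (a)--(c), the key observation is that $S_m$ is increasing (by P$_1$, since $S_m = S_{m-1} + T(S_{m-1})$ and $S \mapsto S + T(S)$ is increasing, the sequence $S_m$ is monotone in $m$ and also monotone in $S_0$), so it suffices to control the tail of each series when $S_0$ is large. Since $S_m \ge e^{\sqrt m}$ for all $m \ge 1$ \emph{regardless of $S_0$}, each of the three series converges: for (a), $\sum_m S_m^{-\vartheta} \le S_0^{-\vartheta} + \sum_{m \ge 1} e^{-\vartheta \sqrt m} < \infty$; for (b) and (c), using additionally $T_m = T(S_m) \ge T(S_0)$ is increasing (by P$_2$, for $S_0 > e$), $\sum_m e^{-\vartheta S_m} \le \sum_m e^{-\vartheta e^{\sqrt m}} < \infty$ and similarly $\sum_m e^{-\vartheta T_m} \le \sum_m e^{-\vartheta T(e^{\sqrt m})} < \infty$ since $T(e^{\sqrt m}) = e^{\sqrt m}/\sqrt m \to \infty$. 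Having established convergence, I would then make the sums smaller than $\delta$ by taking $S_0$ large: split each series as the $m = 0$ term plus the tail; the tail is bounded by a convergent series \emph{independent of $S_0$}, which is not quite enough on its own, so instead I would use that $S_0 \mapsto S_m$ is increasing for every $m$ together with $S_m \to \infty$ as $S_0 \to \infty$ (for each fixed $m$), and dominated convergence for series: choose $M$ so the $m > M$ tail (bounded via $S_m \ge e^{\sqrt m}$) is $< \delta/2$, then choose $D$ so large that for $S_0 > D$ the finite sum over $0 \le m \le M$ is $< \delta/2$ (each of these finitely many terms tends to $0$ as $S_0 \to \infty$). This yields $D = D(\delta, \vartheta)$ as required.

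The main obstacle I anticipate is the induction step for $(\log S_m)^2 \ge (\log S_{m-1})^2 + 1$: one must be careful that the lower-order correction coming from expanding $\log(1 + 1/\log S_{m-1})$ does not eat into the ``$+2$'' enough to drop below ``$+1$'' when $S_{m-1}$ is only moderately large (close to $2$). A clean way around this is to not insist on the sharp ODE constant: it is enough to show $(\log S_m)^2 - (\log S_{m-1})^2 \ge 1$, and since $\log S_m - \log S_{m-1} = \log(1 + 1/\log S_{m-1}) \ge \tfrac{1}{\log S_{m-1}}\big(1 - \tfrac{1}{2\log S_{m-1}}\big) \ge \tfrac{1}{2\log S_{m-1}}$ for $S_{m-1} \ge e$, multiplying by $(\log S_m + \log S_{m-1}) \ge 2\log S_{m-1}$ gives the increment $\ge 1$ directly; the finitely many indices with $2 \le S_{m-1} < e$ (in fact at most one, by P$_2$) can be checked by hand since $S_1 \ge S_0 + T(S_0) \ge 2 + e/\log e = 2 + e > e$ already, so the sharp regime is entered immediately. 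Everything else is a routine comparison-of-series argument.
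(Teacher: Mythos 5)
Your proposal is correct and follows essentially the same route as the paper: an induction showing the discrete recursion tracks the ODE $dS/dm = S/\log S$ (the paper inducts directly on $S_m \ge e^{\sqrt{m}}$ via $e^{\sqrt{k}}(1+k^{-1/2}) \ge e^{\sqrt{k+1}}$, while you equivalently induct on the increment $(\log S_m)^2 - (\log S_{m-1})^2 \ge 1$ and telescope), followed by the same domination $S_m \ge \max(S_0, e^{\sqrt{m}})$ (and $T_m \ge \max(T(S_0), e^{\sqrt{m}}/\sqrt{m})$) together with a dominated-convergence/tail-splitting argument in $S_0$ for (a)--(c). Your handling of the base case and of the single index with $S_{m-1} < e$ is sound, so no gaps.
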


\begin{proof}
We establish the first statement of the lemma (that $S_m \ge e^{\sqrt{m}}$) by induction on $m$. The base case $m=1$ is verified by using (P$_1$) and (P$_2$) to see that $S+T(S)$ is minimal at $S=2$ and exceeds $2+e\geq e^{\sqrt{1}}$ there. To show the induction in $m$, assume that $S_m \ge e^{\sqrt{m}}$ holds for $m = k$, for some $k\in\mathbb{Z}_{\geq 1}$. Then the induction follows from the inequalities
	\begin{flalign*}
	S_{k + 1} = S_k + T_k \ge e^{\sqrt{k}} ( 1 + k^{-1 / 2}) \ge e^{\sqrt{k + 1}}.
	\end{flalign*}
The first equality is by definition; the next inequality uses P$_1$ and the $k=m$ induction hypothesis; the final inequality follows since
$
\exp \big( (k + 1)^{1 / 2} - k^{1 / 2} \big) \le \exp \big(\frac{1}{2 k^{1 / 2}}\big) \le 1 + k^{-1 / 2}.
$
Here, the first inequality relies upon writing $(k+1)^{1/2}-k^{1/2} = k^{1/2}(1+k^{-1})^{1/2}-k^{1/2}$ and the inequalities $(1+x)^{1/2} <1+x/2$ and $x<e^x$ (both  for $x>0$); the second inequality is equivalent to $(2 k^{1/2})^{-1} \leq \log(1+k^{-1/2})$ which follows from $x/2 <\log (1+x)$ for $x\in (0,1)$.

Turning to {\bf (a)} and {\bf (b)}, observe that we now know that $S_m\geq \max(S_0,e^{\sqrt{m}})$. Thus
$$
 \sum_{m = 0}^{\infty} S_m^{-\vartheta} \leq  \sum_{m = 0}^{\infty} \min\big(S_0^{-\vartheta},e^{-\sqrt{m}\vartheta}\big)\qquad
 \sum_{m = 0}^{\infty} e^{-\vartheta S_m} \leq \sum_{m = 0}^{\infty} e^{-\vartheta\min(S_0,e^{\sqrt{m}})}.
$$
In both of these expressions it is clear that as $S_0$ goes to infinity, each summand goes to zero. Additionally, if we drop the $S_0$ term each summation is finite. Hence, by the dominated convergence theorem, each summation goes to zero as $S_0$, and thus taking $S_0$ large enough we can upper bound each sum by $\delta$ as desired. The argument for {\bf (c)} follows similarly. Since $S_0\geq 2$, combining (P$_1$) and (P$_2$) we also see that for $m\in\mathbb{Z}_{\geq 1}$, $T_m \geq S_0(\log S_0)^{-1}$. On the other hand, we also know that the function $S\mapsto T(S)$ monotonically increases as $S$ increases and thus, by the first part of the lemma which gives $S_m\geq e^{\sqrt{m}}$, we have that $T_m =T(S_m)\geq T(e^{\sqrt m}) =e^{\sqrt{m}}/\sqrt{m}$. Using $T_m \geq \max\big(S_0(\log S_0)^{-1},e^{\sqrt{m}}/\sqrt{m}\big)$ and the dominated convergence theorem yields {\bf (c)}.
\end{proof}

\begin{proof}[Proof of \Cref{cor:almostthere}]
For the duration of this proof let $\mathsf{P}^{\varepsilon}_S$ and $\mathsf{H}^{\varepsilon}_S$ denote the events $\mathsf{P}_S$, and $\mathsf{H}_S$ coming from a particular value of $\varepsilon$ (this dependence was implicit in the notation used elsewhere). For a given $S_0>2$ and $\varepsilon_0\in (0,1/4)$, define recursively for $m\in \Z_{\geq 1}$
$$
\varepsilon_m = \varepsilon_{m-1} - S_{m-1}^{-1/200}.
$$
For a given $\varepsilon\in (0,1/4)$, it follows from \Cref{rti} that there exists $D=D(\varepsilon)>0$ such that for all $S_0>D$
\begin{equation}\label{eq:threeeqs}
\sum_{m=0}^{\infty} S_m^{-1/200} < \varepsilon/4, \quad \sum_{m=0}^{\infty} c^{-1} S_m^{-1/5} < \varepsilon/2, \quad \sum_{m=0}^{\infty} c^{-1} e^{-c S_m^{1/12}} < \varepsilon/2,\quad\sum_{m=0}^{\infty} e^{-T_m} < \varepsilon/2.
\end{equation}
where $c=c(\varepsilon)$ is given by \Cref{xti}.
For $k\in \Z_{\geq 0}$ define the event
\begin{equation}
\widetilde{\mathsf{L}}^{\varepsilon_0}_{S_0}(k)=\bigcap_{m=0}^{k-1} \mathsf{P}_{S_m}^{\varepsilon_m} \cap \mathsf{H}_{S_m}^{\varepsilon_m} \cap \mathsf{E}_{S_m}
\end{equation}
with the convention that $\widetilde{\mathsf{L}}^{\varepsilon}_{S_0}(0)=\Omega$, the full sample space, and that
$\widetilde{\mathsf{L}}^{\varepsilon}_{S_0}:=\widetilde{\mathsf{L}}^{\varepsilon}_{S_0}(\infty)$ is the infinite intersection. We make two claims:

\smallskip
\noindent {\bf Claim 1:} For all $\varepsilon\in (0,1/4)$ there exists $D=D(\varepsilon)>0$ so that for all $S_0>D$,
\begin{equation}\label{eq:Lcvarep}
\PP[\widetilde{\mathsf{L}}^{\varepsilon}_{S_0}]\geq 1-4 \varepsilon.
\end{equation}

\smallskip
\noindent {\bf Claim 2:} Let $\mathsf{W}^{\varepsilon}_{S,S'} = \big\{\sup_{S\leq s<s'\leq S'}|\bfX_{s}/s-\bfX_{s'}/s'|> \varepsilon/2\}$. For all $\varepsilon\in (0,1/4)$ there exists $D>D(\varepsilon)>0$ so that for all $S_0>D$,
\begin{equation}\label{eq:Wvareps}
\sum_{m=0}^{\infty} \PP[\mathsf{W}^{\varepsilon}_{S_m,S_{m+1}}]< \varepsilon.
\end{equation}

Assuming these  claims, let us complete the proof of \Cref{cor:almostthere}. Assume that $\varepsilon=\varepsilon_0\in (0,1/4)$ is given and $D=D(\varepsilon)>0$ is suitably large so that for all $S_0>D$, \eqref{eq:threeeqs}, \eqref{eq:Lcvarep} and \eqref{eq:Wvareps} hold. This implies that
\begin{equation}\label{eq:LWcap}
\widetilde{\mathsf{L}}^{\varepsilon}_{S_0}\cap \bigcap_{m=0}^{\infty} \left(\mathsf{W}^{\varepsilon}_{S_m,S_{m+1}}\right)^c
\end{equation}
holds with probability at least $1-5\varepsilon$. Assume below that this event \eqref{eq:LWcap} holds.

On the event $\mathsf{E}_{S_m}$, we have that
\begin{equation}\label{eq:rhodiff}
\left|\frac{\bfX_{S_{m+1}}}{S_{m+1}}-\frac{\bfX_{S_{m}}}{S_{m}}\right| \leq S_m^{-1/200}.
\end{equation}
By \eqref{eq:threeeqs}, the right-hand side summed over $m\in \Z_{\geq 0}$ is bounded above by $\varepsilon/4$. Thus, on  the event in \eqref{eq:LWcap} it follows that
$$
\sup_{m,m'\in \Z_{\geq 0}} \left|\frac{\bfX_{S_{m}}}{S_{m}}-\frac{\bfX_{S_{m'}}}{S_{m'}}\right| \leq \varepsilon/2.
$$
This controls the maximal change in $\bfX_S/S$ on the set of times $S_0,S_1,\ldots$. This is complemented by the control on intermediate wiggling that is afforded to us by  the intersection of the events $\left(\mathsf{W}^{\varepsilon}_{S_m,S_{m+1}}\right)^c$. Combined, this implies that on the event in \eqref{eq:LWcap}
$$
\sup_{s,s'\geq D} \left|\frac{\bfX_{s}}{s}-\frac{\bfX_{s'}}{s'}\right| \leq \varepsilon
.
$$
This implies that on the event in \eqref{eq:LWcap},  $U^{\inf}$ and $U^{\sup}$ differ by at most $\varepsilon$. Since the probability of the event in \eqref{eq:LWcap} is at least $1-5\varepsilon$, \Cref{cor:almostthere} follows.

What remains is to prove the two claims from above.

\noindent{\bf Proof of Claim 1.}
Observe that
\begin{flalign*}
\PP[\widetilde{\mathsf{L}}^{\varepsilon_0}_{S_0}] =& \PP[\mathsf{P}^{\varepsilon_0}_{S_0}] -
\PP[\mathsf{P}^{\varepsilon_0}_{S_0}\cap(\mathsf{H}^{\varepsilon_0}_{S_0})^c ]-
\PP[\mathsf{P}^{\varepsilon_0}_{S_0}\cap \mathsf{H}^{\varepsilon_0}_{S_0}\cap (\mathsf{E}^{\varepsilon_0}_{S_0})^c ]-\sum_{k=1}^{\infty} \PP[\widetilde{\mathsf{L}}^{\varepsilon_k}_{S_k}\cap (\mathsf{P}^{\varepsilon_k}_{S_k})^{c}]\\
&
-\sum_{k=1}^{\infty} \PP[\widetilde{\mathsf{L}}^{\varepsilon_k}_{S_k}\cap \mathsf{P}^{\varepsilon_k}_{S_k}\cap (\mathsf{H}^{\varepsilon_k}_{S_k})^c]
-\sum_{k=1}^{\infty} \PP[\widetilde{\mathsf{L}}^{\varepsilon_k}_{S_k}\cap \mathsf{P}^{\varepsilon_k}_{S_k}\cap \mathsf{H}^{\varepsilon_k}_{S_k}\cap (\mathsf{E}^{\varepsilon_k}_{S_k})^c].
\end{flalign*}
Observe that
$
\PP[\mathsf{P}_{S_0}^{\varepsilon_0}]> 1-3\varepsilon_0
$
provided $S_0$ is large enough  (as follows from the weak convergence of $\bfrho$ to a $U[0,1]$ random variable via \Cref{xt1}).
Observe now that for any $k\geq 1$,
$\PP[\widetilde{\mathsf{L}}^{\varepsilon}_{S_k}\cap (\mathsf{P}^{\varepsilon_k}_{S_k})^{c}]=0$. This is because the combination of the event $\mathsf{P}^{\varepsilon_{k-1}}_{S_{k-1}}$ and $\mathsf{E}^{\varepsilon_{k-1}}_{S_{k-1}}$ implies the event $\mathsf{P}^{\varepsilon_k}_{S_k}$ (this follows from \eqref{eq:rhodiff} which shows that $|\bfrho_k-\bfrho_{k-1}|\leq S_{k-1}^{-1/200}= \varepsilon_{k-1}-\varepsilon_{k}$).
Observe that for any $k\geq 0$,
$$
\PP\big[\widetilde{\mathsf{L}}^{\varepsilon_k}_{S_k}\cap \mathsf{P}^{\varepsilon_k}_{S_k}\cap (\mathsf{H}^{\varepsilon_k}_{S_k})^c\big] \leq \PP\big[\mathsf{P}^{\varepsilon_k}_{S_k}\cap (\mathsf{H}^{\varepsilon_k}_{S_k})^c\big]\leq c^{-1} e^{-c S_k^{1/12}}
$$
where the constant $c=c(\varepsilon_0)>0$ can be chosen the same for all $k$ (as follows from the final statement in \Cref{xti}).
Similarly observe that for any $k\geq 0$,
$$
\PP\big[\widetilde{\mathsf{L}}^{\varepsilon_k}_{S_k}\cap \mathsf{P}^{\varepsilon_k}_{S_k}\cap \mathsf{H}^{\varepsilon_k}_{S_k}\cap (\mathsf{E}^{\varepsilon_k}_{S_k})^c\big]=
\EE\Big[\mathbf{1}_{\widetilde{\mathsf{L}}^{\varepsilon_k}_{S_k}}\mathbf{1}_{ \mathsf{P}^{\varepsilon_k}_{S_k}\cap \mathsf{H}^{\varepsilon_k}_{S_k}} \EE\big[\mathbf{1}_{ (\mathsf{E}^{\varepsilon_k}_{S_k})^c}|\ASEPF_{S_k}\big]\Big]\leq c^{-1} S_k^{-1/5}
$$
where, as above, the constant $c=c(\varepsilon_0)>0$ can be chosen the same for all $k$.
The first equality is evident from conditional expectations, while the second relies on the equality
$
\mathbf{1}_{ \mathsf{P}^{\varepsilon_k}_{S_k}\cap \mathsf{H}^{\varepsilon_k}_{S_k}} \EE\big[\mathbf{1}_{ (\mathsf{E}^{\varepsilon_k}_{S_k})^c}|\ASEPF_{S_k}\big] =
\mathbf{1}_{ \mathsf{P}^{\varepsilon_k}_{S_k}\cap \mathsf{H}^{\varepsilon_k}_{S_k}} \big(1 - \EE\big[\mathbf{1}_{\mathsf{E}^{\varepsilon_k}_{S_k}}|\ASEPF_{S_k}\big]\big)
$
along with the second inequality in \eqref{eq:hspsbd} and the final statement in \Cref{xti}.

Putting together the above deductions we see that
$$
\PP[\widetilde{\mathsf{L}}^{\varepsilon_0}_{S_0}] \geq 1- 3\varepsilon_0 - \sum_{k=0}^{\infty} c^{-1} e^{-c S^{1/12}}-\sum_{k=0}^{\infty} c^{-1} S_k^{-1/5}\geq 1-4\varepsilon_0
$$
by the second and third inequalities in \eqref{eq:threeeqs}. This proves Claim 1.

\smallskip
\noindent{\bf Proof of Claim 2.}
We start by noting a brutal Poisson process bound on the second class particle. Recall that this particle moves left into an unoccupied site at rate $L$, and left into a site occupied by a first class particle at rate $R$ (this is the rate at which the first class particle moves right and switches places with the second class particle). Since $R>L$ by assumption, this implies that we can lower-bound the trajectory of $\bfX_S$ by a Poisson random walk that jumps to left at rate $L+R\leq 2R$. By similar reasoning, we can upper-bound $\bfX_S$ by another Poisson random walk that jumps to the right at rate $L+R\leq 2R$. Recall that for a Poisson$(\lambda)$ random variable $\bfZ$, if $x>\lambda$ then $\PP[\bfZ>x]\leq (e\lambda/x)^x e^{-\lambda}$.

Now, observe that by the union bound and triangle inequality
$$\PP[\mathsf{W}^{\varepsilon}_{S_m,S_{m+1}}] \leq 2\PP[\widetilde{\mathsf{W}}^{\varepsilon}_{S_m,S_{m+1}}]\quad\textrm{where}\quad
\widetilde{\mathsf{W}}^{\varepsilon}_{S_m,S_{m+1}} = \left\{\sup_{s\in [S_{m},S_{m+1}]}\left|\frac{\bfX_{S_m}}{S_m}-\frac{\bfX_{s}}{s}\right|> \frac{\varepsilon}{4}\right\}.
$$
Noting that
$$
\frac{\bfX_{S_m}}{S_m}-\frac{\bfX_{s}}{s} = \frac{\bfX_{S_m}-\bfX_s}{s}+ \frac{s-S_m}{sS_m}\bfX_{S_m}
$$
we see that
$$
\widetilde{\mathsf{W}}^{\varepsilon}_{S_m,S_{m+1}} \subseteq \bigg\{\sup_{s\in [S_{m},S_{m+1}]} \bigg|\frac{\bfX_{S_m}-\bfX_s}{s}\bigg|>\frac{\varepsilon}{8}\bigg\}\cup \bigg\{\sup_{s\in [S_{m},S_{m+1}]} \bigg|\frac{s-S_m}{s S_M} \bfX_{S_m}\bigg|>\frac{\varepsilon}{8}\bigg\}.
$$
By the brutal Poisson bound above, there exists a $D=D(\varepsilon)>0$ such that for all $S_0>D$,
$$
\PP\bigg[\sup_{s\in [S_{m},S_{m+1}]} \bigg|\frac{\bfX_{S_m}-\bfX_s}{s}\bigg|>\frac{\varepsilon}{8}\bigg] \leq
\PP\bigg[\sup_{s\in [S_{m},S_{m+1}]} \bigg|\frac{\bfX_{S_m}-\bfX_s}{T_m}\bigg|>\frac{\varepsilon}{8}\log S_m \bigg] \leq
e^{-T_m}.
$$
Similarly, we see that
$$
\PP\bigg[\sup_{s\in [S_{m},S_{m+1}]} \bigg|\frac{s-S_m}{s S_m} \bfX_{S_m}\bigg|>\frac{\varepsilon}{8}\bigg]\leq
\PP\bigg[\sup_{s\in [S_{m},S_{m+1}]} \bigg|\frac{\bfX_{S_m}}{S_m}\bigg|>\frac{\varepsilon}{8} \log S_m  \bigg]\leq
e^{-S_m}.
$$
Provided $D$ is large enough, the sum over $m$ of the above upper bounds $e^{-T_m}$ and $e^{-S_m}$ are bounded above by $\varepsilon/2$ which implies Claim 2 and completes the proof of \Cref{cor:almostthere}.
\end{proof}

\section{Proving \Cref{xti}}
\label{couple}

To prove \Cref{xti} (we focus on the $\mathsf{E}^{\geq}_S$ case as the $\mathsf{E}^{\leq}_{S}$ case follows immediately from particle-hole symmetry as in \Cref{rem:secondclassduality}) we start in \Cref{b} by coupling $\ASEP$ with a slightly different multi-species ASEP $\ASEPB$ obtained from $\ASEP$ at time $S$ by adding a number of second class particles to the left of $\bfX_{S}$. Appealing to \Cref{prop:Rez}, we can control the behavior of $\bfX_{S+T}$ in terms of the behavior of the bulk of the new second class particles. That behavior can be controlled by hydrodynamic estimates. All of this, however, requires that the time $S$ density profile in $\ASEP$ is close enough to its hydrodynamic limit. This condition is encapsulated in the event $\mathsf{H}_S$ (see \eqref{eq:Hsdef} in the proof of \Cref{zti}).

\begin{figure}[t]
  \includegraphics[width=.7\linewidth]{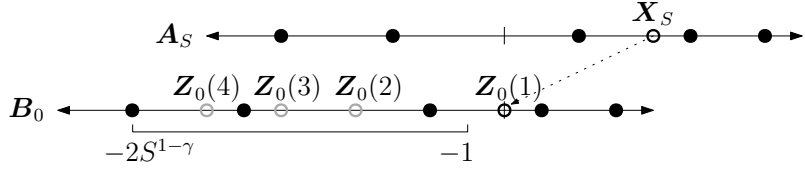}
  \caption{The coupling between $\ASEP_S$ and $\ASEPB_0$ from \Cref{b}. The second class particle (black circle) in $\ASEP_S$ moves to the origin and everything is translated relative to that, and additional second class particles (grey circles) are added with probabilities given in \eqref{probabilityb}.}
  \label{fig:ABcoupling}
\end{figure}

\begin{definition}
\label{b}
%
%
Fix $\gamma = \frac{1}{100}$ (i.e., something close enough to 0). Recall that in $\ASEP_S$ the second class particle is denoted by $\bfX_S$. Given the state of $\ASEP_S$ we define a new process $\ASEPB$ which is a multi-species ASEP with left jump rate $L$, right jump rate $R$, and the following initial data. Each site $j \in \mathbb{Z}$ is initially occupied in $\ASEPB_0$  by a first class particle if and only if $j + \bfX_S$ is occupied by a first class particle in $\ASEP_S$. Site 0 in $\ASEPB_0$ is initially occupied by a second class particle and, furthermore, for each site $j \in \llbracket - 2 S^{1 - \gamma}, - 1\rrbracket$ with $j + \bfX_S$ not occupied by a first class particle in $\ASEP_S$, $\ASEPB_0(j)$ contains a second class particle independently and with probability (see \Cref{bparticles} for an explanation of the choice of these probabilities and \Cref{rem:probs} regarding their positivity, and recall $\bfrho_S$ is defined in \Cref{xti})
	\begin{equation}
	\label{probabilityb}
	\left( S^{-\gamma} + \displaystyle\frac{j}{2S} \right) \left( 1 - \bfrho_S + \displaystyle\frac{j}{2S} \right)^{-1}.
	\end{equation}
Let
$\bfM$
equal the number of second class particles in $\ASEPB$. Denote their tagged positions at any time $t \ge 0$ by $\bfZ_t(1) > \cdots > \bfZ_t(\bfM)$, so that $\bfZ_0(1) = 0$. Set $\llbrace\bfZ_t\rrbrace = \big\{ \bfZ_t(1), \ldots , \bfZ_t(\bfM)\big\}$.

Equivalent to the above description, we let $\ASEPB_t=(\tilde\bfeta_t,\tilde\bfalpha_t)$ and assume initial data $\tilde\bfeta_0(j) = \bfeta(\bfX_S+j)$ for all $j\in \Z$, while for $\tilde\bfalpha_0$ we assume that $\tilde\bfalpha_0(0)=1$, and that for all $j \in \llbracket- 2 S^{1 - \gamma}, - 1\rrbracket $ with $\tilde\bfeta_0(j)=0$, the  $\tilde\bfalpha_0(j)$ are independent Bernoulli random variables with probability \eqref{probabilityb} of equaling $1$. For all other choices of $j$ set $\tilde\bfalpha_0(j)=0$. The Markov dynamics for $(\tilde\bfeta_t,\tilde\bfalpha_t)$ are those of first and second class particles under the basic coupling.

It will be convenient, e.g. in \Cref{LimitProcess}, for us to use $\ASEPB_t^{(1)}$ to denote the occupation variables for just the first class particles in $\ASEPB_t$ and $\ASEPB_t^{(1\cup 2)}$ to denote the occupation variables for the union of first and second class particles in $\ASEPB_t$, i.e. $\ASEPB_t^{(1)} = \tilde\bfeta_t$ and $\ASEPB_t^{(1\cup 2)} = \tilde\bfeta_t+\tilde\bfalpha_t$.

The above definition of $\ASEPB$ depends (i.e., is measurable with respect to $\ASEPF_S$) on the location $\bfX_S$ of the second class particle in $\ASEP_S$ and the associated hydrodynamic density $\bfrho_S$ defined by the relation $1-2\bfrho_S=\bfX_S/S$. We will also need notation where we define a version of $\ASEPB$ relative to a specified choice of $\bfrho_S$ and hence also $\bfX_S$. Let
\begin{equation}\label{eq:IEps}
I^{\varepsilon}_S= \left\{\rho\in (\varepsilon,1-\varepsilon): S(1-2\rho)\in\Z\right\},\qquad \textrm{and for }\rho\in I^{\varepsilon}\textrm{ let } X^{\rho}_S=S(1-2\rho).
\end{equation}
These represent the potential values of the random variables $\bfrho_S$ and $\bfX_S$ respectively. For such a $\rho\in I^{\varepsilon}_S$ and corresponding $X^{\rho}_S$, define $\ASEPB^{\rho}$ exactly as above but with $\bfrho_S$ and $\bfX_S$ replaced by the specified values $\rho$ and $X^{\rho}_S$. Similarly, let $\ASEPB^{(1),\rho}$ and $\ASEPB^{(1\cup 2),\rho}$ respectively denote the first class particle process, and union of first and second class particle processes. In this notation, $\ASEPB = \ASEPB^{\bfrho_S}$ where the variable $\rho$ is replaced by the random variable $\bfrho_S$. Recall that we are using the convention that bold symbols are random variables while their unbolded counterparts are deterministic variables.
\end{definition}

\begin{rem}
	\label{bparticles}

Let us briefly explain the choice of the probabilities in \eqref{probabilityb}. In view of the hydrodynamic limit for the ASEP with step initial data (as in \Cref{hetaxi} with $\rho=1$), the probability that a first class particle occupies a site $j \in \llbracket-\varepsilon S, \varepsilon S\rrbracket$ in $\ASEPB_0$ is approximately $\bfrho_S - \frac{j}{2S}$. Therefore, the probability that site $j$ is empty should approximately be $1 - \bfrho_S + \frac{j}{2S}$. So, \eqref{probabilityb} essentially ensures that the density of either first or second class particles in the interval $\llbracket -2S^{1 - \gamma}, -1\rrbracket$ in $\ASEPB_0$ is approximately constant and equal to $\bfrho_S + S^{- \gamma}$. In particular, the density of particles in $\ASEPB_0$ decreases linearly with slope $\frac{1}{2S}$ on $\llbracket -\varepsilon S, -2S^{1 - \gamma}\rrbracket$ to $\bfrho_S + S^{-\gamma}$ at $- 2S^{1 - \gamma}$, remains constant at $\bfrho_S + S^{-\gamma}$ on $\llbracket -2 S^{1 - \gamma}, -1\rrbracket$, discontinuously decreases to $\bfrho_S$ at site $0$, and then decreases linearly with slope $\frac{1}{2S}$ on $\llbracket 0, \varepsilon S\rrbracket$, see \Cref{fig:ASEPBdensity}.
\end{rem}

\begin{figure}[t]
  \includegraphics[width=.8\linewidth]{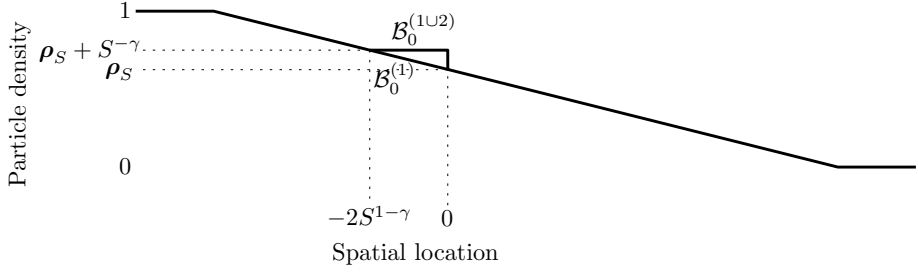}
  \caption{The average particle density versus spatial location for a typical instance of $\ASEPB_0$, as explained \Cref{bparticles}. Since there are only second class particles in $[-2S^{1-\gamma},0]$, the densities only differ therein. The upper line there corresponds to the density of the union of first and second class particles $\ASEPB_0^{(1\cup 2)}$ while the lower line is just for first class particles $\ASEPB_0^{(1)}$.}
  \label{fig:ASEPBdensity}
\end{figure}

\begin{rem}
\label{rem:probs}
Depending on the value of $\bfrho_S$ and $S$, the probabilities in  \eqref{probabilityb} may exceed $1$. However, for a given  value of $\varepsilon$ we can choose $c(\varepsilon)$ in the statement of \Cref{xti} small enough so that for $\bfrho_S\in (\varepsilon,1-\varepsilon)$, either the expressions in \eqref{probabilityb} remain bounded in $(\varepsilon/2,1-\varepsilon/2)$ for all relevant $j$, or $1-c^{-1} S^{-1/5}<0$. In the former case, the Bernoulli random variables are well-defined, while in the later case, the second claimed inequality in  \eqref{eq:hspsbd} in \Cref{xti} is trivially true (since the probability will always exceed $0$).
\end{rem}

Now observe that \eqref{eq:Rezleq}, from \Cref{prop:Rez}, implies that for any $y\in \Z$,
\begin{equation}\label{eq:XZcompare}
\PP\big[\bfX_{S+T} - \bfX_S\le y|\ASEPF_S \big] \leq \displaystyle\frac{1}{\bfM} \displaystyle\sum_{j = 1}^{\bfM} \PP^{\ASEPB_0}\big[ \bfZ_T(j) \le y \big].
\end{equation}
The left-hand side of this inequality is measurable with respect to $\ASEPF_S$ while the right-hand side is measurable with respect to the sigma algebra formed by $\ASEPF_S$ and the Bernoulli random variables used to form $\ASEPB_0$ from $\ASEP_S$. In particular, for any choice of the Bernoulli random variables, the inequality holds. We can rephrase the inequality \eqref{eq:XZcompare} in the following manner: Let $\bfK$ be uniformly distributed on $\{1,\ldots, \bfM\}$, then \eqref{eq:XZcompare} is equivalent to
\begin{equation}\label{eq:XZcompare2}
\PP\big[\bfX_{S+T} \ge \bfX_S + y|\ASEPF_S \big] \geq \PP\big[ \bfZ_T(\bfK) \ge y|\ASEPF_S \big].
\end{equation}
%
%

In light of \eqref{eq:XZcompare2}, we see that in order to establish \Cref{xti}, it suffices to control the locations of most of the second class particles in $\ASEPB$ with high probability. The following proposition achieves this aim.

\begin{prop}\label{zti}
For any  $\varepsilon \in ( 0,1/4)$, there exists $c=c(\varepsilon) > 0$ and $\ASEPF_S$-measurable events $\mathsf{H}_S$ such that for all $S>2$,
\begin{equation}\label{eq:HPbd}
\PP[\mathsf{P}_S\cap (\mathsf{H}_{S})^c] \leq c^{-1} e^{-c S^{1/12}}
\end{equation}
where $\mathsf{P}$ is defined in \eqref{eq:hspsbdrho} and
\begin{align}
\label{ztestimate}
\PP \Big[ \big| \llbrace\bfZ_T\rrbrace \cap \big[(1-2\bfrho_S)T-S^{1 - \frac{\gamma}{2}}, \infty\big) \big| \ge \bfM(1 - S^{-\frac{1}{5}}) \Big| \ASEPF_S \Big]  \ge\big(1 - c^{-1} e^{-cS^{1/12}}\big)\mathbf{1}_{\mathsf{H}_S\cap \mathsf{P}_S}.
\end{align}
The constants $c=c(\varepsilon)$ can be chosen so as to weakly decrease as $\varepsilon$ decreases to 0.
\end{prop}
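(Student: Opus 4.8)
The plan is to compare $\ASEPB$ to an auxiliary ASEP $\bfeta^{\flat}$ with deterministic piecewise-linear initial data, apply the effective hydrodynamic estimate of \Cref{hetalinear}, and then transfer the information about the height function back to the positions of the tagged second class particles $\bfZ_T$. First I would set up the event $\mathsf{H}_S$: by \Cref{hetaxi} (with $\rho=1$, i.e.\ step initial data, which is exactly the initial data of $\ASEP$), the time-$S$ profile of $\ASEP$ is within $S^{2/3}$ of its hydrodynamic limit $\Upsilon^{(1;0)}$ with probability at least $1-c^{-1}Se^{-cS^{1/12}}$ on the whole window $|x/S|\le 1-\varepsilon$; I would define $\mathsf{H}_S$ to be this $\ASEPF_S$-measurable concentration event (restricted to the relevant window around $\bfX_S$, which on $\mathsf{P}_S$ lies in the bulk of the rarefaction fan). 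This immediately gives \eqref{eq:HPbd} after absorbing the polynomial prefactor into the exponential by adjusting $c$ (using $s=S^{1/12}$ so $Te^{-cS^{1/12}}\le c^{-1}e^{-c'S^{1/12}}$).

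Next, conditionally on $\ASEPF_S$ and on $\mathsf{H}_S\cap\mathsf{P}_S$, I would observe that the initial data $\ASEPB^{(1\cup 2)}_0$ (first plus second class particles) is, up to a deviation of order $S^{2/3}$ in its integrated profile, $\Phi_{\varepsilon;\beta}^{(\bfrho_S)}$-distributed (with $\beta$ chosen so that $\frac12-\beta$ corresponds to the crossover location $-2S^{1-\gamma}$, i.e.\ $\beta\sim S^{-\gamma}$), by the choice of the Bernoulli probabilities \eqref{probabilityb} explained in \Cref{bparticles}; the fluctuations of the Bernoulli random variables used to build $\ASEPB_0$ from $\ASEP_S$ contribute another $O(S^{1/2})$ via \Cref{distributionconcentration}. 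By attractivity and monotonicity (\Cref{xizeta1}, \Cref{xizeta2}) I can sandwich $\ASEPB^{(1\cup 2)}_0$ between two genuinely $\Phi_{\varepsilon;\beta}^{(\bfrho_S)}$-distributed configurations shifted vertically by $O(S^{2/3})$, and likewise for $\ASEPB^{(1)}_0$ using $\Upsilon^{(\rho;\lambda)}$-type data. Then \Cref{hetalinear}\eqref{hetaxilambdarholinear2} (applied with $S$ there equal to our $S$ and $T$ there equal to our $T=S/\log S$, so $S\ge\delta^{-2}T$ holds for large $S$) controls $\h_T(\llbracket X,Y\rrbracket;\ASEPB^{(1\cup 2)})$ and \Cref{hetalinear}\eqref{hetaxilambdarholinear} controls $\h_T(\llbracket X,Y\rrbracket;\ASEPB^{(1)})$, each up to an error $\kappa S^{2/3}$ with $\kappa=S^{1/12}$, on a common event of probability $\ge 1-c^{-1}S^3e^{-cS^{1/12}}$.

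To conclude, I would translate the height-function control into a statement about $\#\{\bfZ_T(j)\ge y\}$ for $y=(1-2\bfrho_S)T-S^{1-\gamma/2}$. The number of second class particles of $\ASEPB$ in $[y,\infty)$ equals $\h_T([y,\infty);\ASEPB^{(1\cup2)})-\h_T([y,\infty);\ASEPB^{(1)})$, i.e.\ the difference of the two height functions evaluated at $y$ (both counted from a common far-right reference point where both profiles vanish). The hydrodynamic profiles for $\ASEPB^{(1\cup2)}$ and $\ASEPB^{(1)}$ differ by an amount that, integrated from $y$ to the right edge, accounts for all but an $O(S^{1-\gamma/2})=o(S^{1-\gamma})$ fraction of the $\bfM=\Theta(S^{1-\gamma})$ second class particles — this is precisely the hydrodynamic computation in \Cref{bparticles} showing the perturbation created at time $0$ on $[-2S^{1-\gamma},0]$ spreads only to $[-\Theta(S^{1-\gamma}),\Theta(S^{1-\gamma})]$ by time $T$ (here using $T\ll S$). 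Since the errors from \Cref{hetalinear} are $O(S^{2/3+1/12})=o(S^{1-\gamma})$ once $\gamma=\frac1{100}$, and the shortfall $S^{1-\gamma/2}$ dominates both the error and the $S^{-1/5}\bfM$ tolerance, the count of particles to the right of $y$ is at least $\bfM(1-S^{-1/5})$ on this event, giving \eqref{ztestimate}. The main obstacle I expect is the bookkeeping in this last step: making the comparison of the two hydrodynamic profiles (and the corresponding integrals of $\Upsilon$ versus $\Phi$) precise enough to see that the deficit is genuinely $O(S^{1-\gamma/2})$ rather than $O(S^{1-\gamma})$, while simultaneously keeping all the additive fluctuation errors ($S^{2/3}$ from hydrodynamics, $S^{1/2}$ from the extra Bernoullis, $4RT$ from finite speed of propagation in \Cref{xizetaequal}) safely below that threshold — this is where the specific exponents $\gamma=\frac1{100}$ and the gap between $\frac12-\frac{\gamma}{2}$ and the error scales are being used, and it requires care with the union bound over the $O(S^2)$ choices of $X,Y$ and the conditioning structure relative to $\ASEPF_S$.
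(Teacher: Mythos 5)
Your overall strategy is the one the paper follows: compare $\ASEPB^{(1)}$ and $\ASEPB^{(1\cup 2)}$ to product-measure comparison processes ($\Upsilon^{(\rho)}_{\varepsilon}$- and $\Phi^{(\rho)}_{\varepsilon;\beta}$-distributed with $\beta\sim S^{-\gamma}$), propagate the time-$0$ closeness of the height functions to time $T$ by monotonicity, control the comparison processes at time $T$ via \Cref{hetalinear} with $\kappa=S^{1/12}$, rule out second class particles far to the left by finite speed of propagation, and read off the number of second class particles left of $y=(1-2\bfrho_S)T-S^{1-\gamma/2}$ as a difference of two height functions. The choice of a hydrodynamic concentration event for $\mathsf{H}_S$ and the union bound over the $O(S)$ admissible values of $\bfrho_S$ are also as in the paper.

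However, the exponent accounting in your last step is wrong as written, and taken literally it would destroy the conclusion. With $\gamma=\tfrac{1}{100}$ one has $S^{1-\gamma/2}=S^{199/200}$ and $S^{1-\gamma}=S^{99/100}$, so $S^{1-\gamma/2}\gg S^{1-\gamma}$, not $o(S^{1-\gamma})$; worse, both exceed $\bfM\approx S^{1-2\gamma}=S^{49/50}$, so a ``deficit'' of either order would exceed the total number of second class particles and could never yield $\bfM(1-S^{-1/5})$. The correct picture is that there is \emph{no} hydrodynamic deficit to the left of $y$: the crossover point where the maximum in \Cref{hetalinear}\eqref{hetaxilambdarholinear2} switches from the flat value $\rho+\varepsilon\beta=\rho+S^{-\gamma}$ to the linear ramp sits at distance $2S^{-\gamma}(S+T)\approx 2S^{1-\gamma}$ from the characteristic, which is $\ll S^{1-\gamma/2}$, so on $(-\infty,y]$ the evolved hydrodynamic profiles of the two comparison processes coincide \emph{exactly} for $S$ large. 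Consequently the number of second class particles in $(-\varepsilon S/4,\, y]$ is bounded purely by the accumulated fluctuation errors (of order $S^{3/4}$ in the paper; your $\kappa S^{2/3}$ with $\kappa=S^{1/12}$ is of the same order), and the inequality that actually closes the argument is $S^{3/4}\ll S^{-1/5}\bfM\approx S^{1-2\gamma-1/5}=S^{0.78}$ — no comparison involving $S^{1-\gamma/2}$ enters. The only role of the exponent $1-\gamma/2$ is to guarantee $S^{1-\gamma/2}\ge 2S^{-\gamma}(S+T)$, i.e.\ that $y$ lies strictly to the left of the spread-out perturbation.
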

This will be proved in \Cref{LimitProcess}, but first we prove  \Cref{xti} assuming it.

\begin{proof}[Proof of \Cref{xti}]
Let $\mathsf{H}_S$ and $c=c(\varepsilon)>0$ be given as in \Cref{zti}, in which case the first inequality in \eqref{eq:hspsbd} holds on account of \eqref{eq:HPbd}. We argue here that
\begin{equation}\label{eq:Egeqbd}
\PP[\mathsf{E}^{\geq}_{S} | \ASEPF_{S}] \ge (1 - c^{-1} e^{-c S^{1 / 12}})\mathbf{1}_{\mathsf{H}_{S}\cap \mathsf{P}_S}.
\end{equation}
Assuming this, we can deduce the same bound with $\mathsf{E}^{\leq}_{S}$. This is because after applying the particle-hole symmetry (\Cref{rem:secondclassduality}) to our process, the initial data remains unchanged and the events $\mathsf{E}^{\leq}_{S}$ and $\mathsf{E}^{\geq}_{S}$ swap.

To show \eqref{eq:Egeqbd}, assume that $\mathsf{H}_S\cap \mathsf{P}_S$ holds and let (recall $ \llbrace\bfZ_T\rrbrace$ defined below \eqref{probabilityb})
$$\Lambda = \llbrace\bfZ_T\rrbrace \cap \big[(1-2\bfrho_S)T-S^{1 - \frac{\gamma}{2}}, \infty\big)$$
and define the events
$$
\mathsf{F}_S = \big\{\bfZ_T(\bfK)\geq (1-2\bfrho_S)T-S^{1 - \frac{\gamma}{2}}\big\}, \quad \textrm{and}\quad
\mathsf{G}_S = \big\{|\Lambda|\geq \bfM(1-S^{-1/5})\big\}
$$
(recall that $\gamma =1/100$ and $\bfK$ is uniformly chosen on $\{1,\ldots, \bfM\}$).
From \eqref{eq:XZcompare2} it follows that
$\PP\big[\mathsf{E}^{\geq}_S| \ASEPF_S \big]\geq  \PP\big[\mathsf{F}_{S}| \ASEPF_S \big]$.
Since $\bfM=|\llbrace\bfZ_T\}\!\!\}|$, the event $\mathsf{G}_S$ says that the fraction of particles in $\llbrace\bfZ_T\rrbrace$ which lie in $[(1-2\bfrho_S)T-T^{1 - \frac{\gamma}{2}}, \infty)$ exceeds $1-S^{-1/5}$. The event $\mathsf{F}_S$ is that a randomly chosen particle in $\llbrace\bfZ_T\rrbrace$ lies in $\big[(1-2\bfrho_S)T-S^{1 - \frac{\gamma}{2}}, \infty\big)$. Thus, conditioned on $\mathsf{G}_S$, the probability of $\mathsf{F}_S$ exceeds $1-S^{-1/5}$. This implies that
$
\PP\big[\mathsf{F}_S| \ASEPF_S \big]\geq \PP(\mathsf{G}_S|\ASEPF_S) - S^{-1/5}
$
and by \Cref{zti}, $\PP(\mathsf{G}_S|\ASEPF_S)\geq 1-c^{-1} e^{-cS^{1/12}}$. Putting this all together shows that
$
\PP\big[\mathsf{E}^{\geq}_S| \ASEPF_S \big]\geq 1-c^{-1} S^{-1/5}
$
which yields the second inequality in \eqref{eq:hspsbd} as desired. The final sentence of \Cref{xti} follows from that of \Cref{zti}.
\end{proof}

\section{Proof of \Cref{zti}: reduction to a hydrodynamic limit estimate}

\label{LimitProcess}

It remains to establish \Cref{zti}. To this end, we will start by comparing the multi-class ASEP $\ASEPB$ from \Cref{b} to two versions of ASEP in \Cref{xizetaprocesses} ($\ASEPB^{(1)}$ will be compared to $\bfxi^{(1)}$ while $\ASEPB^{(1\cup 2)}$ will be compared to $\bfxi^{(1\cup 2)}$). The idea, developed in \Cref{xizetab} is that the height function for $\bfxi^{(1)}_0$ will be close (by close, we mean at most order $S^{3/4}$ apart with probability at least $1-c^{-1} e^{-c S^{1/12}}$) to that of $\ASEPB_0^{(1)}$ (the first class particles in $\ASEPB_0$), while the height function for $\bfxi^{(1\cup 2)}_0$ will be close to that of $\ASEPB_0^{(1\cup 2)}$ (the union of first and second class particles in $\ASEPB_0$). This event of height function closeness is part of the hydrodynamic event $\mathsf{H}_S$ which appears in the statement of \Cref{zti}. \Cref{xizetab} then shows that the simpler $\bfxi^{(1)}$ and $\bfxi^{(1\cup 2)}$ processes evolve over time $T=S/\log S$ to be close to the same hydrodynamic limit in the region $(-\infty , (1-2\bfrho_S)T - S^{1-\frac{\gamma}{2}})$. Since the number of second class particles is close to $S^{1-2\gamma}$ which is much larger than $S^{3/4}$, this implies that most of the second class particles in $\ASEPB^{(1\cup 2)}$ are in the complementary region $[(1-2\bfrho_S)T - S^{1-\frac{\gamma}{2}},\infty)$ which is exactly what we seek to show in \Cref{zti}.

The processes $\ASEPB^{(1)}_t$,  $\ASEPB^{(1\cup 2)}_t$, $\bfxi^{(1)}_t$ and $\bfxi^{(1\cup 2)}_t$ all depend on the random variable $\bfrho_S$ (recall from the beginning of \Cref{couple}). In order to make the comparisons mentioned above, we will instead consider $\ASEPB^{(1),\rho}_t$,  $\ASEPB^{(1\cup 2),\rho}_t$, $\bfxi^{(1),\rho}_t$ and $\bfxi^{(1\cup 2),\rho}_t$ for deterministic values of $\rho\in I^{\varepsilon}_S$ (recall from \eqref{eq:IEps}). Taking a union bound over all potential values of $\rho$ we establish that for random $\bfrho_S$, the comparison likewise holds.

\begin{definition}
	\label{xizetaprocesses}
For $\rho\in (\varepsilon,1-\varepsilon)$, let $\bfxi^{(1),\rho}_t$ and $\bfxi^{(1\cup 2),\rho}_t$ denote two versions of ASEP, each with left and right jump rates $L$ and $R$ and initial data given as follows (see also \Cref{fig:ASEPxizetadensity}). For each $j \notin \llbracket-\varepsilon S, \varepsilon S\rrbracket$, we deterministically set $\bfxi^{(1),\rho}_0 (j) = 0 = \bfxi^{(1\cup 2),\rho}_0 (j)$. To define $\bfxi^{(1),\rho}_0$ elsewhere, for each  $j \in \llbracket-\varepsilon S, \varepsilon S\rrbracket$, we define $\bfxi^{(1),\rho}_0 (j)$ according to independent Bernoulli random variables with probabilities
\begin{equation}\label{eq:xirho}
\PP \big[ \bfxi^{(1),\rho}_0 (j) = 1 \big] = \rho - \frac{j}{2S}, \qquad \PP \big[ \bfxi^{(1),\rho}_0 (j) = 0 \big] = 1 - \rho + \frac{j}{2S}.
\end{equation}
In the language of \Cref{distributedinitial}, this initial data is $\Upsilon^{(\rho)}_{\varepsilon}$-distributed on the interval $\llbracket-\varepsilon S, \varepsilon S\rrbracket$.
We define $\bfxi^{(1\cup2),\rho}_0 (j)$ for $j \in \llbracket-\varepsilon S, \varepsilon S\rrbracket$ so for each  $j \in \llbracket-\varepsilon S, \varepsilon S\rrbracket \setminus \llbracket -2 S^{1 - \gamma}, - 1\rrbracket$,
$$
\PP \big[ \bfxi^{(1\cup2),\rho}_0 (j) = 1 \big] = \rho - \frac{j}{2S}; \qquad \PP \big[ \bfxi^{(1\cup2),\rho}_0 (j) = 0 \big] = 1 - \rho + \frac{j}{2S},
$$
while for each  $j \in \llbracket -2 S^{1 - \gamma}, - 1\rrbracket$,
$$
	\PP \big[ \bfxi^{(1\cup2),\rho}_0 (j) = 1 \big] = \rho + S^{-\gamma} \qquad \PP \big[ \bfxi^{(1\cup2),\rho}_0 (j) = 0 \big] =  1 - \rho - S^{-\gamma}.
$$
Again, these choices are mutually independent over all $j$. Moreover, we assume that all of these Bernoulli random variables are chosen independent of the state of $\ASEPB_0$.

Finally, set $\bfxi^{(1)}_t=\bfxi^{(1),\bfrho_S}_t$ and $\bfxi^{(1\cup2)}_t=\bfxi^{(1\cup2),\bfrho_S}_t$, i.e., the processes just defined above but with $\rho$ replaced by $\bfrho_S$ determined by the location of the second class particle in $\ASEP_S$.
\end{definition}

\begin{figure}[t]
  \includegraphics[width=.8\linewidth]{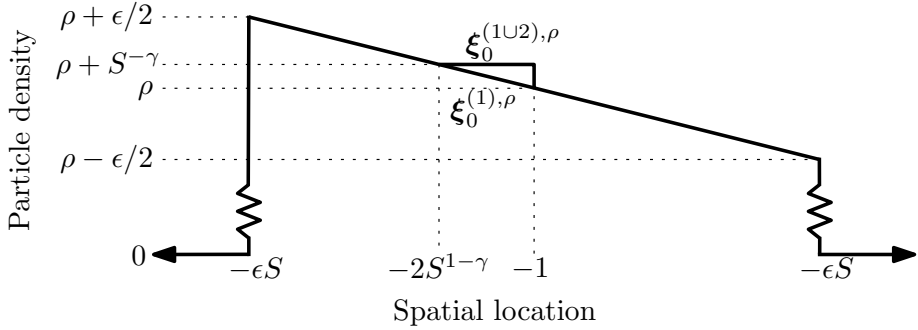}
  \caption{Particles in $\bfxi^{(1),\rho}_0$ and $\bfxi^{(1\cup2),\rho}_0$ (see \Cref{xizetaprocesses}) are initially present according to independent Bernoulli random variables with probabilities give by the plot shown here. The probabilities coincide for $\bfxi^{(1),\rho}_0$ and $\bfxi^{(1\cup2),\rho}_0$, except in the window $[-2S^{1-\gamma},-1]$ where the $\bfxi^{(1\cup2),\rho}_0$ probability remains flat and the $\bfxi^{(1),\rho}_0$ probability decreases linearly.}
  \label{fig:ASEPxizetadensity}
\end{figure}

Under these choices, we have the following lemma, which essentially states that $\bfxi^{(1)}_0$ initially approximates $\ASEPB_0^{(1)}$ and $\bfxi^{(1\cup2)}_0$ initially approximates $\ASEPB_0^{(1\cup 2)}$ (recall \Cref{b}).

\begin{prop}\label{xizetab}
For all $\varepsilon\in (0,1/4)$, there exists $c=c(\varepsilon)>0$ such that for
\begin{equation*}
\mathsf{D}_S(\ASEPB,\bfxi) = \big\{ \displaystyle\max_{|j| \le \varepsilon S} \big| \h_0 \big(j; \ASEPB \big) - \h_0 (j; \bfxi) \big| > S^{\frac{3}{ 4}}\big\},\qquad
\mathsf{M}_S= \big\{\big|\bfM - S^{1-2\gamma}\big| > S^{\frac{3}{ 4}} \big\}
\end{equation*}
and $\mathsf{P}_S$ as in \eqref{eq:hspsbdrho}, the following holds for any $S>2$:
\begin{align}
\label{eq:hbxi}
\PP \big[\mathsf{D}_S(\ASEPB^{(1)},\bfxi^{(1)})\cap \mathsf{P}_S \big] &< c^{-1} e^{- c S^{1/12}}, \\
\label{eq:hbzeta}
\PP \big[\mathsf{D}_S(\ASEPB^{(1\cup 2)},\bfxi^{(1\cup 2)})\cap \mathsf{P}_S \big] &< c^{-1} e^{- c S^{1/12}},\\
\label{eq:Mbdd}
\PP \big[\mathsf{M}_S\cap \mathsf{P}_S\big] &< c^{-1} e^{- c S^{1/12}}.
\end{align}
The constants $c=c(\varepsilon)$ can be chosen so as to weakly decrease as $\varepsilon$ decreases to 0.
\end{prop}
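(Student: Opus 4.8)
The plan is to show that, with probability at least $1-c^{-1}e^{-cS^{1/12}}$ on the event $\mathsf{P}_S$, each of the four configurations entering $\mathsf{D}_S$ is uniformly close — in height function, over $|j|\le\varepsilon S$, to within $\tfrac12 S^{3/4}$ — to one fixed \emph{deterministic} profile, namely the macroscopic density of $\ASEPB_0$ drawn in \Cref{fig:ASEPBdensity} (restricted to the first-class, resp.\ first-plus-second-class, part); then \eqref{eq:hbxi}, \eqref{eq:hbzeta} follow by the triangle inequality and \eqref{eq:Mbdd} from the same count. To handle the dependence on the random density $\bfrho_S$, I would note that on $\mathsf{P}_S$ one has $\bfrho_S\in I^{\varepsilon}_S$ and, by \Cref{xizetaprocesses} and \Cref{b}, the Bernoulli variables defining $\bfxi^{(\cdot),\rho}$ and those placing the extra second-class particles in $\ASEPB^{\rho}$ are independent of the step-data ASEP $\bfeta$, while $\{\bfrho_S=\rho\}$ is $\bfeta$-measurable; hence $\PP[\mathsf{D}_S(\ASEPB^{(\cdot)},\bfxi^{(\cdot)})\cap\mathsf{P}_S]\le\sum_{\rho\in I^{\varepsilon}_S}\PP[\mathsf{D}_S(\ASEPB^{(\cdot),\rho},\bfxi^{(\cdot),\rho})]$, and since $|I^{\varepsilon}_S|=O(S)$ it suffices to prove a bound $c^{-1}S^{2}e^{-cS^{1/12}}$ for each fixed $\rho$ (the polynomial prefactors are absorbed into $c$, and for bounded $S$ the asserted bound is vacuous for $c$ small).

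\textbf{Proof of \eqref{eq:hbxi}.} By \eqref{eqhsum} and \eqref{eq:heightdiff}, for each $\rho$ one has $\h_0(j;\ASEPB^{(1),\rho})=-\h_S(\llbracket X^{\rho}_S,X^{\rho}_S+j\rrbracket;\bfeta)$ when $j\ge0$, and symmetrically for $j<0$. I would apply the step-data case of \Cref{hetaxi} (fluctuation scale $S^{1/3}$, decay $e^{-cs}$) with tail parameter $s=S^{5/12}$ — so that the fluctuation $sS^{1/3}=S^{3/4}$ is at the threshold — union-bounded over all $O(S^{2})$ integer pairs $(X,Y)$ with $|X/S|,|Y/S|\le1-\varepsilon$; on $\{\bfrho_S=\rho\}$, since $|j|\le\varepsilon S$, both $X^{\rho}_S$ and $X^{\rho}_S+j$ lie in this range, and the elementary identity $S\int_{1-2\rho}^{\,1-2\rho+j/S}\tfrac{1-z}{2}\,dz=j\rho-\tfrac{j^{2}}{4S}$ identifies the macroscopic profile. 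For $\bfxi^{(1),\rho}$, which by \Cref{xizetaprocesses} is $\Upsilon^{(\rho)}_{\varepsilon}$-distributed on $\llbracket-\varepsilon S,\varepsilon S\rrbracket$, I would apply \eqref{hxy2} of \Cref{distributionconcentration} with $s\asymp S^{1/4}$ (so the fluctuation is $S^{1/4}(2\varepsilon S)^{1/2}\le S^{3/4}$, failure probability $\lesssim S^{2}e^{-\Omega(S^{1/2})}$), using $\sum_{i=1}^{j}(\rho-\tfrac{i}{2S})=j\rho-\tfrac{j^{2}}{4S}+O(1)$ so the two macroscopic profiles coincide up to a constant. Combining these two error events with the $O(1)$ profile mismatch against the $S^{3/4}$ threshold gives \eqref{eq:hbxi}.

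\textbf{Proof of \eqref{eq:hbzeta} and \eqref{eq:Mbdd}.} Outside the window $W:=\llbracket-2S^{1-\gamma},-1\rrbracket$, $\ASEPB^{(1\cup2),\rho}$ and $\bfxi^{(1\cup2),\rho}$ agree with their first-class analogues, so the previous paragraph applies verbatim. Inside $W$ one has $\ASEPB^{(1\cup2),\rho}_0(i)=\tilde\bfeta_0(i)+\tilde\bfalpha_0(i)$ where, conditionally on $\bfeta$, the $\tilde\bfalpha_0(i)$ are independent Bernoulli$(p_i)$ supported on holes of $\tilde\bfeta_0$ with $p_i$ as in \eqref{probabilityb}; the arithmetic point behind the choice \eqref{probabilityb} (cf.\ \Cref{bparticles}) is that $q_i p_i=S^{-\gamma}+\tfrac{i}{2S}$, where $q_i:=1-\rho+\tfrac{i}{2S}$ is the macroscopic hole density at site $X^{\rho}_S+i$, and $S^{-\gamma}+\tfrac{i}{2S}$ is precisely the excess density of $\bfxi^{(1\cup2),\rho}_0$ over the $\rho-\tfrac{i}{2S}$ line on $W$. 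So it remains to control $\sum_{i\in W,\,i>j}\tilde\bfalpha_0(i)$ around $\sum_{i\in W,\,i>j}(S^{-\gamma}+\tfrac{i}{2S})$ uniformly in $j$: (i) the conditional mean $\sum_{i>j}(1-\tilde\bfeta_0(i))p_i$ is compared to $\sum_{i>j}q_ip_i$ by writing the difference as $\sum_{i>j}\big[(1-\tilde\bfeta_0(i))-q_i\big]p_i$ and summing by parts, using that the partial sums of $(1-\tilde\bfeta_0(i))-q_i$ are $O(S^{1/3+o(1)})$ uniformly in $j\in W$ (step-data \Cref{hetaxi} with union bound, $q_i$ being exactly the hydrodynamic hole density) while $i\mapsto p_i$ is monotone with range and total variation $O(S^{-\gamma})$ on $W$; and (ii) the genuine fluctuation of $\sum_{i>j}\tilde\bfalpha_0(i)$ about its conditional mean is controlled by Hoeffding conditionally on $\bfeta$, on scale $S^{1/4}|W|^{1/2}=S^{3/4-\gamma/2}$, union-bounded over $j$ and $\rho$. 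Letting $j$ range over all of $W$ and adding the single particle at site $0$ yields $\bfM=1+S^{1-2\gamma}+O(S^{3/4-\gamma/2})$, giving \eqref{eq:Mbdd}; combining with the outside-$W$ estimate gives \eqref{eq:hbzeta}. As in \Cref{hetaxi2}, $c(\varepsilon)$ can be taken weakly decreasing in $\varepsilon$ since every input (\Cref{hetaxi}, Hoeffding) has that property.

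\textbf{Main obstacle.} The crux is step (i)–(ii) above: unlike \eqref{eq:hbxi}, where both sides are dispatched by one hydrodynamic estimate plus one Hoeffding bound, the extra second-class particles sit at the holes of the \emph{correlated} configuration $\tilde\bfeta_0$, so the conditional mean of their count is itself random and must be shown to concentrate around the intended macroscopic value; because the placement probabilities $p_i$ vary with $i$, this concentration has to be extracted via a summation-by-parts argument fed by the step-data concentration of \Cref{hetaxi}, underneath a conditional Hoeffding layer. A secondary, purely bookkeeping point is to keep the union bound over $\rho\in I^{\varepsilon}_S$ legitimate — i.e.\ to use that the Bernoulli randomness in \Cref{b} and \Cref{xizetaprocesses} is independent of $\bfeta$ and that $\{\bfrho_S=\rho\}$ is $\bfeta$-measurable.
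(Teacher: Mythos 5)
Your proposal is correct, and its overall architecture matches the paper's: reduce to deterministic $\rho\in I^{\varepsilon}_S$ by a union bound (using that $\{\bfrho_S=\rho\}$ is determined by $\bfeta$ while the auxiliary Bernoulli variables are independent of it), prove \eqref{eq:hbxi} by comparing both $\ASEPB^{(1),\rho}_0$ (via the step-data case of \Cref{hetaxi}, since $\h_0(j;\ASEPB^{(1),\rho})$ is a height increment of $\bfeta_S$ based at $X^{\rho}_S$) and $\bfxi^{(1),\rho}_0$ (via \Cref{distributionconcentration}) to the common parabolic profile $j\rho-\tfrac{j^2}{4S}+O(1)$, and isolate the window $W=\llbracket-2S^{1-\gamma},-1\rrbracket$ as the only place where \eqref{eq:hbzeta} and \eqref{eq:Mbdd} require new work. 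Where you genuinely diverge is inside $W$. The paper partitions $W$ into $\lfloor 2S^{1/3-\gamma}\rfloor$ blocks of length $S^{2/3}$, on each block controls the first-class count to $O(\kappa S^{1/3})$ by \Cref{hetaxi} and then sandwiches the varying placement probability $Q(j)$ of \eqref{probabilityb} between its two endpoint values (using monotonicity), so that the per-block error is $O(\kappa S^{1/3})$ and the total is $O(\kappa S^{2/3})\ll S^{3/4}$. You instead keep $W$ whole and treat the random conditional mean $\sum_{i>j}(1-\tilde\bfeta_0(i))p_i$ directly by Abel summation: the partial sums of $(1-\tilde\bfeta_0(i))-q_i$ are uniformly $O(sS^{1/3})$ by the same step-data input, and $p_i$ is monotone with range and total variation $O_\varepsilon(S^{-\gamma})$, giving an error $O(sS^{1/3-\gamma})$, after which a single conditional Hoeffding bound at scale $S^{3/4-\gamma/2}$ handles the genuine Bernoulli fluctuation. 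Both arguments consume the same two probabilistic inputs and land comfortably inside the $S^{3/4}$ budget with failure probability far below $e^{-cS^{1/12}}$; yours trades the paper's blocking bookkeeping for a summation-by-parts identity and even yields a slightly sharper error in $W$. The only presentational caveat is that for $j<-2S^{1-\gamma}$ the increment $\h_0(j;\ASEPB^{(1\cup2),\rho})$ is not literally the first-class one "verbatim" — it must be assembled from the increment over $\llbracket j,-2S^{1-\gamma}\rrbracket$ (where the two configurations coincide) plus the already-controlled increment over $W$, exactly as the paper's third inequality in \eqref{eq:threeregions} does — but this is a triangle-inequality assembly, not a gap.
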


\begin{rem}
Note that in \Cref{xizetab}, $S$ comes into the definition of $\ASEPB_0$ since it determines that time at which we observe and modify the state of $\ASEP$;  $S$ comes into the definition of $\bfxi^{(1)}$ and $\bfxi^{(1,2)}$ in determining the parameters of the Bernoulli occupation variables; and $S$ comes into the definition of $\bfrho_S$ since $(1-2\bfrho_S)S = \bfX_S$. Also, note that for any $S_0>2$, by taking $C$ large enough and $c$ small enough, we can make the bounds in \Cref{xizetab} trivial for $S<S_0$ (i.e., make the right-hand side exceed $1$). We will use this in proving this result. Also, note that our proof of \eqref{eq:hbxi} and \eqref{eq:hbzeta} applies for  $S^{3/4}$ replaced by any power of $S$ exceeding $2/3$. We choose $3/4$ as it is sufficient for our purposes.
\end{rem}

\begin{proof}
Equation \eqref{eq:hbxi} follows readily from the triangle inequality and a union bound by combining \Cref{hetaxi} (which controls the deviations of the height function for $\ASEPB_0^{(1)}$ around its hydrodynamic limit) and \Cref{distributionconcentration} (which controls the deviation of the height function for $ \bfxi^{(1)}$ around its hydrodynamic limit). The proof of \eqref{eq:hbzeta} is  more involved since we need to track the effect of the additional particles added to go from $\ASEPB_0^{(1)}$ to $\ASEPB_0^{(1\cup 2)}$. We give the details below.

Recall $I^{\varepsilon}_S$ from \eqref{eq:IEps} and observe that the event on the left-hand side of \eqref{eq:hbzeta} satisfies
$$
\mathsf{D}_S(\ASEPB^{(1\cup 2)},\bfxi^{(1\cup 2)})\cap\mathsf{P}_S\subseteq \bigcup_{\rho\in I^{\varepsilon}_S} \mathsf{D}_S(\ASEPB^{(1\cup 2),\rho},\bfxi^{(1\cup 2),\rho})
$$
Since $|I^{\varepsilon}_S|$ is of order $S$, to establish  \eqref{eq:hbzeta} it suffices to show that there exists $c=c(\epsilon)>0$ such that for all  $\rho\in I^{\varepsilon}_S$ and $S>2$
\begin{equation}\label{eq:boundD1}
\PP\big[\mathsf{D}_S(\ASEPB^{(1\cup 2),\rho},\bfxi^{(1\cup 2),\rho})\big] \leq c^{-1} e^{- c S^{1/12}}.
\end{equation}
Observe that for any choice of function $P^{(1\cup 2),\rho}(j)$, we have
$$
\mathsf{D}_S(\ASEPB^{(1\cup 2),\rho},\bfxi^{(1\cup 2),\rho})\subseteq \mathsf{D}_S(\ASEPB^{(1\cup 2),\rho})\cap \mathsf{D}_S(\bfxi^{(1\cup 2),\rho})
$$
where ($\mathsf{D}_S(\bfxi^{(1\cup 2),\rho})$ is likewise defined with $\bfxi^{(1\cup 2),\rho}$ replacing $\ASEPB^{(1\cup 2),\rho}$)
$$
 \mathsf{D}_S(\ASEPB^{(1\cup 2),\rho})=  \big\{ \displaystyle\max_{|j| \le \varepsilon S} \big| \h_0 (j; \ASEPB^{(1\cup 2),\rho}) -P^{(1\cup 2),\rho}(j) \big| >S^{\frac{3}{4}}/2\big\}.
 $$
Thus, to prove \eqref{eq:boundD1} it suffices to find $P^{(1\cup 2),\rho}(j)$ such that there exists $c=c(\epsilon)>0$ so that for all  $\rho\in I^{\varepsilon}_S$ and $S>2$
\begin{equation}\label{eq:boundD1diff}
\PP\big[\mathsf{D}_S^{(1\cup 2),\rho;\beta}\big] \leq c^{-1} e^{- c S^{1/12}}\qquad \textrm{and}\qquad
\PP\big[\mathsf{D}_S^{(1\cup 2),\rho;\xi}\big] \leq c^{-1} e^{- c S^{1/12}}.
\end{equation}
We make the natural choice (defining $P^{(1\cup 2),\rho}([a,b])=P^{(1\cup 2),\rho}(a)-P^{(1\cup 2),\rho}(b)$ for $a,b\in \Z$)
$$
P^{(1\cup 2),\rho}(j) = \EE\big[ \h_0 (j; \bfxi^{(1\cup 2),\rho})\big]
$$
from which the second inequality in \eqref{eq:boundD1diff} follows immediately from applying Hoeffding's inequality (in the spirit of \Cref{distributionconcentration}). This gives a stronger bound with $S^{3/4}$ replace by $S^{1/2}$, though we will not need this here.

It remains to demonstrate the first bound in \eqref{eq:boundD1diff}. This follows from showing that there exists $c=c(\epsilon)>0$ such that for all $\rho\in I^{\varepsilon}_S$ and $S>2$
\begin{align}\label{eq:threeregions}
\nonumber &\PP\bigg[ \displaystyle\max_{j\in \llbracket 0,\varepsilon S\rrbracket} \Big| \h_0 \big(j; \ASEPB^{(1\cup 2),\rho} \big) -P^{(1\cup 2),\rho}(j) \Big| >\frac{ S^{3 / 4}}{6}\bigg] \leq c^{-1} e^{- c S^{1/12}},\\
&\PP\bigg[ \displaystyle\max_{j\in \llbracket -2S^{1-\gamma},-1\rrbracket} \Big| \h_0 \big(j; \ASEPB^{(1\cup 2),\rho} \big) -P^{(1\cup 2),\rho}(j) \Big| >\frac{ S^{3 / 4}}{6} \bigg] \leq c^{-1} e^{- c S^{1/12}},\\
\nonumber &\PP\bigg[ \displaystyle\max_{j\in \llbracket -\varepsilon S,-2S^{1-\gamma}\rrbracket} \Big| \h_0 \big(\llbracket j,-2S^{1-\gamma}\rrbracket; \ASEPB^{(1\cup 2),\rho} \big) -P^{(1\cup 2),\rho}(\llbracket j,-2S^{1-\gamma}\rrbracket) \Big| >\frac{ S^{3 / 4}}{6} \bigg] \leq c^{-1} e^{- c S^{1/12}}.
\end{align}
where in the final inequality we recall the notation from \eqref{eq:heightdiff}.
The first and third inequalities above are immediate from \eqref{eq:hbxi}: For $j\in \llbracket 0,\varepsilon S\rrbracket$ we have $\h_0 \big(j; \ASEPB^{(1\cup 2),\rho} \big)=\h_0 \big(j; \ASEPB^{(1),\rho} \big)$ and for $j\in \llbracket -\varepsilon S,-2S^{1-\gamma}\rrbracket$ we have $\h_0 \big(\llbracket j,-2S^{1-\gamma}\rrbracket; \ASEPB^{(1\cup 2),\rho} \big) =  \h_0 \big(\llbracket j,-2S^{1-\gamma}\rrbracket; \ASEPB^{(1),\rho} \big)$.


Thus, we are left to show the middle inequality in \eqref{eq:threeregions}. To do this we will split the interval $\llbracket -2S^{1-\gamma},-1\rrbracket$ into pieces of size $S^{2/3}$. On each of these we will control the number of first class particles in $\ASEPB^{(1\cup 2),\rho}$ to order $S^{1/3}$ by using the final part of \Cref{hetaxi} (as we are dealing with step initial data), and then control the number of second class particles by bounds on sums of Bernoulli random variables. This will yield an upper and lower bound with error of order $S^{1/3}$ on the number of first and second class particles in $\ASEPB^{(1\cup 2),\rho}$ within each interval. Summing over order $S^{1/3}$ such intervals introduces an error of order $S^{2/3}$ which is still much smaller than the $S^{3/4}$ allowed error.

Define $K_S = \lfloor 2S^{1/3-\gamma}\rfloor$ and intervals $I_k = \llbracket -(k+1) S^{2/3}, -kS^{2/3}\rrbracket$ for $k\in \llbracket 0,K_S-1\rrbracket$ and $I_{k_S} = \llbracket -2S^{1-\gamma},-K_S S^{2/3}\rrbracket$. Let $j_0,\ldots, j_{K_S}$ denote the endpoints of these intervals, i.e., $I_k = [j_{k+1},j_{k}]$ and notice that the union of these intervals covers $\llbracket -2S^{1-\gamma},-1\rrbracket$. Since $\h_0 \big(j; \ASEPB^{(1\cup 2),\rho} \big)$ and $P^{(1\cup 2),\rho}(j)$ are both 1-Lipschitz functions and since $S^{2/3}\ll S^{3/4}$ it suffices to show the following claim: there exist a constant $c>0$ such that for all $\rho\in I^{\varepsilon}_S$, $k\in \llbracket 0,K_S\rrbracket$ and $S>2$
\begin{equation}\label{eq:hojk}
\PP\bigg[\Big| \h_0 \big(j_k; \ASEPB^{(1\cup 2),\rho} \big) - P^{(1\cup 2),\rho}(j_k)\Big| > \frac{S^{3 / 4}}{8}\bigg] \leq c^{-1} e^{- c S^{1/12}},
\end{equation}
This implies the middle equation in \eqref{eq:threeregions} since the most that $\h_0 \big(j; \ASEPB^{(1\cup 2),\rho} \big) - P^{(1\cup 2),\rho}(j)$ can change over $j\in I_k$ is by $2|I_k| = 2S^{2/3}$. For large $S$, this is much smaller than $S^{3/4}/24$ (while for small $S$, we can just choose $c$ small enough so that the right-hand side of the middle equation in \eqref{eq:threeregions} exceeds 1, and hence the relation there trivially holds).

For each $k\in \llbracket 0,K_S\rrbracket$ define $\mathrm{First}_k = \h_0 \big(I_k; \ASEPB^{(1),\rho} \big)$ and the event
$$
\mathsf{F}_k(\kappa) := \left\{  \rho S^{2/3} +\frac{2k+1}{4} S^{1/3} -\kappa S^{1/3} \leq \mathrm{First}_k \leq  \rho S^{2/3} +\frac{2k+1}{4} S^{1/3} +\kappa S^{1/3}\right\}
$$
that the number of first class particles in $I_k$ is within $\kappa S^{1/3}$ of the expected number according to the hydrodynamic limit.
Noting that the term $ \rho S^{2/3} +\frac{2k+1}{4} S^{1/3}$ agrees with the hydrodynamic limit profile for step initial data, we see that by the final part of \Cref{hetaxi} there exists $c,\kappa_0>0$ such that for all $k\in \llbracket 0,K_S\rrbracket$ and $\kappa\in [\kappa_0,S^{2/3}/2]$,
$
\PP\big[\mathsf{F}_k(\kappa)\big]\leq c^{-1} e^{-c \kappa}.
$
On the event $\mathsf{F}_k(\kappa)$, we can bound the number of empty sites $\mathrm{Empty}_k:=S^{2/3}-\mathrm{First}_k$ for $\ASEPB^{(1),\rho}$ at time zero in the interval $I_k$ by
$$
(1-\rho) S^{2/3} - \frac{2k+1}{4}S^{1/3} - \kappa S^{1/3}\leq \mathrm{Empty}_k\leq   (1-\rho) S^{2/3} - \frac{2k+1}{4}S^{1/3} + \kappa S^{1/3}.
$$
As explained in \Cref{b}, in order to construct $\ASEPB^{(1\cup 2),\rho}$ from $ \ASEPB^{(1),\rho}$ on the interval $I_k$, we replace a hole at location $j$ by a second class particle (independently over all $j\in I_k$) with  the probability in \eqref{probabilityb}. Let us denote this probability by $Q(j)$. Observe that $Q(j)$ increases as $j$ decreases, and thus we can lower bound the total number of second class particles on $I_k$ by replacing $Q(j)$ by $Q(-kS^{2/3})$ for each $j\in I_k$, and likewise upper bound the number by using $Q(-(k+1)S^{2/3})$. This shows that given $\mathrm{Empty}_k$, the expected number second class particles that will be added in the interval $I_k$ will be bounded between $\mathrm{Empty}_k Q(-kS^{2/3})$ and $\mathrm{Empty}_{k}Q(-(k+1)S^{2/3})$. Call $\mathrm{Second}_k$ the number of second class particles added in the interval $I_k$ and define the event
$$
\mathsf{S}_k(\kappa) := \left\{ \mathrm{Empty}_k \cdot Q(-kS^{2/3}) -\kappa S^{1/3}\leq  \mathrm{Second}_k \leq \mathrm{Empty}_{k}\cdot Q(-(k+1)S^{2/3}) + \kappa S^{1/3}\right\}.
$$
By Hoeffding's inequality there exists $c,\kappa_0>0$ such that for all $k\in \llbracket 0,K_S\rrbracket$ and $\kappa\geq \kappa_0$,
$$
\PP\big[\mathsf{S}_k(\kappa)\big] \leq c^{-1} e^{-c \kappa}.
$$
On the event that both $\mathsf{F}_k(\kappa)$ and $\mathsf{S}_k(\kappa)$ hold, it follows that
$$
 (\rho + S^{-\gamma}) S^{2/3} - 4\kappa S^{1/3}\leq \mathrm{First}_k +\mathrm{Second}_k \leq (\rho + S^{-\gamma}) S^{2/3} + 4\kappa S^{1/3}
$$
where we have expanded the terms $Q(-kS^{2/3})$ and $Q(-(k+1)S^{2/3})$ and absorbed errors into the $4\kappa S^{1/3}$ term.
Recalling that $P^{(1\cup 2),\rho}(I_k) =(\rho + S^{-\gamma}) S^{2/3}$ and $\h_0 \big(I_k; \ASEPB^{(1\cup 2),\rho} \big)= \mathrm{First}_k +\mathrm{Second}_k$, and using the bounds above on $\PP\big[\mathsf{F}_k(\kappa)\big]$ and $\PP\big[\mathsf{S}_k(\kappa)\big]$, we conclude that there exists $c,\kappa_0>0$ such that for all $k\in \llbracket 0,K_S\rrbracket$ and $\kappa\in [\kappa_0/4,S^{2/3}/2]$,
\begin{equation}
\PP\bigg[\Big| \h_0 \big(I_k; \ASEPB^{(1\cup 2),\rho} \big) - P^{(1\cup 2),\rho}(I_k)\Big| >\kappa S^{1/3}\bigg] \leq c^{-1} e^{- c \kappa}.
\end{equation}
Taking $\kappa =S^{1/12}/8$ and a union bound over all  $k\in \llbracket 0,K_S\rrbracket$ leads to \eqref{eq:hojk}, as desired.

The inequality, \eqref{eq:Mbdd}, follows from what we have shown in \eqref{eq:threeregions} above upon noting that
$$\bfM = \h_0(\llbracket -2S^{1-\gamma},-1\rrbracket ;  \ASEPB^{(1\cup 2)}) -  \h_0(\llbracket -2S^{1-\gamma},-1\rrbracket ;  \ASEPB^{(1)}).$$
Notice that the centering of $\bfM$ by $S^{1-2\gamma}$ is consistent with the hydrodynamic limit, namely that the area of the triangle bounded between the two profiles in Figure \ref{fig:ASEPxizetadensity}.
\end{proof}

Having established \Cref{xizetab} we now know that the initial condition for the height functions of $\ASEPB^{(1)}$ and  $\bfxi^{(1)}$ as well as for $\ASEPB^{(1\cup 2)}$ and  $\bfxi^{(1\cup 2)}$ are, respectively, close to order $S^{3/4}$. The next result, \Cref{zetaestimate}, will show that the product form initial height profiles for $\bfxi^{(1)}$ and  $\bfxi^{(1\cup 2)}$ evolved over a time interval $T$ will be close to order at least $T^{3/4}$ to their hydrodynamic limits (at least when focusing to the left of the characteristic velocity $1-2\bfrho_S$). \Cref{zti} follow then follow by combining \Cref{zetaestimate} with \Cref{xizetab} and the monotonicity afforded to us by \Cref{xizeta2}.

\begin{prop}

\label{zetaestimate}

For any $\varepsilon \in ( 0, 1/2 )$, there exists $c = c (\varepsilon) > 0$ such that the following holds for any $S>2$ (recall $T=S(\log S)^{-1}$). Define the interval and function
\begin{flalign*}
\mathcal{J}_{S,T,\rho}\!= \!\Big[\! -\frac{\varepsilon S}{4}, (1 - 2 \rho) T - S^{1 - \frac{\gamma}{2}} \Big],\quad\!
\mathcal{H}_{S,T,\rho}(X,Y)\!=\! \bigg(\! \rho + \displaystyle\frac{T (1 - 2 \rho)}{2 (S + T)} \bigg) (Y - X)\! + \displaystyle\frac{Y^2 - X^2}{4 (S + T)},
\end{flalign*}
as well as the maximal deviation of the height function and hydrodynamic limit function
\begin{flalign*}
\mathrm{Diff}^{\pm}_{S,T}(\xi,\rho)&= \max_{X, Y \in \mathcal{J}_{S,T,\rho}} \pm \big( \h_T (\llbracket X,Y\rrbracket ; \xi) -\mathcal{H}_{S,T,\rho}(X,Y)\big),\\
\mathrm{Diff}_{S,T}(\xi,\rho) &= \max_{X, Y \in \mathcal{J}_{S,T,\rho}}  \big| \h_T (\llbracket X,Y\rrbracket ; \xi) -\mathcal{H}_{S,T,\rho}(X,Y)\big| =\max\big(\mathrm{Diff}^{+}_{S,T}(\xi,\rho),\mathrm{Diff}^{-}_{S,T}(\xi,\rho)\big)
\end{flalign*}
Then we have that
\begin{flalign}
\mathbb{P} \Big[\big\{\mathrm{Diff}_{S,T}(\bfxi^{(1)},\bfrho) \ge S^{3/4}\big\} \bigcap \big\{ \bfrho_S\in (\varepsilon,1-\varepsilon)\big\} \Big] & < c^{-1} e^{-c S^{1/12}},\label{xizetahb}\\
\mathbb{P} \Big[ \big\{\mathrm{Diff}_{S,T}(\bfxi^{(1\cup 2)},\bfrho) \ge S^{3/4} \big\} \bigcap \big\{ \bfrho_S\in (\varepsilon,1-\varepsilon)\big\} \Big]& < c^{-1} e^{-c S^{1/12}}.\label{xizetahb2}
\end{flalign}
The constants $c=c(\varepsilon)$ can be chosen so as to weakly decrease as $\varepsilon$ decreases to 0.
\end{prop}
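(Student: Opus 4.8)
The plan is to reduce to deterministic values of $\bfrho_S$ and to large $S$, then obtain \eqref{xizetahb} directly from \Cref{hetalinear}\eqref{hetaxilambdarholinear}, and obtain \eqref{xizetahb2} by sandwiching the initial data of $\bfxi^{(1\cup 2),\rho}$ between that of $\bfxi^{(1),\rho}$ and a suitable $\Phi$-distributed configuration and invoking attractivity together with \Cref{hetalinear}\eqref{hetaxilambdarholinear2}. As in the remark following \Cref{xizetab}, for any threshold $S_0=S_0(\varepsilon)$ we may shrink $c=c(\varepsilon)$ so that both bounds hold trivially for $S<S_0$; fix $S_0$ larger than every threshold appearing below and assume $S\ge S_0$. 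On $\{\bfrho_S\in(\varepsilon,1-\varepsilon)\}$ the variable $\bfrho_S$ lies in the grid $I^{\varepsilon}_S$ of \eqref{eq:IEps}, of size $O(S)$, and on $\{\bfrho_S=\rho\}$ we have $\bfxi^{(1)}_t=\bfxi^{(1),\rho}_t$, $\bfxi^{(1\cup2)}_t=\bfxi^{(1\cup2),\rho}_t$, $\bfrho_S=\rho$. So a union bound reduces everything to showing, for each fixed $\rho\in I^{\varepsilon}_S$, that $\PP[\mathrm{Diff}_{S,T}(\bfxi^{(1),\rho},\rho)\ge S^{3/4}]$ and $\PP[\mathrm{Diff}_{S,T}(\bfxi^{(1\cup2),\rho},\rho)\ge S^{3/4}]$ are at most $c^{-1}S^{3}e^{-cS^{1/12}}$; the $O(S)$ union factor and later polynomial prefactors are absorbed into $e^{-cS^{1/12}}$ by shrinking $c$, using $S^{4}e^{-cS^{1/12}}\le c^{-1}e^{-(c/2)S^{1/12}}$ for $S\ge S_0$. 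Throughout set $\delta=\delta(\varepsilon):=\min\{\varepsilon/8,(32R)^{-1}\}$, so that \Cref{hetalinear} applies with this $\delta$ to the given $\varepsilon$ and any $\rho\in I^{\varepsilon}_S$ (the hypothesis $S\ge\delta^{-2}T$ being just $\log S\ge\delta^{-2}$), and take $\kappa=S^{1/12}\in[15,T]$, so that $\kappa S^{2/3}=S^{3/4}$.

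For \eqref{xizetahb}: by \Cref{xizetaprocesses}, $\bfxi^{(1),\rho}$ has $\Upsilon^{(\rho)}_{\varepsilon}$-distributed initial data on $\llbracket-\varepsilon S,\varepsilon S\rrbracket$; one checks $\mathcal{J}_{S,T,\rho}\subseteq[-\varepsilon S/4,\varepsilon S/4]$ for $S\ge S_0$ (since $T+S^{1-\gamma/2}\ll\varepsilon S$), and that $\mathcal{H}_{S,T,\rho}(X,Y)$ is precisely the centering $T\int_{X/T}^{Y/T}\bigl(\rho+\tfrac{(1-2\rho-z)T}{2(S+T)}\bigr)\,dz$ of \Cref{hetalinear}\eqref{hetaxilambdarholinear}. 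Hence $\mathrm{Diff}_{S,T}(\bfxi^{(1),\rho},\rho)$ is dominated by the maximum on the left side of \Cref{hetalinear}\eqref{hetaxilambdarholinear}, and that bound with $\kappa=S^{1/12}$ gives \eqref{xizetahb}.

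For \eqref{xizetahb2}: let $\bfzeta^{\mathrm{up},\rho}$ be ASEP with $\Phi^{(\rho)}_{\varepsilon;\,S^{-\gamma}/\varepsilon}$-distributed initial data on $\llbracket-\varepsilon S,\varepsilon S\rrbracket$; for $S\ge S_0$ the parameter $S^{-\gamma}/\varepsilon$ lies in $(0,1/4)$, and unwinding \Cref{linearconstant} and \Cref{distributedinitial} shows that site-by-site the occupation probabilities obey $\bfxi^{(1),\rho}_0\le\bfxi^{(1\cup2),\rho}_0\le\bfzeta^{\mathrm{up},\rho}_0$: the three profiles coincide except on $\llbracket0,\varepsilon S\rrbracket$, where $\bfxi^{(1),\rho}_0$ and $\bfxi^{(1\cup2),\rho}_0$ both take the linear value $\le\rho\le\rho+S^{-\gamma}$, and except on $\llbracket-2S^{1-\gamma},-1\rrbracket$, where $\bfxi^{(1\cup2),\rho}_0$ is raised to the constant $\rho+S^{-\gamma}$, matching $\bfzeta^{\mathrm{up},\rho}_0$. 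Coupling these three initial data monotonically and running them in the basic coupling, \Cref{xizeta1} gives $\bfxi^{(1),\rho}_T\le\bfxi^{(1\cup2),\rho}_T\le\bfzeta^{\mathrm{up},\rho}_T$ pointwise, hence $\h_T(\llbracket X,Y\rrbracket;\bfxi^{(1),\rho})\le\h_T(\llbracket X,Y\rrbracket;\bfxi^{(1\cup2),\rho})\le\h_T(\llbracket X,Y\rrbracket;\bfzeta^{\mathrm{up},\rho})$ for all $X\le Y$. On the good events from the previous paragraph (for $\bfxi^{(1),\rho}$) and from \Cref{hetalinear}\eqref{hetaxilambdarholinear2} (for $\bfzeta^{\mathrm{up},\rho}$, with $\beta=S^{-\gamma}/\varepsilon$ so that $\varepsilon\beta=S^{-\gamma}$), for every $X,Y\in\mathcal{J}_{S,T,\rho}$ we get $\h_T(\llbracket X,Y\rrbracket;\bfxi^{(1),\rho})\ge\mathcal{H}_{S,T,\rho}(X,Y)-S^{3/4}$ and $\h_T(\llbracket X,Y\rrbracket;\bfzeta^{\mathrm{up},\rho})\le T\int_{X/T}^{Y/T}\max\bigl\{\rho+\tfrac{(1-2\rho-z)T}{2(S+T)},\,\rho+S^{-\gamma}\bigr\}\,dz+S^{3/4}$.

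The crucial point — and the reason \Cref{zetaestimate} restricts to the left half $\mathcal{J}_{S,T,\rho}$, ending at $(1-2\rho)T-S^{1-\gamma/2}$ rather than the full $[-\varepsilon S/4,\varepsilon S/4]$ — is that on $\mathcal{J}_{S,T,\rho}$ this maximum collapses onto its first branch: for $z\le Y/T\le(1-2\rho)-S^{-\gamma/2}\log S$ one has $1-2\rho-z\ge S^{-\gamma/2}\log S$, and since $S/T=\log S$ this is $\ge\tfrac{2(S+T)}{T}S^{-\gamma}=2(\log S+1)S^{-\gamma}$ for $S\ge S_0$, so $\rho+\tfrac{(1-2\rho-z)T}{2(S+T)}\ge\rho+S^{-\gamma}$ throughout the range of integration. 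Thus the $\bfzeta^{\mathrm{up},\rho}$ integral equals $\mathcal{H}_{S,T,\rho}(X,Y)$, and the monotone sandwich forces $|\h_T(\llbracket X,Y\rrbracket;\bfxi^{(1\cup2),\rho})-\mathcal{H}_{S,T,\rho}(X,Y)|\le S^{3/4}$ for all $X,Y\in\mathcal{J}_{S,T,\rho}$, i.e. $\mathrm{Diff}_{S,T}(\bfxi^{(1\cup2),\rho},\rho)\le S^{3/4}$, yielding \eqref{xizetahb2}; the weak monotonicity of $c$ in $\varepsilon$ is inherited from that of $\delta(\varepsilon)$ and of the constant in \Cref{hetalinear}. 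I expect the only genuine obstacle to be this verification — confirming that the exponent gap between the plateau width $S^{1-\gamma}$ and the cutoff $S^{1-\gamma/2}$, together with the slowly growing factor $\log S=S/T$, guarantees that the extra mass near the origin has not yet spread into $\mathcal{J}_{S,T,\rho}$ by time $T$, so that $\bfxi^{(1\cup2),\rho}$ and $\bfxi^{(1),\rho}$ share the same hydrodynamic profile there; everything else is bookkeeping with \Cref{hetalinear}, attractivity, and absorption of polynomial prefactors.
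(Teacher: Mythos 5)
Your proposal is correct and follows essentially the same route as the paper: union bound over the deterministic grid $I^{\varepsilon}_S$, \Cref{hetalinear}\eqref{hetaxilambdarholinear} for $\bfxi^{(1),\rho}$, attractivity for the lower bound on $\bfxi^{(1\cup 2),\rho}$, \Cref{hetalinear}\eqref{hetaxilambdarholinear2} for the upper bound, and the same verification that the $\max$ in the $\Phi$-profile integral collapses to the linear branch on $\mathcal{J}_{S,T,\rho}$ because $S^{1-\gamma/2}\gg 2S^{-\gamma}(S+T)$. Your explicit upper sandwich by an auxiliary $\Phi^{(\rho)}_{\varepsilon;S^{-\gamma}/\varepsilon}$-distributed process $\bfzeta^{\mathrm{up},\rho}$ is a slightly more careful rendering of the paper's direct assertion that $\bfxi^{(1\cup 2),\rho}_0$ is $\Phi$-distributed (which is only literally true after stochastic domination, since the two profiles differ on $\llbracket 0,\varepsilon S\rrbracket$), but the substance of the argument is identical.
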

\begin{proof}[Proof of \Cref{zetaestimate}]
As in the proof of \Cref{xizetab}, we will demonstrate that there exists   $c = c (\varepsilon) > 0$ such that the following holds for all $S>2$ and all $\rho\in I^{\varepsilon}_S$ (recall \eqref{eq:IEps}):
\begin{flalign}
\PP\Big[\mathrm{Diff}_{S,T}(\bfxi^{(1),\rho},\rho) \ge S^{3/4}\Big]&\leq   c^{-1} e^{-c S^{1/12}},\label{xizetahbfixedrho}\\
\PP\Big[\mathrm{Diff}_{S,T}(\bfxi^{(1\cup 2),\rho},\rho) \ge S^{3/4}\Big]&\leq   c^{-1} e^{-c S^{1/12}}.\label{xizetahb2fixedrho}
\end{flalign}
Having shown this, the results in the statement of \Cref{zetaestimate} follow by a union bound (absorbing the resulting linear prefactor of $S$ into the exponent $ c^{-1} e^{-c S^{1/12}}$).

By \Cref{xizetaprocesses}, the initial data for $\bfxi^{(1),\rho}$ is $\Upsilon_{\varepsilon}^{(\rho)}$-distributed (recall \eqref{functionlinear}) on $[-\varepsilon S, \varepsilon S]$. Thus, \eqref{xizetahbfixedrho} follows from the first statement of \Cref{hetalinear} (with $\kappa=S^{1/12}$ there), together with the fact that
\begin{flalign*}
T \displaystyle\int\limits_{X/T}^{Y/T} \Big( \rho + \displaystyle\frac{T}{2 (S + T)} (1 - 2 \rho - z) \Big) dz = \mathcal{H}_{S,T,\rho}(X,Y).
\end{flalign*}

To establish \eqref{xizetahb2fixedrho}, first observe by \Cref{xizeta1} that $\bfxi^{(1),\rho}$ and $\bfxi^{(1\cup 2),\rho}$ can be coupled so that $\h_t (\llbracket X,Y\rrbracket ; \bfxi^{(1\cup 2),\rho}) \ge \h_t (\llbracket X,Y\rrbracket ;\bfxi^{(1),\rho})$, for each $t \ge 0$, whenever $X \le Y$.
By this and \eqref{xizetahbfixedrho}, there exists $c = c(\varepsilon) > 0$ such that for all $S>2$ and all $\rho\in I^{\varepsilon}_S$ (recall \eqref{eq:IEps}):
$$
\PP\Big[\mathrm{Diff}^{-}_{S,T}(\bfxi^{(1\cup 2),\rho},\rho) \ge S^{3/4}\Big]\leq \PP\Big[\mathrm{Diff}^{-}_{S,T}(\bfxi^{(1),\rho},\rho) \ge S^{3/4}\Big]\leq   c^{-1} e^{-c S^{1/12}}.
$$
So, it suffices to establish the complementary bound
\begin{flalign}
\label{xyestimatexy}
\PP\Big[\mathrm{Diff}^{+}_{S,T}(\bfxi^{(1\cup 2),\rho},\rho) \ge S^{3/4}\Big]\leq   c^{-1} e^{-c S^{1/12}}.
\end{flalign}

To establish \eqref{xyestimatexy} observe that $\bfxi^{(1\cup 2),\rho}_0$  is $\Phi_{\varepsilon; \beta}^{(\rho)}$-distributed (as in \Cref{linearconstant}) with $\beta = \varepsilon^{-1} S^{-\gamma}$.
Thus, applying the second part of  \Cref{hetalinear} yields
\begin{flalign}
\nonumber\PP\Bigg[ \displaystyle\max_{\substack{|X/S| \le \varepsilon / 4 \\ |Y/S| \le \varepsilon / 4}} \bigg| \h_T(\llbracket X,Y\rrbracket ; \bfxi^{(1\cup 2),\rho}) - T \displaystyle\int\limits_{X/T}^{Y/T} \max \Big\{ \rho + \displaystyle\frac{(1 - 2 \rho - z) T}{2 (S + T)},& \rho + S^{-\gamma} \Big\} dz \bigg| > S^{3/4} \Bigg]\\
& < c^{-1} e^{-c S^{1/12}}.\label{eq:htXYzetam}
\end{flalign}
Recall that we have assumed $X, Y \le (1 - 2 \rho) T - S^{1 - \frac{\gamma}{2}}$. For large enough $S$, we have that
$(1 - 2 \rho) T - S^{1 - \frac{\gamma}{2}} \le (1 - 2 \rho) T - 2 S^{-\gamma} (S + T)$. In that case,
\begin{flalign*}
 \displaystyle\int\limits_{X/T}^{Y/T} \max \Big\{ \rho + \displaystyle\frac{(1 - 2 \rho - z) T}{2 (S + T)}, \rho + S^{-\gamma} \Big\} dz & =  \displaystyle\int\limits_{X/T}^{Y/T} \Big( \rho + \displaystyle\frac{(1 - 2 \rho - z) T}{2 (S + T)} \Big) dz =\frac{\mathcal{H}_{S,T,\rho}(X,Y)}{T}.
\end{flalign*}
Combining this with \eqref{eq:htXYzetam} yields \eqref{xyestimatexy}, as desired.
\end{proof}

\begin{proof}[Proof of \Cref{zti}]
We will start by defining the $\ASEPF_S$-measurable event $\mathsf{H}_S$ (recall that $\mathsf{E}^c$ is the complement of an event $\mathsf{E}$):
\begin{equation}\label{eq:Hsdef}
\mathsf{H}_S= \mathsf{D}_S(\ASEPB^{(1)},\bfxi^{(1)})^c\cap \mathsf{D}_S(\ASEPB^{(1\cup 2)},\bfxi^{(1\cup 2)})^c \cap \mathsf{M}_S^{c}
\end{equation}
where these events (all of which also depend on $S$ but whose dependence is not explicit in the notation) are defined in \Cref{xizetab}. Recalling the notation $\mathsf{P}_S$ from \eqref{eq:hspsbdrho}
observe that by the union bound and then \eqref{eq:hbxi}, \eqref{eq:hbzeta} and \eqref{eq:Mbdd} we have that there exists $c=c(\varepsilon)>0$ such that for all $S>2$
\begin{align*}
\PP[\mathsf{P}_S\cap (\mathsf{H}_{S})^c]
&\leq \PP \big[\mathsf{P}_S \cap \mathsf{D}_S(\ASEPB^{(1)},\bfxi^{(1)})\big]+
\PP \big[\mathsf{P}_S \cap \mathsf{D}_S(\ASEPB^{(1\cup 2)},\bfxi^{(1\cup 2)})\big]+
\PP \big[\mathsf{P}_S \cap \mathsf{M}_S\big]\\
&\leq c^{-1} e^{- c S^{1/12}}.
\end{align*}
%
%
%
%
This shows \eqref{eq:HPbd}.
Thus, to prove \Cref{zti} it now suffices to show that for the choice of $\mathsf{H}_S$ in \eqref{eq:Hsdef}, \eqref{ztestimate} holds, namely there exists $c=c(\varepsilon)>0$ such that for all $S>2$
$$
\PP \Big[ \Big| \llbrace\bfZ_T\rrbrace \cap \big[(1-2\bfrho_S)T-S^{1 - \frac{\gamma}{2}}, \infty\big) \Big| \ge \bfM(1 - c^{-1} S^{-\frac{1}{5}}) \Big| \ASEPF_S \Big]  \ge\big(1 - c^{-1} e^{-c S^{1/12}}\big)\mathbf{1}_{ \mathsf{H}_S\cap\mathsf{P}_S}.
$$
In other words, to prove the above bound we must show that there exists $c=c(\varepsilon)>0$ such that for any $S>2$, assuming the event $\mathsf{P}_S\cap \mathsf{H}_S$ holds, it follows that with probability at least $1 - c^{-1} e^{-c S^{1/12}}$ the number of second class particles in the interval $ \big[(1-2\bfrho_S)T-S^{1 - \frac{\gamma}{2}}, \infty\big) $ is at least $\bfM(1 - c^{-1} S^{-\frac{1}{5}})$. Observe that on the event $\mathsf{H}_S\cap\mathsf{P}_S$, we have that $\big|\bfM - S^{1-2\gamma}\big| \leq S^{\frac{3}{4}}$ holds and that
\begin{align}
\PP\Big[ \Big| \llbrace\bfZ_T\rrbrace \cap \Big(-\infty, - \frac{\varepsilon S}{4}\Big]\Big|= 0\Big] &\geq 1- c^{-1} e^{-c S^{1/12}},\label{eq:llbraczt1}\\
\PP\Big[ \Big| \llbrace\bfZ_T\rrbrace \cap \Big(- \frac{\varepsilon S}{4}, (1-2\bfrho_S)T-S^{1 - \frac{\gamma}{2}}\Big]\Big|< 4S^{\frac{3}{4}}\Big] &\geq 1- c^{-1} e^{-c S^{1/12}}.\label{eq:llbraczt2}
\end{align}
The first of these inequalities follows immediately from  \Cref{xizetaequal} (and does not depend on the occurrence of $\mathsf{H}_S$). This is because  $\ASEPB^{(1)}$ and $ \ASEPB^{(1\cup 2)}$ are the same at time 0 on the interval $(-\infty, -2S^{1-\gamma})$ and hence remain the same on the smaller interval $(-\infty, -2S^{1-\gamma}-4RT)$ at time $T=S/\log S$ with probability at least $1-4e^{-T/3}$. We can find $c=c(\varepsilon)>0$ such that for all $S>2$ either $(-\infty, -2S^{1-\gamma}-4RT)\subset (-\infty, - \frac{\varepsilon S}{4}]$ and $1-4e^{-T/3}\geq 1-c^{-1} e^{-cS^{1/12}}$, or $1-c^{-1} e^{-cS^{1/12}}<0$. In the first case (which occurs for large enough $S$) \eqref{eq:llbraczt1} follows, and in the second case (for small $S$) \eqref{eq:llbraczt1} follows trivially as the right-hand side is negative.

The second inequality, \eqref{eq:llbraczt2}, relies on \Cref{zetaestimate}. Observe that by the triangle inequality, on the event that
\begin{equation}\label{eq:Diffevs}
\big\{\mathrm{Diff}_{S,T}(\bfxi^{(1)},\bfrho) < S^{3/4}\big\} \cap \big\{\mathrm{Diff}_{S,T}(\bfxi^{(1\cup 2)},\bfrho) < S^{3/4}\big\}
\end{equation}
holds in addition to $ \mathsf{H}_S\cap\mathsf{P}_S$, it follows that
\begin{equation}\label{eq:htcompars}
\h_T\Big(\Big\llbracket - \frac{\varepsilon S}{4}, (1-2\bfrho_S)T-S^{1 - \frac{\gamma}{2}}\Big\rrbracket; \ASEPB^{(1\cup 2)}\Big)-
\h_T\Big(\Big\llbracket - \frac{\varepsilon S}{4}, (1-2\bfrho_S)T-S^{1 - \frac{\gamma}{2}}\Big\rrbracket; \ASEPB^{(1)}\Big) \leq 4S^{3/4}.
\end{equation}
Here we used the monotonicity from \Cref{xizeta2} to show that the $S^{3/4}$ closeness of $\ASEPB^{(1)}$ and $\bfxi^{(1)}$, and of $\ASEPB^{(1\cup 2)}$ and $\bfxi^{(1\cup 2)}$, at time $0$ (which holds on $\mathsf{D}_S(\ASEPB^{(1)},\bfxi^{(1)})^c\cap \mathsf{D}_S(\ASEPB^{(1\cup 2)},\bfxi^{(1\cup 2)})^c$) persists for all time. Then we used the fact that on the event in \eqref{eq:Diffevs} both $\bfxi^{(1)}$ and $\bfxi^{(1\cup 2)}$ have height functions that are within $S^{3/4}$ of the same hydrodynamic limit function $\mathcal{H}_{S,T,\rho}(X,Y)$.
By \Cref{xizeta1} and equation \eqref{eq:heightdiff}, the left-hand side of \eqref{eq:htcompars} is  the event
$$
\Big| \llbrace\bfZ_T\rrbrace \cap \Big(- \frac{\varepsilon S}{4}, (1-2\bfrho_S)T-S^{1 - \frac{\gamma}{2}}\Big]\Big|< 4S^{\frac{3}{4}}
$$
whose probability we wish to control in \eqref{eq:llbraczt2}. By \Cref{zetaestimate} the probability of the event in \eqref{eq:Diffevs} (that, in conjunction with $\mathsf{H}_S \cap \mathsf{P}_S$, imply \eqref{eq:htcompars}) is at least $1- c^{-1} e^{-c S^{1/12}}$ for some $c=c(\varepsilon)>0$. This establishes \eqref{eq:llbraczt2}.

We can now show \eqref{ztestimate} holds. By \eqref{eq:llbraczt1} and \eqref{eq:llbraczt2}, on the event  $\mathsf{H}_S \cap \mathsf{P}_S$,  we have that
$$
\Big\{\Big| \llbrace\bfZ_T\rrbrace \cap \big[(1-2\bfrho_S)T-S^{1 - \frac{\gamma}{2}}, \infty\big) \Big| \ge \bfM  - 4S^{3/4}\Big\}
$$
holds with probability at least $1-2c^{-1} e^{-c S^{1/12}}$. On  $\mathsf{H}_S \cap \mathsf{P}_S$ we also have  $\bfM>S^{1-2\gamma} - S^{3/4}$ which implies that there exists $S_0>0$ such that
$$
\bfM - 4S^{3/4} = \bfM \Big( 1- \frac{4S^{3/4}}{\bfM}\Big) \geq \bfM\Big(1- \frac{4S^{3/4}}{S^{1-2\gamma}-S^{3/4}}\Big) \geq \bfM(1-S^{-1/5}).
$$
This implies \eqref{ztestimate}, provided $S>S_0$ (for smaller $S$ it follows by taking $c$ sufficiently small).

All that remains to completes the proof of  \Cref{zti} is to show that the constants $c=c(\varepsilon)$ in that statement can be chosen so as to weakly decrease as $\varepsilon$ decreases to 0. However, this is easily seen to be the case due to the fact that all results upon which we relied in this proof have a similar qualification on the constants.
\end{proof}
\appendix
\section{Rezakhanlou's good coupling}\label{sec:Rez}
We recall here a coupling which is presented in Section 4.1 of \cite{MSL}. It is proved there, though not stated as a quotable result, hence we also include a proof along the lines of \cite{MSL}. We will stick with the notation used in that paper to make the comparison there simpler. This notation is a bit different than what we use in the main body of this paper, hence we also explain how match to \Cref{prop:Rez}.

Let $(\eta_t,x_t)$ denote the occupation variables for first class particles ($\eta_t$) along with the location  of a single second class particle ($x_t$). Let $p(1)=q$ and $p(-1)=p$ (and $p(i)=0$ for all other $i$) and assume $p\geq q$. The state space for $(\eta_t,x_t)$ is $\{(\eta,x)\in \{0,1\}^{\Z}\times \Z: \eta(x)=0\}$ and the generator is specified by its action on local functions $f$ as
\begin{align*}
\mathcal{A}f (\eta,x) :=& \sum_{u,v\in \Z\setminus x} p(v-u)\eta(u) \big(1-\eta(v)\big) \big[f(\eta^{u,v},x)-f(\eta,x)\big]\\
&+ (p-q) \big(1-\eta(x-1)\big)\big[f(\eta,x-1)-f(\eta,x)\big]\\
&+ (p-q) \eta(x+1)\big[f(\eta^{x+1,x},x+1)-f(\eta,x)\big]\\
&+ q \big[f(\eta^{x,x+1},x+1)+f(\eta^{x,x-1},x-1)-2f(\eta,x)\big].
\end{align*}
Here $\eta^{u,v}(w)$ is equal to $\eta(v)$ if $w=u$, $\eta(u)$ if $w=v$ and $\eta(w)$ otherwise. Let $\PP^{\eta,x}$ denote the probability measure for this Markov process from initial data $(\eta,x)$.

It is easy to check that $\mathcal{A}$ does, indeed, encode the desired jump rates. The first term involves jumps which are separate from the second class particle. When there is a second class particle at $x$ and no (first class) particle at $x-1$, the second line gives a jump rate $p-q$ for the second class particle to move to $x-1$, and the fourth line gives a jump rate $q$, thus a total of rate $p$. If there is a second class particle at $x$ and no particle at $x+1$, then only the fourth line contributes a jump rate of $q$. Thus, the second class particle behaves a expected. If there is a particle at $x-1$ and second class particle at $x$, then the two switch at rate $q$ from the fourth line, and if there is a particle at $x+1$ and a second class particle at $x$, then the two switch with rate $p-q$ from the third line and rate $q$ from the fourth line, hence rate $p$. This matches the dynamics one expects for first/second particle pairs. This type of case-by-case verification of couplings can be implemented for all of the other generators that we define below, though we will not go through it there.

Consider two states $\eta_0\geq \zeta$ in $\{0,1\}^{\Z}$ and let $\alpha$ be defined via $\zeta_0 = \eta+\alpha$, with $\alpha\in \{0,1\}^{\Z}$ as well. These represent all of the second class particles. By the basic coupling (and attractivity of ASEP) we can define the joint evolution $(\eta_t,\alpha_t)$ of first and second class particles started from $\eta_0=\eta$ and $\alpha_0=\alpha$. We will not record the generator, though note that it is given in \cite[(4.4)]{MSL}.
Let $\PP^{\eta,\alpha}$ denote the probability measure for this Markov process from initial data $(\eta,\alpha)$.

For $x\in \Z$, $\eta\in \{0,1\}^{\Z}$ and $N\in \Z_{\geq 1}$, let $A^{\geq}(x,\eta,N)$ equal the set of $\alpha\in \{0,1\}^\Z$ such that $\sum_{j\in \Z} \alpha_j=N$, $\eta+\alpha\in \{0,1\}^Z$, $\alpha(x)=1$ and $\alpha(w)= 1$ only if $w\geq x$ (this is not an if and only if). In words, this means that we start with $N$ second class particles relative to the first class particles at $\eta$, with the left-most one at $x$. Associate to such an $\alpha$, a set $\{z_t(1),\ldots, z_t(N)\}$ of locations for the second class particles.
%
%

\begin{prop}\label{prop:Rezappendix}
For any $x,y\in \Z$, $x_0\in \Z$ and $\eta\in \{0,1\}^{\Z}$ with $\eta_0(x_0)=0$, and for any $N\in \Z_{\geq 1}$ and $\alpha_0\in A^{\geq}(x_0,\eta_0,N)$,
\begin{equation}\label{eq:Rez}
\PP^{\eta_0,x_0}(x_t\geq y) \leq \frac{1}{N} \sum_{j=1}^{N} \PP^{\eta,\alpha}(z_t(j)\geq y).
\end{equation}
\end{prop}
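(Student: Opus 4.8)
\emph{Proof proposal.} The plan is to follow Rezakhanlou's device of realising the single--second-class-particle process as a \emph{labelled} particle inside the $N$-particle system. On one probability space I would run the graphical construction of the $N$-particle process $(\eta_t,\alpha_t)$, with ordered second class particles $z_t(1)>\cdots>z_t(N)$ (so that $x_0$ is the leftmost of the $z_0(j)$, say $z_0(N)=x_0$, since $\alpha_0\in A^{\geq}(x_0,\eta_0,N)$), together with a label $\kappa_t\in\{1,\dots,N\}$ started at $\kappa_0=N$. The label jumps only by using Poisson arrows that the $N$-particle dynamics leave \emph{unused}: when $z_t(\kappa_t-1)$ sits immediately to the right of $z_t(\kappa_t)$, the (blocked) right-arrow at $z_t(\kappa_t)$ triggers $\kappa_t\mapsto\kappa_t-1$, and when $z_t(\kappa_t+1)$ sits immediately to its left, the (blocked) left-arrow at $z_t(\kappa_t)$ triggers $\kappa_t\mapsto\kappa_t+1$. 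The point of this choice is that $z_t(\kappa_t)$ always occupies an $\eta$-hole, so from the single-particle viewpoint a neighbouring $z_t(i)$ looks like a hole; a short case analysis (first class / hole / second class on either side of $z_t(\kappa_t)$, using the generator $\mathcal A$ above) then shows that $z_t(\kappa_t)$ jumps left and right at exactly the rates $p,q$ prescribed by $\eta_t$ alone, and that $\eta_t$ evolves as if $z_t(\kappa_t)$ were the only second class particle. Hence $\big(\eta_t,z_t(\kappa_t)\big)\stackrel{d}{=}$ the process with law $\PP^{\eta_0,x_0}$.

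Write $x_t:=z_t(\kappa_t)$ and $m_t:=\#\{i:z_t(i)\ge y\}$, and let $\mathcal G_t$ denote the $\sigma$-algebra generated by $(\eta_s,\alpha_s)_{s\le t}$. The ordering of the $z_t(i)$ gives the identity $\{x_t\ge y\}=\{\kappa_t\le m_t\}$, while $\tfrac1N\sum_j\PP^{\eta,\alpha}[z_t(j)\ge y]=\tfrac1N\EE[m_t]$, so \eqref{eq:Rez} is equivalent to $\PP[\kappa_t\le m_t]\le\tfrac1N\EE[m_t]$. This in turn would follow from the conditional stochastic domination
\[
\PP\big[\kappa_t\le m\,\big|\,\mathcal G_t\big]\ \le\ \frac mN,\qquad m=1,\dots,N,
\]
by integrating against $m=m_t$ (which is $\mathcal G_t$-measurable) and taking expectations. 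Thus everything reduces to showing that, conditionally on the trajectory of the bulk system, the label $\kappa_t$ is stochastically at least a uniform draw from $\{1,\dots,N\}$; this is precisely the source of the inequality (rather than equality) in the statement.

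To prove the domination I would run simultaneously the labels $\kappa^{[a]}_t$ for all starting values $\kappa^{[a]}_0=a$, all reading the same $N$-particle trajectory and the same unused arrows. Since the jump rules depend only on the current value of the label, these are monotonically coupled, $a\le a'\Rightarrow\kappa^{[a]}_t\le\kappa^{[a']}_t$, so $\kappa_t=\kappa^{[N]}_t=\max_a\kappa^{[a]}_t$ and $\{\kappa_t\le m\}=\{\kappa^{[a]}_t\le m\ \text{for all }a\}$. By Markov's inequality, $\PP[\kappa_t\le m\mid\mathcal G_t]\le\tfrac1N\EE[F_t(m)\mid\mathcal G_t]$ with $F_t(m):=\#\{a:\kappa^{[a]}_t\le m\}$ and $F_0(m)=m$, so it suffices to show $\EE[F_t(m)\mid\mathcal G_t]\le m$ for each $m$. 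Now $F_t(m)$ changes only through the ``block'' between levels $m$ and $m+1$, i.e. at times when $z_t(m)$ and $z_t(m+1)$ are adjacent: the labels at level $m$ all move up to $m+1$ at rate $p$ (so $F_t(m)$ drops by their number), and the labels at level $m+1$ all move down to $m$ at rate $q$ (so $F_t(m)$ jumps up by their number). Because $p\ge q$, this block pushes occupation onto the upper level $m+1$, whose reversible occupation ratio is $p/q\ge1$; tracking how the level-occupations interact across neighbouring blocks --- equivalently, checking that the conditional law of a uniformly-started label stays log-concave relative to the reversible measure $\propto(p/q)^{m}$ --- yields $\EE[F_t(m)\mid\mathcal G_t]\le m$. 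This bookkeeping is the main technical obstacle, and is the step recalled in detail from \cite{MSL}.

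Finally, \Cref{prop:Rez} is obtained from \Cref{prop:Rezappendix} by the particle--hole symmetry of \Cref{rem:secondclassduality} (which turns a configuration in $A^{\ge}$ into one in $A^{\le}$ and the event $\{\cdot\ge y\}$ into $\{\cdot\le -y\}$, leaving the first/second class ASEP invariant), after reconciling the left-drift convention $p\ge q$ used here with the right-drift convention $R>L$ of the main text.
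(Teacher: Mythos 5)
Your construction of the rank-label $\kappa_t$ is internally consistent (the case analysis of rates does show $(\eta_t,z_t(\kappa_t))\stackrel{d}{=}\PP^{\eta_0,x_0}$, and the reduction of \eqref{eq:Rez} to the conditional domination $\PP[\kappa_t\le m\mid\mathcal G_t]\le m/N$ via $\{x_t\ge y\}=\{\kappa_t\le m_t\}$ and Markov's inequality for $F_t(m)$ is sound). The genuine gap is exactly the step you defer: $\EE[F_t(m)\mid\mathcal G_t]\le m$. Writing $n_t(j)$ for the conditional expected number of labels at rank $j$ (so $n_0(j)\equiv 1$), one has $\frac{d}{dt}\EE[F_t(m)\mid\mathcal G_t]=\mathbf 1[\mathrm{adj}(m,m+1)]\,\big(q\,n_t(m+1)-p\,n_t(m)\big)$, so what must be shown is that the one-sided ratio bound $n_t(m+1)\le (p/q)\,n_t(m)$ for all $m$ — i.e.\ monotonicity of $n_t(m)(q/p)^m$ in $m$, not log-concavity relative to $(p/q)^m$ — is preserved by the coupled evolution of the $n_t(j)$ in the quenched, time-varying adjacency environment. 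This is a nontrivial invariant-region argument (on the boundary $q\,n(m+1)=p\,n(m)$ one must check that the neighbouring blocks $(m-1,m)$ and $(m+1,m+2)$ only push $n(m+1)-(p/q)n(m)$ downward, using the ratio bounds there); it is true and can be carried out, but it is asserted rather than proved in your write-up, and it is not the step recalled from \cite{MSL} — that reference, and the paper, prove the proposition by a different route.

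The paper sidesteps this difficulty entirely. There the labels ride on the particles and are exchanged only by the \emph{symmetric} rate-$q$ swaps in the generator $\mathcal B$, so the uniform distribution on label orderings is preserved \emph{exactly} — no domination over uniform has to be established. The asymmetry $p>q$ is instead absorbed into a two-process coupling: for a fixed label $j$, comparing the generators $\mathcal A$ and $\mathcal B$ at a site where $x_t=z_t(j)$ shows that $x_t$ jumps left at a weakly higher rate (it sees the neighbouring second class particles as holes) and right at the same rate, whence $x_t\le z_t(j)$ for all $t$, and \eqref{eq:Rez} follows from $\PP(x_t\ge y)\le\PP(z_t(1)\ge y)$ plus exact uniformity. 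In other words, your version makes the distinguished particle match the single-particle law exactly but must then prove a stochastic inequality for the label, while the paper keeps the label dynamics measure-preserving and puts the inequality into the particle coupling. To close your argument you must either supply the invariant-region computation above, or drop the extra rate-$(p-q)$ label jump (keeping only the symmetric rate-$q$ exchanges), accept $x_t\le z_t(\kappa_t)$ in place of equality in law, and thereby recover the paper's proof.
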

\begin{proof}
To prove this we will introduce a process $\mathbf{z}_t=\big(z_t(1),\ldots, z_t(N)\big)$ which is comprised of the locations in $\alpha_t$, but for which the order of the labels can change. We will then show that for any particular label $j\in \{1,\ldots, N\}$, there exists a coupling of $(\eta_t,\mathbf{z}_t,x_t)$ such that $x_t\leq z_t(j)$ for all $t$. Finally, since the uniform distribution on orders for $\mathbf{z}_t$ is preserved, marginally, for all $t$, we will be able to conclude that \eqref{eq:Rez} holds.
We need a bit of notation. Let $\alpha(u;\mathbf{z}):=\alpha(u)= \mathbf{1}_{u = z_j\textrm{ for some }j\in \{1,\ldots, N\}}$ denote the indicator that there is a second class particle at position $u$, given location vector $\mathbf{z}$; let $\zeta(u;\mathbf{z}) = \eta(u) + \alpha(u;\mathbf{z})$ similarly denote the indicator for either a first or second class particle at position $u$.

We now define a coupling of $\eta_t$ and the second class particle label process $\mathbf{z}_t$. Forgetting about the labels, this reduces to the usual (basic) coupling of first and second class particles. It should be noted that the labels in $\mathbf{z}_t$ do not stay ordered. The state space for $(\eta_t,\mathbf{z}_t)$ is evident, and the generator is given by its actions on local functions $f$ as
\begin{align*}
\mathcal{B}f (\eta,\mathbf{z}) :=& (p-q) \sum_{u\in \Z} \eta(u) (1-\zeta(u-1,\mathbf{z})) \left[f(\eta^{u,u-1},\mathbf{z})-f(\eta,\mathbf{z})\right]\\
&+ (p-q)\sum_{j=1}^{N} (1-\zeta(z_j-1;\mathbf{z}))\left[f(\eta,\mathbf{z}^{z_j,z_j-1})-f(\eta,\mathbf{z})\right]\\
&+ (p-q) \sum_{j=1}^{N} \eta(z_j+1)\left[f(\eta^{z_j+1,z_j},\mathbf{z}^{z_j,z_j+1})-f(\eta,\mathbf{z})\right]\\
&+ q \sum_{u\in \Z}\left[f(\eta^{u,u+1},\mathbf{z}^{u,u+1})-f(\eta,\mathbf{z})\right].
\end{align*}
Here $\mathbf{z}^{u,v}$ denotes the configuration resulting from exchanging the content of sites $u$ and $v$ in $\mathbf{z}$. In particular, if $z_i=u$ and $z_j=v$, then $\tilde{\mathbf{z}}=\mathbf{z}^{u,v}$ has $\tilde{z}_i=v$ and $\tilde{z}_j=u$ (and all other values unchanged). It is worth noting that the fourth line in $\mathcal{B}$ results in such swapping of labels between second class particles.

Now, we will demonstrate that for any given $j\in \{1,\ldots, N\}$ it is possible to construct a coupling $(\eta_t,\mathbf{z}_t,x_t)$ such that if  $x_0\leq z_0(j)$, then $x_t\leq z_t(j)$, for all $t$. We have already defined couplings of $(\eta_t,x_t)$ and $(\eta_t,\mathbf{z}_t)$.
So, to see that our desired triple coupling exists, we simply need to show that if, for some $t$, $x_t = z_t(j)=u$, then the jump rates for $x_t$ and $z_t(j)$ are ordered so $x_t$ jumps left with higher rate than $z_t(j)$ and jumps right with lower rate than $z_t(j)$. This is shown by inspection of the generators: The left jump rate for $x_t$ is $(p-q)(1-\eta_t(u-1))+q$ while  for $z_t(j)$ it is $(p-q)\zeta_t(u-1) +q$. Since $\eta_t(u-1)=0$ implies $\zeta_t(u-1)=0$, the rates a ordered as desired; the right jump rate for $x_t$ and $z_t(j)$ are both $(p-q)\eta_t(u+1)+q$. This proves that the desired coupling exists. Let us denote the coupled probability measure started in state $\eta,\mathbf{z},x$ by $\PP^{\eta,\mathbf{z},x}$.

Now we can conclude with the proof of \eqref{eq:Rez}. For a given $\alpha$, let $\mathbf{z}_0$ denote a uniformly random ordering on the elements of $\alpha$. The dynamics on $\mathbf{z}_t$ preserve the uniform ordering in the sense that for any fixed $t$, the marginal distribution of the order of labels in $\mathbf{z}_t$ remains uniform. For any $\alpha\in A^{\geq}(x,\eta,N)$, fix $j=1$ and use the coupling from the previous paragraph to see that
\begin{equation}
\PP^{\eta,x}(x_t\geq y) = \PP^{\eta,\mathbf{z}, x}(x_t\geq y) \leq \PP^{\eta,\mathbf{z}, x}(z_t(1)\geq y) =\PP^{\eta,\mathbf{z}}(z_t(1)\geq y) = \frac{1}{N} \sum_{j=1}^{N} \PP^{\eta,\alpha}\big(z_t(j)\geq y\big).
\end{equation}
The first equality is immediate from the coupling since including the $\mathbf{z}_t$ process has no baring on the event $x_t\geq y$. The second inequality is because under the $j=1$ version of the coupling of $(\eta_t,\mathbf{z}_t,x_t)$, we have that $z_t(1)\geq x_t$ provided $z_0(1)\geq x_0$. That inequality, however, is implied by the assumption that  $\alpha\in A^{\geq}(x,\eta,N)$ (which means that $x_0=x$ is the left-most particle in $\alpha$ and hence in $\mathbf{z}$). The third equality is again immediate from the coupling since now $x_t$ has no baring on the event $z_t(1)\geq y$. The final equality is because we have assumed that the order in $\mathbf{z}$ is uniformly chosen and this property holds for all $t$. Thus, we must average over the events $z_t(j)\geq y$ as stated.
\end{proof}

\begin{proof}[Proof of \Cref{prop:Rez}]
In order to match \Cref{prop:Rezappendix} with  \Cref{prop:Rez} we take $R=p$ and $L=q$ and then reverse space.
\end{proof}

\section{Proof of moderate deviation results and proof of \Cref{hetaxi}}\label{sec:modDevproof}

To prove  \Cref{hetaxi}, we will use the following proposition that provides upper and lower tail bounds on $\h_T (X; \boldsymbol{\eta}_T)$. These bounds are summarized in Figure \ref{fig:Tailbounds} and its caption.

\begin{prop}\label{prop:combined}
For any $\varepsilon > 0$, there exists $c = c(\varepsilon) > 0$ such that the following holds. Let $\rho \in [\varepsilon, 1]$ and $\boldsymbol{\eta}$ be ASEP under $(\rho; 0)$-Bernoulli initial data. For any $T > 1$, $s\geq 0$ letting  $Y_0:=(1 - 2 \rho) T + T^{2/3} $ we have (recall from \eqref{eq:heightdiff} that $\h_T([X,Y];\bfeta):=\h_T(X;\bfeta)-\h_T(Y;\bfeta)$)
\begin{flalign}
&\PP \bigg[  \h_T (X; \bfeta) \ge \displaystyle\frac{(T-X)^2}{4T} + s T^{\frac{1}{3}} \bigg] \le c^{-1} e^{-cs} \quad \textrm{for }X \in \big\llbracket\! -(1 - \varepsilon)T, (1 - \varepsilon) T \big\rrbracket,\label{0hetaxi2}\\
&\PP \bigg[ \h_T(X; \bfeta) \le \displaystyle\frac{(T-X)^2}{4T} - s T^{\frac{1}{3}} \bigg] \le  c^{-1} (e^{-cs} + e^{-cT}),  \quad \textrm{for }X\in  \big\llbracket Y_0, (1 - \varepsilon) T \big\rrbracket,\label{h1}\\
&\PP \big[ \h_T([X,Y_0]; \bfeta)  \ge \rho (Y_0 - X) + s T^{\frac{1}{2}} \big] \le 2 e^{-\frac{s^2}{4}},  \quad \textrm{for }X\in \big\llbracket\! -T, Y_0 \big\rrbracket \label{hxyeta1} \\
&\PP \big[ \h_T([X,Y_0];\bfeta) \le \rho (Y_0 - X) -s T^{\frac{2}{3}} \big] \le c^{-1} T^{\frac{1}{3}} (e^{-cs} + e^{-cT})\quad \textrm{for }X\in \big\llbracket \!-T, Y_0 \big\rrbracket.\label{hxyeta2}
\end{flalign}
The constants $c=c(\varepsilon)$ can be chosen so as to weakly decrease as $\varepsilon$ decreases to 0.
\end{prop}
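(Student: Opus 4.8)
\textbf{Proof proposal for \Cref{prop:combined}.}

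The plan is to reduce all four tail bounds to known one-point tail estimates for the ASEP height function under step-Bernoulli initial data, which in turn are reduced to tail estimates for TASEP via the identity of \cite{AEPDPP} relating the ASEP $q$-Laplace transform to a multiplicative functional of a discrete Laguerre point process. First I would set up the coordinate dictionary: for $(\rho;0)$-Bernoulli initial data the hydrodynamic profile $\Upsilon^{(\rho;0)}$ has the linear rarefaction fan of slope $-1/2$ emanating from the point $(1-2\rho)$, and integrating it gives exactly the parabolic centering $\frac{(T-X)^2}{4T}$ appearing in \eqref{0hetaxi2} and \eqref{h1} (valid for $X$ in the fan, i.e.\ $X\le (1-2\rho)T$; note $Y_0>(1-2\rho)T$ so the region $\llbracket Y_0,(1-\varepsilon)T\rrbracket$ sits inside the density-$\rho$ plateau where the profile is flat, which is why \eqref{h1} is stated there, and indeed $\frac{(T-X)^2}{4T}$ is the correct integrated profile since the fan reaches down to $X$). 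The point $Y_0 = (1-2\rho)T + T^{2/3}$ is placed a fluctuation-scale $T^{2/3}$ to the \emph{right} of the characteristic so that, with overwhelming probability, all added second-class-type discrepancies have passed it, decoupling the left portion of the height function.

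Next I would prove \eqref{0hetaxi2}, the upper tail. This is the ``easy'' tail: it follows directly from decay of the Fredholm kernel in the $q$-Laplace transform formula for the ASEP height function under step-Bernoulli data (as established in \cite{BCS}, with the relevant asymptotic analysis; see also \cite{DZ21}). Concretely, $\PP[\h_T(X;\bfeta)\ge m]$ is controlled by $\EE\big[\frac{1}{(\mu q^{\h_T(X;\bfeta)};q)_\infty}\big]$ for suitable $\mu$, and bounding this expectation by the corresponding Fredholm determinant and estimating the kernel on the relevant contour gives exponential decay $c^{-1}e^{-cs}$ at scale $T^{1/3}$, uniformly for $|X|\le (1-\varepsilon)T$ (the $\varepsilon$-dependence of $c$ entering through how close to the fan's edge we allow $X$). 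For \eqref{h1}, the lower tail, I would invoke the identity from \cite{AEPDPP}: the ASEP $q$-Laplace transform equals the expectation of $\prod_i (1+ \text{stuff}(\mathfrak{p}_i))$ over a discrete Laguerre determinantal point process $\{\mathfrak{p}_i\}$, and this multiplicative functional is bounded below by its factor corresponding to the \emph{lowest} particle $\mathfrak{p}_{\min}$, whose law is exactly that of a TASEP height function. Thus the ASEP lower tail is dominated by the TASEP lower tail, for which we already have the sharp $T^{1/3}$-scale bound with $e^{-cs}$ decay from \cite[Theorem 13.2]{LPDL} / \cite[Prop.\ 4.1--4.2]{ASFTLPM} (quoted as \Cref{l0estimate}); the extra additive $e^{-cT}$ term in \eqref{h1} absorbs the error in matching the Laguerre point process lowest particle to the TASEP height function on the relevant scale and the cost of restricting to the region where the formula is effective.

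The remaining two bounds, \eqref{hxyeta1} and \eqref{hxyeta2}, concern the \emph{increment} $\h_T([X,Y_0];\bfeta) = \sum_{j=X+1}^{Y_0}\bfeta_T(j)$, i.e.\ the number of particles in a window stretching from deep inside the fan up to the plateau point $Y_0$, whose expected value is $\rho(Y_0-X)$. The upper bound \eqref{hxyeta1} I would obtain by coupling: by attractivity (\Cref{xizeta1}) and particle-hole symmetry, $(\rho;0)$-Bernoulli data is dominated by $\rho$-stationary data (translated so the densities agree on the relevant region), under which the occupation variables at time $T$ are i.i.d.\ Bernoulli$(\rho)$ and Hoeffding gives the Gaussian bound $2e^{-s^2/4}$ at scale $T^{1/2}$ — this is exactly the same comparison used in the proof of \Cref{hetaxi2}. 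The lower bound \eqref{hxyeta2} is where the work concentrates and is \textbf{the main obstacle}: it requires controlling that \emph{not too few} particles accumulate in the window, which cannot come from a single-sided stationary comparison and instead needs the genuine step-Bernoulli lower-tail input. I would write $\h_T([X,Y_0];\bfeta) = \h_T(X;\bfeta) - \h_T(Y_0;\bfeta)$ and lower-bound it by combining a lower bound on $\h_T(X;\bfeta)$ (from \eqref{h1}, extended to $X$ in the fan where the parabola is the integrated profile) with an upper bound on $\h_T(Y_0;\bfeta)$ (from \eqref{0hetaxi2}); the parabola difference $\frac{(T-X)^2}{4T} - \frac{(T-Y_0)^2}{4T}$ must then be reconciled with the target $\rho(Y_0-X)$, and the discrepancy is exactly controlled because $Y_0$ was chosen on the characteristic up to $T^{2/3}$, making the parabola tangent there to the line of slope $(1-2\rho)$ — the mismatch is $O(T^{1/3})$ from the $T^{2/3}$ offset, absorbable into $sT^{2/3}$. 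The union bound over the $O(T^{1/3})$ relevant values of $X$ (we only need the bound at a net of points since $\h_T(\cdot;\bfeta)$ is $1$-Lipschitz) produces the $T^{1/3}$ prefactor in \eqref{hxyeta2}, and the $e^{-cT}$ term is inherited from \eqref{h1}. The delicate point throughout is ensuring the constants $c=c(\varepsilon)$ are monotone in $\varepsilon$, which holds because every ingredient — the kernel decay, the TASEP tail bounds, and the Hoeffding comparisons — has this property.
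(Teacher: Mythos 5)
Your treatment of \eqref{hxyeta1} matches the paper's (domination by $\rho$-stationary data plus Hoeffding), but each of the other three bounds contains a genuine gap. The most serious is \eqref{hxyeta2}. You propose to lower-bound $\h_T([X,Y_0];\bfeta)=\h_T(X;\bfeta)-\h_T(Y_0;\bfeta)$ using a one-point lower-tail bound on $\h_T(X;\bfeta)$ ``extended to $X$ in the fan.'' No such bound is available: for $X<(1-2\rho)T$ the correct centering is not the parabola $\frac{(T-X)^2}{4T}$ but $\rho\big((1-2\rho)T-X\big)+\rho^2 T$, and, more importantly, the Fredholm-determinant analysis behind \eqref{h1} requires $X\ge Y_0$ (the paper explicitly forgoes the case $X<Y_0$ because the contours become substantially more involved), while the range in \eqref{hxyeta2} extends down to $X=-T$, outside even the window of \eqref{0hetaxi2}. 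The paper avoids one-point estimates to the left of $Y_0$ altogether: truncating the $(\rho;0)$-Bernoulli configuration at $X-Y_0$ produces initial data that is dominated by $\bfeta_0$ and equal in law to a left-shift of it, and attractivity then gives the comparison \eqref{x0y01}, stating that the lower tail of the increment over $[X,X+A]$ is dominated by that over $[Y_0,Y_0+A]$. One covers $[X,Y_0]$ by $O(T^{1/3})$ blocks of length $T^{2/3}$, on each of which the parabola is linear to within $O(T^{1/3})$ so that \eqref{0hetaxi2} and \eqref{h1} applied at the two endpoints near $Y_0$ control the block, and takes a union bound; this is the true source of the $T^{1/3}$ prefactor. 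This shift-and-dominate reduction is the key idea missing from your proposal.

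Second, you have assigned the two analytic inputs to the wrong tails. Decay of the kernel in the $q$-Laplace transform Fredholm determinant shows the determinant is close to $1$; since the $q$-Laplace transform is a smoothed version of $\PP[\h_T(X;\bfeta)\ge -p]$, this controls the \emph{lower} tail \eqref{h1} (this is exactly what \Cref{estimatedeterminant} combined with \eqref{za1} delivers), not the upper tail \eqref{0hetaxi2}; obtaining \eqref{0hetaxi2} from the determinant would require showing it is \emph{small}, which kernel decay cannot do. Conversely, the identity of \cite{AEPDPP} (\Cref{qexpectation12}) holds only for \emph{step} initial data, so it cannot be invoked for \eqref{h1}, which concerns $(\rho;0)$-Bernoulli data; and attractivity transfers only the upper tail from step to step-Bernoulli (step dominates), not the lower tail. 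The paper's division of labor is the reverse of yours: the Laguerre identity together with \Cref{l0estimate} gives both tails for step initial data (\Cref{0hetaxi0}), monotonicity (\Cref{xizeta2}) then yields the upper tail \eqref{0hetaxi2} for step-Bernoulli data, and \eqref{h1} is obtained by a direct steepest-descent analysis of the step-Bernoulli Fredholm determinant of \cite{BCS,PTAEP} in the regime $X\ge Y_0$ (Appendices \ref{sec:modDevproof} and \ref{RightKernel}) — genuine analytic work that your proposal does not account for.
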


\begin{figure}
	\begin{center}
	\includegraphics[width=4.5in]{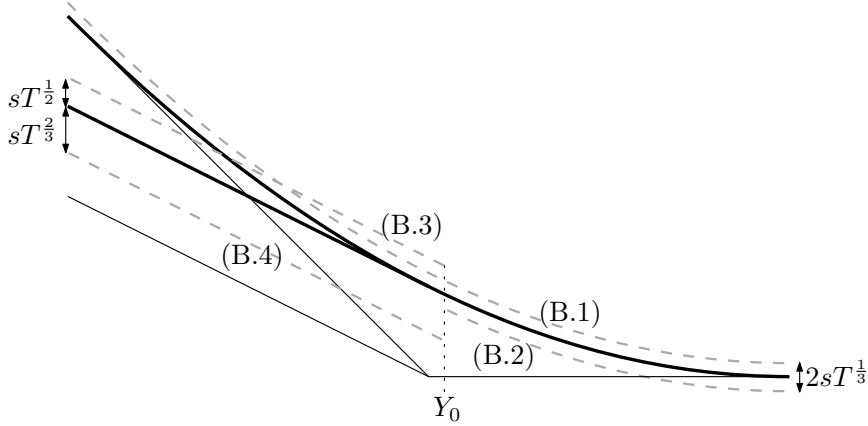}
	\end{center}
	\caption{The four bounds in \Cref{prop:combined}. The thin solid lines represent the initial hydrodynamic profile for $(\rho; 0)$-Bernoulli initial data with $\rho=1/2$ and step initial data, and the thick solid lines present the evolved hydrodynamic profiles. The four dashed lines are labeled with the corresponding bounds in \Cref{prop:combined} and the scale of the error bounds are noted as well.}
	\label{fig:Tailbounds}
\end{figure}

We will first provide a proof of \eqref{hxyeta1} based on a simple coupling argument and concentration bound for sums of i.i.d. Bernoulli random variables. Then we will prove \eqref{hxyeta2}, assuming \eqref{0hetaxi2} and \eqref{h1}. We then prove \eqref{0hetaxi2}, relying upon a remarkable identity from \cite{AEPDPP} that relates ASEP to the discrete Laguerre ensemble. Finally, we prove \eqref{h1} through asymptotics of a Fredholm determinant formula coming from \cite{BCS,PTAEP}.

A few remarks about the proposition are in order. While \eqref{0hetaxi2} gives an upper bound on $\h_T (X; \bfeta)$ for all $X \in \big[- (1-\varepsilon) T, (1 - \varepsilon) T \big]$, it is only useful for us (though the decay is likely not as sharp as possible) for $X\in \big((1 - 2 \rho) T,(1 - \varepsilon) T\big)$. This is because the hydrodynamic limit center changes for velocities less that $1-2\rho$. Equation \eqref{h1} gives an effective lower bound on $\h_T (X; \bfeta)$ for $X\in \big[Y_0,(1 - \varepsilon) T\big]$. The reason for the $T^{2/3}$ offset in how we define $Y_0$ comes from the proof of this bound where it simplifies the analysis and choice of contours in the Fredholm determinant used there.  The Gaussian $T^{1/2}$ order upper bound in  \eqref{hxyeta1} should be close to tight, while the $T^{2/3}$ order lower bound in \eqref{hxyeta2} is not tight. We expect the actual lower bound should involve $T^{1/2}$ and a Gaussian tail as in \eqref{hxyeta1}. Such a bound in place of  \eqref{hxyeta2} may be possible from the Fredholm determinant, though we do not pursue it (see \cite{PTAEP} for an example of such an analysis).

Assuming \Cref{prop:combined}, we can establish \Cref{hetaxi}.

\begin{proof}[Proof of \Cref{hetaxi}]
As in \Cref{prop:combined}, let $Y_0 = (1 - 2 \rho) T + T^{2/3}$.  Observe that by combining \eqref{hxyeta1} with \eqref{hxyeta2} (for the first bound below) and  \eqref{0hetaxi2} with \eqref{h1} (for the second bound below) we have that for any
\begin{flalign*}
\PP \Big[ \big| \h_T([X,Y_0]; \bfeta) - \rho (Y_0 - X) \big| \ge s T^{\frac{2}{3}} \Big]  \le c^{-1} T e^{-cs}, \qquad & \text{if $-(1-\varepsilon) T \le X \le Y_0$}; \\
\PP \Bigg[ \bigg| \h_T(X; \bfeta) - \displaystyle\frac{(T - X)^2}{4T} \bigg| \ge sT^{\frac{2}{3}} \Bigg]\le c^{-1} T e^{-cs}, \qquad & \text{if $Y_0 \le X \le (1 - \varepsilon) T$}.
\end{flalign*}
This holds for any $\varepsilon > 0$ (with $c=c(\varepsilon)$, $\rho \in [\varepsilon, 1]$, $T>1$ and $s\in [0,T]$). We restrict $s\in [0,T]$ (as opposed to $s\geq 0$) in order to bound $c^{-1} T^{\frac{1}{3}} (e^{-cs} + e^{-cT})\leq  T e^{-cs}$ in \eqref{hxyeta2}.
Now observe that from the explicit form of $\Upsilon^{(\rho; 0)}$ given by \Cref{lambdarhofunctions}, we have that
\begin{flalign*}
T \displaystyle\int\limits_{X/T}^{Y/T} \Upsilon^{(\rho; 0)} (z) dz = \begin{cases}
 \displaystyle\tfrac{1}{4T} \big( (T - X)^2 - (T - Y)^2 \big), \qquad & \text{if $(1 - 2\rho) T \le X \le Y \le T$}, \\
(Y-X) \rho, \qquad & \text{if $X \le Y \le (1 - 2 \rho) T$.}\end{cases}			
\end{flalign*}
Now \eqref{hetaxilambda0} readily follows by combining this and the previous display.

The claim about step initial data follows immediately from \eqref{0hetaxi2} with \eqref{h1} after observing that for any fixed $\varepsilon$, there is a $T_0>0$ such that for all $T>T_0$, $Y_0/T <-(1-\varepsilon)$. Thus, by choosing a small enough $c$, we can ensure that \eqref{hetaxilambda0} holds for all $T>1$ with $sT^{2/3}$ replaced by $sT^{1/3}$, as desired.
\end{proof}

\subsection*{ Proof of \Cref{hxyeta1}}
We compare the ASEP $\bfeta$ to a $\rho$-stationary ASEP $\bfxi$. Since we can clearly couple $\bfeta_0$ and $\bfxi_0$ so that $\bfeta_0 (j) \le \bfxi_0 (j)$ for all $j \in \mathbb{Z}$, the attractivity in \Cref{xizeta1} yields a coupling between $\bfeta$ and $\bfxi$ such that $\bfeta_t (j) \le \bfxi_t (j)$ for all $t \ge 0$ and $j \in \mathbb{Z}$. It follows from \eqref{eq:heightdiff} that
\begin{flalign*}
\PP \big[ \h_T([X,Y_0]; \bfeta)  \ge \rho (Y_0 - X) + s T^{\frac{1}{2}} \big]
&=\PP \Bigg[ \displaystyle\sum_{j = X+1}^{Y_0 } \bfeta_T (j) \ge \rho (Y_0 - X) + s T^{\frac{1}{2}} \Bigg]\\
&\le \PP \Bigg[ \displaystyle\sum_{j = X+1}^{Y_0 } \bfxi_T (j) \ge \rho (Y_0 - X) + s T^{\frac{1}{2}} \Bigg] \le 2 e^{-\frac{s^2}{4}},
		\end{flalign*}
where to deduce the last bound we used the concentration estimate \eqref{hx1}, together with the fact that $|X - Y_0| \le 2T$. This establishes \eqref{hxyeta1}.
		
\subsection*{ Proof of \Cref{hxyeta2} assuming \Cref{0hetaxi2} and \Cref{h1}}
We claim that for any $A,X\in \Z$ such that $A\geq 0$ and $X \le Y_0$, and for any $M\in \R$
\begin{equation}
\label{x0y01}
\PP \big[ \h_T([X,X + A]; \bfeta) \le  M \big]\leq  \PP \big[ \h_T([Y_0,Y_0 + A]; \bfeta) \le M \big].
\end{equation}
This follows from a simple coupling argument. Let $\bfzeta$ denote an ASEP whose initial data $\bfzeta_0$ is obtained by setting $\bfzeta_0(j) = \bfeta_0(j)$ for $j\leq X-Y_0$ and $\bfzeta_0(j)=0$ otherwise. Notice that  $\bfzeta_0$ is equal in distribution to $\bfeta_0$, shifted to left by a distance of magnitude $|X-Y_0|$, and that it is also coupled in such a way that $\bfeta_0 (j) \ge \bfzeta_0 (j)$ for all $j \in \Z$. By the attractivity of ASEP from \Cref{xizeta1} we may couple $\bfeta$ and $\bfzeta$ so that $\bfeta_t (j) \ge \bfzeta_t (j)$ for all $t \ge 0$ and $j \in \Z$. It follows then from \eqref{eq:heightdiff}  and the above considerations that
\begin{flalign*}
\PP \big[ \h_T([X,X + A; \bfeta) \le  M\big] &=	\PP \Bigg[ \displaystyle\sum_{j = X+1}^{X + A} \bfeta_T (j) \le  M \Bigg] \le \PP \Bigg[ \displaystyle\sum_{j = X+1}^{X + A} \bfzeta_T (j) \le M \Bigg]\\
&= \PP \Bigg[ \displaystyle\sum_{j = Y_0+1}^{Y_0 + A} \bfeta_T (j) \le  M \Bigg] = \PP \big[ \h_T([Y_0,Y_0 + A; \bfeta) \le M \big],
\end{flalign*}
where the inequality uses the attractive coupling and the penultimate equality holds since $\big( \bfzeta_T (j) \big)_{j\in \Z}$ and $\big( \bfeta_T (j + Y_0-X) \big)_{j\in \Z}$ have the same law.

Observe now that to show \eqref{hxyeta2} we must show that the height difference $ \h_T([X,Y_0];\bfeta)$ compensated by the linear hydrodynamic profile $\rho (Y_0 - X)$ is unlikely to dip more than $-s T^{\frac{2}{3}}$. The inequality in \eqref{x0y01} shows that we can control the height changes to the left of $Y_0$ by those at $Y_0$. The bounds in \eqref{0hetaxi2} and \eqref{h1} are effective in controlling the height changes around $Y_0$. However, they involve parabolic hydrodynamic terms $\frac{(T-X)^2}{4T}$ whereas in \eqref{hxyeta2} we are dealing with linear hydrodynamic terms $\rho(Y_0-X)$. However, on short enough spatial intervals, the parabolic term is approximately linear. In particular, on the spatial scale $T^{\frac{2}{3}}$, the parabolic effect is of order $T^{\frac{1}{3}}$ which is of the order of fluctuations. Thus, to establish the desired control in \eqref{hxyeta2} we use \eqref{x0y01} repeatedly on spatial intervals of order $T^{\frac{2}{3}}$. Each application introduces a fluctuation error of order  $T^{\frac{1}{3}}$. By a union bound over order  $T^{\frac{1}{3}}$ such spatial intervals, we arrive at the order  $T^{\frac{2}{3}}$ fluctuation error bound in \eqref{hxyeta2} (this union bound also explains the $T^{\frac{1}{3}}$  factor on the right-hand side in  \eqref{hxyeta2}). The rest of this proof provides the details to the argument sketched above.

Let us assume that $A \in \llbracket 0, T^{\frac{2}{3}}\rrbracket$ and $X\in \Z$ such that $X \le Y_0$. Letting $M=\rho A-sT^{\frac{1}{3}}$ in \eqref{x0y01} we have that
\begin{equation}
\label{x0y01prime}
\PP \big[ \h_T([X,X + A]; \bfeta) \le  \rho A-sT^{\frac{1}{3}} \big]\leq  \PP \big[ \h_T([Y_0,Y_0 + A]; \bfeta) \le \rho A-sT^{\frac{1}{3}} \big].
\end{equation}
We now claim that there exists $c=c(\varepsilon)>0$ such that for any $T>1$ and $s\geq 0$,
\begin{equation}\label{eq:hyy0a}
\PP \big[ \h_T([Y_0,Y_0 + A]; \bfeta) \le \rho A-sT^{\frac{1}{3}} \big] \leq c^{-1} (e^{-cs} + e^{-cT}).
\end{equation}
To prove this, we note that for $s\geq 1$,
\begin{flalign*}
&\Big\{\h_T([Y_0,Y_0 + A]; \bfeta) \le \rho A-sT^{\frac{1}{3}}\Big\} \subset \\
&\bigg\{\h_T(Y_0; \bfeta) \le \frac{(T-Y_0)^2}{4T}-\frac{sT^{\frac{1}{3}}}{8}\bigg\}\cup
 \bigg\{\h_T(Y_0+A; \bfeta) \ge \frac{(T-Y_0-A)^2}{4T}+\frac{sT^{\frac{1}{3}}}{8}\bigg\}
\end{flalign*}
as follows immediately from the inequality (also for $s\geq 1$) that
\begin{flalign*}
\displaystyle\frac{(T - Y_0)^2}{4T}  - \displaystyle\frac{(T - Y_0 - A)^2}{4T}\ge \rho A - \frac{3sT^{\frac{1}{3}}}{4},
		 \end{flalign*}
Thus, by the union bound along with the bounds in \eqref{0hetaxi2} and \eqref{h1}, we arrive at \eqref{eq:hyy0a} provided $s\geq 1$. For $s\in [0,1]$, the result follows  by choosing $c$ sufficiently close to zero.
	
Let us now apply \eqref{eq:hyy0a} to conclude the desired bound in \eqref{hxyeta2}. Observe that for $X\in \llbracket -T,Y_0\rrbracket$, the interval $[X,Y_0]$ can be covered by at most $K=2T^{1/3}$ intervals (this is an overestimate but suffices) each of length $A \in \llbracket 0, T^{\frac{2}{3}}\rrbracket$. Call the endpoints of these intervals $X=X_0<X_1<\cdots <X_K=Y_0$. Then we have
\begin{flalign*}
\PP \big[ \h_T([X,Y_0];\bfeta) \le \rho (Y_0 - X) -s T^{\frac{2}{3}}\big]
&\leq \sum_{j=1}^{K}  \PP \bigg[ \h_T([X_{j-1},X_j]; \bfeta)\leq \rho A-\frac{sT^{\frac{1}{3}}}{2}\bigg]\\
&\leq K  c^{-1} (e^{-cs/2} + e^{-cT}).
\end{flalign*}
The first inequality follows from the union bound while the second from combining \eqref{x0y01prime} (with $X=X_{j-1}$ and $X+A=X_{j}$) with \eqref{eq:hyy0a}. Clearly, this implies \eqref{hxyeta2} as desired.
	
\subsection{Proof of \Cref{0hetaxi2} }
\label{Estimate1s}

The main result needed in this proof of \Cref{0hetaxi0} (about step initial data ASEP) from which \eqref{0hetaxi2} follows via the montonicity result \Cref{xizeta2}.

\begin{prop}
\label{0hetaxi0}
For any $\varepsilon > 0$, there exists $c = c(\varepsilon) > 0$ such that the following holds. Let $\bfxi$ be ASEP under step initial data. Then, for any $T > 1$, $s\geq 0$  and $X \in \big\llbracket\! -(1 - \varepsilon)T, (1 - \varepsilon) T \big\rrbracket$,
\begin{flalign}
\label{hetaxilambda00}
\PP \Bigg[ \bigg| \h_T(X; \bfxi) - \displaystyle\frac{(T-X)^2}{4T} \bigg| \ge s T^{1/3} \Bigg] \le c^{-1} e^{-cs}.
\end{flalign}
\end{prop}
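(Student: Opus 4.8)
\emph{Setup and reductions.} Write $\tau = L/R \in [0,1)$; the case $\tau = 0$ (TASEP) is exactly the quoted step-data estimate \Cref{l0estimate}, so assume $\tau > 0$. The bound \eqref{hetaxilambda00} splits into an upper tail ($\h_T(X;\bfxi)$ too large) and a lower tail ($\h_T(X;\bfxi)$ too small). Since $\h_T(X;\bfxi)\ge 0$ always and $\h_T(X;\bfxi)$ is dominated by the number of right-arrows out of sites $\le 0$ firing before time $T$ (a sum of i.i.d.\ Poisson$(RT)$ variables), a crude Poissonian large-deviation bound gives $\PP[\h_T(X;\bfxi)\ge m]\le c^{-1}e^{-cm}$ for $m\ge 10RT$, which settles the upper tail once $s\ge c^{-1}T^{2/3}$; and for $s > T^{2/3}$ the lower-tail event is empty, since then $\frac{(T-X)^2}{4T}-sT^{1/3}<0$ (note $\frac{(T-X)^2}{4T}<T$ on the allowed range of $X$). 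Shrinking $c=c(\varepsilon)$ also makes \eqref{hetaxilambda00} vacuous for $T$ below any fixed threshold. So it remains to treat $T$ large and $s\le c^{-1}T^{2/3}$, where $m:=\frac{(T-X)^2}{4T}\pm sT^{1/3}=\Theta(T)$ and $X/T\in[-(1-\varepsilon),1-\varepsilon]$.

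\emph{Upper tail.} The event $\{\h_T(X;\bfxi)\ge \frac{(T-X)^2}{4T}+sT^{1/3}\}$ lies on the thin ($e^{-cs^3}$-type) side of the limiting law, which is not visible directly from the Fredholm determinant; I would route it through TASEP using the identity of \cite{AEPDPP} (see also \cite{Bor18}). The function $k\mapsto\big((-\tau^{k};\tau)_\infty\big)^{-1}$ is increasing, takes values in $(0,1]$, and is $\ge c_\tau>0$ on $\{k\ge 0\}$, so a Chebyshev-type estimate bounds $\PP[\h_T(X;\bfxi)\ge m]$ by $c_\tau^{-1}$ times the ASEP $\tau$-Laplace transform $\EE\big[(\zeta\tau^{\h_T(X;\bfxi)};\tau)_\infty^{-1}\big]$ at $\zeta=-\tau^{-\lceil m\rceil}$. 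By \cite{AEPDPP} this transform equals $\EE\big[\prod_i\Psi(\ell_i)\big]$, a multiplicative functional of the discrete Laguerre determinantal point process $\{\ell_i\}$ whose factors $\Psi$ are increasing and $(0,1]$-valued; bounding $\prod_i\Psi(\ell_i)\le\Psi(\ell_{\min})$ by the factor of the lowest particle, and using that $\ell_{\min}$ is, in law, a step-data TASEP height function at suitably matched parameters, reduces the task to estimating $\EE[\Psi(\h^{\mathrm{TASEP}}-\mathrm{shift})]$. Writing $\EE[\Psi(\cdot)]\le\PP[\h^{\mathrm{TASEP}}\ge\ell-M]+\Psi(-M)$ and taking, say, $M=s^{1/2}T^{1/3}$ (so $\Psi(-M)$, which decays superexponentially in $M$, is negligible, while the level $\ell-M$ still exceeds the TASEP hydrodynamic centering by order $sT^{1/3}$), the one-point moderate-deviation estimate for step TASEP (\Cref{l0estimate}, from \cite{LPDL,ASFTLPM,CTPS}) gives $\EE[\Psi(\cdot)]\le c^{-1}e^{-cs}$.

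\emph{Lower tail.} The event $\{\h_T(X;\bfxi)\le \frac{(T-X)^2}{4T}-sT^{1/3}\}$ is the one directly accessible from the exact formula. I would start from the Fredholm determinant representation $\EE\big[(\zeta\tau^{\h_T(X;\bfxi)};\tau)_\infty^{-1}\big]=\det(I+K_\zeta)_{L^2(\mathcal{C})}$ of the step-data ASEP $\tau$-Laplace transform from \cite{BCS} (see also \cite{PTAEP}), taken at $\zeta=-\tau^{-\lfloor m\rfloor}$ with $m=\frac{(T-X)^2}{4T}-sT^{1/3}$. Since $1-(\zeta\tau^{\h_T(X;\bfxi)};\tau)_\infty^{-1}\ge\tfrac12\mathbf{1}_{\h_T(X;\bfxi)\le\lfloor m\rfloor}$, a Chebyshev estimate bounds $\PP[\h_T(X;\bfxi)\le m]\le 2\big(1-\det(I+K_\zeta)\big)$, so it suffices to show that $\det(I+K_\zeta)$ is within $c^{-1}e^{-cs}$ of $1$. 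For $X/T$ ranging over the compact set $[-(1-\varepsilon),1-\varepsilon]$ one deforms $\mathcal{C}$ through the explicit $X/T$-dependent critical point along steepest-descent directions, on which $K_\zeta$ is exponentially small, and a trace-norm bound yields $|1-\det(I+K_\zeta)|\le c^{-1}e^{-cs}$ uniformly in $X$ and in $s\le c^{-1}T^{2/3}$. As we only need the rate $e^{-cs}$, not the sharp $e^{-cs^{3/2}}$, neither the contour choice nor the estimate on $K_\zeta$ needs to be optimized.

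\emph{Main obstacle and assembly.} The only genuinely technical point is this uniform steepest-descent analysis of the lower-tail Fredholm determinant — choosing contours that work simultaneously for all $X/T\in[-(1-\varepsilon),1-\varepsilon]$ and bounding the deformed kernel uniformly across the whole window $0\le s\le c^{-1}T^{2/3}$ — which is precisely the content carried out in Appendix~\ref{RightKernel}. A secondary, less delicate matter is putting the \cite{AEPDPP} identity in the normalization for which the lowest Laguerre particle is identified with a step-data TASEP height function to which \Cref{l0estimate} applies directly. With the two one-sided bounds and the crude large-$s$ estimate in place, \eqref{hetaxilambda00} follows by taking the smallest of the constants; and then \eqref{0hetaxi2} follows from \Cref{0hetaxi0} by attractivity (\Cref{xizeta2}), since $(\rho;0)$-Bernoulli initial data is pointwise dominated by step initial data.
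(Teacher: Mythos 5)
Your proposal is correct in outline, and your upper-tail argument is essentially the paper's: both pass through the $q$-Laplace transform, the \cite{AEPDPP} identity with the discrete Laguerre ensemble, the bound of the multiplicative functional by the factor at the lowest particle (which is a TASEP height function by \Cref{processparticle}), and then \Cref{l0estimate}; your cutoff $M=s^{1/2}T^{1/3}$ plays the role of the paper's $b=sT^{1/3}/2$ in \eqref{za3}, and either choice works. Where you diverge is the lower tail. The paper does \emph{not} use the Fredholm determinant for step initial data: it stays inside the Laguerre identity, lower-bounding the multiplicative functional $\prod_{z\in\mathfrak{Z}}(1+q^{z-c})^{-1}$ by the full $q$-Pochhammer $(-q^{\mathfrak{Z}_0-c};q)_{\infty}^{-1}$ (inserting the missing factors, each at most $1$, only decreases the product), then applying \eqref{za2} and the TASEP \emph{lower}-tail bound from \Cref{l0estimate}, and finally \eqref{za1}. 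So for step data both tails reduce to known TASEP estimates with no new asymptotic analysis. Your route — steepest descent on $\det(\Id+K_\zeta)$ for step data — is viable (the constraint $\nu\ge 1-2\rho+T^{-1/3}$ of \Cref{estimatedeterminant} is vacuous at $\rho=1$ for large $T$, and the contour analysis of Appendix~\ref{RightKernel} specializes), but it front-loads the technical work that the paper defers to the genuinely unavoidable case, namely the lower tail \eqref{h1} for $(\rho;0)$-Bernoulli data with $\rho<1$, where the Laguerre identity is not available. In short: your approach is sound, but the paper's symmetric use of the Laguerre identity for both tails of the step-data estimate is the lighter argument, and the Fredholm machinery is reserved for where it is actually needed.
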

	
\begin{proof}[Proof of \Cref{0hetaxi2}]
		Letting $\boldsymbol{\xi} = \big( \xi_t (x) \big)$ denote an ASEP under step initial data, we have $\bfeta_0 (x) \le \bfxi_0 (x)$ for each $x \in \mathbb{Z}$. Thus, $\h_0 (x; \boldsymbol{\eta}) \le \h_0 (x; \boldsymbol{\xi})$, and so the monotonicity result \Cref{xizeta2} yields a coupling between $(\boldsymbol{\eta}; \boldsymbol{\xi})$ such that $\h_T(x; \bfeta) \le \h_T(x; \bfxi)$ for each $T \ge 0$ and $x \in \mathbb{Z}$. Hence the proposition follows from the fact that
		\begin{flalign*}
			\PP \Bigg[  \h_T(X; \bfeta) - \displaystyle\frac{(T-X)^2}{4T}  \ge s T^{1/3} \Bigg] \le \PP \Bigg[  \h_T(X; \bfxi) - \displaystyle\frac{(T-X)^2}{4T} \ge s T^{1/3} \Bigg] \le c^{-1} e^{-cs}.
		\end{flalign*}
where in the last inequality we applied \Cref{0hetaxi0}.
	\end{proof}

The proof of \Cref{0hetaxi0} relies on an identity \cite[Theorem 10.2]{AEPDPP} (cited below as \Cref{qexpectation12}) which relates a $q$-Laplace transform for the step initial data ASEP height function to a multiplicative statistic for the determinantal point process called the discrete Laguerre ensemble. From this identity and existing asymptotics regarding this ensemble, we are able to prove our tail bound. We remark that our tail bound is suboptimal and we do not fully take advantage of the decay afforded to us by the identity. However, the exponential decay we prove is sufficient for our purposes. We also note that this style of result -- using a $q$-Laplace transform identity with a determinantal point process in order to prove tail bounds -- goes back to work of \cite{CG} which uses a similar identity relating the KPZ equation and Airy point process \cite{BG}.

To prepare for the proof of \Cref{0hetaxi0}, we recall a few  results. The first is the fact that the statement in \Cref{0hetaxi0} holds in the case of TASEP, when $L=0$. This result is implicitly due to \cite{CTPS} (in terms of an estimate on the Fredholm determinant that determines this tail probability), though appears explicitly as a probabilistic tail estimate (formulated in terms of exponential last passage percolation) as \cite[Theorem 13.2]{LPDL} and \cite[Proposition 4.1 and Proposition 4.2]{ASFTLPM}.
\begin{lem}
\label{l0estimate}
For any $\varepsilon > 0$, there exists $c = c(\varepsilon) > 0$ such that the following holds. Let $\bfxi$ be TASEP ($L=0$ temporarily) under step initial data. Then, for any $T > 1$, $s\geq 0$  and $X \in \big\llbracket\! -(1 - \varepsilon)T, (1 - \varepsilon) T \big\rrbracket$,
\begin{flalign*}
\PP \Bigg[ \bigg| \h_T(X; \bfxi) - \displaystyle\frac{(T-X)^2}{4T} \bigg| \ge s T^{1/3} \Bigg] \le c^{-1} e^{-cs}.
\end{flalign*}
\end{lem}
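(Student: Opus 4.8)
The plan is to deduce Lemma~\ref{l0estimate} from the exact solvability of step‑initial‑data TASEP via the Johansson correspondence with exponential last passage percolation (LPP); this tail estimate is in fact recorded, in the LPP formulation, as \cite[Theorem 13.2]{LPDL} and \cite[Proposition 4.1 and Proposition 4.2]{ASFTLPM}, and the underlying Fredholm‑determinant bounds go back to \cite{CTPS}. Concretely, recall that for TASEP started from particles on $\mathbb{Z}_{\le -1}$ with right jump rate $R=1$ one has an identity of the form $\{\h_T(X;\bfxi)\ge k\}=\{G(a_k,b_k)\le T\}$, where $G(m,n)=\max_{\pi}\sum_{(i,j)\in\pi}w_{ij}$ is the last passage time over up‑right lattice paths from $(1,1)$ to $(m,n)$ with i.i.d.\ $\mathrm{Exp}(1)$ weights $w_{ij}$, and $a_k,b_k$ are affine functions of $k$ and $X$. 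The first step is to record this dictionary and to observe, using the shape theorem $\mathbb{E}\,G(m,n)=(1+o(1))(\sqrt m+\sqrt n)^2$, that choosing $k=\tfrac{(T-X)^2}{4T}$ forces $(\sqrt{a_k}+\sqrt{b_k})^2=T$; thus $\tfrac{(T-X)^2}{4T}$ is exactly the deterministic centering dictated by the LPP shape function, and for $|X|\le(1-\varepsilon)T$ one has $a_k\asymp b_k\asymp T$ and fluctuation scale $(a_kb_k)^{1/6}\asymp T^{1/3}$, with all implied constants depending only on $\varepsilon$.

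The second step is to transfer tail bounds through this correspondence. The event $\{\,|\h_T(X;\bfxi)-\tfrac{(T-X)^2}{4T}|\ge sT^{1/3}\,\}$ becomes an event of the form $\{\,|G(m,n)-(\sqrt m+\sqrt n)^2|\ge c\,s\,(mn)^{1/6}\,\}$ with $m\asymp n\asymp T$, and one then invokes two estimates for such $G(m,n)$: the upper tail $\PP[G(m,n)\ge(\sqrt m+\sqrt n)^2+r(mn)^{1/6}]\le Ce^{-cr}$ for $r\ge 1$, which follows from decay of the correlation kernel in Johansson's Fredholm (Meixner‑type) determinant formula for $\PP[G(m,n)\le t]$ (the sharp rate $e^{-cr^{3/2}}$ is available but $e^{-cr}$ suffices), and the lower tail $\PP[G(m,n)\le(\sqrt m+\sqrt n)^2-r(mn)^{1/6}]\le Ce^{-cr}$ for $r\ge 1$, obtained in \cite{CTPS} by applying Widom's trick to that determinant. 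Both hold uniformly over the relevant range of $(m,n)$ with constants depending only on $\varepsilon$. Combining the two tails, translating back through the dictionary, and absorbing the range $s\in[0,1]$ by shrinking $c$, yields $\PP[\,|\h_T(X;\bfxi)-\tfrac{(T-X)^2}{4T}|\ge sT^{1/3}\,]\le c^{-1}e^{-cs}$ with $c=c(\varepsilon)$ weakly decreasing in $\varepsilon$, as claimed.

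The hard part will be the lower tail, i.e.\ the bound that $\h_T(X;\bfxi)$ (equivalently $-G(m,n)$) is unlikely to substantially exceed its mean: as is typical for determinantal models, this is the tail invisible to crude kernel decay and genuinely requires Widom's trick or a Riemann--Hilbert analysis, which is precisely the contribution of \cite{CTPS} (an alternative route is superadditivity of $G$ together with a concentration inequality). A secondary but routine point is to verify uniformity of all constants over $X$ with $|X|\le(1-\varepsilon)T$: this amounts to noting that $T^2-X^2$ and $T^2$ are comparable with an $\varepsilon$‑dependent constant on this range, so the LPP estimates apply with $\varepsilon$‑dependent but $X$‑independent constants, and that the error $O((mn)^{1/6})=O(T^{1/3})$ in $\mathbb{E}\,G(m,n)=(\sqrt m+\sqrt n)^2$ is consistent with the claimed fluctuation scale $T^{1/3}$.
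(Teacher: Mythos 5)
Your proposal is correct and matches the paper's treatment: the paper does not prove this lemma but quotes it directly from the literature, citing exactly the sources you identify (the LPP formulations in \cite[Theorem 13.2]{LPDL} and \cite[Propositions 4.1--4.2]{ASFTLPM}, with the underlying Fredholm-determinant/Widom's-trick bounds from \cite{CTPS}). Your spelled-out dictionary (the centering $k=\frac{(T-X)^2}{4T}$ giving $(\sqrt{a_k}+\sqrt{b_k})^2=T$ and $(a_kb_k)^{1/6}\asymp T^{1/3}$ uniformly for $|X|\le(1-\varepsilon)T$) is the standard and correct reduction to those cited results.
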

	
To establish \Cref{0hetaxi0}, we will make use of a determinantal point process, introduced in \cite{AEPDPP}, called the discrete Laguerre ensemble. We begin by recalling its definition. In what follows, a \emph{configuration} on $\mathbb{Z}_{\ge 0}$ is a subset $\mathfrak{Z} \subseteq \mathbb{Z}_{\ge 0}$ of nonnegative integers; we let $\Conf (\mathbb{Z}_{\ge 0})$ denote the set of all configurations on $\mathbb{Z}_{\ge 0}$. Given a function $K : \mathbb{Z}_{\ge 0} \times \mathbb{Z}_{\ge 0} \rightarrow \mathbb{R}$, a \emph{determinantal point process} on $\Conf (\mathbb{Z}_{\ge 0})$ with \emph{correlation kernel $K$} is a probability measure $\PP$ on $\Conf (\mathbb{Z}_{\ge 0})$ satisfying the following property. Letting $\mathfrak{Z} \in \Conf (\mathbb{Z}_{\ge 0})$ denote a random configuration sampled under $\PP$, we have
$
\PP \big[ x_1, x_2, \ldots , x_k \in \mathfrak{Z} \big] = \det [ K (x_i, x_j)]_{1 \le i, j \le k}
$
for any distinct $x_1, x_2, \ldots , x_k \in \mathbb{Z}_{\ge 0}$. 	Generic $K$ will not define a probability measure.
			
\begin{definition}

\label{definitionl}
Fix $\beta\in \R_{>0}$. \emph{Laguerre polynomials} are the orthogonal polynomials on $[0, \infty)$ under the weight measure $t^{\beta-1}e^{-t}dt$. The degree $n$ polynomial in this ensemble, with leading coefficient $n!^{-1}$, is denoted by$L_n^{(\beta)} (x)$. The \emph{discrete Laguerre kernel} is defined by
\begin{align*}
K_{\mathrm{DLaguerre} (r;\beta)}(x,y) = \bigg(\frac{x! y!}{\Gamma(x+\beta)\Gamma(y+\beta)}\bigg)^{1/2} \int^{\infty}_{r} L^{(\beta)}_{x}(t)L^{(\beta)}_{y}(t)t^{\beta-1}e^{-t}dt,
\end{align*}
for any $x, y \in \mathbb{Z}_{\ge 0}$. The \emph{discrete Laguerre ensemble} $\text{DLaguerre} (r; \beta)$ is the determinantal point process on $\Conf (\mathbb{Z}_{\ge 0})$ with correlation kernel $K_{\mathrm{DLaguerre} (r; \beta)}$.

\end{definition}

The following lemmas indicate our use of the discrete Laguerre ensemble. The first shows that its smallest element match the TASEP height function in distribution (which is accessible by \Cref{l0estimate}); the second explains its relation to the step initial data ASEP.

\begin{lem}[{\cite[Corollary 10.3 and Theorem 3.7]{AEPDPP}}]

\label{processparticle}

Adopt the notation of \Cref{0hetaxi0}, and assume that $L = 0$. Let $\mathfrak{Z} \in \Conf (\mathbb{Z}_{\ge 0})$ denote a sample of the discrete Laguerre ensemble $\mathrm{DLaguerre}^+ (T, x + 1)$. Then, $\h_T(x; \bfxi)$ has the same law as $\min \mathfrak{Z}$.

\end{lem}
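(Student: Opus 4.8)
The plan is to deduce \Cref{processparticle} directly from the results of \cite{AEPDPP}, so the ``proof'' is really a matter of recalling the relevant identities and checking that conventions match; let me describe the conceptual route. Since $L=0$, $\bfxi$ is TASEP with step initial data, and for $x \ge 0$ the height function $\h_T(x;\bfxi)$ equals the number of particles lying strictly to the right of $x$ at time $T$. The classical correspondence between step initial data TASEP and exponential last passage percolation gives, for every $n \ge 1$,
\begin{equation*}
\big\{ \h_T(x;\bfxi) \ge n \big\} = \big\{ G(n, n+x) \le T \big\},
\end{equation*}
where $G(a,b)$ denotes the last passage time across an $a \times b$ array of i.i.d.\ $\mathrm{Exp}(1)$ weights along up-right paths. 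Via the Robinson--Schensted--Knuth correspondence the full array $\big( G(a,b) \big)$ is a Schur process, so the point configuration obtained by reading off the values $\h_T(x+k;\bfxi)$ for $k \ge 0$ with the appropriate shifts — namely $\big\{ \h_T(x+k;\bfxi) + k : k \ge 0 \big\}$, whose smallest element is by construction the $k=0$ term $\h_T(x;\bfxi)$ — is a determinantal point process on $\mathbb{Z}_{\ge 0}$; the continuum shadow of this statement is that $G(n, n+x)$ has the law of the top eigenvalue of a Laguerre unitary ensemble, whose spectrum is a determinantal process governed by a Laguerre-polynomial kernel.

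First I would invoke \cite[Theorem 3.7]{AEPDPP} to obtain the determinantal structure of this configuration, and then \cite[Corollary 10.3]{AEPDPP} to identify its correlation kernel as precisely $K_{\mathrm{DLaguerre}^+(T, x+1)}$ from \Cref{definitionl}, i.e.\ that the configuration has the law of $\mathfrak{Z} \sim \mathrm{DLaguerre}^+(T, x+1)$. Since the minimum of the configuration is deterministically equal to $\h_T(x;\bfxi)$, taking minima on both sides yields $\h_T(x;\bfxi) \stackrel{d}{=} \min \mathfrak{Z}$, which is the assertion.

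I do not expect a genuine mathematical obstacle, since this is a quoted result; the work is bookkeeping. One must (i) reconcile the sign and indexing conventions for $\h_T$ used here (see \eqref{jtx}) with those of \cite{AEPDPP}; (ii) check that the parameters line up as $r = T$ and $\beta = x+1$, and in particular pin down the meaning of the superscript ``$+$'' in $\mathrm{DLaguerre}^+$ relative to the $\mathrm{DLaguerre}$ of \Cref{definitionl}; and (iii) handle the range of $x$ needed in \Cref{0hetaxi0}, where $x$ may be negative so that $\beta = x+1$ is not an admissible Laguerre parameter. For (iii) one reduces to the case $x \ge 0$ using the (near) self-duality of step initial data under the particle-hole symmetry of \Cref{etaeta} together with the identity $\h_T(x;\bfxi) = \h_T(0;\bfxi) + \sum_{j=x+1}^{0} \bfxi_T(j)$, or else applies the correspondingly reparametrized form of the \cite{AEPDPP} identity directly; none of this requires new ideas beyond carefully tracking normalizations.
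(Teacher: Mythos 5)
The paper offers no proof of \Cref{processparticle}: it is quoted verbatim from \cite{AEPDPP}, so your proposal --- invoke the cited results and reconcile conventions --- is in substance exactly what the paper does, and your bookkeeping points are the right ones to worry about. Two remarks. First, your point (iii) is well taken and is resolved in the paper exactly as you suggest: the lemma is only ever applied inside the proof of \Cref{0hetaxi0} after a reduction, via the particle--hole symmetry of \Cref{etaeta}, to the case $x \ge 0$, so $\beta = x+1$ is always an admissible Laguerre parameter; the hypothesis ``adopt the notation of \Cref{0hetaxi0}'' is slightly loose on this score. Second, your heuristic route to the determinantal structure is not the mechanism of \cite{AEPDPP}: there the discrete Laguerre ensemble enters through the $q\to 0$ degeneration of the $q$-Laplace transform identity (the statement recorded here as \Cref{qexpectation12}, itself obtained by rewriting the Fredholm determinant of \cite{BCS} as a multiplicative functional, in the spirit of \cite{BG,CG}), not through RSK/Schur-process considerations applied to the spatial profile. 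Moreover the configuration you propose, $\big\{ \h_T(x+k;\bfxi) + k : k \ge 0 \big\}$, is only non-decreasing in $k$ rather than strictly increasing (consecutive values coincide precisely when $\bfxi_T(x+k+1)=1$), so as written it is not a simple point configuration and cannot literally be the ensemble $\mathfrak{Z}$; one would have to pass to the correct strictly ordered variant. Since you defer the actual identification to the cited theorems, neither issue affects the validity of the argument, but the heuristic as stated should not be taken as a substitute for the citation.
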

	
	Below, for any $q, a \in \mathbb{C}$ and any integer $k \ge 0$ (possibly infinite, in which case we assume that $|q| < 1$), the \emph{$q$-Pochhammer symbol} is defined by $(a; q)_k = \prod_{j = 0}^k (1 - a q^j)$.

\begin{lem}[{\cite[Theorem 10.2]{AEPDPP}}]
\label{qexpectation12}
Fix any time $T > 0$, an integer spatial location $x \ge 0$ and let $q = \frac{L}{R} \in (0, 1)$. Let $\bfxi$ denote ASEP, with left jump rate $L$ and right jump rate $R$, under step initial data and let $\mathfrak{Z} \subset \mathbb{Z}_{\ge 0}$ denote a sample of $\mathrm{DLaguerre}^+ \big( (1 - q) T, x + 1 \big)$. Then, for any $\zeta \in \mathbb{C}\setminus \{-q^{\Z_{\leq 0}}\}$ we have
\begin{flalign}
	\label{zh}
	\EE \Bigg[ \displaystyle\frac{1}{\big(- \zeta q^{\h_T(x; \bfxi)}; q \big)_{\infty}} \Bigg] = \EE\Bigg[ \displaystyle\prod_{z \in \mathfrak{Z}} \displaystyle\frac{1}{1 + \zeta q^z} \Bigg],
\end{flalign}
where the expectation on the left side is with respect to the ASEP $\boldsymbol{\xi}$, and the expectation on the right side is with respect to the discrete Laguerre configuration $\mathfrak{Z}$.
\end{lem}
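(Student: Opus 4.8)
\emph{Proof proposal for the quoted identity \eqref{zh}.} Since this is quoted from \cite[Theorem 10.2]{AEPDPP}, I only outline the strategy I would follow. The plan is to match two Fredholm determinant expressions for the same quantity: the first coming from the Borodin--Corwin--Sasamoto analysis of the ASEP $q$-Laplace transform, and the second from viewing the right-hand side of \eqref{zh} as a multiplicative functional of a determinantal point process. First I would recall that for step initial data ASEP with $q = L/R$, the spectral theory and $q$-moment formulas of \cite{BCS} (and their refinements) express $\EE\bigl[(-\zeta q^{\h_T(x;\bfxi)}; q)_\infty^{-1}\bigr]$ as a Fredholm determinant $\det(I + K_\zeta)$ of an explicit kernel $K_\zeta$ given by double contour integrals involving the ASEP analogue $w \mapsto e^{(q-1)Tw/(1+w)}$ of the exponential, Gamma-ratio factors carrying the spatial variable $x$, and a $q$-series factor that is responsible for the $q$-Pochhammer appearing on the left of \eqref{zh}.

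Next I would discretize. Collapsing the internal $w$-contour of $K_\zeta$ onto the geometric string of poles $w \in q^{-\Z_{\geq 0}}$ (i.e. summing residues) turns $\det(I+K_\zeta)$ into $\det(I + \widetilde K_\zeta)_{\ell^2(\Z_{\geq 0})}$. Using the contour representation $L^{(\beta)}_n(t) = \frac{1}{2\pi\mathrm{i}}\oint \frac{e^{-tw/(1-w)}}{(1-w)^{\beta+1} w^{n+1}}\, dw$ of the Laguerre polynomials together with the rescaling $t \mapsto (1-q)^{-1}t$ that converts the $q$-deformed weight into the classical weight $t^{\beta-1}e^{-t}$, one identifies $\widetilde K_\zeta$, up to a diagonal conjugation, with the operator $g \cdot K_{\mathrm{DLaguerre}((1-q)T,\, x+1)}$ built from the kernel of \Cref{definitionl}, where $\beta = x+1$ and $g(z) := \frac{1}{1+\zeta q^z} - 1 = \frac{-\zeta q^z}{1+\zeta q^z}$.

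It then remains to interpret the other side of \eqref{zh}. The general identity that for a determinantal point process $\mathfrak{Z}$ on $\Z_{\geq 0}$ with trace-class correlation kernel $K$ and bounded $g$ with $1+g \ge 0$ one has $\EE\bigl[\prod_{z\in\mathfrak{Z}}(1 + g(z))\bigr] = \det(I + gK)_{\ell^2(\Z_{\geq 0})}$ (see \cite{Bor18} and standard references on determinantal point processes) shows that the right-hand side of \eqref{zh}, with $K = K_{\mathrm{DLaguerre}((1-q)T,\, x+1)}$ and $g$ as above, equals $\det(I + \widetilde K_\zeta)$, and hence the left-hand side. A final check of the domain of validity in $\zeta$ is required: the contour manipulations and convergence of the Fredholm expansions hold for $\zeta \notin -q^{\Z_{\leq 0}}$, and since both sides of \eqref{zh} are meromorphic in $\zeta$ the identity propagates from a real interval to all such $\zeta$. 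An alternative route avoiding Fredholm determinants would be to expand both sides as $q$-series in $\zeta$ and match the $q$-moments $\EE[q^{k\h_T(x;\bfxi)}]$ against the corresponding moments of $\mathfrak{Z}$ term by term via the nested contour integral $q$-moment formulas; this rests on the same special-function computations.

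The main obstacle is the discretization-and-identification step: carrying out the residue expansion of the Borodin--Corwin--Sasamoto kernel and recognizing the outcome as \emph{precisely} the discrete Laguerre kernel of \Cref{definitionl}. This demands care with (i) which contours the variables lie on and with justifying the deformation past an infinite string of poles, (ii) exact bookkeeping of the Gamma-function and factorial prefactors $\bigl(\tfrac{m!\,n!}{\Gamma(m+\beta)\Gamma(n+\beta)}\bigr)^{1/2}$, and (iii) the rescaling that turns the $q$-deformed exponential weight into the genuine Laguerre weight $t^{\beta-1}e^{-t}$. Once the kernel is pinned down, the remaining steps are applications of known machinery, namely the \cite{BCS} Fredholm determinant formula and the Fredholm-determinant/determinantal-point-process correspondence.
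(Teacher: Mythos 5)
This lemma is not proved in the paper at all: it is imported verbatim as \cite[Theorem 10.2]{AEPDPP}, so there is no internal argument to compare yours against. Measured against the cited source, your outline does capture the correct high-level strategy — express the left side of \eqref{zh} as a Fredholm determinant via the $q$-Laplace transform formulas descending from \cite{BCS}, express the right side as $\det(\Id+gK_{\mathrm{DLaguerre}})$ via the standard multiplicative-functional identity for determinantal point processes (this is also the mechanism behind the KPZ/Airy identity of \cite{BG} and the more general result of \cite{Bor18} that the introduction alludes to), and then match kernels. Your identification of the test function $g(z)=-\zeta q^z/(1+\zeta q^z)$ and of the Laguerre parameter $\beta=x+1$, $r=(1-q)T$ is consistent with \Cref{definitionl} and with \Cref{processparticle}.

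That said, as a proof it has a genuine gap, and you have located it yourself: the entire content of the theorem is the ``discretization-and-identification step,'' i.e.\ showing that after collapsing the inner contour of the \cite{BCS} kernel onto the geometric string of poles and performing the diagonal conjugation, what emerges is \emph{exactly} the kernel $K_{\mathrm{DLaguerre}(r;\beta)}$ of \Cref{definitionl}, with the correct $\bigl(\frac{x!\,y!}{\Gamma(x+\beta)\Gamma(y+\beta)}\bigr)^{1/2}$ prefactors and the incomplete integral $\int_r^\infty$ rather than $\int_0^\infty$. Describing this step is not the same as executing it; without it one has two Fredholm determinants with no demonstrated relation. Two further points would need attention in a complete write-up: the multiplicative-functional identity $\EE\bigl[\prod_{z\in\mathfrak{Z}}(1+g(z))\bigr]=\det(\Id+gK)$ requires a trace-class (or at least admissible) hypothesis on $gK$, which must be checked for the unbounded $g$ arising when $\zeta$ is complex; and the analytic continuation to all $\zeta\in\mathbb{C}\setminus\{-q^{\Z_{\le 0}}\}$ requires verifying that the left-hand expectation converges locally uniformly away from the poles so that both sides are genuinely meromorphic. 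None of these are obstructions in principle, but they are where the work lives, so the proposal should be regarded as a correct plan rather than a proof.
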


In order to make use of this lemma, we need to be able to translate between the above $q$-Laplace transform type expectations and statements about probabilities.

\begin{lem}
\label{aqestimate}
Let $\mathbf{A}$ be a real-valued random variable, $q \in [0, 1)$ and $b\in \R$. Then,
\begin{flalign}
&\PP [\mathbf{A} \le 0] \le 2 \cdot \Big( 1 - \EE \big[ (-q^{\mathbf{A}}; q)_{\infty}^{-1} \big] \Big),\label{za1}\\
&\EE \big[ (-q^{\mathbf{A}}; q)_{\infty}^{-1} \big] \ge e^{q^b / (q-1)} \cdot \PP [\mathbf{A} \ge b], \label{za2}\\
&\EE\big[ (1 + q^{\mathbf{A}})^{-1} \big] \le \PP [\mathbf{A} > -b] + q^b \cdot \PP [\mathbf{A} \le -b].\label{za3}
\end{flalign}

\end{lem}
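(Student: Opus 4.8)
The plan is to prove the three estimates in order, each by a short, self-contained argument combining a pointwise inequality for the function $x \mapsto (-q^x;q)_\infty^{-1}$ (respectively $x \mapsto (1+q^x)^{-1}$) with Markov's inequality. Throughout write $f_q(x) := (-q^x;q)_\infty^{-1} = \prod_{j\ge 0}(1+q^{x+j})^{-1}$, which is a continuous, strictly increasing function of $x\in\mathbb{R}$ taking values in $(0,1)$, with $f_q(x)\to 0$ as $x\to-\infty$ and $f_q(x)\to 1$ as $x\to+\infty$.

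For \eqref{za1}: the key pointwise fact is that $1-f_q(x)\ge \tfrac12\mathbf{1}_{x\le 0}$. Indeed, for $x\le 0$ one has $q^x\ge 1$, so the $j=0$ factor already gives $f_q(x)\le (1+q^x)^{-1}\le \tfrac12$, hence $1-f_q(x)\ge\tfrac12$; and $1-f_q(x)\ge 0$ always. Taking expectations, $\EE[1-f_q(\mathbf{A})]\ge \tfrac12\PP[\mathbf{A}\le 0]$, which rearranges to \eqref{za1}. (The only subtlety is $q=0$, where $f_0(x)=(1+\mathbf{1}_{x\le0}\cdot\infty)^{-1}$ should be read as $\mathbf{1}_{x>0}$ under the convention $0^0=1$; the inequality $1-f_0(x)=\mathbf{1}_{x\le0}\ge\tfrac12\mathbf{1}_{x\le0}$ still holds.)

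For \eqref{za2}: here I use that $f_q$ is increasing together with a lower bound on $f_q(b)$. On the event $\{\mathbf{A}\ge b\}$ we have $f_q(\mathbf{A})\ge f_q(b)$, so $\EE[f_q(\mathbf{A})]\ge f_q(b)\,\PP[\mathbf{A}\ge b]$. It remains to check $f_q(b)\ge e^{q^b/(q-1)}$, i.e. $-\sum_{j\ge0}\log(1+q^{b+j})\ge \frac{q^b}{q-1}=-\frac{q^b}{1-q}$. Using $\log(1+t)\le t$ for $t\ge0$, the left side is $\ge -\sum_{j\ge0}q^{b+j}=-\frac{q^b}{1-q}$, as desired (again with the obvious interpretation when $q=0$). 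This gives \eqref{za2}. For \eqref{za3}: write $g_q(x):=(1+q^x)^{-1}$, which is increasing in $x$ with $g_q(x)\le 1$ always and, for $x\le -b$, $g_q(x)=(1+q^x)^{-1}\le q^{-x}\le q^{b}$ since $q^x\ge q^{-x}\cdot q^{2x}$... more simply, for $x\le -b$ we have $q^x\ge q^{-b}$ so $g_q(x)\le q^b/(q^b+1)\le q^b$. Hence $g_q(\mathbf{A})\le \mathbf{1}_{\mathbf{A}>-b}+q^b\mathbf{1}_{\mathbf{A}\le -b}$ pointwise, and taking expectations yields \eqref{za3}.

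The arguments are routine; the only thing requiring a little care is the handling of the boundary case $q=0$ and making the convergence/monotonicity claims about the infinite product $f_q$ precise (all factors lie in $(0,1]$ for $q\in(0,1)$, and the product converges since $\sum_j q^{x+j}<\infty$). I do not expect any genuine obstacle here, so the main (minor) point is simply to state the pointwise comparison inequalities cleanly and invoke Markov/monotonicity. No result beyond elementary calculus is needed.
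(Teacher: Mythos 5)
Your proposal is correct and follows essentially the same route as the paper: both proofs reduce each bound to a pointwise inequality in the deterministic variable (monotonicity of $x\mapsto(-q^x;q)_\infty^{-1}$, the bound $(1+q^x)^{-1}\le 1-\tfrac12\mathbf{1}_{x\le 0}$, the estimate $(-q^b;q)_\infty^{-1}\ge e^{q^b/(q-1)}$ via $\log(1+t)\le t$, and the indicator bound for $(1+q^x)^{-1}$) and then take expectations. The extra care you give to the $q=0$ case and to convergence of the infinite product is fine but not needed beyond what the paper records.
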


\begin{proof}
Observe that for any $a \in \R$ we have
\begin{flalign*}
& (-q^a; q)_{\infty}^{-1} \le (1 + q^a)^{-1} \le 1 - \displaystyle\frac{\textbf{1}_{a \le 0}}{2},\\
&(-q^a; q)_{\infty}^{-1} \ge (-q^b; q)_{\infty}^{-1} \cdot \textbf{1}_{a \ge b} \ge e^{q^b / (q-1)} \cdot \textbf{1}_{a \ge b},\\
& (1 + q^a)^{-1} \le \textbf{1}_{a > -b} + q^b \cdot \textbf{1}_{a \le -b}.
\end{flalign*}
Setting $a=\mathbf{A}$ and taking expectations yields the lemma.
\end{proof}

Now we can establish \Cref{0hetaxi0}.

\begin{proof}[Proof of \Cref{0hetaxi0}]
By the particle-hole symmetry \Cref{etaeta}, it suffices to address the case $X \ge 0$. Let $m = \frac{1}{4} \big( \frac{T-X}{T}\big)^2$, and set $\zeta = q^{-mT - sT^{1/3}}$. As in the statement of \Cref{qexpectation12}, let $\mathfrak{Z} \in \Conf (\mathbb{Z}_{\ge 0})$ denote a sample of the discrete Laguerre ensemble $\mathrm{DLaguerre} \big( (1 - q) T; X + 1 \big)$. Then, by \Cref{qexpectation12} we have that
\begin{flalign}
	\label{zxi}
	\EE\Bigg[  \big(- q^{\h_T(x; \bfxi) - mT - sT^{1/3}} ; q \big)_{\infty}^{-1} \Bigg] = \EE\Bigg[ \displaystyle\prod_{z \in \mathfrak{Z}} \big( 1 +  q^{z - mT - sT^{1/3}} \big)^{-1} \Bigg].
\end{flalign}
Let $\mathfrak{Z}_0$ denote the minimal element in $\mathfrak{Z}$. We claim that there exists $c= c(\varepsilon) > 0$ such that
\begin{flalign*}
	\EE \Bigg[ \displaystyle\prod_{z \in \mathfrak{Z}} \big( 1 + q^{z - mT - sT^{1/3}} \big)^{-1} \Bigg] & \le \EE\Big[ \big(1 + q^{\mathfrak{Z}_0 - mT - sT^{1/3}} \big)^{-1} \Big] \\
	& \le \PP \bigg[ \mathfrak{Z}_0 \ge mT + \displaystyle\frac{sT^{1/3}}{2} \bigg] + q^{sT^{1/3} / 2} \le c^{-1} e^{-c s}.
\end{flalign*}
The first inequality is immediate (dropping the other terms in the product only increases the value), the second utilizes \eqref{za3} (with
$\mathbf{A} =  \mathfrak{Z}_0 - mT - s T^{1/3}$ and $b=\frac{s}{2} T^{1/3}$), and the third follows by combining \Cref{processparticle} with \Cref{l0estimate}.

Combining the above inequality with \eqref{zxi} and \eqref{za2} (with $\mathbf{A} = \h_T(x; \bfxi) - mT - sT^{1/3}$ and $b=0$), we find that
\begin{flalign*}
	\PP \big[ \h_T(x; \bfxi) \ge mT + sT^{1/3} \big] & \le e^{1/(1-q)} \cdot \EE \Bigg[ \big( -q^{ \mathfrak{Z}_0 - mT - sT^{1/3}}; q \big)_{\infty}^{-1} \Bigg] \\
	& \le e^{1/(1-q)} \cdot \EE \Bigg[ \displaystyle\prod_{z \in \mathfrak{Z}} \big( 1 + q^{z - mT - sT^{1/3}} \big)^{-1} \Bigg] \le e^{1/(1-q)} c^{-1} e^{-c s}.
\end{flalign*}
Modifying the value of $c$, this yields the upper bound on $\h_T(x; \bfxi)$ in \Cref{0hetaxi0}.

It remains to prove the lower bound.
Observe that there exists $c = c(\varepsilon) > 0$ such that
\begin{flalign*}
\EE\Bigg[ \displaystyle\prod_{z \in \mathfrak{Z}} \big( 1 + q^{z - mT + sT^{1/3}} \big)^{-1} \Bigg] & \ge \EE \Big[ \big(- q^{ \mathfrak{Z}_0 - mT + sT^{1/3}}; q \big)_{\infty}^{-1} \Big] \\
	& \ge \exp \bigg( \displaystyle\frac{q^{sT^{1/3} / 2}}{q-1} \bigg) \cdot \PP \bigg[  \mathfrak{Z}_0 \ge mT - \displaystyle\frac{sT^{1/3}}{2} \bigg] \\
	& \ge \big( 1 - C_2 e^{-c_2 s T^{1/3}} \big) \cdot  (1 - C_2 e^{-c_2 s}) \ge 1 - 2 c^{-1} e^{-c s}.
\end{flalign*}
The first inequality is immediate (inserting all missing terms in the product above the minimal term only decreases its value), the second utilizes \eqref{za2} (with $\mathbf{A} =  \mathfrak{Z}_0 - mT + s T^{1/3}$ and $b=\frac{s}{2} T^{1/3}$), and the third follows by combining  \Cref{processparticle} and \Cref{l0estimate}.

Combining the above inequality with \eqref{zxi} (with $-sT^{1/3}$ replaced there by $sT^{1/3}$) and \eqref{za1} (with $\mathbf{A} = \h_T(x; \bfxi) - mT + sT^{1/3}$), we find that
 \begin{flalign*}
 	\PP \big[ \h_T(x; \bfxi) \le mT - sT^{1/3} \big] & \le 2 \cdot \bigg( 1 - \EE \Big[ \big( q^{\h_T(x; \bfxi) - mT + sT^{1/3}}; q \big)_{\infty}^{-1} \Big]\bigg) \\
 	& = 2 \cdot \Bigg( 1 - \EE \bigg[ \displaystyle\prod_{z \in \mathfrak{Z}} \big( 1 + q^{z - mT + sT^{1/3}} \big)^{-1} \bigg] \Bigg) \le 4 c^{-1} e^{-cs}.
  \end{flalign*}
Modifying the value of $c$, this yields the lower bound on $\h_T(x; \bfxi)$ in \Cref{0hetaxi0}.
\end{proof}

	\subsection{Proof of \Cref{h1}}
	
	\label{Estimate2s}
	
	In this section we establish \eqref{h1}, which is based a Fredholm determinant identity in the ASEP under $(\rho; 0)$-Bernoulli initial data due to \cite[Theorem 5.3]{BCS}. This formula also appears in \cite[Proposition 5.1]{PTAEP} where it is extended to a more general class of initial data. We will utilize a number of estimates used in \cite{PTAEP} to perform asymptotics on this formula (\cite[Apendix D]{BCS} sketch some asymptotics from their formula, though without going into details). Let us first recall the definition of a Fredholm determinant series.

\begin{definition}
\label{definitiondeterminant}
Fix a contour $\mathcal{C} \subset \mathbb{C}$ in the complex plane, and let $K: \mathcal{C} \times \mathcal{C} \rightarrow \mathbb{C}$ be a meromorphic function with no poles on $\mathcal{C} \times \mathcal{C}$. We define the \emph{Fredholm determinant}
\begin{flalign}
\label{determinantsum}
\det \big( \Id + K \big)_{L^2 (\mathcal{C})} = 1 + \displaystyle\sum_{k = 1}^{\infty} \displaystyle\frac{1}{ (2 \pi \mathrm{i} )^k k!} \displaystyle\int_{\mathcal{C}} \cdots \displaystyle\int_{\mathcal{C}} \det \big[ K(x_i, x_j) \big]_{i, j = 1}^k \displaystyle\prod_{j = 1}^k d x_j.
\end{flalign}
\end{definition}

	We next require the following identity for the $q$-Laplace transform (essentially the left side of \eqref{zh}) of ASEP with $(\rho; 0)$-Bernoulli initial data. In what follows, we recall that a contour $\gamma \subset \mathbb{C}$ is called \emph{star-shaped} (with respect to the origin) if, for each real number $a \in [-\pi, \pi]$, there exists exactly one complex number $z_a \in \gamma$ such that $z_a/|z_a| = e^{\mathrm{i} a}$.
	
	\begin{prop}[{\cite[Theorem 5.3]{BCS},\cite[Proposition 5.1]{PTAEP}}]
		
		\label{asymptoticheightvertex}
		
		Fix $\rho \in (0, 1)$, $x \in \mathbb{Z}$, and $p \in \mathbb{R}$. Denote $q = \frac{L}{R} \in (0, 1)$, and set $\beta = \frac{\rho}{1 - \rho}$. Let $\Gamma \subset \mathbb{C}$ be a positively oriented, star-shaped contour  enclosing $0$, but leaving outside $-q$ and $q \beta$. Further let $\mathcal{C} \subset \mathbb{C}$ be a positively oriented, star-shaped contour contained inside $q^{-1} \Gamma$, that encloses $0$, $-q$, and $\Gamma$, but that leaves outside $q \beta$. For ASEP $\bfeta$ with $(\rho; 0)$-Bernoulli initial data, we have
		\begin{flalign}
		\label{expectationk}
			\EE\Big[ \big( -q^{\h_T(X; \bfeta) + p}; q \big)_{\infty}^{-1} \Big] = \det \big( \Id + K^{(p)} \big)_{L^2 (\mathcal{C})},
		\end{flalign}
where
		\begin{flalign}
			\label{kvw1}
			K^{(p)} (w, w') & = \displaystyle\frac{1}{2 \mathrm{i} \log q} \displaystyle\sum_{j = -\infty}^{\infty} \displaystyle\oint_{\Gamma} \displaystyle\frac{g (w; X, T)}{g (v; X, T)} \displaystyle\frac{v^{p - 1} w^{-p}}{\sin \big( \frac{\pi}{\log q} ( \log v - \log w + 2 \pi  \mathrm{i} j ) \big)} \displaystyle\frac{dv}{ w' - v },
		\end{flalign}
and
		\begin{flalign*}
		g (z; X, T) = (z + q)^{X - 1} \exp \left( \displaystyle\frac{q T}{z + q} \right) \displaystyle\frac{1}{(q^{-1} \beta^{-1} z; q)_{\infty}}. 		
		\end{flalign*}
	
	\end{prop}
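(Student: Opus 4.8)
The plan is to derive \eqref{expectationk}--\eqref{kvw1} from the $q$-moment formulas for ASEP through the Borodin--Corwin--Sasamoto resummation machinery, assembling known ingredients rather than reproving everything from scratch. First I would invoke the Markov self-duality of ASEP (with $q=L/R\in(0,1)$): the duality function is a product of $q$-height variables, and it converts the evolution of the $k$-point $q$-moments $\EE\big[\prod_{i=1}^{k} q^{\h_T(x_i;\bfeta)}\big]$ into a system of $k$ free (decoupled) ASEP walkers subject to two-body boundary conditions. For $(\rho;0)$-Bernoulli initial data the initial $k$-point function factorizes explicitly (a product of Bernoulli/geometric factors), so this linear system is solved in closed form by a nested contour integral ansatz: one writes $\EE\big[\prod_i q^{\h_T(x_i)}\big]$ as a $k$-fold integral over nested contours with the Cauchy-type interaction kernel $\prod_{a<b}\frac{z_a-z_b}{z_a-q z_b}$ and single-variable weight $(z+q)^{X-1}\exp(qT/(z+q))(q^{-1}\beta^{-1}z;q)_\infty^{-1}$, i.e.\ precisely the function $g(z;X,T)$ in the statement (the $(q^{-1}\beta^{-1}z;q)_\infty^{-1}$ factor being the trace of the Bernoulli part of the initial data). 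This step is carried out in \cite[Theorem 5.3]{BCS}, with \cite[Proposition 5.1]{PTAEP} extending the admissible class of initial data.

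Second, I would pass from moments to the $q$-Laplace transform. Using $\frac{1}{(u;q)_\infty}=\sum_{k\ge 0}\frac{u^k}{(q;q)_k}$ with $u=-q^{\h_T(X)+p}$ and interchanging sum and expectation gives $\EE\big[(-q^{\h_T(X;\bfeta)+p};q)_\infty^{-1}\big]=\sum_{k\ge 0}\frac{(-1)^k q^{kp}}{(q;q)_k}\EE\big[q^{k\h_T(X;\bfeta)}\big]$; substituting the nested-contour moment formula turns this into a sum over $k$ of $k$-fold integrals.

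Third --- the analytic heart --- I would apply the Mellin--Barnes trick: the sum over the integer $k$ is traded for a single complex integral against a $\pi/\sin$ factor, after which the nested contours can be collapsed onto a single star-shaped contour $\mathcal{C}$ and an auxiliary contour $\Gamma$. Reorganizing the resulting expression as a series of $k\times k$ determinants of the kernel obtained by integrating $g(w;X,T)/g(v;X,T)$ times the $1/(2\mathrm{i}\log q)\cdot v^{p-1}w^{-p}/\sin(\cdots)$ factor against $1/(w'-v)$ identifies it with $\det(\Id+K^{(p)})_{L^2(\mathcal{C})}$ in the sense of \Cref{definitiondeterminant}. The precise pole-avoidance constraints on $\Gamma$ and $\mathcal{C}$ (enclosing $0$, leaving out $-q$ and $q\beta$, with $\mathcal{C}\subset q^{-1}\Gamma$ and $\mathcal{C}$ enclosing $\Gamma$) are exactly what is needed so that none of these contour deformations cross a pole of $g$ or of the $\sin$-factor.

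The hard part will be justifying the resummation: the ASEP $q$-moments grow roughly like $q^{-\binom{k}{2}}$ times a bounded factor, so $\sum_k\frac{q^{kp}}{(q;q)_k}\EE[q^{k\h_T}]$ need not converge absolutely for all parameters, and the naive interchange of sum and expectation is not automatically valid. The standard remedy --- which I would follow from \cite{BCS,PTAEP} --- is either to restrict the parameter range first and then argue analytic continuation using that both sides are analytic in $q^{p}$, or to set up the Mellin--Barnes contour integral so that the $k$-sum is replaced directly by a convergent single integral before any interchange is performed. Since \Cref{asymptoticheightvertex} is quoted verbatim from \cite[Theorem 5.3]{BCS} and \cite[Proposition 5.1]{PTAEP}, I would simply cite their argument; the work specific to this paper lies downstream, in the steepest-descent asymptotics of $\det(\Id+K^{(p)})_{L^2(\mathcal{C})}$ that yield \eqref{h1}.
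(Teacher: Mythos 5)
Your outline accurately reflects how the cited works \cite{BCS,PTAEP} establish this identity (duality to nested-contour $q$-moment formulas, $q$-Laplace resummation via the Mellin--Barnes trick, and the contour constraints needed to avoid poles of $g$ and of the $\sin$ factor), and the paper itself offers no independent proof but simply quotes the result from those references. Your plan to cite their argument, together with your correct identification of the convergence/analytic-continuation subtlety in the resummation step, matches the intended treatment.
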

	
The following result captures the decay of the right side of \eqref{expectationk} as $p$ grows with a suitable centering and scaling. Its proof closely follows \cite[Section 6]{PTAEP} and is provided in \Cref{RightKernel} below.
It is here that our choice that  $X\geq  Y_0= (1-2\rho)T+T^{2/3}$ is used. By making this assumption, the choices for the contours $\mathcal{C}$ and $\Gamma$ are simplified. It is possible, as was done \cite[Section 8]{PTAEP}, to address the case where $X<Y_0$, though it involves more complicated contours and since we have other ways to control that case (namely, \eqref{hxyeta1} and \eqref{hxyeta2}) we forgo that additional complexity.

\begin{prop}
\label{estimatedeterminant}
For any $\varepsilon \in ( 0,1/4 )$, there exists $c = c(\varepsilon) > 0$ such that the following holds. Adopt the notation of \Cref{asymptoticheightvertex}, and set $X = \nu T + 1$. Assume that
	\begin{flalign*}
	\rho \in [\varepsilon, 1], \qquad \nu \in [-(1-\varepsilon), 1 - \varepsilon], \qquad p = p(T, \nu, s) = s f_{\nu} T^{1/3} - m_{\nu} T, \qquad s \ge 0,
	\end{flalign*}
where
	\begin{flalign}
	\label{metafeta}
		m_{\nu} = \bigg( \displaystyle\frac{1 - \nu}{2} \bigg)^2, \qquad f_{\nu} = \bigg( \displaystyle\frac{1 - \nu^2}{4} \bigg)^{2/3}.
	\end{flalign}
If $\nu \ge 1 - 2 \rho + T^{-1/3}$, then
		\begin{flalign}
		\label{eta1estimate}
		\Big| \det \big( \Id + K^{(p)} \big)_{L^2 (\mathcal{C})} - 1 \Big| \le c^{-1} (e^{-cs} + e^{-cT}).
		\end{flalign}
	\end{prop}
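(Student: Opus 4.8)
## Proof plan for Proposition \ref{estimatedeterminant}

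The plan is to bound the Fredholm determinant $\det(\Id + K^{(p)})_{L^2(\mathcal{C})}$ by controlling the operator norm (or rather a Hilbert--Schmidt/trace-class norm) of $K^{(p)}$ and showing it is exponentially small in $s$ and $T$. Recall the standard bound that if $\|K^{(p)}\|$ (in an appropriate norm, e.g. trace norm on $L^2(\mathcal{C})$) is at most $\delta < 1$, then $|\det(\Id + K^{(p)}) - 1| \le \delta \, e^{\delta}$, so it suffices to show $\|K^{(p)}\|_{1} \le c^{-1}(e^{-cs} + e^{-cT})$. The first step is therefore to set up the contours $\mathcal{C}$ and $\Gamma$ explicitly, exploiting the assumption $\nu \ge 1 - 2\rho + T^{-1/3}$ (equivalently $X \ge Y_0$). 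As remarked after the proposition statement, under this assumption the steepest descent geometry simplifies: the relevant critical point of the exponent in $g(z; X, T)$ — coming from the competition between the $(z+q)^{X-1}$ and $\exp(qT/(z+q))$ factors — sits on the positive real axis at a location where one can take $\mathcal{C}$ and $\Gamma$ to be circles (or star-shaped perturbations thereof) passing near it, with $\mathcal{C}$ enclosing $-q$ and $\Gamma$ and $q^{-1}\Gamma$ enclosing $\mathcal{C}$, both staying clear of $q\beta$ (this last point is where $\rho \in [\varepsilon,1]$, hence $\beta$ bounded, is used, and where the factor $(q^{-1}\beta^{-1}z;q)_\infty^{-1}$ in $g$ must be controlled to avoid its poles at $z \in q\beta \cdot q^{\mathbb{Z}_{\ge 0}}$).

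The second and main step is the steepest descent analysis of the ratio $g(w; X, T)/g(v; X, T)$ appearing in the kernel \eqref{kvw1}. Writing $g(z;X,T) = \exp(T \cdot G_\nu(z) + O(\log T))$ with $G_\nu(z) = \nu \log(z+q) + \frac{q}{z+q}$ (up to lower-order pieces from the $q$-Pochhammer and from $X = \nu T + 1$ versus $\nu T$), one locates the critical point $z_c = z_c(\nu)$ solving $G_\nu'(z) = 0$, namely $z_c + q = q/\nu$ so $z_c = q(1-\nu)/\nu$, and checks that $z_c > 0$ lies strictly between the $\Gamma$-contour radius and the $\mathcal{C}$-contour radius precisely when $\nu$ is bounded away from $\pm 1$ — this is where $\nu \in [-(1-\varepsilon), 1-\varepsilon]$ enters. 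The $p$-dependent factors $v^{p-1} w^{-p}$ contribute, after the substitution $p = s f_\nu T^{1/3} - m_\nu T$, a term $\exp(-m_\nu T (\log v - \log w)) \cdot \exp(s f_\nu T^{1/3}(\log v - \log w)) \cdot v^{-1}$; the $-m_\nu T$ piece combines with $G_\nu$ to shift the effective exponent so that, with the choice $m_\nu = ((1-\nu)/2)^2$, the value of the combined exponent at the critical point is strictly negative for $w$ on $\mathcal{C}$, $v$ on $\Gamma$ — giving the $e^{-cT}$ contribution from the bulk of the contours — while the local expansion near $z_c$ on the $f_\nu T^{1/3}$ scale yields the Airy-type decay and the $e^{-cs}$ contribution. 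Concretely I would: (a) deform $\mathcal{C}$, $\Gamma$ to pass through (neighborhoods of) $z_c$ along directions of steepest descent for $-\mathrm{Re}\, G_\nu$; (b) split each contour into a local part of size $\sim T^{-1/3}$ around $z_c$ and a global part; (c) on the global part, bound the exponential $\exp(T(\mathrm{Re}\,\tilde G_\nu(w) - \mathrm{Re}\,\tilde G_\nu(v)))$ by $e^{-cT}$ using convexity/monotonicity of $\tilde G_\nu = G_\nu - m_\nu \log(\cdot)$ along the contour; (d) on the local part, use the rescaling $w = z_c + T^{-1/3}\hat w$, $v = z_c + T^{-1/3}\hat v$, Taylor expand to cubic order, and bound the resulting integral (now with a genuine cubic exponent and the $s f_\nu T^{1/3}(\log v - \log w)$ term becoming $\sim s(\hat v - \hat w)$ after rescaling) by $c^{-1} e^{-cs}$; (e) control the $\sin$ denominator in \eqref{kvw1} — the $j=0$ term dominates and $|\sin(\tfrac{\pi}{\log q}(\log v - \log w))|$ is bounded below away from the diagonal, while the $j \ne 0$ terms decay geometrically in $|j|$ since $|\log q| > 0$, so the sum over $j$ converges and is uniformly bounded.

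The third step is bookkeeping: assemble the pointwise kernel bound $|K^{(p)}(w,w')| \le c^{-1} T^{C}(e^{-cs} + e^{-cT})$ into a trace-norm bound, absorbing the polynomial $T^{C}$ into the exponent by shrinking $c$ (legitimate since we may assume $s, T$ large — for bounded $s$ or $T$ the estimate \eqref{eta1estimate} is trivial by taking $c$ small, as the right side exceeds any fixed quantity), and conclude $|\det(\Id + K^{(p)})_{L^2(\mathcal{C})} - 1| \le c^{-1}(e^{-cs} + e^{-cT})$ via the determinant expansion and Hadamard's inequality. I expect the \textbf{main obstacle} to be step (b)--(d): verifying that the chosen contours simultaneously (i) satisfy the nesting and pole-avoidance constraints of Proposition \ref{asymptoticheightvertex} for the full parameter range $\rho \in [\varepsilon,1]$, $\nu \in [-(1-\varepsilon), 1-\varepsilon]$ with $\nu \ge 1 - 2\rho + T^{-1/3}$, and (ii) are genuine steepest-descent contours along which $\mathrm{Re}\,\tilde G_\nu$ is monotone with a quantitative (uniform in the parameters) gap. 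This is exactly the technical heart of \cite[Section 6]{PTAEP}, and the role of the present proof is to track the uniformity in $\varepsilon$ and the $T^{-1/3}$ margin in the constraint $\nu \ge 1 - 2\rho + T^{-1/3}$; the latter margin is what keeps $z_c$ a definite distance — on the $T^{-1/3}$ scale — from the obstruction that appears as $\nu \downarrow 1 - 2\rho$, and hence what makes the $e^{-cs}$ decay kick in rather than degenerating. The remaining details, including the precise deformation and the cubic Airy-type integral estimate, follow \cite{PTAEP} and are carried out in Appendix \ref{RightKernel}.
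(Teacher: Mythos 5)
Your overall strategy --- steepest descent on the kernel \eqref{kvw1} following \cite[Section 6]{PTAEP}, a split of each contour into a local piece at scale $T^{-1/3}$ and a global piece contributing $e^{-cT}$, and a Hadamard-type bound to pass from pointwise kernel estimates to the determinant --- is exactly the paper's route. But there is a genuine gap in your step two: you misidentify the critical point. The relevant exponent is not $G_\nu(z)=\nu\log(z+q)+\tfrac{q}{z+q}$ alone but (as you yourself note a sentence later) its combination with the $p$-dependent factor, namely $G(z)=\tfrac{q}{z+q}+\nu\log(z+q)+m_\nu\log z$ (note the sign: the $w^{-p}$ factor contributes $+m_\nu\log w$, so your $\tilde G_\nu=G_\nu-m_\nu\log(\cdot)$ has the wrong sign). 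With the choice $m_\nu=\big(\tfrac{1-\nu}{2}\big)^2$ one gets $G'(z)=\big(\tfrac{\nu+1}{2}\big)^2\tfrac{(z-\psi)^2}{z(z+q)^2}$ with $\psi=\tfrac{q(1-\nu)}{1+\nu}$, i.e.\ a \emph{double} critical point $G'(\psi)=G''(\psi)=0$, not the simple saddle $z_c=\tfrac{q(1-\nu)}{\nu}$ of $G_\nu$ that you compute. This degeneracy is not a technicality: it is what makes the local expansion genuinely cubic, forces the $T^{1/3}$ scale and the value of $f_\nu$ via $\tfrac{1}{2}G'''(\psi)=(f_\nu/\psi)^3$, and dictates the entire contour geometry (three level lines of $\Real G=G(\psi)$ meeting at $\psi$ at angles $\pm\tfrac{5\pi}{6},\pm\tfrac{\pi}{2},\pm\tfrac{\pi}{6}$, with the local legs of $\mathcal{C}$ and $\Gamma$ at $\pm\tfrac{\pi}{3}$ and $\pm\tfrac{2\pi}{3}$). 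Expanding "to cubic order" around your $z_c$ would be dominated by the quadratic term and give Gaussian $T^{1/2}$ scaling, and your claim that the combined exponent is "strictly negative at the critical point" cannot be right as stated --- at the common saddle the difference $G(w)-G(v)$ vanishes, and the negativity is only achieved \emph{away} from $\psi$ along the contours (this is the content of the level-line construction).

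Two further points. First, the $e^{-cs}$ decay does not come for free from the local cubic expansion: it requires displacing the local $v$-contour by one unit (on the $\sigma=\psi f_\nu^{-1}T^{-1/3}$ scale) to the left of $\psi$, so that after rescaling $\Real(\hat v-\hat w)\le -1$ uniformly and the factor $(v/w)^{sf_\nu T^{1/3}}$ produces $e^{cs\Real\hat v}\le e^{-cs}$; your proposal leaves this offset implicit. Second, your reading of the hypothesis $\nu\ge 1-2\rho+T^{-1/3}$ is correct in spirit --- it keeps the saddle a distance of order $\sigma$ from the pole locus $q\beta\cdot q^{\mathbb{Z}_{\le 0}}$ of the $q$-Pochhammer factor --- but the threshold $\nu=1-2\rho$ corresponds to $\psi=q\beta$, not to your $z_c$ (for which the coincidence occurs at $\nu=1-\rho$), which is another symptom of the wrong saddle. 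With the critical point corrected, the rest of your plan (uniformity in $\nu,\rho$ via continuity of the level lines, absorption of polynomial prefactors, triviality for bounded $s$ or $T$) goes through as in the paper.
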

	
	Given \Cref{estimatedeterminant}, we can quickly establish \eqref{h1}.

	\begin{proof}[Proof of \Cref{h1}]
	 By \eqref{expectationk} and \eqref{eta1estimate} (and replacing $s$ with $f_{\nu}^{-1} s$ in the latter), we find that there exists $c= c (\varepsilon) > 0$ such that
	\begin{flalign}
	\label{qh}
		\EE \Big[ \big( -q^{\h_T(X; \bfeta) - m_{\nu} T + s T^{1/3}}; q \big)_{\infty}^{-1} \Big] \ge 1 -c^{-1} (e^{-c s} + e^{-c T}).
	\end{flalign}
This, together with \eqref{za1} (applied with $\mathbf{A} = \h_T(X; \bfeta) - m_{\nu} T + sT^{1/3}$), yields \eqref{h1}.
	\end{proof}

	\section{Fredholm determinant estimates}
	
	\label{RightKernel}
	
	In this section we establish \Cref{estimatedeterminant}; we assume throughout that $\nu > 1 - 2 \rho + T^{-1/3}$ and that $\rho \in [-(1-\varepsilon), 1 - \varepsilon]$. We closely follow \cite[Section 6]{PTAEP}, which asymptotically analyzed the Fredholm determinant $\det \big( \Id + K^{(p)} \big)_{L^2 (\mathcal{C})}$ but did not control its decay in $s$. In Section \ref{ckgammakcontours} we recall from \cite[Section 6.1]{PTAEP} a useful choice of contours $\mathcal{C}$ and $\Gamma$, and we then prove \Cref{estimatedeterminant} in Section \ref{VertexRight}.

	\subsection{Choosing the Contours $\mathcal{C}$ and $\Gamma$}
	
	\label{ckgammakcontours}
	
	In this section we recall from \cite[Section 6.2.2]{PTAEP} a choice of contours $\mathcal{C}$ and $\Gamma$ useful for analyzing $\det \big( \Id + K^{(p)} \big)_{L^2 (\mathcal{C})}$. To that end, we first rewrite the kernel (dropping the $p$ in our notation below) $K (w, w') = K^{(p)} (w, w')$ from \eqref{kvw1} as
	\begin{flalign}
		\begin{aligned}
			\label{vptright}
			K (w, w') = \displaystyle\frac{1}{2 \mathrm{i} \log q} \displaystyle\sum_{j \in \mathbb{Z}} & \displaystyle\oint_{\Gamma} \displaystyle\frac{ \exp \left( T \big( G (w) - G (v) \big) \right) }{\sin \big( \pi (\log q)^{-1} (2 \pi \mathrm{i} j + \log v - \log w) \big) } \displaystyle\frac{(q^{-1} \beta^{-1} v; q)_{\infty}}{(q^{-1} \beta^{-1} w; q)_{\infty}} \\
			& \qquad \times \left( \displaystyle\frac{v}{w} \right)^{s f_{\nu} T^{1 / 3}} \displaystyle\frac{dv}{v (w' - v)} ,
		\end{aligned}
	\end{flalign}
where $G (z)$ is given by
	\begin{flalign*}
		G (z) = \displaystyle\frac{q}{z + q} + \nu \log (z + q) + m_{\nu} \log z,
	\end{flalign*}
and $m_{\nu}$ and $f_{\nu}$ are given by \eqref{metafeta}. Observe that
	\begin{flalign*}
			\qquad \quad G' (z) = \left( \displaystyle\frac{\nu + 1}{2} \right)^2 \displaystyle\frac{(z - \psi)^2 }{z (z + q)^2},  \qquad \text{with} \qquad \psi =  \displaystyle\frac{q (1 - \nu)}{1 + \nu}.
	\end{flalign*}
Thus, $\psi$ is a critical point of $G$, and
	\begin{flalign*}
		G'' (\psi) = 0; \qquad \displaystyle\frac{G''' (\psi)}{2} = \displaystyle\frac{(\nu + 1)^5}{16 q^3 (1 - \nu)} = \left( \displaystyle\frac{f_{\nu}}{\psi } \right)^3.
	\end{flalign*}
From a Taylor expansion, this implies that	
	\begin{flalign}
		\begin{aligned}
			\label{gzpsi}
			G (z) - G (\psi) & = \frac{1}{3} \left( \displaystyle\frac{f_{\nu} (z - \psi)}{\psi} \right)^3 + R \left( \displaystyle\frac{f_{\nu;} (z - \psi)}{\psi} \right)
		\end{aligned}
	\end{flalign}
where, uniformly in $\nu \in [\varepsilon - 1, 1 - \varepsilon]$, as $|z - \psi| \rightarrow 0$,
\begin{flalign}
		\label{r13}
		R \left( \displaystyle\frac{f_{\nu} (z - \psi)}{\psi} \right) = G (z) - G (\psi) - \displaystyle\frac{1}{3} \left( \displaystyle\frac{f_{\nu} (z - \psi)}{\psi} \right)^3 =\mathcal{O} \big( |z - \psi|^4 \big).
	\end{flalign}

	Next we recall from \cite{PTAEP} a choice contours $\mathcal{C}$ and $\Gamma$ satisfying the conditions of Proposition \ref{asymptoticheightvertex} such that $\Real \big( G (w) - G (v) \big) < 0$ for $w \in \mathcal{C}$ and $v \in \Gamma$ both away from $\psi$. To explain these contours, it will be useful to recall properties of the level lines of  $\Real G (z) = G(\psi)$.

	\begin{prop}[{\cite[Proposition 6.7]{PTAEP}}]
		
		\label{linesgv}
		
		There exist three simple, closed curves, $\mathcal{L}_1 $, $\mathcal{L}_2 $, and $\mathcal{L}_3$, that all pass through $\psi$ and satisfy the following properties.
		
		\begin{enumerate}[leftmargin=*]
			
			\item{ \label{vzpsij} For any $z \in \mathbb{C} \setminus \{ q \}$, we have $\Real G(z) = G(\psi)$ if and only if $z \in (\mathcal{L}_1 \cup \mathcal{L}_2 \cup \mathcal{L}_3) \setminus \{ q \}$. }
			
			\item{ \label{anglev} The level lines $\mathcal{L}_1$, $\mathcal{L}_2$, and $\mathcal{L}_3$ are all star-shaped. }
			
			\item{ \label{containmentv} We have that $\mathcal{L}_1 \cap \mathcal{L}_2 = \mathcal{L}_2 \cap \mathcal{L}_3 = \mathcal{L}_1 \cap \mathcal{L}_3 = \{ \psi \}$. Furthermore, $\mathcal{L}_1 \setminus \{ \psi \}$ is contained in the interior of $\mathcal{L}_2$, and $\mathcal{L}_2 \setminus \{ \psi \}$ is contained in the interior of $\mathcal{L}_3$. }
			
			\item{ \label{qkappaq0v}   The level line $\mathcal{L}_1$ encloses $0$ but not $-q$, the level line $\mathcal{L}_2$ encloses $0$, and the level line $\mathcal{L}_3$ encloses $0$ and $-q$. Furthermore, $-q$ lies on $\mathcal{L}_2$.}
			
			\item{ \label{psianglesv} The level line $\mathcal{L}_1$ meets the positive real axis (at $\psi$) at angles $5 \pi / 6$ and $- 5 \pi /6$, the level line $\mathcal{L}_2$ meets the positive real axis (at $\psi$) at angles $\pi / 2$ and $- \pi / 2$, and the level line $\mathcal{L}_3$ meets the positive real axis (at $\psi$) at angles $\pi / 6$ and $- \pi /6$. }
			
			\item{ \label{positiverealv} For all $z$ enclosed by $\mathcal{L}_2$ but outside of $\mathcal{L}_1$, we have that $\Real \big( G(z) - G(\psi) \big) > 0$. }
			
			\item{ \label{negativerealv} For all $z$ enclosed by $\mathcal{L}_3$ but outside of $\mathcal{L}_2$, we have that $\Real \big( G(z) - G (\psi) \big) < 0$. }
			
		\end{enumerate}
		
	\end{prop}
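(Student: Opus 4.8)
The plan is to establish the proposition by a direct study of the level set $\{\,\Real G = G(\psi)\,\}$ of the harmonic function $u(z) := \Real G(z)$, combining the explicit formula for $G'$ with the classical description of level sets of harmonic functions near critical points. First I would note that, although $G$ itself is multivalued because of the logarithms, $u(z) = \Real\!\big(q/(z+q)\big) + \nu\log|z+q| + m_\nu\log|z|$ is a single-valued harmonic function on $\mathbb{C}\setminus\{0,-q\}$, whose critical points are exactly the zeros of $G'$ (since $\nabla u = 0$ iff $G'(z)=0$). By the given factorization $G'(z) = \big(\tfrac{\nu+1}{2}\big)^2\frac{(z-\psi)^2}{z(z+q)^2}$ the unique critical point is $z=\psi$, and there $u-u(\psi)$ vanishes to order exactly three because $G''(\psi)=0$ but $G'''(\psi)\neq 0$. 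Hence, by the Taylor expansion \eqref{gzpsi}--\eqref{r13}, near $\psi$ the level set coincides to leading order with the zero set of $\Real\!\big((f_\nu(z-\psi)/\psi)^3\big)$, which is a union of six real-analytic arcs issuing from $\psi$ whose tangent rays make angles $\pm\pi/6$, $\pm\pi/2$, $\pm 5\pi/6$ with the positive real axis; this already gives the local angle statement, and the sign of $u-u(\psi)$ in each of the six sectors is read off from the same cubic.

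Next I would locate the global skeleton of the level set by examining the real axis, the punctures, and infinity. On $\mathbb{R}\setminus\{0,-q\}$ one has $u'(x)=G'(x)$, which has the sign of $x^{-1}(x-\psi)^2(x+q)^{-2}$; thus $u$ is strictly increasing on $(0,\infty)$ with a single stationary inflection at $\psi$, strictly decreasing on $(-q,0)$, and strictly decreasing on $(-\infty,-q)$. Combined with $u\to-\infty$ as $x\to 0$, $u\to+\infty$ as $x\to\infty$, $u\to+\infty$ as $x\uparrow -q$, and $u\to-\infty$ as $x\downarrow -q$, this shows the level set meets the real axis at precisely three points: $\psi$, a point $x_1\in(-q,0)$, and a point $x_2\in(-\infty,-q)$. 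Moreover $u(z)=m_\nu\log|z|+O(1)\to-\infty$ near $0$ and $u(z)=\tfrac{(1+\nu)^2}{4}\log|z|+O(1)\to+\infty$ near $\infty$, so no arc of the level set limits onto $0$ or $\infty$; and writing $z+q=\varepsilon e^{\mathrm{i}\phi}$ near $-q$ gives $u=(q\cos\phi)/\varepsilon+\nu\log\varepsilon+O(1)$, whence an arc can approach $-q$ only along the two directions $\phi\to\pm\pi/2$, so $-q$ is the limit point of exactly two arcs (this is the meaning of ``$-q$ lies on $\mathcal{L}_2$''). Since $u$ is real-analytic, the level set is a locally finite analytic variety, so each of the six arcs out of $\psi$ is a properly embedded analytic curve in $\mathbb{C}\setminus\{0,-q\}$; being unable to escape to $0$ or $\infty$, and (for all but the vertical pair) unable to limit onto $-q$, each such arc must either close up at $\psi$ or, for the two vertical arcs, terminate at $-q$.

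I would then assemble these facts using the conjugation symmetry $u(\bar z)=u(z)$. The two arcs leaving $\psi$ at angles $\pm 5\pi/6$ are exchanged by conjugation and cannot reach $0$, $-q$, or $\infty$, so together they form a simple closed curve $\mathcal{L}_1$ through $\psi$ meeting the real axis again only at $x_1\in(-q,0)$, hence enclosing $0$ but not $-q$; similarly the arcs at $\pm\pi/6$ form a simple closed curve $\mathcal{L}_3$ through $\psi$ and $x_2<-q$ enclosing both $0$ and $-q$, and the two vertical arcs form a simple closed curve $\mathcal{L}_2$ through $\psi$ and $-q$ enclosing $0$. Because two distinct level arcs of a harmonic function can meet only at a critical point, $\mathcal{L}_i\cap\mathcal{L}_j=\{\psi\}$ for $i\neq j$, and comparing their real-axis traces $(x_1,\psi)\subset(-q,\psi)\subset(x_2,\psi)$ together with a Jordan-curve argument forces the nesting $\mathcal{L}_1\setminus\{\psi\}\subset\mathrm{int}\,\mathcal{L}_2$ and $\mathcal{L}_2\setminus\{\psi\}\subset\mathrm{int}\,\mathcal{L}_3$. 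Star-shapedness of each $\mathcal{L}_i$ is the one part genuinely requiring the structure of $G'$ (a nested family of Jordan curves need not be star-shaped): one verifies, following the estimates of \cite{PTAEP}, that along each ray from the origin the radial derivative $\partial_r u(re^{\mathrm{i}\theta})=\Real\!\big(e^{\mathrm{i}\theta}G'(re^{\mathrm{i}\theta})\big)$ forces $u$ to cross the value $u(\psi)$ exactly once. Finally, the two sign statements follow from the maximum principle: on the open region between $\mathcal{L}_1$ and $\mathcal{L}_2$ the function $u-u(\psi)$ is harmonic, vanishes on the boundary, and has no interior critical point, so it is of one sign there, which the leading cubic identifies as positive by testing at a point $\psi+re^{2\pi\mathrm{i}/3}$; likewise $u-u(\psi)<0$ between $\mathcal{L}_2$ and $\mathcal{L}_3$, tested at $\psi+re^{\pi\mathrm{i}/3}$.

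The main obstacle is the global assembly and the star-shapedness: verifying that the six local branches connect up in exactly the stated nested way, with no extra closed components, no arcs wandering off, and no transversal self-intersections away from $\psi$, and that the resulting curves are star-shaped. These are the only steps that use the structure of $G$ beyond a Taylor expansion at $\psi$ — namely the monotonicity of $\Real G$ along the real axis and along rays, the precise blow-up rates at $0$, $-q$, and $\infty$, and the conjugation symmetry — and they amount to a Morse-theoretic analysis of the harmonic function $u$ as carried out in \cite{PTAEP}. Everything else is the local cubic model at $\psi$ together with the maximum principle.
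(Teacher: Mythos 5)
First, note that the paper does not prove this proposition: it is quoted verbatim from \cite[Proposition 6.7]{PTAEP}, so there is no internal argument to compare against. Your strategy --- the local cubic model at the triple critical point $\psi$, classification of the possible limit points of level arcs (namely $0$, $-q$, $\infty$), assembly via conjugation symmetry and the real-axis trace, and the maximum principle for the sign statements --- is exactly the kind of Morse-theoretic analysis carried out in that reference, and the overall architecture is sound; the local angle and sign computations at $\psi$ are correct.

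Two concrete steps, however, would fail as written. First, the one-sided limits at $-q$ are reversed: since $q>0$, as $x\uparrow -q$ one has $x+q\to 0^-$, hence $q/(x+q)\to-\infty$ and $u\to-\infty$; and $u\to+\infty$ as $x\downarrow -q$. With the limits you state, $u$ would tend to $-\infty$ at both endpoints of $(-q,0)$ while being strictly decreasing there (and to $+\infty$ at both endpoints of $(-\infty,-q)$), which is impossible and in particular does not yield the crossings $x_1$ and $x_2$; the corrected limits do. Second, the maximum-principle step is self-defeating as phrased: a function harmonic on the open region between $\mathcal{L}_1$ and $\mathcal{L}_2$, continuous up to the boundary and vanishing there, is identically zero. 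What rescues the argument is precisely that $-q$ lies on $\mathcal{L}_2$ and $u$ does not extend continuously there: from the side of the annulus between $\mathcal{L}_1$ and $\mathcal{L}_2$ one has $u\to+\infty$ at $-q$, so the minimum of $u-u(\psi)$ over the closure equals $0$ and is attained only on the boundary, forcing $u>u(\psi)$ inside; symmetrically $u\to-\infty$ at $-q$ from the outer side, forcing $u<u(\psi)$ between $\mathcal{L}_2$ and $\mathcal{L}_3$. The clause ``has no interior critical point'' is neither needed nor sufficient for this. Beyond these, the genuinely hard steps --- ruling out extra components of the level set, verifying that the six local branches connect up in the stated nested pattern, and star-shapedness (your criterion that $u$ crosses the value $u(\psi)$ ``exactly once'' along each ray should read ``exactly three times, once per curve'') --- are exactly the ones you defer to \cite{PTAEP}, so the proposal is a correct outline rather than a complete proof.
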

	
	\begin{rem}
	
	\label{etacontinuous}
	
	By the continuity of $G$ and $\psi$ in $\nu$, the level lines $\mathcal{L}_1$, $\mathcal{L}_2$, and $\mathcal{L}_3$ vary uniformly continuously in $\nu \in [-(1-\varepsilon), 1 - \varepsilon]$.
	 	
	\end{rem}

	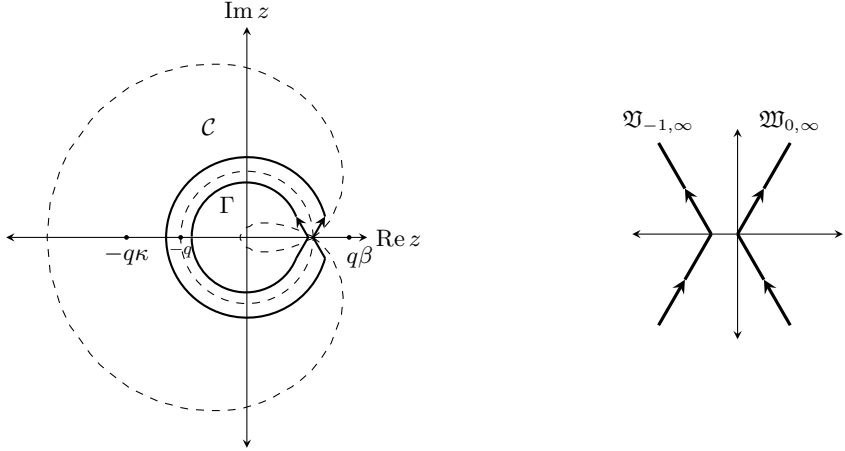
\begin{figure}
		\begin{minipage}{0.4\linewidth}
			\centering
			\begin{tikzpicture}[
				>=stealth,
				auto,
				style={
					scale = .8
				}
				]					
				\draw[<->, black	] (0, -3.5) -- (0, 3.5) node[black, above = 0] {$\Imaginary z$};
				\draw[<->, black] (-4, 0) -- (2, 0) node[black, right = 0] {$\Real z$};
				
				\draw[->,black, thick] (1.1, 0) -- (1.3, .34641);
				\draw[-,black, thick] (1.1, 0) -- (1.3, -.34641);
				\draw[black, thick] (1.3, .34641) arc (15.4:180:1.34536) node [black, above = 42, right = 10 	] {$\mathcal{C}$};
				\draw[black, thick] (1.3, -.34641) arc (-15.4:-180:1.34536);
				
				\draw[->, black, thick] (1.02, 0) -- (.82, .34641);
				\draw[-, black, thick] (1.02, 0) -- (.82, -.34641);
				\draw[black, thick] (.82, .34641) arc (21.67:180:.9)  node [black, above = 12, right = 7 	] {$\Gamma$};
				\draw[black, thick] (.82, -.34641) arc (-21.67:-180:.9);
												
				\path[draw, dashed] (1.1, 0) -- (1.08, .02) -- (1.06, .03) -- (1.03, .04) -- (1, .05) -- (.98, .06) -- (.95, .08) -- (.93, .09) -- (.89, .1) -- (.86, .12) -- (.83, .13) -- (.8, .14) -- (.75, .16) -- (.71, .17) -- (.65, .19) -- (.61, .2) -- (.56, .21) -- (.5, .22) -- (.47, .23) -- (.43, .24) -- (.38, .24) -- (.35, .24) -- (.3, .24) -- (.27, .24) -- (.22, .24) -- (.18, .23) -- (.14, .22) -- (.08, .21) -- (.06, .19) -- (.04, .18) -- (0, .17) -- (-.02, .15) -- (-.04, .13) -- (-.06, .12) -- (-.07, .1) -- (-.08, .09) -- (-.08, .08) -- (-.09, .07) -- (-.1, .04) -- (-.11, .03) -- (-.11, 0);
				
				\path[draw, dashed] (1.1, 0) -- (1.08, -.02) -- (1.06, -.03) -- (1.03, -.04) -- (1, -.05) -- (.98, -.06) -- (.95, -.08) -- (.93, -.09) -- (.89, -.1) -- (.86, -.12) -- (.83, -.13) -- (.8, -.14) -- (.75, -.16) -- (.71, -.17) -- (.65, -.19) -- (.61, -.2) -- (.56, -.21) -- (.5, -.22) -- (.47, -.23) -- (.43, -.24) -- (.38, -.24) -- (.35, -.24) -- (.3, -.24) -- (.27, -.24) -- (.22, -.24) -- (.18, -.23) -- (.14, -.22) -- (.08, -.21) -- (.06, -.19) -- (.04, -.18) -- (0, -.17) -- (-.02, -.15) -- (-.04, -.13) -- (-.06, -.12) -- (-.07, -.1) -- (-.08, -.09) -- (-.08, -.08) -- (-.09, -.07) -- (-.1, -.04) -- (-.11, -.03) -- (-.11, 0);
				
				\draw[black, dashed] (0, 0) circle [radius=1.1];
				
				\path[draw, dashed] (1.1, 0) -- (1.23, .1) -- (1.4, .3) -- (1.57, .71) -- (1.6, 1) -- (1.57, 1.3) -- (1.4, 1.76) -- (1.23, 2.02) -- (1.07, 2.22) -- (.9, 2.37) -- (.74, 2.49) -- (.57, 2.58) -- (.4, 2.68) -- (.23, 2.75) -- (.07, 2.8) -- (-.1, 2.85) -- (-.27, 2.87) -- (-.43, 2.88) -- (-.6, 2.88) -- (-.77, 2.87) -- (-.94, 2.86) -- (-1.1, 2.83) -- (-1.27, 2.78) -- (-1.44, 2.74) -- (-1.6, 2.67) -- (-1.77, 2.59) -- (-1.94, 2.51) -- (-2.1, 2.39) -- (-2.27, 2.26) -- (-2.44, 2.12) -- (-2.6, 1.94) -- (-2.77, 1.7) -- (-2.94, 1.47) -- (-3.1, 1.13) -- (-3.27, .57) -- (-3.32, 0);
				
				\path[draw, dashed] (1.1, 0) -- (1.23, -.1) -- (1.4, -.3) -- (1.57, -.71) -- (1.6, -1) -- (1.57, -1.3) -- (1.4, -1.76) -- (1.23, -2.02) -- (1.07, -2.22) -- (.9, -2.37) -- (.74, -2.49) -- (.57, -2.58) -- (.4, -2.68) -- (.23, -2.75) -- (.07, -2.8) -- (-.1, -2.85) -- (-.27, -2.87) -- (-.43, -2.88) -- (-.6, -2.88) -- (-.77, -2.87) -- (-.94, -2.86) -- (-1.1, -2.83) -- (-1.27, -2.78) -- (-1.44, -2.74) -- (-1.6, -2.67) -- (-1.77, -2.59) -- (-1.94, -2.51) -- (-2.1, -2.39) -- (-2.27, -2.26) -- (-2.44, -2.12) -- (-2.6, -1.94) -- (-2.77, -1.7) -- (-2.94, -1.47) -- (-3.1, -1.13) -- (-3.27, -.57) -- (-3.32, 0);
				
				\filldraw[fill=black, draw=black] (-1.1, 0) circle [radius=.03] node [black, below = 0, scale = .75] {$-q$};
				
				\filldraw[fill=black, draw=black] (-2, 0) circle [radius=.03] node [black, below = 0] {$-q \kappa$};
				
				\filldraw[fill=black, draw=black] (1.7, 0) circle [radius=.03] node [black, right = 4, below = 0] {$q \beta$};			
			\end{tikzpicture}		
		\end{minipage}
		\qquad
		\begin{minipage}{0.4\linewidth}		
			\centering 		
			\begin{tikzpicture}[
				>=stealth,
				scale=.35
				]
				lem
				\draw[<->] (-4, 5) -- (4, 5);
				\draw[<->] (0, 1) -- (0, 9);
				
				\draw[->,black,very thick] (2, 1.535) -- (1, 3.268);
				\draw[-,black,very thick] (1, 3.268) -- (0, 5);
				\draw[-,black,very thick]  (1, 6.732) -- (2, 8.465) node [black, above = 0] {$\mathfrak{W}_{0, \infty}$};
				\draw[->,black,very thick] (0, 5) -- (1, 6.732);
				
				\draw[->,black,very thick] (-3, 1.535) -- (-2, 3.268);
				\draw[-,black,very thick] (-1, 5) -- (-2, 3.268);
				\draw[-,black,very thick] (-2, 6.732) -- (-3, 8.465) node [black, above = 0] {$\mathfrak{V}_{-1, \infty}$};
				\draw[->,black,very thick] (-1, 5) -- (-2, 6.732);
			\end{tikzpicture}
		\end{minipage}
		
		\caption{\label{l1l2l3} To the left, the three level lines $\mathcal{L}_1$, $\mathcal{L}_2$, and $\mathcal{L}_3$ are depicted as dashed curves; the contours $\Gamma$ and $\mathcal{C}$ are depicted as solid curves and are labeled. To the right are the two contours $\mathfrak{W}_{0, \infty}$ and $\mathfrak{V}_{-1, \infty}$. }
	\end{figure}

	Now, let us explain how to select the contours $\mathcal{C}$ and $\Gamma$. They will be the unions of two contours, a ``small piecewise linear part'' near $\psi$, and a ``large curved part'' that closely follows the level line $\mathcal{L}_2$. The former are given by the following definition.

	\begin{definition}[{\cite[Definition 6.2]{PTAEP}}]
		
		\label{crgammar}
		
		For $r \in \mathbb{R}$ and $\varpi > 0$ (possibly infinite), let $\mathfrak{W}_{r, \varpi}$ denote the piecewise linear curve in the complex plane that connects $r + \varpi e^{- \pi \mathrm{i} / 3} $ to $r$ to $r + \varpi e^{\pi \mathrm{i} / 3}$. Similarly, let $\mathfrak{V}_{r, \varpi}$ denote the piecewise linear curve in the complex plane that connects $r + \varpi e^{- 2 \pi \mathrm{i} / 3}$ to $r$ to $r + \varpi^{2 \pi \mathrm{i} / 3}$. See \Cref{l1l2l3} for depictions.
		
	\end{definition}
Definition \ref{linearvertexright} and Definition \ref{curvedvertexright} define the piecewise linear and curved parts of the contours $\mathcal{C}$ and $\Gamma$, respectively, and Definition \ref{vertexrightcontours} defines the contours $\mathcal{C}$ and $\Gamma$.
	
	\begin{definition}[{\cite[Definition 6.3]{PTAEP}}]
		
		\label{linearvertexright}
		
		Let $\mathcal{C}^{(1)} = \mathfrak{W}_{\psi, \varpi}$ and $\Gamma^{(1)} = \mathfrak{V}_{\psi - \psi f_{\nu}^{-1} T^{-1 / 3}, \varpi}$, where $\varpi$ is chosen to be sufficiently small (independently of $T$ and $\nu$) so that:
		
		\begin{itemize}[leftmargin=*]
			\item{The two conjugate endpoints of $\mathcal{C}^{(1)}$ lie strictly between $\mathcal{L}_2$ and $\mathcal{L}_3$, so that their distance from $\mathcal{L}_2$ and $\mathcal{L}_3$ is bounded away from $0$, independently of $T \ge 1$ and $\nu \in [\varepsilon - 1, 1 - \varepsilon]$.}
			
			\item{The two conjugate endpoints of $\Gamma^{(1)}$ are strictly between $\mathcal{L}_1$ and $\mathcal{L}_2$, so that their distance from $\mathcal{L}_1$ and $\mathcal{L}_2$ is bounded away from $0$, independently of $T \ge 1$ and $\nu \in [\varepsilon - 1, 1 - \varepsilon]$.}
			
			\item{We have $\big| R\big( \psi^{-1} f_{\nu} (z - \psi) \big) \big| < |f_{\nu} (z - \psi) / 2 \psi|^3$, for all $z \in \mathcal{C}^{(1)} \cup \Gamma^{(1)}$, for $R$ in \eqref{r13}.}
			
			\item{We have that $|v / w| \in (q^{1 / 2}, 1)$ for all $v \in \Gamma^{(1)}$ and $w \in \mathcal{C}^{(1)}$.}
			
		\end{itemize}
Such a $\varpi>0$ is guaranteed to exist by part \ref{psianglesv} of Proposition \ref{linesgv} and \eqref{gzpsi}.
		
	\end{definition}

	\begin{definition}[{\cite[Definition 6.4]{PTAEP}}]
		
		\label{curvedvertexright}
		
		Let $\mathcal{C}^{(2)}$ denote a positively oriented contour from the top endpoint $\psi + \varpi e^{\pi \mathrm{i} / 3}$ of $\mathcal{C}^{(1)}$ to the bottom endpoint $\psi + \varpi e^{- \pi \mathrm{i} / 3}$ of $\mathcal{C}^{(1)}$, and let $\Gamma^{(2)}$ denote a positively oriented contour from the top endpoint $\psi - \psi f_{\nu}^{-1} T^{-1 / 3} + \varpi e^{2 \pi \mathrm{i} / 3}$ of $\Gamma^{(1)}$ to the bottom endpoint $\psi + \psi f_{\nu}^{-1} T^{-1 / 3} + \varpi e^{-2 \pi \mathrm{i} / 3}$ of $\Gamma^{(1)}$, satisfying:		
		\begin{itemize}[leftmargin=*]
			
			\item{The contour $\mathcal{C}^{(2)}$ remains strictly between $\mathcal{L}_2$ and $\mathcal{L}_3$, so that the distance from $\mathcal{C}^{(2)}$ to $\mathcal{L}_2$ and $\mathcal{L}_3$ remains bounded away from $0$, independently of $T \ge 1$ and $\nu \in [\varepsilon - 1, 1 - \varepsilon]$.}
			
			\item{The contour $\Gamma^{(2)}$ remains strictly between $\mathcal{L}_1$ and $\mathcal{L}_2$, so that the distance from $\mathcal{C}^{(2)}$ to $\mathcal{L}_1$ and $\mathcal{L}_2$ remains bounded away from $0$, independently of $T \ge 1$ and $\nu \in [\varepsilon - 1, 1 - \varepsilon]$.}
			
			\item{ The contour $\mathcal{C}^{(1)} \cup \mathcal{C}^{(2)}$ is star-shaped.}
			
			\item{The contour $\Gamma^{(1)} \cup \Gamma^{(2)}$ is star-shaped and does not contain $-q \kappa$.}
			
			\item{The contours $\mathcal{C}^{(2)}$ and $\Gamma^{(2)}$ are both sufficiently close to $\mathcal{L}_2$ so that the interior of $\Gamma^{(1)} \cup \Gamma^{(2)}$ encloses the image of $\mathcal{C}^{(1)} \cup \mathcal{C}^{(2)}$ under multiplication by $q$. }
			
		\end{itemize}
Such contours $\mathcal{C}^{(2)}$ and $\Gamma^{(2)}$ are guaranteed to exist by	 part \ref{anglev} and \ref{qkappaq0v} of Proposition \ref{linesgv}.
		
	\end{definition}
	
	\begin{definition}[{\cite[Definition 6.5]{PTAEP}}]
		
		\label{vertexrightcontours}
		
		Set $\mathcal{C} = \mathcal{C}^{(1)} \cup \mathcal{C}^{(2)}$ and $\Gamma = \Gamma^{(1)} \cup \Gamma^{(2)}$. Examples of the contours $\mathcal{C}$ and $\Gamma$ are depicted in Figure \ref{l1l2l3}.
		
	\end{definition}
	
	 The following lemma states that $\mathcal{C}$ and $\Gamma$ satisfy their required conditions, and that $\Real \big( G(w) - G(v) \big) < 0$ for each $w \in \mathcal{C}$ and $v \in \Gamma^{(2)}$.

	\begin{lem}[{\cite[Definition 6.6 and Lemma 6.13]{PTAEP}}]
		
		\label{rightcagammaa}
		
		The contour $\Gamma$ is positively oriented and star-shaped; it encloses $0$, but leaves outside $-q$ and $q \beta$. Furthermore, $\mathcal{C}$ is a positively oriented, star-shaped contour that is contained inside $q^{-1} \Gamma$; that encloses $0$, $-q$ and $\Gamma$; but that leaves outside $q \beta$. Moreover, there exists a positive real number $c = c (\varepsilon) > 0$, such that
		\begin{flalign*}
			\max \left\{ \sup_{\substack{w \in \mathcal{C} \\ v \in \Gamma^{(2)}}} \Real \big( G(w) - G(v) \big), \sup_{\substack{w \in \mathcal{C}^{(2)} \\ v \in \Gamma}} \Real \big( G(w) - G(v) \big) \right\} < - c.
		\end{flalign*}
	\end{lem}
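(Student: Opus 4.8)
The statement is quoted from \cite[Definition 6.6 and Lemma 6.13]{PTAEP}, so the plan is to recover it by unwinding the construction in \Cref{linearvertexright}--\Cref{vertexrightcontours}. Two things must be checked: the topological requirements on $\mathcal{C}$ and $\Gamma$ demanded by \Cref{asymptoticheightvertex}, and the uniform negativity of $\Real\big(G(w)-G(v)\big)$. For the topology I would use \Cref{linesgv}: $\mathcal{L}_1$ encloses $0$ but not $-q$, $\mathcal{L}_2$ encloses $0$ and has $-q$ on it, $\mathcal{L}_3$ encloses both $0$ and $-q$, and all three are star-shaped meeting only at $\psi$. By construction $\Gamma=\Gamma^{(1)}\cup\Gamma^{(2)}$ stays strictly between $\mathcal{L}_1$ and $\mathcal{L}_2$, so it encloses $0$ and excludes $-q$; it also excludes $q\beta$, since $\nu\ge 1-2\rho+T^{-1/3}$ forces $\psi=\tfrac{q(1-\nu)}{1+\nu}<\tfrac{q\rho}{1-\rho}=q\beta$, and because $\mathcal{L}_2$ meets the positive real axis only at $\psi$ (part~\ref{psianglesv}) the contour $\Gamma$ crosses it strictly left of $q\beta$. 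Similarly $\mathcal{C}=\mathcal{C}^{(1)}\cup\mathcal{C}^{(2)}$ stays strictly between $\mathcal{L}_2$ and $\mathcal{L}_3$, hence encloses $0$, $-q$ and $\Gamma$ and excludes $q\beta$; star-shapedness and the inclusion $\mathcal{C}\subset q^{-1}\Gamma$ are imposed directly in \Cref{curvedvertexright}, and are consistent with the linear pieces $\mathfrak{W}_{\psi,\varpi}$, $\mathfrak{V}_{\psi-\psi f_\nu^{-1}T^{-1/3},\varpi}$ for $\varpi$ small and $T\ge1$. Uniformity over $\nu\in[\varepsilon-1,1-\varepsilon]$ throughout is supplied by the uniform continuity in \Cref{etacontinuous} together with compactness.

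For the key inequality I would first show $\Real\big(G(w)-G(\psi)\big)\le 0$ for all $w\in\mathcal{C}$, with a strict uniform gap on $\mathcal{C}^{(2)}$, and $\Real\big(G(v)-G(\psi)\big)\ge c_1>0$ for all $v\in\Gamma^{(2)}$. The statements on $\mathcal{C}^{(2)}$ and $\Gamma^{(2)}$ are parts~\ref{negativerealv} and~\ref{positiverealv} of \Cref{linesgv} combined with the requirement that these arcs stay a uniform distance from the relevant level lines. On $\mathcal{C}^{(1)}=\mathfrak{W}_{\psi,\varpi}$, parametrizing $w=\psi+te^{\pm\pi\mathrm{i}/3}$ makes the cubic term $\tfrac13\big(f_\nu(w-\psi)/\psi\big)^3=-\tfrac13(f_\nu t/\psi)^3$ real and negative, so by \eqref{gzpsi} and the remainder bound $|R|<|f_\nu(w-\psi)/2\psi|^3$ from the third bullet of \Cref{linearvertexright} one gets $\Real\big(G(w)-G(\psi)\big)\le -\tfrac{5}{24}(f_\nu t/\psi)^3\le 0$. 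Combining these, $\sup_{w\in\mathcal{C},\,v\in\Gamma^{(2)}}\Real\big(G(w)-G(v)\big)\le -c_1$.

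It then remains to bound $\sup_{w\in\mathcal{C}^{(2)},\,v\in\Gamma}\Real\big(G(w)-G(v)\big)$, where the only new contribution is $v\in\Gamma^{(1)}=\mathfrak{V}_{\psi-\psi f_\nu^{-1}T^{-1/3},\varpi}$. Writing $v-\psi=-\psi f_\nu^{-1}T^{-1/3}+se^{\pm 2\pi\mathrm{i}/3}$ and expanding via \eqref{gzpsi}--\eqref{r13}, the real part of the leading cube, in rescaled variables, factors as $(2a+u)(u-a)(u+a/2)$ with $a=T^{-1/3}$ and $u=f_\nu s/\psi\ge 0$; this is negative only for $u\in[0,a)$, where it has magnitude $O(a^3)=O(T^{-1})$, so $\Real\big(G(v)-G(\psi)\big)\ge -C_0 T^{-1}$ on $\Gamma^{(1)}$ — the slight offset of the vertex from $\psi$ being exactly what allows this small dip. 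Together with $\Real\big(G(w)-G(\psi)\big)\le -c_2<0$ on $\mathcal{C}^{(2)}$, this gives $\Real\big(G(w)-G(v)\big)\le -c_2+C_0 T^{-1}\le -\tfrac{c_2}{2}$ once $T\ge T_0(\varepsilon)$; for $1\le T\le T_0$ one takes the constant $c$ smaller. Setting $c=c(\varepsilon)$ to be the minimum of $c_1$, $\tfrac{c_2}{2}$, and the worst case value for bounded $T$ then completes the argument.

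The main obstacle is the analysis near the coalescing critical point $\psi$: one must verify, uniformly in $\nu\in[\varepsilon-1,1-\varepsilon]$ and $T\ge 1$, that the piecewise-linear segments $\mathcal{C}^{(1)}$ and $\Gamma^{(1)}$ really are steepest-descent/ascent paths along their whole length (not merely to leading order), that they glue to arcs hugging $\mathcal{L}_2$ while preserving star-shapedness and all the enclosure relations, and — most delicately — that the $T^{-1/3}$ offset of the $\Gamma^{(1)}$ vertex costs only $O(T^{-1})$ in the sign bound. This last point is what produces the $e^{-cT}$ term in \Cref{estimatedeterminant} and is why that estimate is effectively a large-$T$ statement, with small $T$ absorbed into the constant. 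All of this is carried out in \cite[Section~6.1]{PTAEP}, and transcribes verbatim since $G$ and $\psi$ here coincide with theirs.
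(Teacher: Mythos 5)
The paper offers no proof of this lemma: it is quoted verbatim from \cite[Definition 6.6 and Lemma 6.13]{PTAEP}, with only the remark that uniformity in $\nu$ follows from \Cref{etacontinuous}. Your reconstruction is the argument carried out in \cite[Section 6.1]{PTAEP} and is essentially correct: the topological claims are built into \Cref{linearvertexright}--\Cref{vertexrightcontours} via \Cref{linesgv}, the bound $\Real\big(G(w)-G(\psi)\big)\le -\tfrac{5}{24}(f_\nu t/\psi)^3$ on $\mathcal{C}^{(1)}$ follows exactly as you say from \eqref{gzpsi} and the third bullet of \Cref{linearvertexright}, and your factorization $\Real(z^3)=(2a+u)(u-a)(u+a/2)$ for $z=-(a+u/2)\pm \mathrm{i}u\sqrt{3}/2$ checks out, giving the $O(T^{-1})$ dip on $\Gamma^{(1)}$ caused by the $T^{-1/3}$ offset of its vertex. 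The one imprecision is your disposal of the regime $1\le T\le T_0(\varepsilon)$: if the supremum over $w\in\mathcal{C}^{(2)}$, $v\in\Gamma^{(1)}$ were nonnegative there, no choice of smaller $c>0$ would rescue the stated inequality, and indeed for $T$ of order one the offset $\sigma=\psi f_\nu^{-1}T^{-1/3}$ need not be small, so the contour construction itself (endpoints of $\Gamma^{(1)}$ between $\mathcal{L}_1$ and $\mathcal{L}_2$, remainder bound on all of $\Gamma^{(1)}$) can degenerate. This is a defect of quoting an asymptotic lemma as an effective one rather than of your argument; as you note, the only downstream use is \Cref{estimatedeterminant}, whose conclusion is trivial for bounded $T$ once $c$ is taken small, so the overall proof of the paper is unaffected.
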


	The uniformity in $\nu$ of the constant $c$ in \Cref{rightcagammaa}, was not explicitly stated in \cite{PTAEP}, but it follows from \Cref{etacontinuous}.

	\subsection{Proof of \Cref{estimatedeterminant}}
	
	\label{VertexRight}
We start by analyzing the contribution to the right side of \eqref{vptright} when $w \in \mathcal{C}^{(1)}$ and $v \in \Gamma^{(1)}$, that is, when both $w$ and $v$ are near $\psi$. To that end, define $\widetilde{K} (w, w')$ by the same formula as used to define $K(w,w')$ in \eqref{vptright}, but with the contour $\Gamma$ replaced by $\Gamma^{(1)}$.
%
Now let us change variables, a procedure that will in effect ``zoom into'' the region around $\psi$. Denote $\sigma = \psi f_{\nu}^{-1} T^{-1 / 3}$, and set
	\begin{flalign}
		\label{wwvvertexright}
		w = \psi + \sigma \widehat{w}, \qquad w' = \psi + \sigma \widehat{w}', \qquad v = \psi + \sigma \widehat{v}, \qquad \widehat{K} (\widehat{w}, \widehat{w}') = \sigma \widetilde{K} (w, w').
	\end{flalign}
Also, for any contour $\mathcal{D}$, set $\widehat{\mathcal{D}} = \sigma^{-1} \big( \mathcal{D} - \psi \big)$, where $\sigma^{-1} (\mathcal{D} - \psi)$ denotes all numbers of the form $\sigma^{-1} (z - \psi)$ with $z \in \mathcal{D}$. In particular, from Definition \ref{crgammar} and Definition \ref{linearvertexright}, we find that $\mathcal{C}^{(1)} = \mathfrak{W}_{0, \varpi / \sigma}$ and $\Gamma^{(1)} = \mathfrak{V}_{-1, \varpi / \sigma}$. The following lemma provides a bound on $\widehat{K}$; its proof is similar to that of \cite[Lemma 6.11]{PTAEP}.
	
	\begin{lem}
		
		\label{rightvgamma1tildev}
		
		There exists $c=c (\varepsilon) > 0$ such that for each $\widehat{w} \in \widehat{\mathcal{C}}^{(1)}$ and $\widehat{w}' \in \widehat{\mathcal{C}}$
		\begin{flalign*}
			\big| \widehat{K} (\widehat{w}, \widehat{w}') \big| \le \displaystyle\frac{c^{-1}}{1 + |\widehat{w}'|} \exp \big( - c |\widehat{w}|^3 - cs).
		\end{flalign*}

	\end{lem}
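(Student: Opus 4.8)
The plan is to follow the steepest-descent analysis of \cite[Lemma 6.11]{PTAEP}, which treats exactly the kernel $\widehat{K}$ on the scale $T^{-1/3}$ around the critical point $\psi$, and to additionally track the decay in $s$ coming from the factor $(v/w)^{s f_{\nu} T^{1/3}}$ in \eqref{vptright}. First I would insert the Taylor expansion \eqref{gzpsi}--\eqref{r13} of $G$ at $\psi$ into the exponent $T\big(G(w) - G(v)\big)$ and change variables as in \eqref{wwvvertexright}, with $\sigma = \psi f_{\nu}^{-1} T^{-1/3}$. Since $\tfrac{f_{\nu}}{\psi}(w - \psi) = T^{-1/3}\widehat{w}$ and likewise for $v$, this gives $T\big(G(w) - G(\psi)\big) = \tfrac13 \widehat{w}^3 + T\,R(T^{-1/3}\widehat{w})$, and by the third bullet of \Cref{linearvertexright} the remainder obeys $T\,|R(T^{-1/3}\widehat{w})| < \tfrac18 |\widehat{w}|^3$ on $\mathcal{C}^{(1)}$ (and the analogous bound on $\Gamma^{(1)}$). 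On $\widehat{\mathcal{C}}^{(1)} = \mathfrak{W}_{0,\varpi/\sigma}$ the variable $\widehat{w}$ lies on the rays $\{|\widehat{w}| e^{\pm \pi \mathrm{i}/3}\}$, so $\widehat{w}^3 = -|\widehat{w}|^3$ is real and negative, giving $\big|\exp(T(G(w) - G(\psi)))\big| \le \exp(-\tfrac18 |\widehat{w}|^3)$; this is the origin of the factor $e^{-c|\widehat{w}|^3}$. On $\widehat{\Gamma}^{(1)} = \mathfrak{V}_{-1,\varpi/\sigma}$ a direct computation of $\Real(\widehat{v}^3)$ shows that $-\tfrac13\Real(\widehat{v}^3) + \tfrac18|\widehat{v}|^3$ is bounded on the part of the contour near $-1$ and becomes $\le -c|\widehat{v}|^3$ farther out, so $\big|\exp(-T(G(v)-G(\psi)))\big|$ is integrable along $\widehat{\Gamma}^{(1)}$ and concentrated in an $O(1)$ neighborhood of $-1$.

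Next I would extract the $e^{-cs}$ decay. The contour $\Gamma^{(1)}$ is deliberately centered at $\psi - \sigma = \psi(1 - f_{\nu}^{-1}T^{-1/3})$ rather than at $\psi$; using this, the smallness of $\varpi$ (so that $\varpi < \psi$, with $\psi$ bounded away from $0$ for $\nu \le 1-\varepsilon$) and the fourth bullet of \Cref{linearvertexright}, an elementary estimate on $|v/w|^2 = |\psi - \sigma + t e^{\pm 2\pi\mathrm{i}/3}|^2 / |\psi + t' e^{\pm \pi\mathrm{i}/3}|^2$ (with $t, t' \in [0,\varpi]$) gives $|v/w| \le 1 - c T^{-1/3}$ for some $c = c(\varepsilon) > 0$ and all $T$ large (smaller $T$ being handled trivially by shrinking $c$), whence $\big|(v/w)^{s f_{\nu} T^{1/3}}\big| = \exp\big(s f_{\nu} T^{1/3}\log|v/w|\big) \le e^{-cs}$, using that $f_{\nu}$ is bounded below for $\nu \in [\varepsilon - 1, 1-\varepsilon]$. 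The remaining factors are controlled exactly as in \cite{PTAEP}: the ratio $(q^{-1}\beta^{-1}v;q)_{\infty} / (q^{-1}\beta^{-1}w;q)_{\infty}$ is bounded above and below on $\mathcal{C}^{(1)}\cup\Gamma^{(1)}$ uniformly in $T$ and, by \Cref{etacontinuous}, in $\nu$; the factor $1/v$ is bounded since $\Gamma^{(1)}$ stays away from $0$; and the sum $\sum_{j\in\mathbb{Z}} \big|\sin\big(\tfrac{\pi}{\log q}(2\pi\mathrm{i}j + \log v - \log w)\big)\big|^{-1}$ converges uniformly, since $\log q \in \mathbb{R}$ forces the $j \ne 0$ terms to decay exponentially in $|j|$, while for $j = 0$ the bound $|v/w| \le 1 - c T^{-1/3}$ keeps the real part of the sine's argument bounded away from $0$ and $\pi$ (the resulting polynomial-in-$T$ factor being absorbed by the powers of $\sigma$ discussed below).

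Finally, assembling the bounds, $\widehat{K}(\widehat{w}, \widehat{w}') = \sigma\,\widetilde{K}(w,w')$ becomes, after the substitution $v = \psi + \sigma\widehat{v}$, a constant times $e^{-c|\widehat{w}|^3 - cs}$ times $\int_{\widehat{\Gamma}^{(1)}} \frac{|d\widehat{v}|}{|\widehat{w}' - \widehat{v}|}$: the Jacobian $dv = \sigma\, d\widehat{v}$ cancels the $\sigma$ in $w' - v = \sigma(\widehat{w}' - \widehat{v})$, and the leftover powers of $\sigma \le C$ are harmless. Because the cubic decay confines $\widehat{v}$ to an $O(1)$ neighborhood of $-1$ and because $\widehat{\mathcal{C}}$ and $\widehat{\Gamma}^{(1)}$ stay a bounded distance apart there, this last integral is at most $c^{-1}(1 + |\widehat{w}'|)^{-1}$, which yields the claim. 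I expect the two geometric inputs to be the main obstacle: verifying $|v/w| \le 1 - c T^{-1/3}$ with $c$ uniform in $\nu$, and the accompanying lower bound on $|\sin(\cdot)|$ for the $j = 0$ term — both require examining how the shifted linear contour $\Gamma^{(1)}$ sits relative to $\mathcal{C}^{(1)}$ on the $T^{-1/3}$ scale; the Airy-type (cubic) part of the argument is otherwise a transcription of \cite[Section 6]{PTAEP}.
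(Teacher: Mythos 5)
Your architecture is the paper's: the paper also rewrites $\widehat{K}$ as $\frac{1}{2\pi\mathrm{i}}\int_{\widehat{\Gamma}^{(1)}} I(\widehat{w},\widehat{w}';\widehat{v})\,d\widehat{v}$, bounds each factor of the integrand separately (the six estimates \eqref{termsintegrand13} plus the exponential bound \eqref{vright10}), and extracts $e^{-cs}$ from the factor $\big((1+\psi^{-1}\sigma\widehat{v})/(1+\psi^{-1}\sigma\widehat{w})\big)^{\psi\sigma^{-1}s}$ together with $\Real\widehat{v}\le -1$ on $\mathfrak{V}_{-1,\varpi/\sigma}$ — which is the same mechanism as your $|v/w|\le 1-cT^{-1/3}$ computation. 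Your treatment of the cubic exponent, the sine sum, the $j=0$ term, and the final $(1+|\widehat{w}'|)^{-1}$ integration all match.

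There is, however, one genuine error: your assertion that the ratio $(q^{-1}\beta^{-1}v;q)_{\infty}/(q^{-1}\beta^{-1}w;q)_{\infty}$ is \emph{uniformly bounded} on $\Gamma^{(1)}\times\mathcal{C}^{(1)}$. It is not, and this is exactly the point where the standing hypothesis $\nu\ge 1-2\rho+T^{-1/3}$ (equivalently $X\ge Y_0$) enters. At $\nu=1-2\rho$ one has $\psi=q\beta$, so for $\nu$ within $O(T^{-1/3})$ of $1-2\rho$ the tip of $\mathcal{C}^{(1)}$ sits within $O(\sigma)$ of the zero of the leading factor $1-q^{-1}\beta^{-1}w$ of the denominator Pochhammer. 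One then only has $|1-q^{-1}\beta^{-1}w|\gtrsim\sigma$ (using $\psi<q\beta(1-c'\sigma)$, which is where the hypothesis is consumed), while the corresponding numerator factor satisfies $|1-q^{-1}\beta^{-1}v|\lesssim\sigma(1+|\widehat{v}|)$ since $|v-\psi|=\sigma|\widehat{v}|$ can be as large as $\varpi$. Hence the ratio grows (linearly in $|\widehat{v}|$, i.e.\ up to order $T^{1/3}$ at the far ends of $\Gamma^{(1)}$) rather than staying bounded; \Cref{etacontinuous} does not rescue this, since the problem is the $T^{-1/3}$-scale proximity of the contour to $q\beta$, not lack of continuity in $\nu$. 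The gap is repairable without changing anything else in your argument: replace ``bounded above and below uniformly'' by the bound $c^{-1}\exp\big(c^{-1}(|\widehat{w}|+|\widehat{v}|)\big)$ (in fact polynomial growth suffices), which is then absorbed by the factor $e^{-c(|\widehat{w}|^3+|\widehat{v}|^3)}$ you already have — this is precisely how the paper's sixth inequality in \eqref{termsintegrand13} handles it.
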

	
	To establish this lemma, we first rewrite the kernel $\widehat{K}$. By \eqref{wwvvertexright} and the fact that $\sigma = \psi f_{\nu}^{-1} T^{-1 / 3}$, we deduce that
	\begin{flalign}
		\label{vright111}
		\widehat{K} (\widehat{w}, \widehat{w}') & = \displaystyle\frac{1}{2 \pi \mathrm{i}}\displaystyle\int_{\widehat{\Gamma}^{(1)}} I \big( \widehat{w}, \widehat{w}'; \widehat{v} \big) d \widehat{v},
	\end{flalign}
where
	\begin{flalign}
		\begin{aligned}
			\label{vright112}
			I \big( \widehat{w}, \widehat{w}'; \widehat{v} \big) & = \displaystyle\frac{1}{\big( 1 + \psi^{-1} \sigma \widehat{v} \big) \big( \widehat{w}' - \widehat{v} \big)  } \exp \left( \displaystyle\frac{\widehat{w}^3 - \widehat{v}^3}{3} + T \big( R (T^{-1 / 3} \widehat{w}) - R (T^{-1 / 3} \widehat{v}) \big)  \right) \\
			& \qquad \times \left( \displaystyle\frac{1 + \psi^{-1} \sigma \widehat{v}}{1 + \psi^{-1} \sigma \widehat{w}} \right)^{\psi \sigma^{-1} s } \displaystyle\frac{\big(q^{-1} \beta^{-1} ( \psi + \sigma \widehat{v}); q \big)_{\infty}}{\big( q^{-1} \beta^{-1} (\psi + \sigma \widehat{w} ); q \big)_{\infty}} \\
			& \qquad \times \displaystyle\frac{\pi \psi^{-1} \sigma}{\log q} \displaystyle\sum_{j = -\infty}^{\infty}  \displaystyle\frac{1}{\sin \Big( \frac{\pi}{\log q} \big( 2 \pi \mathrm{i} j + \log (1 + \psi^{-1} \sigma \widehat{v} )  - \log (1 + \psi^{-1} \sigma \widehat{w} ) \big) \Big)}.
		\end{aligned}
	\end{flalign}
	
	Lemma \ref{rightvgamma1tildev} will follow from an estimate on $I$, given by the following lemma.
	
	\begin{lem}
		
		\label{uniformlimit13integrand}
		
There exists $c=c(\varepsilon)>0$ such that
	\begin{flalign}
			\label{uniform13integrand}
			\Big| I \big( \widehat{w}, \widehat{w}'; \widehat{v} \big) \Big| \le \displaystyle\frac{c^{-1}}{1 + |\widehat{w}'|} \exp \Big( cs \Real  \widehat{v} - c \big( |\widehat{w}|^3 + |\widehat{v}|^3 \big) \Big),
		\end{flalign}
for all $\widehat{w} \in \mathfrak{W}_{0, \varpi / \sigma}$, $\widehat{w}' \in \widehat{\mathcal{C}}$, and $\widehat{v} \in \mathfrak{V}_{-1, \varpi / \sigma}$.
		
	\end{lem}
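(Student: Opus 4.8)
The plan is to follow the strategy of \cite[Lemma 6.11]{PTAEP}, bounding separately each of the six factors comprising the integrand $I(\widehat{w},\widehat{w}';\widehat{v})$ written out in \eqref{vright112}. The point is that in $I$ the variables $\widehat{w}$ and $\widehat{v}$ lie on the \emph{piecewise linear} parts $\mathfrak{W}_{0,\varpi/\sigma}$ and $\mathfrak{V}_{-1,\varpi/\sigma}$ of the rescaled contours, so that the Taylor expansion \eqref{gzpsi}--\eqref{r13} of $G$ around $\psi$ and the four smallness conditions defining $\varpi$ in \Cref{linearvertexright} are all available; the remaining variable $\widehat{w}'$ ranges over all of $\widehat{\mathcal{C}}$ but enters only through $(\widehat{w}'-\widehat{v})^{-1}$. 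All estimates will be uniform in $\nu\in[\varepsilon-1,1-\varepsilon]$ and $\rho\in[\varepsilon,1]$; this uniformity rests on \Cref{etacontinuous}, on compactness, and on the fact that $\psi=q(1-\nu)/(1+\nu)$ is bounded away from $0$ and, by the hypothesis $\nu\ge 1-2\rho+T^{-1/3}$, also from $q\beta=q\rho/(1-\rho)$.

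First I would handle the bounded factors. Parametrizing $\widehat{w}=se^{\pm\pi\mathrm{i}/3}$ and $\widehat{v}=-1+te^{\pm 2\pi\mathrm{i}/3}$ with $s,t\in[0,\varpi/\sigma]$, one has $\Real\widehat{w}=s/2\ge 0$ and $\Real\widehat{v}=-1-t/2\le -1$, so $\Real(\widehat{w}'-\widehat{v})\ge 1+\tfrac12\Real\widehat{w}'$ for $\widehat{w}'\in\mathfrak{W}_{0,\varpi/\sigma}$, giving $|\widehat{w}'-\widehat{v}|\ge\tfrac12(1+|\widehat{w}'|)$ there; when $\widehat{w}'$ lies on the curved part $\widehat{\mathcal{C}}^{(2)}$, the level-line separation from \Cref{curvedvertexright} forces $|\widehat{w}'-\widehat{v}|\gtrsim\sigma^{-1}$ while $1+|\widehat{w}'|\lesssim\sigma^{-1}$, so again $|\widehat{w}'-\widehat{v}|\ge c(1+|\widehat{w}'|)$. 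Next, $(1+\psi^{-1}\sigma\widehat{v})^{-1}=\psi|v|^{-1}$ is bounded since $v\in\Gamma^{(1)}$ stays a bounded distance from $0$, and the $q$-Pochhammer ratio is bounded above and below because the contours near $\psi$ avoid the zero set $q\beta q^{\Z_{\le 0}}$ of the relevant symbols. For the $j$-sum with its prefactor $\pi\psi^{-1}\sigma/\log q$: for $|j|\ge 1$ the argument of $\sin$ has imaginary part of magnitude $\ge c|j|$ (uniformly for large $T$), so those terms are $O(e^{-c|j|})$ and sum to $O(1)$, while with $\xi=\log(v/w)$ the $j=0$ term obeys $\Real\big(\tfrac{\pi}{\log q}\xi\big)=\tfrac{\pi\log|v/w|}{\log q}\in\big(0,\tfrac{\pi}{2}\big)$ by the condition $|v/w|\in(q^{1/2},1)$, whence $\bigl|\sin(\tfrac{\pi}{\log q}\xi)\bigr|\ge\tfrac{2}{|\log q|}\bigl|\log|v/w|\bigr|$; combined with the lower bound $\bigl|\log|v/w|\bigr|\ge c\sigma\psi^{-1}$ proved in the next paragraph, the prefactor times the $j=0$ term is also $O(1)$.

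The two substantive estimates are the power $(v/w)^{\psi\sigma^{-1}s}$ and the cubic exponential. For the first, the key identity is $|v|^2=\psi^2\big(1-2a(1+t/2)+a^2(1+t+t^2)\big)$ with $a=\psi^{-1}\sigma=f_\nu^{-1}T^{-1/3}$, together with $|w|\ge\psi$ (because $\Real\widehat{w}\ge 0$). Since $\varpi$ is fixed small relative to $\inf_\nu\psi$, one has $at\le\varpi/\psi<1$ along the entire linear contour, hence $a(1+t+t^2)\le 2(1-c)(1+t/2)$, and then $\log(1+x)\le x$ gives $\log|v/w|\le ca\,\Real\widehat{v}=c\sigma\psi^{-1}\Real\widehat{v}$; raising to the power $\psi\sigma^{-1}s$ and using $s\ge 0$, $\Real\widehat{v}\le-1$ produces the factor $\exp(cs\,\Real\widehat{v})$. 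For the cubic term, on the linear contours $\Real(\widehat{w}^3/3)=-\tfrac13|\widehat{w}|^3$, and an elementary estimate of $\Real(-\widehat{v}^3/3)=\tfrac13\big(1+\tfrac32t-\tfrac32t^2-t^3\big)$ against $|\widehat{v}|^3=(1+t+t^2)^{3/2}$ shows $\Real\big(\tfrac{\widehat{w}^3-\widehat{v}^3}{3}\big)\le C-\tfrac14(|\widehat{w}|^3+|\widehat{v}|^3)$ for a suitable constant $C$ (the $-1$ shift of the $\widehat{v}$-contour contributes only corrections of order at most $t^2$); the remainder $T\big(R(T^{-1/3}\widehat{w})-R(T^{-1/3}\widehat{v})\big)$ is bounded in modulus by $\tfrac18(|\widehat{w}|^3+|\widehat{v}|^3)$ by the third bullet of \Cref{linearvertexright}, so it is absorbed and the exponent's real part is $\le C-\tfrac18(|\widehat{w}|^3+|\widehat{v}|^3)$. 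Multiplying the six bounds, renaming $c$ and absorbing $e^{C}$ into the prefactor $c^{-1}$ yields \eqref{uniform13integrand}.

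I expect the main obstacle to be making those last two steps quantitative and uniform: verifying that the quadratic term $a^2(1+t+t^2)$ in $|v|^2/\psi^2$ stays dominated by the linear term $2a(1+t/2)$ along the \emph{whole} linear contour $\mathfrak{V}_{-1,\varpi/\sigma}$, whose length is of order $\sigma^{-1}\sim T^{1/3}$ (this is precisely what pins down how small $\varpi$ must be chosen relative to $\inf_\nu\psi$), and checking that the shifted cubic $\Real(\widehat{w}^3-\widehat{v}^3)/3$ still decays like $-c(|\widehat{w}|^3+|\widehat{v}|^3)$ with a coefficient exceeding $\tfrac18$ so as to swallow the $R$-error. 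Granting \Cref{uniformlimit13integrand}, \Cref{rightvgamma1tildev} follows by integrating \eqref{uniform13integrand} in $\widehat{v}$ over $\mathfrak{V}_{-1,\varpi/\sigma}$, and, together with the exponential decay on the curved parts furnished by \Cref{rightcagammaa}, this yields \Cref{estimatedeterminant}.
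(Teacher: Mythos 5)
Your proposal follows essentially the same route as the paper: the proof there is exactly a factor-by-factor bound on the six terms in \eqref{vright112}, with the two substantive pieces being the power $(v/w)^{\psi\sigma^{-1}s}$ (yielding $e^{cs\Real\widehat{v}}$) and the cubic exponential with the $R$-remainder absorbed via the third bullet of \Cref{linearvertexright}; your treatments of the $j$-sum, the separation $|\widehat{w}'-\widehat{v}|\gtrsim 1+|\widehat{w}'|$, and the two Taylor-expansion computations all match.

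One intermediate claim is not correct as stated: the $q$-Pochhammer ratio is \emph{not} bounded above uniformly. Under the standing hypothesis $\nu\ge 1-2\rho+T^{-1/3}$ one only gets $\psi<q\beta(1-c'\sigma)$ with $\sigma\sim T^{-1/3}$, so the denominator factor $1-q^{-1}\beta^{-1}(\psi+\sigma\widehat{w})$ can be as small as order $\sigma$ while the numerator factor $1-q^{-1}\beta^{-1}(\psi+\sigma\widehat{v})$ is of order $\sigma|\widehat{v}|$ along the linear contour (whose length is $\varpi/\sigma\sim T^{1/3}$); the ratio therefore grows roughly linearly in $|\widehat{v}|$. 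The paper handles this by bounding the ratio by $c^{-1}\exp\big(c^{-1}(|\widehat{w}|+|\widehat{v}|)\big)$ and letting the cubic decay $e^{-c(|\widehat{w}|^3+|\widehat{v}|^3)}$ absorb it. Since your final combination step does have surplus cubic decay, the conclusion survives, but you should replace ``bounded above and below'' by a polynomial (or sub-exponential) growth bound and absorb it explicitly. A cosmetic remark: the paper's third inequality in \eqref{termsintegrand13} reads $\exp(cs\Imaginary\widehat{v})$, which is a typo for $\Real\widehat{v}$ as its own justification shows; your derivation produces the correct $\exp(cs\Real\widehat{v})$.
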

	\begin{proof}[Proof of Lemma \ref{rightvgamma1tildev}]
This follows from \Cref{uniformlimit13integrand}, \eqref{vright111}, and the fact that $\Real \widehat{v} \le -1$  for each $\widehat{v} \in \mathfrak{V}_{-1, \varpi / \sigma}$.
	\end{proof}
	
	
	\begin{proof}[Proof of \Cref{uniformlimit13integrand}]
	
	Observe that there exist $c= c(\varepsilon) > 0$  such that the six inequalities
		\begin{flalign}
			\begin{aligned}
				\label{termsintegrand13}
				&   \left| \displaystyle\frac{1}{1 + \psi^{-1} \sigma \widehat{v}} \right| < c^{-1}, \qquad \displaystyle\frac{1}{\widehat{v} - \widehat{w}'} < \displaystyle\frac{c^{-1}}{1 + |\widehat{w}'|}, \qquad \left| \displaystyle\frac{1 + \psi^{-1} \sigma \widehat{v}}{1 + \psi^{-1} \sigma \widehat{w}} \right|^{\psi \sigma^{-1} s} \le c^{-1} \exp \big( c s \Imaginary \widehat{v} \big),  \\
				& \qquad \qquad \displaystyle\frac{\pi \psi^{-1} \sigma}{\big| \log q \big|} \displaystyle\sum_{j \ne 0} \left| \displaystyle\frac{1}{\sin \Big( \frac{\pi}{\log q} \big( 2 \pi \mathrm{i} j + \log (1 + \psi^{-1} \sigma \widehat{v} )  - \log (1 + \psi^{-1} \sigma \widehat{w} ) \big) \Big)} \right| \le c^{-1} T^{-1 / 3},\\
				& \qquad \left| \left( \displaystyle\frac{\pi \psi^{-1} \sigma}{\log q} \right) \displaystyle\frac{1}{\sin \Big( \frac{\pi}{\log q} \big( \log (1 + \psi^{-1} \sigma \widehat{v} )  - \log (1 + \psi^{-1} \sigma \widehat{w} ) \big) \Big)} \right| < c^{-1},\\
				& \qquad \qquad \left| \displaystyle\frac{\big(q^{-1} \beta^{-1} ( \psi + \sigma \widehat{v}  ); q \big)_{\infty}}{\big( q^{-1} \beta^{-1} (\psi + \sigma \widehat{w} ); q \big)_{\infty}} \right| <c^{-1} \exp \Big(c^{-1} \big( |\widehat{w}| + |\widehat{v}| \big) \Big),
			\end{aligned}
		\end{flalign}
all hold for each  $\widehat{w} \in \mathfrak{W}_{0, \varpi / \sigma}$, $\widehat{w}' \in \widehat{\mathcal{C}}$, and $\widehat{v} \in \mathfrak{V}_{-1, \varpi / \sigma}$.
		
Indeed, the first inequality holds since $v = \psi (1 + \psi^{-1} \sigma \widehat{v})$ is bounded away from $0$ for $v \in \Gamma$. The second inequality holds since $\big| \widehat{w}' - \widehat{v} \big| \ge c_1 (\Real \widehat{v} + 1)$ for $\widehat{w}' \in \widehat{\mathcal{C}}$ and $\widehat{v} \in \mathfrak{V}_{-1, \varpi / \sigma}$. The third inequality holds since $1 + \psi^{-1} \sigma \widehat{w}$ is bounded away from $0$, and since $\Real \widehat{v} < \Real \widehat{w}$, for $\widehat{w} \in \widehat{\mathcal{C}}$. The fourth inequality holds since $\sigma = \mathcal{O} \big( T^{-1 / 3} \big)$, since $\sin \big( \frac{\pi}{\log q} \big( 2 \pi \mathrm{i} j + \log (1 + \psi^{-1} \sigma \widehat{v} )  - \log (1 + \psi^{-1} \sigma \widehat{w} ) \big) \big)$ increases exponentially in $|j|$, and since that term is also bounded away from $0$ (the latter statement is true since $j$ is nonzero). The fifth inequality follows from a Taylor expansion, the fact that $v / w$ is always bounded away from any integral power of $q$, and the fact that $\big| \widehat{v} - \widehat{w} \big|^{-1} < c^{-1}$ for sufficiently small $c> 0$. The sixth inequality is true since its left side grows polynomially in $|\widehat{w}|$ and $|\widehat{v}|$, while its right side grows exponentially in these two quantities. To see this, recall that  $|\widehat{v}|$ and $|\widehat{w}|$ are at most $\varpi / \sigma$ (by the assumptions of this lemma), which in particular indicates that both the numerator and denominator on the left-hand side of the sixth inequality are bounded above (since the arguments of those Pochhammer symbols are bounded above). The concern is then that the denominator might be very small. This can only happen if $\psi$ is close to $q\beta$. Observe that $\psi < q\beta (1 - c'\sigma)$, for some $c' > 0$ (by the definition of $\psi$ and the facts that $\nu \ge 1 - 2\rho - T^{-1/3}$ and $\sigma \sim T^{-1/3}$). Thus it suffices to bound the ratio of
$$
\frac{|(1 - q^{-1}\beta^{-1} (\psi + \sigma \widehat{v}))|}{|(1 - q^{-1}\beta^{-1} (\psi + \sigma\widehat{w}))|},
$$
as the remaining parts are bounded above and below by constants. Using the previously mentioned bound $\psi < q\beta (1 - c'\sigma)$, it follows that this ratio is bounded by a polynomial in $\widehat{v}$ and $\widehat{w}$ (actually, linearly in $\widehat{v}$), as claimed.
		
The estimates \eqref{termsintegrand13} address all terms on the right side of \eqref{vright112}, except for the exponential term. To analyze this term, first recall $\big| R(z) \big| < |z|^3 / 8$, for all $z \in \widehat{\mathcal{C}}^{(1)} \cup \widehat{\Gamma}^{(1)}$; this was stipulated as the third part of Definition \ref{linearvertexright}. Thus, decreasing $c=c(\varepsilon)>0$ if necessary,
\begin{flalign}
\begin{aligned}
\label{vright10}
\big| e^{ \frac{\widehat{w}^3}{3} - \frac{\widehat{v}^3}{3} + T ( R (T^{- 1 / 3} \widehat{w}) - R (T^{- 1 / 3} \widehat{v}) ) }\big| & = e^{\frac{\widehat{w}^3}{3} - \frac{\widehat{v}^3}{3} + \frac{| \widehat{v} |^3}{8} + \frac{| \widehat{w} |^3}{8}} < c^{-1} e^{ - \frac{1}{5} ( | \widehat{w} |^3 + | \widehat{v} |^3 )} .
\end{aligned}
\end{flalign}
In \eqref{vright10}, the last estimate follows from the fact that $\widehat{w}^3 - \widehat{v}^3 < 0$, for sufficiently large $\widehat{w} \in \widehat{\mathcal{C}}^{(1)}$ and $\widehat{v} \in \widehat{\Gamma}^{(1)}$, and that it decreases cubically in $|\widehat{w}|$ and $|\widehat{v}|$ as they tend to $\infty$.
		
		Now, the estimate \eqref{uniform13integrand} follows from the definition \eqref{vright112} of $I$, the six estimates \eqref{termsintegrand13}, and the exponential estimate \eqref{vright10}.
	\end{proof}

	We next analyze the integral \eqref{vptright} defining $K (w, w')$ when either $w$ or $v$ is not close to $\psi$, that is, when either $w \in \mathcal{C}^{(2)}$ or $v \in \Gamma^{(2)}$. In this case, we will see that the integral decays exponentially in $T$. Define
	\begin{flalign*}
		\overline{K} \big( \widehat{w}, \widehat{w}' \big) = \sigma K (w, w') = \displaystyle\frac{1}{2 \pi \mathrm{i}} \displaystyle\int_{\widehat{\Gamma}} I \big( \widehat{w}, \widehat{w}'; \widehat{v} \big) d \widehat{v},
	\end{flalign*}  	
for each $\widehat{w}, \widehat{w}' \in \widehat{\mathcal{C}}$. From the change of variables \eqref{wwvvertexright}, we have that
	\begin{flalign}
		\label{rightvdeterminantkernelbar13}
		\det \big( \Id + K \big)_{L^2 (\mathcal{C})} = \det \big( \Id + \overline{K} \big)_{L^2 (\widehat{\mathcal{C}})}.
	\end{flalign}
	
	The following lemma indicates that $\big| \overline{K} - \widehat{K} \big|$ decays exponentially on the domain of $\widehat{K}$ and that $|\overline{K}|$ decays exponentially elsewhere. Although the uniformity in \Cref{kclosekbarc2small} of the dependence of $c$ and $C$ on $\nu$ was not stated directly in \cite{PTAEP}, it follows from the uniformity of the constant $c$ from \Cref{rightcagammaa} in $\nu$.
	
	\begin{cor}[{\cite[Corollary 6.14]{PTAEP}}]
		
		\label{kclosekbarc2small}
		
		There exist $c = c(\varepsilon) > 0$ so that
		\begin{flalign}
			\label{kclosekbar}
			\Big| \overline{K} \big( \widehat{w}, \widehat{w}' \big) - \widehat{K} \big( \widehat{w}, \widehat{w}' \big) \Big| < c^{-1} \exp \Big( - c \big( T + |\widehat{w}|^3 \big) \Big),
		\end{flalign}
for all $\widehat{w} \in \widehat{\mathcal{C}}^{(1)}$ and $\widehat{w}' \in \widehat{\mathcal{C}} \cup \mathfrak{W}_{0, \infty}$, and such that
		\begin{flalign}
			\label{kbarsmallc2}
			\Big| \overline{K} \big( \widehat{w}, \widehat{w}' \big) \Big| < c^{-1} \exp \Big( - c \big( T + |\widehat{w}|^3 \big) \Big),
		\end{flalign}
for all $\widehat{w} \in \widehat{\mathcal{C}}^{(2)}$ and $\widehat{w}' \in \widehat{\mathcal{C}} \cup \mathfrak{W}_{0, \infty}$.
	\end{cor}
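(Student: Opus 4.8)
The plan is to follow the proof of \cite[Corollary 6.14]{PTAEP} essentially verbatim, the only addition being to verify that all constants may be chosen uniformly in $\nu\in[-(1-\varepsilon),1-\varepsilon]$. Both bounds rest on the same mechanism: the kernel in \eqref{vptright} carries the factor $\exp\big(T(G(w)-G(v))\big)$, which is exponentially small in $T$ whenever $w$ or $v$ is bounded away from the critical point $\psi$ (by \Cref{rightcagammaa}), while near $\psi$ the cubic vanishing of $G''$ produces, after the rescaling \eqref{wwvvertexright} by $\sigma\asymp T^{-1/3}$, the factor $\exp(-c|\widehat w|^3)$. The remaining factors in \eqref{vptright} are in all cases controlled exactly as in the six estimates \eqref{termsintegrand13}, none of which required $\widehat w$ to lie on the linear portion of the contour.

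For \eqref{kbarsmallc2} I would take $\widehat w\in\widehat{\mathcal C}^{(2)}$, so that $w\in\mathcal C^{(2)}$ lies at distance from $\psi$ bounded below uniformly in $T$ and $\nu$ and strictly between $\mathcal L_2$ and $\mathcal L_3$; then $\Real\big(G(w)-G(v)\big)<-c$ for every $v\in\Gamma$ by \Cref{rightcagammaa}, giving $\big|\exp(T(G(w)-G(v)))\big|\le e^{-cT}$. The other factors contribute at most polynomially in $T$: the Pochhammer ratio grows at most exponentially in $|\widehat w|+|\widehat v|$, the $j\neq0$ part of the reciprocal-sine sum is $\mathcal{O}(T^{-1/3})$ and the $j=0$ term is $\mathcal{O}(1)$, the factor $(v/w)^{sf_\nu T^{1/3}}$ is at most $1$ since $|v/w|<1$ for $v\in\Gamma$ and $w\in\mathcal C$, and the prefactor $(w'-v)^{-1}$ supplies the $1/(1+|\widehat w'|)$ decay. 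Since $\widehat\Gamma$ has length $\mathcal{O}(\sigma^{-1})=\mathcal{O}(T^{1/3})$, integrating and absorbing the polynomial-in-$T$ factors into a slightly smaller $c$ gives $\big|\overline K(\widehat w,\widehat w')\big|\le c^{-1}e^{-cT}$; and since $|\widehat w|=\mathcal{O}(\sigma^{-1})=\mathcal{O}(T^{1/3})$ on $\widehat{\mathcal C}^{(2)}$, one has $|\widehat w|^3=\mathcal{O}(T)$, so this is equivalent to $c^{-1}\exp(-c(T+|\widehat w|^3))$.

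For \eqref{kclosekbar} I would first observe that $\overline K(\widehat w,\widehat w')-\widehat K(\widehat w,\widehat w')$ is precisely the part of the $v$-integral in \eqref{vptright} over $\Gamma^{(2)}$, because $\overline K$ integrates over $\widehat\Gamma=\widehat\Gamma^{(1)}\cup\widehat\Gamma^{(2)}$ while $\widehat K$ integrates only over $\widehat\Gamma^{(1)}$. For $\widehat w\in\widehat{\mathcal C}^{(1)}$ (so $w\in\mathcal C^{(1)}\subset\mathcal C$) and $v\in\Gamma^{(2)}$, \Cref{rightcagammaa} again gives $\Real\big(G(w)-G(v)\big)<-c$, hence the $e^{-cT}$ factor. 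The extra $\exp(-c|\widehat w|^3)$ I would extract as in \eqref{vright10}: write $G(w)-G(v)=\big(G(w)-G(\psi)\big)+\big(G(\psi)-G(v)\big)$; near $\psi$ the quantity $\Real\big(G(w)-G(\psi)\big)$ is comparable to $-|w-\psi|^3$ along $\mathcal C^{(1)}=\mathfrak W_{\psi,\varpi}$ (whose legs meet the real axis at angle $\pm\pi/3$, so $(w-\psi)^3$ is real and negative there), which after rescaling is $\asymp-|\widehat w|^3/T$, while away from $\psi$ it is $\le-c\le-c|\widehat w|^3/T$ since $|\widehat w|=\mathcal{O}(T^{1/3})$ there; combined with the bound $\big|R(\psi^{-1}f_\nu(z-\psi))\big|<|f_\nu(z-\psi)/2\psi|^3$ on $\mathcal C^{(1)}$ from the third bullet of \Cref{linearvertexright}, this gives $\big|\exp(T(G(w)-G(\psi)))\big|\le C\exp(-c|\widehat w|^3)$. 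Multiplying by $e^{-cT}$ and bounding the other factors as above yields \eqref{kclosekbar}.

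The final step is to record the uniformity in $\nu$: the constant in \Cref{rightcagammaa}, the estimates \eqref{r13} and \eqref{termsintegrand13}, and the ``bounded away from zero'' conditions built into the definitions of $\mathcal C$ and $\Gamma$ are all uniform over $\nu\in[-(1-\varepsilon),1-\varepsilon]$ thanks to the uniform continuity of the level lines $\mathcal L_1,\mathcal L_2,\mathcal L_3$ in $\nu$ from \Cref{etacontinuous} and compactness of that interval. I expect the main obstacle to be exactly this bookkeeping — checking that a single choice of the small parameter $\varpi$ and of the curved parts $\mathcal C^{(2)},\Gamma^{(2)}$ can be made to work simultaneously for all $\nu$ in the compact range — rather than any new analytic input, since the underlying asymptotic estimates are already available from \cite{PTAEP}.
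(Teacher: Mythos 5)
The paper gives no proof of this corollary: it is quoted verbatim from \cite[Corollary 6.14]{PTAEP}, and the only addition is the remark (made in the surrounding text) that the uniformity of the constants in $\nu$ follows from the uniformity of the constant in \Cref{rightcagammaa}, which in turn rests on \Cref{etacontinuous}. Your reconstruction follows exactly the strategy of that reference — the $e^{-cT}$ factor from \Cref{rightcagammaa} whenever $w\in\mathcal{C}^{(2)}$ or $v\in\Gamma^{(2)}$, the $e^{-c|\widehat w|^3}$ factor from the cubic expansion \eqref{gzpsi} on $\mathcal{C}^{(1)}$ together with $|\widehat w|^3=\mathcal{O}(T)$ on $\widehat{\mathcal{C}}^{(2)}$, the identification of $\overline K-\widehat K$ with the $\Gamma^{(2)}$ portion of the $v$-integral, and the estimates \eqref{termsintegrand13} for the remaining factors — and your closing paragraph on $\nu$-uniformity is precisely the point the paper itself singles out as the only thing to check. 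So the approach is the right one.

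There is, however, one step that does not follow from the construction as stated: the claim that $|v/w|<1$ for all $v\in\Gamma$ and $w\in\mathcal{C}$, which you use to discard the factor $(v/w)^{sf_{\nu}T^{1/3}}$. The fourth bullet of \Cref{linearvertexright} guarantees $|v/w|\in(q^{1/2},1)$ only for $v\in\Gamma^{(1)}$ and $w\in\mathcal{C}^{(1)}$. On the curved pieces, \Cref{curvedvertexright} forces both $\Gamma^{(2)}$ and $\mathcal{C}^{(2)}$ to hug $\mathcal{L}_2$ from opposite sides (this is needed to keep $q\mathcal{C}$ inside $\Gamma$, especially when $q$ is close to $1$), and $\mathcal{L}_2$ is not a circle: by \Cref{linesgv} it passes through both $\psi$ and $-q$, and $\psi=q(1-\nu)/(1+\nu)\ne q$ for $\nu\ne 0$. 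Consequently one can have $|v|>|w|$, for instance $v\in\Gamma^{(2)}$ near $-q$ against $w$ near the tip $\psi$ of $\mathcal{C}^{(1)}$ when $\nu>0$, so the factor is bounded only by $e^{CsT^{1/3}}$ rather than by $1$. For fixed $s$ (the setting of \cite{PTAEP}) this is harmlessly absorbed into $e^{-cT}$, but for the uniform-in-$s$ statement needed here you must either restrict to $s\le\epsilon T^{2/3}$ with $\epsilon$ small (which suffices for the application, since the events controlled by \eqref{h1} are empty once $s>T^{2/3}$, as $\h_T(X;\bfeta)\ge 0$ under $(\rho;0)$-Bernoulli data) or supply a separate argument for large $s$; without one of these, the constants in \eqref{kclosekbar} and \eqref{kbarsmallc2} cannot be taken independent of $s$.
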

	
	To show \Cref{estimatedeterminant}, we will make use of the following lemma, which is the $K_1 = 0$ case of \cite[Lemma A.4]{PTAEP} that approximates a Fredholm determinant with a small kernel.

	\begin{lem}[{\cite[Lemma A.4]{PTAEP}}]
		
		\label{determinantclosekernels}
		
		Adopting the notation of Definition \ref{definitiondeterminant}, we have
		\begin{flalign*}
			\Big| \det \big( \Id + K \big)_{L^2 (\mathcal{C})} - 1 \Big| \le  \displaystyle\sum_{k = 1}^{\infty} \displaystyle\frac{2^k k^{k / 2}}{(k - 1)!} \displaystyle\int_{\mathcal{C}} \cdots & \displaystyle\int_{\mathcal{C}} \displaystyle\prod_{i = 1}^k \Bigg| \displaystyle\frac{1}{k} \displaystyle\sum_{j = 1}^k \big| K (x_i, x_j) \big|^2  \Bigg|^{1 / 2}  \displaystyle\prod_{i = 1}^k dx_i.
		\end{flalign*}

	\end{lem}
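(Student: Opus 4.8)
The bound is quoted from \cite[Lemma A.4]{PTAEP}, and a self-contained argument is short: it amounts to combining the Fredholm series of \Cref{definitiondeterminant} with Hadamard's inequality. First I would subtract $1$ from the series in \Cref{definitiondeterminant} to obtain $\det(\Id+K)_{L^2(\mathcal{C})}-1 = \sum_{k\ge 1}\frac{1}{(2\pi\mathrm{i})^k k!}\int_{\mathcal{C}}\cdots\int_{\mathcal{C}}\det[K(x_i,x_j)]_{i,j=1}^k\prod_{j=1}^k dx_j$, the series converging absolutely since $K$ has no poles on the compact set $\mathcal{C}\times\mathcal{C}$. Then apply the triangle inequality to the sum over $k$ and bound each $k$-fold contour integral by the integral of the modulus of its integrand against arc-length measure (this is the meaning of $\prod_i dx_i$ on the right-hand side, the integrand there being real and nonnegative), using $|(2\pi\mathrm{i})^k|=(2\pi)^k$. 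This reduces everything to a pointwise bound on $\big|\det[K(x_i,x_j)]_{i,j=1}^k\big|$.

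The key step is Hadamard's inequality applied to the rows of $[K(x_i,x_j)]_{i,j=1}^k$: the modulus of the determinant is at most the product of the Euclidean norms of the rows, i.e. $\big|\det[K(x_i,x_j)]_{i,j=1}^k\big|\le\prod_{i=1}^k\big(\sum_{j=1}^k|K(x_i,x_j)|^2\big)^{1/2}=k^{k/2}\prod_{i=1}^k\big(\tfrac1k\sum_{j=1}^k|K(x_i,x_j)|^2\big)^{1/2}$, which produces exactly the averaged inner sum appearing in the statement together with the factor $k^{k/2}$.

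Finally I would absorb the combinatorial prefactors. Since $k!=k(k-1)!$ and $(4\pi)^k k\ge 1$ for every $k\ge 1$, one has $\frac{k^{k/2}}{(2\pi)^k k!}\le\frac{2^k k^{k/2}}{(k-1)!}$. Inserting the Hadamard bound into each term, integrating over $\mathcal{C}^k$, and summing over $k$ then yields the claimed inequality. (The weight $2^k$ on the right is very wasteful; it is kept only because that is the form used in the later Fredholm estimates.)

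There is no genuine obstacle here — the content is entirely Hadamard's inequality and the remainder is bookkeeping. The only points deserving a little care are that Hadamard should be invoked on the rows, so that the surviving sum runs over the second argument of $K$ as in the statement (invoking it on columns gives the identical bound by the symmetric role of the two slots), and that the $dx_i$ on the right-hand side are to be read as arc-length integration since that integrand is nonnegative. One should also note that absolute convergence of the series, which justifies the interchange of summation, integration and modulus, is exactly what the no-poles hypothesis on $K$ in \Cref{definitiondeterminant} provides.
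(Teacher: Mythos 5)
Your argument is correct. The paper itself gives no proof of this lemma — it is quoted verbatim as the $K_1=0$ case of \cite[Lemma A.4]{PTAEP} — and your route (expand the Fredholm series, take absolute values with $|(2\pi\mathrm{i})^k|=(2\pi)^k$, apply Hadamard's inequality to the rows of $[K(x_i,x_j)]_{i,j=1}^k$ to produce the factor $k^{k/2}\prod_i\big(\tfrac1k\sum_j|K(x_i,x_j)|^2\big)^{1/2}$, then absorb $\tfrac{1}{(2\pi)^k k}\le 2^k$) is exactly the standard proof underlying the cited result, and your prefactor bookkeeping checks out.
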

	
	 Now we can use Lemma \ref{rightvgamma1tildev} and Corollary \ref{kclosekbarc2small} to establish \Cref{estimatedeterminant}.
	
	\begin{proof}[Proof of \Cref{estimatedeterminant}]
		
		By \Cref{rightvgamma1tildev} and \Cref{kclosekbarc2small},
		\begin{flalign}
		\label{k1k}
		\Big| \overline{K} \big( \widehat{w}, \widehat{w}' \big) \Big| \le c^{-1} \exp \Big( -c \big| \widehat{w} \big|^3 - c s \Big) +c^{-1} \exp \Big( -c \big| \widehat{w} \big|^3 - c T \Big)
		\end{flalign}
for some $c = c (\varepsilon) > 0$.
Thus, allowing constants to change between lines we see that 
		\begin{flalign*}
		\Big| \det \big( \Id + K^{(p)} \big)_{L^2 (\mathcal{C})} -1 \Big| & = \Big| \det \big( \Id + \overline{K} \big)_{L^2 (\widehat{\mathcal{C}})} - 1 \Big| \\
		& \le \displaystyle\sum_{k = 1}^{\infty} \displaystyle\frac{2^k k^{k / 2}}{(k - 1)!} \displaystyle\int_{\widehat{\mathcal{C}}} \cdots  \displaystyle\int_{\widehat{\mathcal{C}}} \displaystyle\prod_{i = 1}^k \Bigg| \displaystyle\frac{1}{k} \displaystyle\sum_{j = 1}^k \big| \overline{K} (x_i, x_j) \big|^2  \Bigg|^{1 / 2}  \displaystyle\prod_{i = 1}^k dx_i \\
		& \le \displaystyle\sum_{k = 1}^{\infty} \displaystyle\frac{2^k k^{k/2}}{(k-1)!} \Bigg( c^{-1} (e^{-c s} + e^{-c T}) \displaystyle\int_{\widehat{\mathcal{C}}} \exp \big( -c_1 |\widehat{w}|^3 \big) d \widehat{w} \Bigg)^k \\
		& \le \displaystyle\sum_{k = 1}^{\infty} \displaystyle\frac{16^k}{k^{k/2}} (c^{-1})^k (e^{-c s} + e^{-c T})^k \le c^{-1} (e^{-c s} + e^{-c T}),
		\end{flalign*}
from which we deduce \eqref{eta1estimate}. Here, to deduce the first statement we used \eqref{rightvdeterminantkernelbar13}, to deduce the second we used \Cref{determinantclosekernels}, to deduce the third we used \eqref{k1k}, and to deduce the fourth we used the facts that
$\int_{\widehat{\mathcal{C}}} \exp \big( -c_1 |\widehat{w}|^3 \big) d \widehat{w}$ is bounded above by a constant and that $(k-1)! \ge 2^{-k} k! \ge 8^{-k} k^k$.
	\end{proof}

\end{document}